\documentclass[a4paper,11pt,colorlinks=true,linkcolor=RedViolet,urlcolor=RoyalPurple,citecolor=Red,bookmarksopen=true]{article}
\usepackage{mathrsfs}
\usepackage{graphicx}

\usepackage{amsmath,amsfonts,amssymb,amsthm,epsfig,epstopdf, dsfont, faktor, tensor, slashed, mathtools, cancel, tabularx, tcolorbox, color, soul,  wasysym, authblk, titling, enumerate, epigraph ,url, array, stmaryrd, bold-extra}
\usepackage{quiver}\usepackage{quiver}

\renewenvironment{thebibliography}[1]{
  \begin{oldthebibliography}{#1}
    \setlength{\itemsep}{0em}
    \setlength{\parskip}{0.2em}
}
{
  \end{oldthebibliography}
}

\usepackage{bookmark}
\usepackage[toc,page]{appendix}

\def\Q{{\mathbb Q}}
\def\B{{\mathbb B}}
\def\P{{\mathbb P}}
\def\R{{\mathbb R}}
\def\Z{{\mathbb Z}}
\def\ZK{{\Z_{(K)}}}
\def\PzZK{{\P(z;\ZK)}}

\def\PzwZK{{\P(z,w;\ZK)}}
\def\Zh{{\Z[[h]]}}
\def\Qh{{\Q[[h]]}}
\def\Rh{{\R[[h]]}}
\def\Rxh{{\R[x][[h]]}}
\def\Rzh{{\R[z][[h]]}}
\def\Rztwoh{{\R[z^2][[h]]}}
\def\Zpos{{Z^{\geq0}}}
\def\ZKh{{\ZK[[h]]}}
\def\ZKph{{\Z_{(K')}[[h]]}}
\def\PzZKh{{\P(z;\ZK)[[h]]}}
\def\PalphaZKh{{\P(\alpha;\ZK)[[h]]}}

\def\PzwZKh{{\P(z,w;\ZK)[[h]]}}
\def\G{{\mathbb G}}
\def\N{{\mathbb N}}

\def\sA{{\mathbf{A}}}
\def\sB{{\mathbf{B}}}
\def\sC{{\mathbf{C}}}
\def\sD{{\mathbf{D}}}
\def\cN{{\mathcal N}}
\def\cL{{\mathcal L}}
\def\cLb{{\overline{\cL}}}

\def\cP{{\mathcal P}}
\def\cT{{\mathcal T}}
\def\eK{{\equiv_K}}
\def\bfz{{\bf z}}
\def\neK{{\not\equiv_K}}
\def\ePK{{\equiv_{\Phi_K}}}
\def\ePKp{{\equiv_{\Phi_{K'}}}}

\def\p{{\mathfrak{p}}}
\def\f{{\mathfrak{f}}}
\def\c{{\mathfrak{c}}}

\def\bfz{{\bf z}}
\def\bfr{{\bf r}}
\def\bfn{{\bf n}}
\def\bfA{{\bf A}}
\def\bfa{{\bf a}}
\def\bfm{{\bf m}}
\def\bfv{{\bf v}}
\def\bfx{{\bf x}}
\def\bfb{{\bf b}}
\def\bfD{{\bf D}}

\def\vol{{\text{vol}}}

\def\sgn{{\text{sgn}}}

\def\trap{{\trapeziumfilled}}
\def\dtrap{{\trapeziumfilledneg}}
\def\fbar{{\overline{f}}}
\def\cbar{{\overline{c}}}

\newcommand{\Bint}{\mathrlap{\:B}\int}
\newcommand{\Bintt}{{\mathrlap{\kern1pt{}B}\int}}
\newcommand{\Biint}{\mathop{\mathchoice
  {\Biintaux\displaystyle}
  {\Biintaux\textstyle}
  {\Biintaux\scriptstyle}
  {\Biintaux\scriptscriptstyle}
}\displaylimits}

\newcommand{\Biintaux}[1]{%
  \ooalign{%
    \hfil$#1 B$\hfil\cr
    $#1 \iint$\cr
  }%
}
\newcommand{\rectangle}{{%
  \ooalign{$\sqsubset\mkern3mu$\cr$\mkern3mu\sqsupset$\cr}%
}}

\newcommand{\squarefilled}{%
  \begin{tikzpicture}[x=1ex,y=1ex,baseline=-0.1ex]
    \fill (0,0) rectangle (1,1);
  \end{tikzpicture}%
}

\newcommand{\trapeziumfilled}{%
  \begin{tikzpicture}[x=1.2ex,y=1.2ex,baseline=-0.1ex]
    \fill (0,0) -- (1,0) -- (1,1) -- (0,0.7) -- cycle;
  \end{tikzpicture}%
}

\newcommand{\trapeziumfilledneg}{%
  \begin{tikzpicture}[x=1.2ex,y=1.2ex,baseline=-0.1ex]
    \fill (0,1) -- (1,1) -- (1,0) -- (0,0.3) -- cycle;
  \end{tikzpicture}%
}

\usepackage[dvipsnames]{xcolor}

\theoremstyle{plain}
\newtheorem{thm}{Theorem}[section]
\newtheorem{prop}[thm]{Proposition}
\newtheorem{lem}[thm]{Lemma}
\newtheorem{exmp}[thm]{Example}
\newtheorem{defn}[thm]{Definition}
\newtheorem{conj}{Conjecture}

\newtheorem*{cor}{Corollary}
\newtheorem*{rem}{Remark}

\numberwithin{equation}{section}

\usepackage[english]{babel}
\usepackage[a4paper,top=1.5cm,bottom=2.5cm,left=1.5cm,right=1.5cm,marginparwidth=1.75cm]{geometry}

    \title{\textbf{Quantum Pick's Theorem in two dimensions}\thanks{This research was supported in part by grant 2020242 from the United States-Israel Binational Science Foundation (BSF), Jerusalem, Israel.}}
\author{{Moris Metin Bardavid \& Ruth Lawrence}\thanks{Hebrew University of Jerusalem; \tt{morisme.bardavid\textbf{[at]}mail.huji.ac.il}, \tt{ruthel.naimark\textbf{[at]}math.huji.ac.il}}}
\begin{document}
\maketitle
\begin{abstract}
   Consider an $n$-dimensional sum of the form $S_K=\sum q^{f(\bfz)}$ over lattice points $\bfz$ which lie in $K\cP$, the $K$ scaling of an $n$-dimensional rational polytope $\cP$. We formulate a conjecture on the structure of $S_K$ considered for $K$ prime as a power series in $h=q-1$ modulo the $K$-th cyclotomic polynomial, and in particular the existence of a decomposition in powers of the Gauss sum $\G$, up to the $n$-th, whose coefficients are almost independent of $K$, where the leading coefficient is related to the volume of $\cP$ and the coefficient of $\G^r$ has contributions from the codimension $r$ skeleton of $\cP$.  The conjecture is proved for $n=1,2$.
\end{abstract}

\section{\textbf{\textsc{Introduction and main result}}}

\begin{conj}\label{quantumpickconjecture}  Suppose that $f(\bfz)$ is a rational quadratic function of $\bfz\in\R^n$ with positive definite homogeneous part and $\cP$ is an $n$-dimensional rational polytope. Then the integer lattice sum,
\begin{align*}
    S_K(\cP) :=\sum_{\bfz \in K \cP\cap\Z^n } q^{f(\bfz)}
\end{align*}
satisfies the following properties,
\begin{enumerate}
    \item As a formal power series in $h=q-1$, $S_K(\cP)\in\ZKh$ for all but finitely many primes $K$.
    \item There exists a unique level $n$, $\G$-decomposition of $S_K$, that is there exist $\cN\in\N$ and power series $Z_K^{(r)}(\cP)\in\Q[[h]]$ (for $r=0,1,\ldots,n$ and all prime $K$ coprime to $\cN$) whose coefficients depend on $K$ only modulo $\cN$ and such that for all sufficiently large prime $K$ coprime to $\cN$,
    \[
         S_K(\cP)  \equiv_{\Phi_K} \sum_{r=0}^n Z_K^{(r)}(\cP) \G^r
    \]
    where $\G=\sum\limits_{r=0}^{K-1}q^{r^2}$ is the Gauss sum.
    \item $Z_{K}^{(n)} = \left( \frac{(\Delta)_K}{K} \right) q^{f_0}  \cdot \vol(\cP)$ where $\Delta\in\Q$ is the discriminant of the homogeneous part of $f$, $(\Delta)_K$ is the residue of $\Delta\in\ZK$ mod $K$, $\vol(\cP)$ denotes the volume of $\cP$ and $f_0$ denotes the value of $f$ at its critical point.
    \item $Z_K^{(r)}(\cP)$ can be decomposed as a sum of contributions corresponding to codimension $r$ faces of $\cP$, each term depending only on that face and local data on how it sits in $\cP$.
\item For each $i$ coprime to $\cN$, the power series $Z^{(r)}_K(\cP)$ for sufficiently large prime $K\equiv_{\cN}i$ is the asymptotic expansion around $q=1$ of a holomorphic function $Z_{\infty,i}^{(r)}(q)$ of $q$ defined either for $|q|>1$ or for $|q|<1$, which has almost modular properties (in the sense of the generalised modularity conjecture \cite{GMC} ).
\end{enumerate}
\end{conj}

\noindent In this paper we prove parts (1)-(4) of the conjecture for $n=1,2$. Sections 2,3  are preliminaries introducing the notation. Section 4 develops results on Bernoulli numbers and symbolic integration which will be used later. In section 5, we deal with the one-dimensional case and in section 6 with the two-dimensional case.

\medskip
\noindent The motivation for the paper comes from the generalized modularity conjecture \cite{GMC} on quantum 3-manifold invariants. The value of a quantum invariant of a closed compact oriented 3-manifold at a $K$-th root of unity can be written as a ratio of $q$-sums over integer points in a $K$-scaled rational polytope, whose summand is a product of $q$ powers, theta-nets and quantum $6j$ symbols. For particular manifolds, for example Seifert fibred manifolds with three fibres, the summand may be a $q$-power by a quadratic expression alone and then the known almost modularity properties of quantum invariants in these special cases (see for example \cite{LZ}) would relate to the properties in Conjecture 1.  This paper provides the first step in the investigation of the structure of general such $q$-sums, by dealing with the $\G$ decomposition in one and two dimensional cases.

\medskip\noindent
Note that the structure of a sum of $q$-exponentials of a {\sl linear} integral form $\lambda:\Z^n \rightarrow \Z$ (as opposed to a quadratic form as in the current paper) over integer points in a scaled polytope, was studied in \cite{CH} for lattice polytopes, while an extension into rational polytopes (and further insight) is provided in \cite{BK}.

\section{\textbf{\textsc{Properties of}} $\ZK$}

Let $K$ be an odd prime.

\medskip
\noindent{\bf $K$-adic order:}  Any non-zero $x\in\Q$ can be uniquely written in the form $x=K^m\cdot\frac{a}{b}$ where $a\in\Z^\times$ and $b\in\N$ are coprime integers not divisible by $K$ and $m\in\Z$. The exponent $m$ will be denoted by $v_K(x)$ and defines the $K$-adic order map $v_K:\Q^\times\longrightarrow\Z$ by $x\mapsto{}m$, which is a group homomorphism from $(\Q^\times,\times)$ to $(\Z,+)$.

\medskip\noindent{\bf $K$-adic integers:} Denote by $\Z_{(K)}$ the subset of $\Q$ on which $v_K$ is non-negative, that is, rationals (including zero) which can be written in the form $\frac{a}{b}$ where $b\in\N$ is not divisible by $K$ and $a\in\Z$. Then $\Z_{(K)}$ forms a ring. For $i\in\N$, we will say that $x\in\Z_{(K)}$ is divisible by $K^i$ when $K^{-i}x\in\Z_{(K)}$; equivalently the condition is that $v_K(x)\geq{}i$. For $x,y\in\Q$, we write $x\equiv_{K^i}y$ to mean that $x,y\in\ZK$ and $x-y$ is divisible by $K^i$ in $\ZK$.

\medskip\noindent{\bf Remainder map:} For any $i\in\N$ and $x\in\Z_{(K)}$, let $(x)_{K^i}$ denote the unique integer in $\{0,1,\ldots,K^i-1\}$ such that $x-(x)_{K^i}$ is divisible by $K^i$ in $Z_{(K)}$. When $a$ is an integer, $(a)_{K^i}$ is the remainder of $a$ on division by $K^i$. More generally, for $b\in\N$ coprime to $K$ and $a\in\Z$,
\[\left(\frac{a}{b}\right)_{K^i}=(a\cdot\overline{b})_{K^i}\]
where $[\overline{b}]$ is the inverse to $[b]$ in $\Z/K^i\Z$.

\begin{lem}\label{combinationbelongs} If $\alpha\in\ZK$ and $n\in\N$ then
$\binom{\alpha}{n}\equiv\frac{1}{n!}\alpha(\alpha-1)\dotsm(\alpha-n+1)\in\ZK$.
\end{lem}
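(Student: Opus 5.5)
The strategy is to reduce to the classical fact that $\binom{m}{n}\in\Z$ for integers $m$, using the density of integers in $\ZK$ with respect to the $K$-adic topology. Write $P(x)=x(x-1)\dotsm(x-n+1)\in\Z[x]$. The claim is that $v_K\bigl(P(\alpha)\bigr)\geq v_K(n!)$ whenever $\alpha\in\ZK$; the case $P(\alpha)=0$ is trivial.

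First, set $e=v_K(n!)$ and take $m=(\alpha)_{K^{e}}\in\Z$, the remainder from Section 2. Then $\alpha-m$ is divisible by $K^{e}$ in $\ZK$. Since $P$ has integer coefficients, $P(\alpha)-P(m)$ is divisible by $\alpha-m$ in the ring $\ZK$: indeed $x^j-y^j=(x-y)\sum x^iy^{j-1-i}$, and $\ZK$ is a ring containing $\Z$. Hence $P(\alpha)\equiv_{K^{e}}P(m)$.

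Second, $P(m)=n!\binom{m}{n}$ for $m\in\N$, and for negative $m$ one uses $P(m)=(-1)^nP(n-1-m)$. In either case $P(m)$ is an integer multiple of $n!$, so $v_K(P(m))\geq e$. Combining the two steps gives $v_K(P(\alpha))\geq e$.

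Finally, $n!=K^{e}u$ with $u$ an integer coprime to $K$, so $u$ is a unit in $\ZK$. Therefore $P(\alpha)/n!=u^{-1}\cdot K^{-e}P(\alpha)\in\ZK$. None of this is deep; the only point needing care is the first step, namely choosing the modulus $K^{e}$ rather than just $K$ so that the approximation is fine enough to capture the full power of $K$ dividing $n!$.
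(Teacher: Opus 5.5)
Your proof is correct, but it takes a different route from the paper's.

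\textbf{The paper's route.} The paper writes $\alpha=a/b$ with $b$ coprime to $K$, so that $\binom{\alpha}{n}=b^{-n}\,a(a-b)\dotsm(a-(n-1)b)/n!$. It then computes the $K$-adic valuation of the numerator exactly. For each $i$ it counts the $t\in\{0,\dots,n-1\}$ with $t\equiv(\alpha)_{K^i}$ modulo $K^i$, and compares this with Legendre's count of multiples of $K^i$ in $\{1,\dots,n\}$. It shows each difference is non-negative: the only possible deficit of $-1$ occurs when $K^i$ divides $n$, and then it is cancelled because $(n-1)_{K^i}=K^i-1$.

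\textbf{Your route.} You approximate $\alpha$ by the integer $m=(\alpha)_{K^e}$, where $e=v_K(n!)$. Because $P\in\Z[x]$, the divisibility $K^e\mid P(m)$ transfers to $K^e\mid P(\alpha)$. This is a $K$-adic density and continuity argument rather than a count.

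\textbf{Comparison.} Your argument is shorter and proves more with no extra work. Verbatim, it shows that any $p\in\Q[x]$ with $p(\Z)\subset\ZK$ satisfies $p(\ZK)\subset\ZK$: write $p=P/N$ with $P\in\Z[x]$ and take $e=v_K(N)$. That is exactly Lemma~\ref{pztoZK}, which the paper only reaches by combining this lemma with Lemmas~\ref{P(z,R)characterization} and~\ref{PZKonZK}. The paper's computation gives something yours does not: an exact formula for $v_K\bigl(\binom{\alpha}{n}\bigr)$ as a sum of non-negative terms, not just a lower bound.

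\textbf{Minor remarks.} Since $(\alpha)_{K^e}\in\{0,\dots,K^e-1\}$, your case of negative $m$ never arises. When $e=0$ there is nothing to prove.
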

\begin{proof} When $\alpha\in\Z$, immediately $\binom{\alpha}{n}\in\Z\subset\Z_{(K)}$, so we assume that $\alpha\not\in\Z$. In order to prove that $\binom{\alpha}{n}\in\Z_{(K)}$, it suffices to show that $v_K\left(\binom{\alpha}{n}\right)\geq0$. Write $\alpha = \frac{a}{b}$ where $a\in\Z$ and $b\in\N$ are coprime and $(b,K)=1$. Then
\[
\binom{\alpha}{n}=\frac{\alpha(\alpha -1)\dotsm(\alpha-n+1)}{n!}
    = \frac{1}{b^n}\cdot\frac{(a)(a-b)\dotsm(a-(n-1)b)}{n!}
    \]
This is non-zero because $\alpha\not\in\N$. Since $(b,K)=1$, thus the only $K$-factors come from the last factor and
\begin{align*}
 v_K\left(\binom\alpha{n}\right)&=v_K(a)+v_K(a-b)+\cdots+v_K(a-(n-1)b)-v_K(n!)\\
     &=\sum_{i=1}^\infty \left(\#\left\{t\Big|\,0\leq{}t\leq(n-1),\quad{}K^i\,\big|\,(a-tb)\right\}
                        -\#\left\{t\Big|\,1\leq{}t\leq{}n,\quad{}K^i\,\big|\,t\right\}\right)\\
    &=\sum_{i=1}^\infty \left(\#\left\{t\Big|\,0\leq{}t\leq(n-1),\quad{}t\equiv(\alpha)_{K^i}\hbox{ mod $K^i$}\right\}
                        -\#\left\{t\Big|\,1\leq{}t\leq{}n,\quad{}t\equiv0\hbox{ mod $K^i$}\right\}\right)\\
    &=\sum_{i=1}^\infty \left(\left\lfloor \frac{n-1}{K^i} \right\rfloor+\delta_{(\alpha)_{K^i}\leq(n-1)_{K^i}}
                        -\left\lfloor \frac{n}{K^i} \right\rfloor\right)
\end{align*}
where in the last line $\delta_T$ is a Boolean delta symbol which is 1 if $T$ is true and 0 otherwise. To finish the proof, we need to show that this sum is non-negative. Indeed we claim that each term in the sum (that is for each $i$) is independently non-negative. Observe that
\[\left\lfloor\frac{n-1}{K^i}\right\rfloor-\left\lfloor\frac{n}{K^i}\right\rfloor
= \begin{cases}
        -1 &\hbox{if $K^i\,\big|\,n$} \\[1.5ex]
        0  &\hbox{otherwise}
    \end{cases}
\]
while when $K^i\,\big|\,n$, precisely $(n-1)_{K^i}=K^i-1$ so that automatically $(\alpha)_{K^i}\leq(n-1)_{K^i}$ and the Boolean delta symbol will be $+1$ exactly canceling the $-1$ arising from the difference of integer parts.
\end{proof}

\begin{defn}\label{P(z,R)definition} Let $R \subseteq \Q$ be any ring. Define the set
\begin{align*}
    \P(z;R) := \left\{ \sum_{r=0}^\infty a_r \binom{z}{r}\Bigg|\,a_r\in R\hbox{ with $a_r= 0$ for all but finitely many $r$} \right\}
\end{align*}
\end{defn}

\begin{lem}\label{PZKonZK}
    If $p(z)\in\PzZK$ then $p(x)\in\ZK$ for all $x\in\ZK$.
\end{lem}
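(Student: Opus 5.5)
The statement follows almost immediately from Lemma~\ref{combinationbelongs} together with the fact that $\ZK$ is a ring. The plan is as follows.

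First, take $p(z)\in\PzZK$. By Definition~\ref{P(z,R)definition} we may write
\[
p(z)=\sum_{r=0}^{N} a_r\binom{z}{r}
\]
with $a_r\in\ZK$ and $N$ finite. This representation is available because only finitely many of the coefficients $a_r$ are nonzero.

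Next, fix $x\in\ZK$ and evaluate term by term. For each $r\ge 1$, Lemma~\ref{combinationbelongs} applied with $\alpha=x$ and $n=r$ gives $\binom{x}{r}\in\ZK$. The term $r=0$ is $\binom{x}{0}=1\in\ZK$, which holds trivially. Since $\ZK$ is closed under multiplication, each product $a_r\binom{x}{r}$ lies in $\ZK$. Since $\ZK$ is also closed under finite sums, $p(x)\in\ZK$.

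The ring property of $\ZK$ was asserted in the preliminaries. If one wants to justify it, note that $v_K(xy)=v_K(x)+v_K(y)$ and $v_K(x+y)\ge\min(v_K(x),v_K(y))$. Equivalently, denominators prime to $K$ are closed under products, so sums and products of such fractions again have denominators prime to $K$.

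There is essentially no obstacle here. The only substantive input is the integrality of $\binom{x}{r}$ for non-integral $x\in\ZK$, and that was already handled in Lemma~\ref{combinationbelongs}. The one point to watch is that the sum defining $p$ is finite, which is built into the definition of $\PzZK$.
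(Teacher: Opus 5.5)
Your proposal is correct and is exactly the paper's argument: expand $p$ in the binomial basis with coefficients in $\ZK$ as in Definition~\ref{P(z,R)definition}, apply Lemma~\ref{combinationbelongs} to each $\binom{x}{r}$, and conclude by closure of $\ZK$ under finite sums and products. Treating the $r=0$ term separately is a nice touch, since Lemma~\ref{combinationbelongs} is stated only for $n\in\N$.
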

\begin{proof}
    The proof follows immediately from Definition~\ref{P(z,R)definition} and Lemma~\ref{combinationbelongs}.
\end{proof}

\begin{lem}\label{P(z,R)characterization} Suppose that $p(z)\in\Q[z]$ and $R\subset\Q$ is any ring. Then the following are equivalent
\begin{enumerate}
    \item $p(z)\in\P(z;R)$
    \item $p(n)\in{}R$ for all $n\in\Z$
    \item $p(n)\in{}R$ for all $n\in\N$
\end{enumerate}
 In particular,  $R[z] \subset \mathbb{P} (z; R)$.
\end{lem}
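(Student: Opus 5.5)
We prove the cycle of implications $(1)\Rightarrow(2)\Rightarrow(3)\Rightarrow(1)$. The implication $(1)\Rightarrow(2)$ holds because $\binom{n}{r}\in\Z\subset R$ for every $n\in\Z$ and $r\ge0$: indeed $\binom{n}{r}=(-1)^r\binom{r-n-1}{r}$ when $n<0$. Hence $p(n)=\sum_r a_r\binom{n}{r}$ is a finite $R$-linear combination of integers. Since $R$ is a subring of $\Q$ it contains $1$, hence $\Z$, so $p(n)\in R$. The implication $(2)\Rightarrow(3)$ is trivial.

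The substantive step is $(3)\Rightarrow(1)$. Let $d=\deg p$. The polynomials $\binom{z}{0},\binom{z}{1},\ldots,\binom{z}{d}$ have degrees $0,1,\dots,d$, so they form a $\Q$-basis of the polynomials of degree at most $d$. We may therefore write $p(z)=\sum_{r=0}^d a_r\binom{z}{r}$ with $a_r\in\Q$, and it remains to show that every $a_r\in R$. For this we use the forward difference operator $\Delta p(z)=p(z+1)-p(z)$. Pascal's rule gives $\Delta\binom{z}{r}=\binom{z}{r-1}$, so
\[
a_r=(\Delta^r p)(0)=\sum_{j=0}^r(-1)^{r-j}\binom{r}{j}p(j).
\]
This expresses $a_r$ as an integer combination of the values $p(0),\ldots,p(r)$. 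Each of these lies in $R$ by (3), and $\Z\subset R$, so $a_r\in R$ and $p\in\P(z;R)$. If $\N$ is meant to exclude $0$, we apply the same argument to $\tilde p(z)=p(z+1)$, whose values at $0,1,2,\ldots$ are $p(1),p(2),\ldots$, all in $R$. This gives $\tilde p\in\P(z;R)$. Then Vandermonde's identity $\binom{z-1}{r}=\sum_{i=0}^r(-1)^{r-i}\binom{z}{i}$ shows that $p(z)=\tilde p(z-1)$ is again an $R$-combination of the $\binom{z}{i}$, so $p\in\P(z;R)$.

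For the final claim, $R[z]\subset\P(z;R)$: any $p\in R[z]$ takes values in $R$ at every integer, because $\Z\subset R$ and $R$ is closed under ring operations. The implication $(2)\Rightarrow(1)$ then gives $p\in\P(z;R)$. No step here is a real obstacle. The only points needing care are the inversion formula for $a_r$, which follows by induction from $\Delta\binom{z}{r}=\binom{z}{r-1}$ together with the evaluation $\binom{0}{s}=\delta_{s0}$, and the convention for whether $0\in\N$.
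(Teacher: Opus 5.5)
Your proof is correct and follows essentially the paper's route: both extract the coefficients in the binomial basis with the forward difference operator. The paper proceeds by induction on degree, peeling off the top coefficient via $\Delta^d p = d!\,p_d$, whereas you unroll that induction into the explicit Newton formula $a_r=(\Delta^r p)(0)=\sum_{j}(-1)^{r-j}\binom{r}{j}p(j)$. The only cost of your explicit version is the extra shift-and-Vandermonde step for the convention $0\notin\N$. The paper's induction does not need this step, because $\Delta^d p$ is constant and can be evaluated at any natural number.
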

\begin{proof}
 1$\Rightarrow$2: If $p(z)\in\mathbb{P}(z;R)$ then by definition,  $p(z)=\sum_{r=0}^\infty{}a_r\binom{z}{r}$ for some sequence $(a_r)_{r\geq0}$ of elements of $R$.  Since $\binom{n}{r}\in\Z$ for all $n\in\Z$, $r\in\N$, thus $p(n)=\sum_{r=0}^\infty{}a_r\binom{n}{r}\in{}R$ for all $n\in\Z$.

\noindent 2$\Rightarrow$3: Immediate.

\noindent 3$\Rightarrow$1: We prove by induction on the degree $d$ of the polynomial $p(z)$ that if $p(n)\in{}R$ for all
$n\in\N$ then $p(z)\in\mathbb{P}(z;R)$. For $d=0$, that is constant polynomials, this is immediate since
$\binom{z}{0}\equiv1$. Suppose that $p(z)\in{}\Q[z]$ of degree at most $d$($>0$) is such that $p(n)\in{}R$ for all
$n\in\N$. Define the finite difference operator $\Delta$ on $\Q[z]$ by $\Delta{}p(z)=p(z+1)-p(z)$; it decreases the
degree of a polynomial by one and multiplies its leading coefficient by its degree so that on a polynomial of degree
$d$, $\Delta^dp(z)=d!\cdot{}p_d$ where $p_d$ is the coefficient of $z^d$ in $p(z)$. Since $p(n)\in{}R$ for all
$n\in\N$ thus $\Delta^ip(n)\in{}R$ for all $i,n\in\N$. In particular, $i=d$ gives that $d!p_d\in\Z$. Thus
$p(z)-d!p_d\binom{z}{d}$ is a polynomial of degree less than $d$ (since it has been constructed so that the
coefficient of $z^d$ vanishes) whose values on all natural numbers lie in $R$ and so by the inductive hypothesis can
be written as an $R$-linear combination of functions $\binom{z}{r}$, $0\leq{}r<d$.
\end{proof}

\noindent Combining Lemmas \ref{PZKonZK}, \ref{P(z,R)characterization}, we get the following lemma.
\begin{lem}\label{pztoZK}
    If $p(z)\in\PzZK$ and $p(z_0)\in\ZK$ for all $z_0\in\Z$ then $p(z_0)\in\ZK$ for all $z_0\in\ZK$.
\end{lem}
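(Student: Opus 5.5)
The statement as written has a redundant hypothesis. Since $p(z)\in\PzZK$ already, Lemma~\ref{PZKonZK} gives the conclusion directly. The plan is nonetheless to present the argument in the way the combination of Lemmas~\ref{PZKonZK} and~\ref{P(z,R)characterization} suggests. The intended reading is that the essential hypothesis is integrality of values on $\Z$, and that membership in $\PzZK$ follows from it.

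First, take $p(z)$ a polynomial with rational coefficients (implicit in the hypotheses) with $p(z_0)\in\ZK$ for all $z_0\in\Z$. Apply Lemma~\ref{P(z,R)characterization} with the ring $R=\ZK\subset\Q$: condition~2 there holds, so condition~1 holds. Hence $p(z)=\sum_r a_r\binom{z}{r}$ with finitely many nonzero $a_r\in\ZK$, i.e.\ $p(z)\in\PzZK$. This step only recovers (or confirms) the stated hypothesis $p\in\PzZK$. It is the step that makes the lemma useful in applications where one only knows integrality on integers.

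Second, apply Lemma~\ref{PZKonZK}. For $x\in\ZK$, each $\binom{x}{r}\in\ZK$ by Lemma~\ref{combinationbelongs}. Since $\ZK$ is a ring, the finite sum $p(x)=\sum_r a_r\binom{x}{r}$ lies in $\ZK$.

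There is no real obstacle. The only point needing care is that Lemma~\ref{P(z,R)characterization} is applied with $R=\ZK$, which is a subring of $\Q$ as that lemma requires. The only nontrivial input, integrality of $\binom{x}{r}$ at $K$-adic integers, has already been established in Lemma~\ref{combinationbelongs}.
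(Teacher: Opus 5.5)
Your proposal is correct and takes the same route as the paper, which obtains this lemma simply by combining Lemma~\ref{P(z,R)characterization} (with $R=\ZK$) and Lemma~\ref{PZKonZK}. Your remark that the hypothesis $p\in\PzZK$ already makes the integer-value condition redundant is also accurate.
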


\noindent There are similar results for polynomials in two variables.

\begin{defn} Let $R \subseteq \Q$ be any ring. Define the set
\[\P(x,y;R):= \Bigg\{ \sum_{m,n= 0}^\infty a_{m,n} \binom{x}{m } \binom{y}{n } \Bigg|\, a_{m,n}\in R\hbox{ with $a_{m,n}=0$ for all but finitely many pairs $(m,n)$}\Bigg\}
\]
\end{defn}

\begin{prop}\label{2dPxyRcharacterisation} Suppose $p(x,y) \in \Q [x,y]$ and $R\subset\Q$ be any ring. Then the following are equivalent
\begin{enumerate}
    \item $p(x,y)\in\P(x,y;R)$
    \item $p(x,y)\in{}R$ for all $m,n\in\Z$
    \item $p(x,y)\in{}R$ for all $m,n\in\N$
\end{enumerate}
\end{prop}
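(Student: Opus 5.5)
The plan is to mirror the proof of Lemma~\ref{P(z,R)characterization}, reducing the two-variable statement to the one-variable one by treating $p(x,y)$ as a polynomial in $y$ whose coefficients are polynomials in $x$. (Items 2 and 3 of the statement should of course be read as $p(m,n)\in R$.)

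\emph{1$\Rightarrow$2.} This is immediate. For $m,n\in\Z$ each product $\binom{m}{i}\binom{n}{j}$ is an integer, so $p(m,n)=\sum a_{i,j}\binom{m}{i}\binom{n}{j}\in R$.

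\emph{2$\Rightarrow$3.} This is trivial.

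\emph{3$\Rightarrow$1.} This is the main step. Since $\{\binom{y}{j}\}_{j\ge0}$ is a $\Q$-basis of $\Q[y]$, I would first expand
\[
p(x,y)=\sum_{j=0}^{d} c_j(x)\binom{y}{j}\qquad\hbox{with } c_j(x)\in\Q[x].
\]
The key claim is that $c_j(m)\in R$ for every $m\in\N$ and every $j$. To see this, fix $m\in\N$ and consider the one-variable polynomial $q_m(y)=p(m,y)=\sum_j c_j(m)\binom{y}{j}$. By hypothesis $q_m(n)\in R$ for all $n\in\N$, so Lemma~\ref{P(z,R)characterization} gives $q_m\in\P(y;R)$.

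The point to check is that the coefficients in the binomial basis are unique. They are, because the $\binom{y}{j}$ are linearly independent over $\Q$. Concretely, the finite-difference argument in that lemma recovers them as $c_j(m)=\Delta_y^j q_m(0)$, which is an integer combination of values $q_m(n)$ with $n\in\N$. Hence $c_j(m)\in R$.

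Having shown this, apply Lemma~\ref{P(z,R)characterization} again, now to each $c_j(x)\in\Q[x]$, to get $c_j(x)=\sum_i a_{i,j}\binom{x}{i}$ with $a_{i,j}\in R$. Substituting back yields $p(x,y)=\sum_{i,j}a_{i,j}\binom{x}{i}\binom{y}{j}$ with only finitely many nonzero coefficients, so $p\in\P(x,y;R)$.

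I do not expect a serious obstacle. The only delicate point is the uniqueness of the expansion in the binomial basis, which is what justifies passing from "$q_m\in\P(y;R)$" to "$c_j(m)\in R$". The explicit formula $c_j(m)=\Delta_y^j p(m,\cdot)(0)=\sum_{k}(-1)^{j-k}\binom{j}{k}p(m,k)$ settles it directly. That formula also gives an alternative self-contained proof: $a_{i,j}=\Delta_x^i\Delta_y^j p(0,0)$, which is an integer combination of the values $p(k,l)$ with $k,l\in\N$.
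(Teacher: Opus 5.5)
Your proof is correct, but it takes a different route from the paper's. The paper does not reduce to Lemma~\ref{P(z,R)characterization}. Instead it reruns that lemma's induction, now on the total degree $d$, using the mixed differences $\Delta_x^i\Delta_y^{d-i}$. Among monomials of total degree at most $d$, these kill everything except $x^iy^{d-i}$, so they give $i!(d-i)!\,p_{i,d-i}\in R$. Subtracting $\sum_i i!(d-i)!\,p_{i,d-i}\binom{x}{i}\binom{y}{d-i}$ then lowers the total degree.

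You instead slice the problem. You expand $p=\sum_j c_j(x)\binom{y}{j}$ over $\Q[x]$, apply the one-variable lemma to each $p(m,\cdot)$, and use uniqueness of the binomial-basis coordinates to get $c_j(m)\in R$ for all $m\in\N$. A second application of the lemma to each $c_j$ finishes the argument.

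What each route buys:
\begin{itemize}
\item Your route uses the one-variable result as a black box and extends verbatim, by induction on the number of variables, to $n$ variables, which is the setting of the conjecture. Its only extra ingredient is the linear independence of the $\binom{y}{j}$.
\item The paper's route is self-contained. It rests on the observation, stated there without justification, about which monomials survive $\Delta_x^i\Delta_y^{d-i}$.
\item Your closing formula $a_{i,j}=\Delta_x^i\Delta_y^j p(0,0)=\sum_{k,l}(-1)^{i+j-k-l}\binom{i}{k}\binom{j}{l}\,p(k,l)$ is the most direct of the three arguments. It needs no induction, and it shows explicitly that each coefficient depends only on the values of $p$ on $\{0,\ldots,i\}\times\{0,\ldots,j\}$.
\end{itemize}
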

\begin{proof}
    The proof is similar to that of Lemma \ref{P(z,R)characterization}, with the change in 3$\Rightarrow$1 that there are now two difference operators $\Delta_xp(x,y)=p(x+1,y)-p(x,y)$ and $\Delta_yp(x,y)=p(x,y+1)-p(x,y)$, which respectively decrease the $x$-degree and the $y$-degree by one. Induction is now by the total degree $d$ of $p(x,y)$. Supposing that $p(x,y)\in{}R$ for all $x,y\in\N$ then $\Delta_x^i\Delta_y^{d-i}p(x,y)=i!(d-i)!p_{i,d-i}\in{}R$ where $p_{i,j}$ is the coefficient of the monomial $x^iy^j$ in $p(x,y)$ and so
    \[p(x,y)-\sum_{i=0}^d\left(i!(d-i)!p_{i,d-i}\binom{x}{i}\binom{y}{d-i}\right)\] will be a polynomial of total degree less than $d$ whose values on $\N\times\N$ lie in $R$, completing the inductive step.
\end{proof}

\section{\textbf{\textsc{Formal Power Series}}}
Throughout this paper we will be working in formal power series in a formal variable $h$, with either rational coefficients (usually in $\Z_{(K)}$ for some prime $K$) or coefficients which are polynomial in an additional variable or variables. Set $q=1+h$.

\medskip\noindent{\bf Powers of $q$:} For any $\alpha\in\R$, the symbol $q^\alpha$ will refer to the formal power series obtained by expansion of the binomial series $(1+h)^\alpha=\sum_{n=0}^\infty\binom{\alpha}{n}h^n\in\R[[h]]$. In particular, thereby $\Q[q,q^{-1}]$ can be considered as embedded in $\Q[[h]]$ as well as $\Z[q,q^{-1}]$ considered as embedded in $\Z[[h]]$.

\begin{lem}\label{qpoweralpha} If $\alpha\in\ZK$ then $q^\alpha\in\ZKh$.
\end{lem}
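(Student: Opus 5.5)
The plan is to read the statement directly off the definition of $q^\alpha$ as a binomial series. By the convention fixed at the start of this section, $q^\alpha$ means the formal power series
\[
q^\alpha=(1+h)^\alpha=\sum_{n=0}^\infty\binom{\alpha}{n}h^n .
\]
So $q^\alpha\in\ZKh$ holds exactly when every coefficient $\binom{\alpha}{n}$, for $n\in\N$, lies in $\ZK$.

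That coefficientwise condition is precisely Lemma~\ref{combinationbelongs}. It states that for $\alpha\in\ZK$ and $n\in\N$ the generalized binomial coefficient $\frac{1}{n!}\alpha(\alpha-1)\dotsm(\alpha-n+1)$ lies in $\ZK$. The case $n=0$ is trivial, since the coefficient is $1$.

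Equivalently, one can argue via Lemma~\ref{PZKonZK}. The polynomial $\binom{z}{n}$ lies in $\PzZK$ by Definition~\ref{P(z,R)definition}, so its value at $z=\alpha\in\ZK$ lies in $\ZK$.

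Applying this for each $n$ shows that all coefficients of the series lie in $\ZK$, and hence $q^\alpha\in\ZKh$.

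There is no real obstacle here. The only care needed is to confirm that the formal binomial expansion is the definition in force, so no analytic convergence is involved. All the arithmetic content, namely the $K$-adic valuation count showing $v_K\binom{\alpha}{n}\ge 0$, has already been done in Lemma~\ref{combinationbelongs}.
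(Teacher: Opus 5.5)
Your proof is correct and is the same as the paper's. The paper also expands $q^\alpha=\sum_{n}\binom{\alpha}{n}h^n$ by definition and applies Lemma~\ref{combinationbelongs} coefficientwise; your detour through Lemma~\ref{PZKonZK} is only a restatement, since that lemma is itself derived from Lemma~\ref{combinationbelongs}.
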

\begin{proof}
    This is immediate from the definition and Lemma \ref{combinationbelongs} since $\alpha\in\ZK\implies\binom{\alpha}{n}\in\ZK\>,\forall n\in\Z^{\geq0}$.
\end{proof}

\begin{lem} \label{qpowerfz} If $f(z)\in\PzZK$ then $q^{f(z)}\in\PzZKh$.\end{lem}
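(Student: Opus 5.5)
The plan is to expand $q^{f(z)}$ by the binomial series and check each $h$-coefficient separately. By definition,
\[
q^{f(z)}=(1+h)^{f(z)}=\sum_{n=0}^\infty \binom{f(z)}{n}h^n ,
\]
so the coefficient of $h^n$ is the polynomial $c_n(z):=\binom{f(z)}{n}=\frac{1}{n!}f(z)(f(z)-1)\dotsm(f(z)-n+1)\in\Q[z]$. The statement $q^{f(z)}\in\PzZKh$ means exactly that every coefficient $c_n(z)$ lies in $\PzZK$. So the task reduces to showing, for each fixed $n$, that $c_n(z)\in\PzZK$.

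For this I will use Lemma~\ref{P(z,R)characterization} with $R=\ZK$: it suffices to show that $c_n(m)\in\ZK$ for every $m\in\N$. Fix $m\in\N$. Since $f(z)\in\PzZK$, the implication 1$\Rightarrow$2 of Lemma~\ref{P(z,R)characterization} (or directly Lemma~\ref{PZKonZK}) gives $\alpha:=f(m)\in\ZK$. Then $c_n(m)=\binom{\alpha}{n}$, and Lemma~\ref{combinationbelongs} gives $\binom{\alpha}{n}\in\ZK$. Hence $c_n(m)\in\ZK$ for all $m\in\N$, so $c_n(z)\in\PzZK$, and this holds for every $n$.

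There is no real obstacle here; the only point needing care is interpretive. One has to note that $\binom{f(z)}{n}$ is indeed a polynomial in $z$ with rational coefficients, so that Lemma~\ref{P(z,R)characterization} applies. It is also worth stating explicitly that $\PzZKh$ denotes formal power series in $h$ whose coefficients lie in $\PzZK$, with no uniformity in degree required across different $n$. With that convention fixed, the argument is the pointwise evaluation above, combining Lemma~\ref{PZKonZK}, Lemma~\ref{combinationbelongs} and Lemma~\ref{P(z,R)characterization}.
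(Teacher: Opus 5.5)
Your proposal is correct and follows essentially the same route as the paper: expand $q^{f(z)}=\sum_n\binom{f(z)}{n}h^n$, note that $f$ takes values in $\ZK$ at integer points, apply Lemma~\ref{combinationbelongs} to get $\binom{f(z_0)}{n}\in\ZK$, and conclude $\binom{f(z)}{n}\in\PzZK$ by Lemma~\ref{P(z,R)characterization}. The only difference is that you evaluate at $m\in\N$ and use $3\Rightarrow1$ of that lemma, whereas the paper evaluates at all $z_0\in\Z$; both work equally well.
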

\begin{proof}
  Suppose $n\in\Z^{\geq0}$.  For any $z_0\in\Z$ then $f(z_0)\in\ZK$ and so by Lemma \ref{combinationbelongs}, $\binom{f(z_0)}{n}\in\ZK$. Since $f(z)$ is a polynomial, thus $\binom{f(z)}{n}$ is a polynomial of degree $n\cdot\deg(f)$ which takes values in $\ZK$ at all integer points, so that by Lemma \ref{P(z,R)characterization}, $\binom{f(z)}{n}\in\PzZK$. Hence $q^{f(z)}=\sum_{n=0}^\infty{}\binom{f(z)}{n}h^n\in\PzZKh$.
\end{proof}

\begin{lem}\label{pzqtoZKh}
    If $\p(z;q)\in\PzZKh$ and $\p(z_0;q)\in\ZKh$ for all $z_0\in\Z$, then $\p(z_0;q)\in\ZKh$ for all $z_0\in\ZK$.
\end{lem}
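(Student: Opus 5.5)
The plan is to reduce the statement coefficientwise in $h$ to Lemma~\ref{pztoZK}. Since $\p(z;q)\in\PzZKh$, I will write it as a formal power series
\[
\p(z;q)=\sum_{n=0}^\infty p_n(z)\,h^n ,\qquad p_n(z)\in\PzZK .
\]
Here each $p_n(z)$ is a polynomial lying in $\PzZK$; this is just what membership of $\PzZKh$ means. For any fixed $z_0\in\Q$, evaluation at $z_0$ is done coefficient by coefficient, so
\[
\p(z_0;q)=\sum_n p_n(z_0)h^n .
\]
The whole argument rests on this identification. Consequently $\p(z_0;q)\in\ZKh$ holds precisely when $p_n(z_0)\in\ZK$ for every $n\ge0$.

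Next I use the hypothesis. It says $\p(z_0;q)\in\ZKh$ for all $z_0\in\Z$. Reading off the coefficient of $h^n$, this gives $p_n(z_0)\in\ZK$ for every $n$ and every integer $z_0$.

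Now I fix $n$ and apply Lemma~\ref{pztoZK} to the polynomial $p_n(z)$. The lemma requires two inputs: that $p_n\in\PzZK$, and that $p_n$ is $\ZK$-valued on $\Z$. Both have been established, so the lemma gives $p_n(z_0)\in\ZK$ for all $z_0\in\ZK$. (Lemma~\ref{PZKonZK} alone would already suffice here.)

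Finally, I fix $z_0\in\ZK$. Every coefficient of $\p(z_0;q)=\sum_n p_n(z_0)h^n$ then lies in $\ZK$, and therefore $\p(z_0;q)\in\ZKh$.

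There is no genuine obstacle. The only point needing care is the first step: checking that evaluation at $z_0$ commutes with taking $h$-coefficients. This holds because only finitely many terms contribute to each power of $h$, namely the single polynomial $p_n$. In particular, no convergence issue arises.
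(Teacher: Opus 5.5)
Your proof is correct and follows the paper's route: the paper states this lemma without proof, as the coefficientwise power-series analogue of Lemma~\ref{pztoZK}, and that coefficientwise reduction is exactly what you carry out. Your remark that Lemma~\ref{PZKonZK} alone suffices is also right, since membership in $\PzZKh$ already makes the hypothesis on integer values redundant.
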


\noindent There are similar results for two variables.
\begin{lem} \label{qpowerfzw} If $f(z,w)\in\PzwZK$ then $q^{f(z,w)}\in\PzwZKh$.\end{lem}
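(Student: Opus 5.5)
The plan is to follow the proof of Lemma~\ref{qpowerfz} exactly, replacing the one-variable characterisation (Lemma~\ref{P(z,R)characterization}) with its two-variable analogue, Proposition~\ref{2dPxyRcharacterisation}. Expanding the binomial series gives, as a formal identity in $\Q[z,w][[h]]$,
\[
q^{f(z,w)}=\sum_{n=0}^\infty\binom{f(z,w)}{n}h^n .
\]
It therefore suffices to show that, for each fixed $n\in\Z^{\geq0}$, the coefficient polynomial $\binom{f(z,w)}{n}$ lies in $\PzwZK$.

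Fix $n$. First, $f(z,w)\in\PzwZK$ is an $\ZK$-combination of the products $\binom{z}{m}\binom{w}{k}$, and these products take integer values at integer points. Hence $f(z_0,w_0)\in\ZK$ for every $(z_0,w_0)\in\Z^2$.

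Next, apply Lemma~\ref{combinationbelongs} with $\alpha=f(z_0,w_0)$. This gives $\binom{f(z_0,w_0)}{n}\in\ZK$ for all integer pairs $(z_0,w_0)$.

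Now observe that $\binom{f(z,w)}{n}=\frac{1}{n!}f(f-1)\cdots(f-n+1)$ is a genuine polynomial in $\Q[z,w]$, of total degree at most $n\deg f$. It takes $\ZK$-values on all of $\Z^2$, and in particular on $\N\times\N$. By the implication $3\Rightarrow1$ of Proposition~\ref{2dPxyRcharacterisation} with $R=\ZK$, it follows that $\binom{f(z,w)}{n}\in\PzwZK$.

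Since this holds for every $n$, the coefficient of each $h^n$ lies in $\PzwZK$, so $q^{f(z,w)}\in\PzwZKh$, as claimed.

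There is no serious obstacle here. The only step that needs care is checking that the two-variable characterisation applies, and that step is exactly what Proposition~\ref{2dPxyRcharacterisation} provides.
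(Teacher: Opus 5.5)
Your proof is correct and is the argument the paper intends. The paper gives no separate proof, stating this lemma only as the two-variable analogue of Lemma~\ref{qpowerfz}; your argument is that lemma's proof with Proposition~\ref{2dPxyRcharacterisation} in place of Lemma~\ref{P(z,R)characterization}.
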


\begin{lem}\label{pzwqtoZKh}
    If $\p(z,w;q)\in\PzwZKh$ then $\p(z_0,w_0;q)\in\ZKh$ for all $z_0,w_0\in\ZK$.
\end{lem}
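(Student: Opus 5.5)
The plan is to argue coefficient by coefficient in $h$, reducing everything to the two-variable analogue of Lemma \ref{PZKonZK}. Write $\p(z,w;q)=\sum_{n\ge0}p_n(z,w)h^n$ with each $p_n(z,w)\in\PzwZK$. Here I take the definition of $\PzwZKh$ to be the natural one: formal power series in $h$ whose coefficients lie in $\PzwZK$. Then $\p(z_0,w_0;q)=\sum_n p_n(z_0,w_0)h^n$, so it suffices to show the following: if $p(z,w)\in\PzwZK$ and $z_0,w_0\in\ZK$, then $p(z_0,w_0)\in\ZK$.

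For this, expand $p$ in the defining basis,
\[p(z,w)=\sum_{m,n}a_{m,n}\binom{z}{m}\binom{w}{n},\]
with $a_{m,n}\in\ZK$ and only finitely many nonzero. Lemma \ref{combinationbelongs} gives $\binom{z_0}{m}\in\ZK$ and $\binom{w_0}{n}\in\ZK$ for every $m,n\ge0$. Since $\ZK$ is a ring, each term $a_{m,n}\binom{z_0}{m}\binom{w_0}{n}$ lies in $\ZK$, and the finite sum does too. This is exactly the two-variable version of Lemma \ref{PZKonZK}.

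Applying this to every coefficient $p_n$ puts each $h$-coefficient of $\p(z_0,w_0;q)$ in $\ZK$, hence $\p(z_0,w_0;q)\in\ZKh$. The only possible subtlety is whether substitution commutes with the formal $h$-expansion. It does, because substituting $z=z_0$, $w=w_0$ acts on each $h$-coefficient separately, so no convergence issue arises.

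Unlike Lemma \ref{pzqtoZKh}, no hypothesis on integer values is needed here: membership in $\PzwZK$ already forces $\ZK$-coefficients in the binomial basis, and Proposition \ref{2dPxyRcharacterisation} is not required. I do not expect any step to be a real obstacle.
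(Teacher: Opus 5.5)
Your proof is correct and is essentially the argument the paper intends. The paper gives no separate proof; it presents this as the two-variable analogue of Lemmas \ref{PZKonZK} and \ref{pzqtoZKh}, which follow from the binomial-basis definition together with Lemma \ref{combinationbelongs}, applied coefficientwise in $h$ exactly as you do.
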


\subsection{\textsc{The Cyclotomic Polynomial}}
Denote the $K^{\text{th}}$ cyclotomic polynomial by $\Phi_K(q)=\frac{q^K-1}{q-1}$. In terms of $h$, $\Phi_K(q)=\sum_{r=0}^{K-1}\binom{K}{r+1}h^r$ which is an integer coefficient monic polynomial in $h$ of degree $K-1$, all of whose other coefficients (that is, of $h^n$ for $n<K-1$) are divisible by $K$ and whose constant term is $K$.

\medskip\noindent{\bf Divisibility by $\Phi_K$:} For any ring $R$, a formal power series $\sum_{r=0}^\infty{}a_rh^r\in{}R[[h]]$ is invertible in $R[[h]]$ if, and only if, $a_0\in{}R$ is invertible. In the case of $R=\Z_{(K)}$, the invertibility condition is that the constant term $a_0$ is not divisible by $K$. In particular, $\Phi_k$ is not invertible in $\Z_{(K)}[[h]]$ although it is invertible in $\Q[[h]]$. Thus divisibility of an element of $\Z_{(K)}[[h]]$ by $\Phi_K(q)$ is a non-trivial condition which is equivalent to an infinite number of congruences.

\begin{defn} The formal power series $\sum_{n=0}^\infty{}c_n h^n\in\Z_{(K)}[[h]]$ is said to be divisible by $\Phi_K(q)$ if, and only if, there exists $\sum_{n=0}^\infty a_n h^n\in\Z_{(K)}[[h]]$ such that $\sum_n c_n h^n= \left( \sum_n a_n h^n  \right) \Phi_K (q)$. Equivalently, $\sum_{k=0}^nb_{n-k}c_k\in\Z_{(K)}  $ for all $n\in\N$, where $\{b_n\}_{n=0}^{\infty}$ are the coefficients of the formal power series $\frac{1}{\Phi_K(q)}\in\Q[[h]]$. Furthermore, we will write $p(h)\equiv_{\Phi_K}q(h)$ to mean that $p(h),q(h)\in\Z_{(K)}[[h]]$ and that their difference is divisible by $\Phi_K(q)$ in $\Z_{(K)}[[h]]$.
\end{defn}

\begin{lem}\label{gammaKlemma} If $\gamma \in \Z_{(K)}$ then $q^{\gamma K }\equiv_{\Phi_K}1$. Equivalently, if $a\eK0$ then $q^a\ePK1$.
\end{lem}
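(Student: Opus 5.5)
The plan is to reduce to the single factorisation $q^K-1=(q-1)\Phi_K(q)=h\,\Phi_K(q)$ and then deal with the rational exponent $\gamma$ by a binomial expansion whose coefficients are controlled by Lemma~\ref{combinationbelongs}.

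First, the two formulations are equivalent. If $a\eK0$, then by definition $a\in\ZK$ and $\gamma:=a/K\in\ZK$, so $q^a=q^{\gamma K}$. Conversely, $a=\gamma K$ with $\gamma\in\ZK$ satisfies $a\eK0$. So it suffices to prove $q^{\gamma K}\ePK1$ for $\gamma\in\ZK$.

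Next, in $\Q[[h]]$ we have the formal identity $q^{\gamma K}=(q^K)^\gamma$. This holds because $(1+h)^\alpha(1+h)^\beta=(1+h)^{\alpha+\beta}$ and $((1+h)^\alpha)^\beta=(1+h)^{\alpha\beta}$ are polynomial identities in $\alpha,\beta$ coefficientwise, so they extend from integers to all rationals. Now write $q^K=1+u$ with
\[
u=q^K-1=h\,\Phi_K(q).
\]
Then
\[
q^{\gamma K}=(1+u)^\gamma=\sum_{n\ge0}\binom{\gamma}{n}u^n=1+\sum_{n\ge1}\binom{\gamma}{n}h^n\Phi_K(q)^n .
\]
This series converges $h$-adically, since $u\in h\Z[[h]]$.

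Hence
\[
q^{\gamma K}-1=\Phi_K(q)\cdot\sum_{n\ge1}\binom{\gamma}{n}h^n\Phi_K(q)^{n-1}.
\]
The cofactor lies in $\ZKh$ for two reasons. By Lemma~\ref{combinationbelongs}, each $\binom{\gamma}{n}\in\ZK$. Also $\Phi_K(q)\in\Z[h]$, and the $n$-th term is divisible by $h^n$, so the infinite sum is a well-defined element of $\ZKh$. Lemma~\ref{qpoweralpha} gives $q^{\gamma K}\in\ZKh$, so both sides lie in $\ZKh$ and the divisibility definition is satisfied.

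The only step needing care is justifying $(1+h)^{\gamma K}=((1+h)^K)^\gamma$ for non-integral $\gamma$ as formal power series. I would handle it by noting that each coefficient of $h^m$ on either side is a polynomial in $\gamma$. Both sides agree for all $\gamma\in\N$, so the two polynomials agree identically.
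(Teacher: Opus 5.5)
Your proof is correct, but it takes a different route from the paper. The paper writes $\gamma=r/s$ with $(s,K)=1$ and factors $\frac{q^{\gamma K}-1}{q^K-1}=\frac{q^{rK/s}-1}{q^{rK}-1}\cdot\frac{q^{rK}-1}{q^K-1}$ as a quotient of two finite geometric sums in rational powers of $q$. Its key input is that the denominator $q^{\frac{s-1}{s}rK}+\cdots+1$ has constant term $s$, a unit in $\ZK$, and so is invertible in $\ZKh$. You instead expand $(q^K)^\gamma=(1+h\Phi_K)^\gamma$ binomially, so the $K$-integrality of $\gamma$ enters through Lemma~\ref{combinationbelongs} rather than through invertibility of $s$.

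Your version has three advantages. It produces an explicit cofactor $\sum_{n\ge1}\binom{\gamma}{n}h^n\Phi_K^{n-1}$. It treats every $\gamma\in\ZK$ uniformly; the paper's sum $q^{(r-1)K}+\cdots+1$ tacitly assumes $r>0$, so $r\le 0$ needs a small separate adjustment. And your polynomial-in-$\gamma$ argument makes explicit a justification for rational exponent laws that the paper also needs implicitly, when it uses $q^\alpha q^\beta=q^{\alpha+\beta}$ for rational exponents to split $q^{rK}-1$.

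The paper's version, in exchange, uses only finite sums and the inversion of a series with unit constant term, so it avoids composing power series. Both arguments in fact prove the slightly stronger statement that $q^{\gamma K}-1$ is divisible by $q^K-1=h\Phi_K$ in $\ZKh$.
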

\begin{proof} Write $\gamma = \frac{r}{s}$ with $r\in\Z$ and $s\in\N$ coprime and $(s,K)=1$. For all $a\in\Q$, $\frac{q^a-1}{q-1}\in\Q[[h]]$ with constant term $a$. If in addition $b\in\Q^\times$, then $\frac{q^b-1}{q-1}$ is invertible in $\Q[[h]]$ and so $\frac{q^a-1}{q^b-1}$ is a well-defined expression in $\Q[[h]]$, since it can be written as
\[\frac{q^a-1}{q^b-1}=\frac{q^a-1}{q-1}\cdot\left(\frac{q^b-1}{q-1}\right)^{-1}
\]
As elements of $\Q[[h]]$,
\[
    \frac{q^{\gamma{}K} -1}{ q^K-1 } =
 \frac{ q^{rK/s} -1 }{q^{rK}-1} \cdot\frac{ q^{rK}-1 }{q^{K} -1} = \frac{ q^{(r-1) K} + q^{(r-2 )K}+ ... + q^K + 1   }{
 q^{\frac{s-1}{s} rK } + q^{\frac{s-2}{s} rK  } + ... + q^{\frac{1}{s}rK } + 1    }
\]
Since $(s,K)=1$, $\gamma{}K$ and the exponents of $q$ appearing in the numerator and denominator are elements of $Z_{(K)}$ and so $q^{\gamma{}K}$ as well as the numerator and denominator themselves are elements of $\Z_{(K)}[[h]]$. Furthermore, the constant term in the denominator (as a formal power series) is $s\not\equiv0$ (mod $K$) and so the denominator is invertible in $\Z_{(K)}[[h]]$. This proves that $\frac{q^{\gamma{}K} -1}{ q^K-1 }\in\Z_{(K)}[[h]]$. Hence
\[\frac{q^{\gamma{}K}-1}{\Phi_K(q)}=(q-1)\cdot\frac{q^{\gamma{}K}-1}{q^K-1}\in\Z_{(K)}[[h]]
\]
\end{proof}

\begin{prop}\label{modKmodphiK} Suppose $\alpha, \beta \in \Z_{(K)} $. Then $q^{\alpha }\equiv_{\Phi_K}q^{\beta}$ if and only if $\alpha\equiv_K\beta$. In particular, for all $a\in\ZK$ then $q^a\ePK{}q^{(a)_K} $.
\end{prop}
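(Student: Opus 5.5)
The plan is to prove the two directions separately, and in each case reduce to Lemma~\ref{gammaKlemma} after factoring out a unit.

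For the ``if'' direction, suppose $\alpha\eK\beta$. Then $\alpha-\beta\in\ZK$ is divisible by $K$. Write $\alpha-\beta=\gamma K$ with $\gamma\in\ZK$. By Lemma~\ref{gammaKlemma}, $q^{\alpha-\beta}-1$ is divisible by $\Phi_K(q)$ in $\ZKh$. Multiplying by $q^\beta$, which lies in $\ZKh$ by Lemma~\ref{qpoweralpha}, gives $q^\alpha-q^\beta=q^\beta(q^{\alpha-\beta}-1)\ePK0$. The final assertion then follows by taking $\beta=(a)_K$, since $a\eK(a)_K$ by the definition of the remainder map.

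For the ``only if'' direction, suppose $q^\alpha\ePK q^\beta$. Since $q^{-\beta}\in\ZKh$ by Lemma~\ref{qpoweralpha}, multiplying through gives $q^{\alpha-\beta}-1=\Phi_K(q)\,g(h)$ for some $g\in\ZKh$. Set $\delta=\alpha-\beta\in\ZK$. The strategy is to compare the ``size'' at $h=0$, which is where the main subtlety lies: constant terms alone do not suffice, because the constant term of $\Phi_K$ is $K$ while that of $q^\delta-1$ is $0$. I would therefore divide both sides by $h=q-1$. The quotient $\frac{q^\delta-1}{q-1}$ has constant term $\delta$. The question is whether $\Phi_K(q)g(h)/h$ makes sense in $\ZKh$. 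Since the left side vanishes at $h=0$ and $\Phi_K(0)=K\neq0$, we get $g(0)=0$. So $g=hg_1$ with $g_1\in\ZKh$, and
\[\frac{q^\delta-1}{q-1}=\Phi_K(q)\,g_1(h).\]
Comparing constant terms gives $\delta=K\,g_1(0)$ with $g_1(0)\in\ZK$. Hence $v_K(\delta)\geq1$ (or $\delta=0$), that is, $\alpha\eK\beta$.

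The step that needs care is this division by $h$. It is legitimate in $\ZKh$ because $h$ is a non-zero-divisor and divisibility by $h$ is simply the vanishing of the constant term. I expect no further obstacle.
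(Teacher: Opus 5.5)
Your proof is correct and takes essentially the same approach as the paper. The ``if'' direction is identical: write $q^\alpha-q^\beta=q^\beta(q^{\gamma K}-1)$ with $\gamma=(\alpha-\beta)/K$ and apply Lemma~\ref{gammaKlemma}. In the ``only if'' direction, normalizing by $q^{-\beta}$, dividing by $h$ and comparing constant terms just repackages the paper's argument, which compares the $h^1$ coefficients of $q^\alpha-q^\beta$ and $\Phi_K(q)\,g(h)$ once the constant-term comparison forces $g(0)=0$.
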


\begin{proof} $(\Leftarrow)$ Suppose that $\alpha \equiv_K\beta$. Put $\gamma=\frac{\alpha-\beta}{K}\in\Z_{(K)}$. By lemma \ref{gammaKlemma}, $q^{\gamma{}K}-1$ is divisible by $\Phi_K(q)$ and thus so is $q^\alpha - q^\beta = q^{\beta} ( q^{\gamma{}K} -1 )$.\newline
$(\Rightarrow)$ Suppose that $q^\alpha\equiv_{\Phi_K}q^\beta $. As a formal power series in $h$, $q^\alpha-q^\beta$ has zero constant term and the coefficient of $h^1$ is $\alpha-\beta$. Since it is divisible by $\Phi_K(q)$, whose constant term is $K$, thus $\alpha-\beta$ is divisible by $K$ in $\Z_{(K)}$.
\end{proof}

\begin{lem} \label{pzqmodPhicharacterisation}
Suppose that $\p(z;q)\in\PzZKh$. Then the following statements are equivalent
\begin{enumerate}
    \item $\p(z;q)\ePK0$ in $\PzZKh$
    \item $\p(z_0;q)\ePK0$ in $\ZKh$ for all $z_0\in\Z$
    \item $\p(z_0;q)\ePK0$ in $\ZKh$ for all $z_0\in\ZK$
\end{enumerate}
\end{lem}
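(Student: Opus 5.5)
The plan is to read statement (1) as saying that $\p(z;q)=\Phi_K(q)\cdot\mathfrak{r}(z;q)$ for some $\mathfrak{r}(z;q)\in\PzZKh$. I would then prove the cycle $1\Rightarrow3\Rightarrow2\Rightarrow1$. Throughout, write $\p(z;q)=\sum_n c_n(z)h^n$ with each $c_n(z)\in\PzZK$. Let $\{b_n\}$ denote the coefficients of $1/\Phi_K(q)\in\Q[[h]]$, as in the definition of divisibility by $\Phi_K$.

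For $1\Rightarrow3$, suppose $\p(z;q)=\Phi_K(q)\,\mathfrak{r}(z;q)$ with $\mathfrak{r}(z;q)=\sum_n r_n(z)h^n$ and $r_n(z)\in\PzZK$. Each coefficient of the product is a finite combination $c_n(z)=\sum_k \phi_{n-k}r_k(z)$, where $\phi_j$ are the integer coefficients of $\Phi_K$. Hence substituting $z=z_0\in\ZK$ commutes with the multiplication, and $\p(z_0;q)=\Phi_K(q)\,\mathfrak{r}(z_0;q)$. By Lemma~\ref{PZKonZK}, applied coefficientwise (this is just Lemma~\ref{pzqtoZKh}), $\mathfrak{r}(z_0;q)\in\ZKh$, so $\p(z_0;q)\ePK0$. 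The implication $3\Rightarrow2$ is immediate since $\Z\subset\ZK$.

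The substantive direction is $2\Rightarrow1$. Since $\Phi_K(q)$ is invertible in $\Q[[h]]$, I would define the quotient directly in $\Q[z][[h]]$ by
\[
\mathfrak{r}(z;q):=\sum_{n\geq0}r_n(z)h^n,\qquad r_n(z):=\sum_{k=0}^n b_{n-k}\,c_k(z).
\]
Each $r_n(z)$ is a finite $\Q$-linear combination of polynomials, hence lies in $\Q[z]$, and $\Phi_K(q)\,\mathfrak{r}(z;q)=\p(z;q)$ holds in $\Q[z][[h]]$.

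For any $z_0\in\Z$, evaluation again commutes with these finite coefficient formulas, so $r_n(z_0)$ is the $h^n$ coefficient of $\p(z_0;q)/\Phi_K(q)$. This quotient is the unique one in $\Q[[h]]$. By hypothesis (2) and the definition of $\ePK$, that coefficient lies in $\ZK$. Thus $r_n(z_0)\in\ZK$ for all $z_0\in\Z$. By Lemma~\ref{P(z,R)characterization} with $R=\ZK$, this gives $r_n(z)\in\PzZK$ for every $n$. Therefore $\mathfrak{r}(z;q)\in\PzZKh$, which establishes (1).

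The only real point needing care is in this last step. We must make sure that the quotient computed in $\Q[z][[h]]$, once specialised at $z_0$, coincides with the quotient whose integrality is guaranteed by (2). This follows from the uniqueness of division by the invertible series $\Phi_K$ in $\Q[[h]]$, together with the fact that each coefficient involves only finitely many $c_k$. Once that is in place, the polynomial characterisation in Lemma~\ref{P(z,R)characterization} does the rest.
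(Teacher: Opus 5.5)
Your proposal is correct and follows essentially the same route as the paper: you form the quotient $\Phi_K^{-1}\p(z;q)$ in $\Q[z][[h]]$, identify its specialisations at integers with the quotients that (2) guarantees are integral, and apply Lemma~\ref{P(z,R)characterization} coefficientwise, with $(1)\Rightarrow(3)$ coming from Lemma~\ref{PZKonZK} and $(3)\Rightarrow(2)$ trivial. Your explicit remark on the uniqueness of division by $\Phi_K$ in $\Q[[h]]$ spells out a step the paper leaves implicit.
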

\begin{proof}
    Consider $p(z;q)$ as an element of $\Q[z][[h]]$ and take the quotient $\c(z;q)=\Phi_K^{-1}\cdot\p(z;q)\in\Q[z][[h]]$ which can be expanded as $\c(z;q)=\sum_{n=0}^\infty{}c_n(z)h^n$.

    \noindent(1)$\Leftrightarrow$(2): Then $\p(z;q)\ePK0$  in $\PzZKh$ iff $\c(z;q)\in\PzZKh$ iff $c_n(z)\in\PzZK\>,\forall n\in\Z^{\geq0}$ iff (by Lemma \ref{P(z,R)characterization}) $c_n(z_0)\in\ZK\>,\forall n\in\Z^{\geq0}, z_0\in\Z$ iff $\c(z_0,q)\in\ZKh\>,\forall n\in\Z^{\geq0}$ iff $\p(z_0;q)\ePK0 ,\forall z_0\in\Z$ in $\ZKh$. (1)$\Rightarrow$(3) follows from Lemma \ref{PZKonZK} while (3)$\Rightarrow$(2) is immediate.
\end{proof}

\subsection{\textsc{Gauss Sums}}

Let $\G:=\sum\limits_{z=0}^{K-1} q^{z^2}\in\Z[h]$
and $\tilde\G:=\sum\limits_{z=0}^{K-1}q^{-z^2}\in\Z[[h]]$ denote the two standard Gauss sums.

\noindent Note that $\G$ and $\tilde\G$ both have constant term $K$ as elements of $\Z[[h]]$ and so are not invertible in $\ZKh$.

\begin{lem}\label{qazsum}
    Suppose $a\in\ZK$. Then $\sum\limits_{z=0}^{K-1}q^{za}\ePK\begin{cases}
       K  &\hbox{if $a\equiv_K0$}\\[1.5ex]
       0  &\hbox{otherwise}
    \end{cases}$
\end{lem}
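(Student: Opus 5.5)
We split according to whether $a\eK0$ or not, and in both cases reduce to statements about $\Phi_K$ using Lemma~\ref{gammaKlemma} and Proposition~\ref{modKmodphiK}. Throughout, $a\in\ZK$ gives $za\in\ZK$ for every integer $z$, so each $q^{za}\in\ZKh$ by Lemma~\ref{qpoweralpha}, and the whole sum lies in $\ZKh$.

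\emph{Case $a\eK0$.} Then $za\eK0$ for every $z\in\{0,\ldots,K-1\}$. Lemma~\ref{gammaKlemma} gives $q^{za}\ePK1$ for each such $z$. Summing the $K$ congruences gives $\sum_{z=0}^{K-1}q^{za}\ePK K$.

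\emph{Case $a\not\eK0$.} Set $m=(a)_K\in\{1,\ldots,K-1\}$. By Proposition~\ref{modKmodphiK}, $za\eK zm$ implies $q^{za}\ePK q^{zm}$, so it suffices to show
\[
\sum_{z=0}^{K-1}q^{zm}\ePK0 .
\]
Since $m$ is invertible mod $K$, the map $z\mapsto (zm)_K$ permutes $\{0,1,\ldots,K-1\}$. Applying Proposition~\ref{modKmodphiK} again, $q^{zm}\ePK q^{(zm)_K}$, so the sum is congruent to $\sum_{j=0}^{K-1}q^j$. This is exactly $\Phi_K(q)$, which is divisible by $\Phi_K(q)$ in $\ZKh$ with quotient $1$.

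Alternatively, one can write $\sum_{z=0}^{K-1}q^{zm}=\frac{q^{Km}-1}{q^m-1}=\Phi_K(q)\cdot\frac{q^{Km}-1}{q^K-1}\cdot\frac{q-1}{q^m-1}$. Here the middle factor is the polynomial $1+q^K+\cdots+q^{(m-1)K}$. The last factor is the inverse of $1+q+\cdots+q^{m-1}$, whose constant term is $m\not\eK0$, so it is invertible in $\ZKh$.

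The only point needing care is that the congruences $\ePK$ can be added term by term and respect reindexing. This is immediate, since divisibility by $\Phi_K$ in $\ZKh$ is closed under finite sums. I expect no real obstacle. The mildly delicate step is the non-integer case $a\in\ZK\setminus\Z$, and it is handled by reducing to the integer $(a)_K$ via Proposition~\ref{modKmodphiK} before using the permutation argument.
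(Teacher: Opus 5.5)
Your proof is correct and follows the paper's argument. When $a\equiv_K 0$ you apply Lemma~\ref{gammaKlemma} termwise; otherwise you reduce to $(za)_K$ via Proposition~\ref{modKmodphiK} and use that $z\mapsto (za)_K$ permutes the residues, which leaves $\sum_{r=0}^{K-1}q^r$.

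You are right that this last sum is exactly $\Phi_K(q)$; the paper's proof writes $h\Phi_K$, a slip that does not affect the conclusion. Your alternative factorization through the unit $1+q+\cdots+q^{m-1}$ is also a valid route, essentially the computation in the proof of Lemma~\ref{gammaKlemma}.
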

\begin{proof}
    If $a\eK0$ then $za\eK0$ and so by lemma \ref{gammaKlemma}, $q^{za}\ePK1$ for all $z=0,\ldots,K-1$ so that $\sum\limits_{z=0}^{K-1}q^{za}\ePK{}K$.

\noindent If $a\neK0$ then $(a)_K\not=0$ is invertible in $\Z/K\Z$. Hence $(za)_K$ ranges over all values $0,\dots,K-1$ as $z$ ranges over $0,\ldots,K-1$ and so by Lemma \ref{modKmodphiK},
    \[
\sum\limits_{z=0}^{K-1}q^{za}\ePK\sum\limits_{z=0}^{K-1}q^{(za)_K}=\sum\limits_{r=0}^{K-1}q^{r}=h\Phi_K\ePK0
    \]
\end{proof}

\begin{lem}\label{generalizedquadraticgausssum}
\begin{enumerate}
    \item $\G\tilde\G\equiv_{\Phi_K}K\>.$
    \item Suppose $a\in\ZK$. Then
    \[\G_a:=\sum\limits_{z=0}^{K-1} q^{az^2} \equiv_{\Phi_K}\begin{cases}
        \G &\hbox{if $(a)_K\not=0$ is a perfect square in $\Z/K\Z$} \\[1.5ex]
        K  &\hbox{if $a\equiv_K0$}\\[1.5ex]
        -\G&\hbox{otherwise}
    \end{cases}\]
\end{enumerate}
\noindent In particular, $\G_{-1}=\tilde\G$ while when $a\not\equiv_K0$ then $\sum\limits_{z=0}^{K-1} q^{az^2} \equiv_{\Phi_K}\left( \frac{(a)_K}{K} \right)\G$ where $\left( \frac{r}{K} \right)$ is the Legendre symbol.
\end{lem}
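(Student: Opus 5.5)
Both parts follow the classical evaluation of quadratic Gauss sums, with Proposition~\ref{modKmodphiK} doing the reduction: modulo $\Phi_K$, $q^\alpha$ depends only on $(\alpha)_K$. So $q$ can be treated as a primitive $K$-th root of unity, and Lemma~\ref{qazsum} plays the role of orthogonality of characters.

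For part (1), I will expand
\[
\G\tilde\G=\sum_{z,w=0}^{K-1}q^{z^2-w^2}.
\]
I substitute $z=w+t$ with $t$ taken modulo $K$. This is legitimate because the exponents are integers and can be reduced mod $K$ by Proposition~\ref{modKmodphiK}. The sum becomes
\[
\sum_{t=0}^{K-1}q^{t^2}\sum_{w=0}^{K-1}q^{2tw}.
\]
By Lemma~\ref{qazsum} with $a=2t$, and using that $K$ is odd, the inner sum is $\ePK K$ if $t=0$ and $\ePK 0$ otherwise. Hence $\G\tilde\G\ePK K$. One point to note is that congruences mod $\Phi_K$ may be multiplied by elements of $\ZKh$, such as $q^{t^2}$, so termwise replacement is valid.

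For part (2), I first reduce to $a\in\{0,\dots,K-1\}$: since $az^2\eK(a)_Kz^2$, Proposition~\ref{modKmodphiK} gives $\G_a\ePK\G_{(a)_K}$. If $a\eK0$, every term is $\ePK1$, so $\G_a\ePK K$. For $a\neK0$, I rewrite the sum over $z$ as a sum over values $y=z^2$ mod $K$. Each nonzero square is attained twice and $0$ once, so with $\chi$ the Legendre symbol,
\[
\G_a\ePK\sum_{y=0}^{K-1}\bigl(1+\chi(y)\bigr)q^{ay}\ePK\sum_{y}\chi(y)q^{ay},
\]
where $\sum_y q^{ay}\ePK0$ by Lemma~\ref{qazsum}. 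Substituting $u=ay$ (a bijection mod $K$) and using multiplicativity $\chi(y)=\chi(a)\chi(u)$, the right-hand side becomes $\chi(a)\sum_u\chi(u)q^u$. The case $a=1$ of the same computation identifies $\sum_u\chi(u)q^u$ with $\G$. Therefore $\G_a\ePK\chi((a)_K)\G$, which yields all three cases and the Legendre-symbol form.

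The final remark $\G_{-1}=\tilde\G$ holds by definition. The main thing to check carefully is that all these exponent reductions and bijective reindexings are valid mod $\Phi_K$ inside $\ZKh$. This is covered by Proposition~\ref{modKmodphiK} together with the fact that $\Phi_K$-divisibility is preserved under sums and under multiplication by elements of $\ZKh$. I expect no other obstacle.
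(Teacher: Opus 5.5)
Your proof is correct. Part (1) matches the paper's argument, and part (2) is organised differently.

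\textbf{Part (1).} The paper changes variables by $(x,y)\mapsto((x-y)_K,(x+y)_K)$, which is a bijection because $2$ is invertible mod $K$, and reduces to $\sum_{z,w}q^{zw}$. Your shear $z=w+t$ is a bijection for any $K$, and oddness of $K$ enters instead when you apply Lemma~\ref{qazsum} to $2t$. This is essentially the same argument.

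\textbf{Part (2).} The paper splits into cases:
\begin{itemize}
\item If $(a)_K=x^2$ is a nonzero square, it reindexes by $z\mapsto(xz)_K$ to get $\G$ directly.
\item If $(a)_K$ is a non-residue, it observes that $(az^2)_K$ for $z=1,\dots,K-1$ covers each non-residue exactly twice. It then uses $\sum_u q^u\ePK0$ to rewrite $1+2\sum_{u\notin X}q^u$ as $-\G$.
\end{itemize}
You instead write $\G_a\ePK\sum_y(1+\chi(y))q^{ay}$, remove the constant part by Lemma~\ref{qazsum}, and let multiplicativity of $\chi$ handle the case distinction in one step.

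Both arguments rest on the same fact: the nonzero squares form an index-two subgroup of $(\Z/K\Z)^\times$. The paper uses this implicitly in its covering claim. Your version has three advantages:
\begin{itemize}
\item it treats squares and non-squares uniformly;
\item it gives the Legendre-symbol form $\G_a\ePK\left(\frac{(a)_K}{K}\right)\G$ directly rather than as a corollary;
\item it produces the identity $\G\ePK\sum_u\chi(u)q^u$ along the way.
\end{itemize}
The paper's version is more hands-on and never appeals explicitly to multiplicativity of the Legendre symbol.
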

\begin{proof} 1. By definition,
\[\G\tilde\G
=\left(\sum\limits_{x=0}^{K-1}q^{x^2}\right)\left(\sum\limits_{y=0}^{K-1}q^{-y^2}\right)
=\sum\limits_{x,y=0}^{K-1}q^{x^2-y^2}
\]
Change variables to $z=(x-y)_K$, $w=(x+y)_K$. Since $K$ is an odd prime, the map $(x,y)\mapsto(z,w)$ is a bijection from $\{0,1,\ldots,K-1\}^2$ to itself with inverse $x=\left(\frac{z+w}{2}\right)_K$, $y=\left(\frac{w-z}{2}\right)_K$. By Lemma \ref{modKmodphiK},
\[\G\tilde\G\ePK\sum\limits_{z,w=0}^{K-1}q^{zw}\ePK\sum\limits_{w=0}^{K-1}K\delta_{w=0}=K\]
where in the second step we perform the sum over $z$ using Lemma \ref{qazsum}.

\noindent 2. If $a\eK0$ then $az^2\eK0$ and so by lemma \ref{gammaKlemma}, $q^{az^2}\ePK1$ for all $z=0,\ldots,K-1$ so that $\sum\limits_{z=0}^{K-1}q^{za}\ePK{}K$.

\noindent If $(a)_K$ is a non-zero perfect square, say $(a)_K=(x^2)_K$ some $x\in\{1,\ldots,K-1\}$, then $az^2\eK(xz)_K^2$ while $z\mapsto(xz)_K$ is a bijection from $\{0,1,\ldots,K-1\}$ to itself (whose inverse is $u\mapsto\left(\frac{u}{x}\right)_K$) and so by Lemma \ref{modKmodphiK},
\[\G_a=\sum\limits_{z=0}^{K-1}q^{az^2}\ePK\sum\limits_{z=0}^{K-1}q^{(xz)_K^2}=\sum\limits_{z=0}^{K-1}q^{z^2}=\G
\]

\noindent Finally, if $(a)_K$ is not a perfect square then as $z$ ranges over $1,\ldots,K-1$, the number $(az^2)_K$ ranges over the complement of the set $X$ of remainders of perfect squares mod $K$, each being covered twice
\[
\big\{(az^2)_K\bigm|z=1,\ldots,K-1\big\}=\big\{0,1,\ldots,K-1\big\}-\big\{x^2\bigm|x=0,1,\ldots,K-1\big\}
\]
so that using Lemma \ref{modKmodphiK} and Lemma \ref{qazsum}
\[\G_a\ePK1+\sum\limits_{z=1}^{K-1}q^{(az^2)_K}
\ePK1+2\sum\limits_{\genfrac{}{}{0pt}{2}{u=0}{u\notin{}X}}^{K-1}q^u
=1+2\sum\limits_{u=0}^{K-1}q^u-2\sum\limits_{\genfrac{}{}{0pt}{2}{u=0}{u\in{}X}}^{K-1}q^u
=1+2\cdot0-2-2\sum\limits_{\genfrac{}{}{0pt}{2}{u=1}{u\in{}X}}^{K-1}q^u=-\sum\limits_{z=0}^{K-1}q^{(z^2)_K}
\ePK-\G
\]
\end{proof}
\begin{lem}\label{easyGauss} Let $a,b,c\in\ZK$. Then the generalized quadratic Gauss sum satisfies
\begin{align*}
\sum_{z=0}^{K-1} q^{az^2 + bz + c}\ePK
\begin{cases}
       q^{c-\frac{b^2}{4a}}\left(\frac{(a)_K}{K} \right) \G &\hbox{if $a\not\equiv_K0$}\\[1.5ex]
       0  &\hbox{if $a\eK0$, $b\not\equiv_K0$}\\[1.5ex]
       Kq^c  &\hbox{if $a\eK0$ and $b\eK0$}
    \end{cases}\>.
\end{align*}
\end{lem}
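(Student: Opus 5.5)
We treat the three cases separately and reduce each to an earlier lemma, working throughout modulo $\Phi_K$ and replacing exponents by their residues mod $K$ via Proposition~\ref{modKmodphiK}.

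\emph{Case $a\eK0$.} Every term $q^{az^2+bz+c}$ equals $q^c\cdot q^{az^2}\cdot q^{bz}$. Since $az^2\eK0$ for each integer $z$, Lemma~\ref{gammaKlemma} gives $q^{az^2}\ePK1$. Congruence mod $\Phi_K$ is compatible with multiplication by elements of $\ZKh$, and $q^c,q^{bz}\in\ZKh$ by Lemma~\ref{qpoweralpha}. Hence the sum is congruent to $q^c\sum_{z=0}^{K-1}q^{bz}$, and Lemma~\ref{qazsum} gives $Kq^c$ or $0$ according as $b\eK0$ or not.

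\emph{Case $a\not\equiv_K0$.} Here we complete the square inside $\ZK$. Since $K$ is odd and $a$ is a $K$-adic unit, $\beta:=\frac{b}{2a}\in\ZK$, and
\[az^2+bz+c=a(z+\beta)^2+c-\tfrac{b^2}{4a}.\]
Set $m=(\beta)_K\in\{0,\ldots,K-1\}$. Then $a(z+\beta)^2\eK a(z+m)^2$, because $\beta-m$ is divisible by $K$ in $\ZK$. By Proposition~\ref{modKmodphiK},
\[q^{a(z+\beta)^2}\ePK q^{a(z+m)^2}.\]
Multiplying by $q^{c-b^2/(4a)}\in\ZKh$ (Lemma~\ref{qpoweralpha}) expresses the sum as $q^{c-b^2/(4a)}$ times $\sum_z q^{a(z+m)^2}$.

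Next we shift the index. As $z$ runs over $0,\ldots,K-1$, the residue $(z+m)_K$ runs over the same set. Also $a(z+m)^2\eK a((z+m)_K)^2$, so Proposition~\ref{modKmodphiK} lets us reindex:
\[\sum_z q^{a(z+m)^2}\ePK\sum_{u=0}^{K-1}q^{au^2}=\G_a.\]
Lemma~\ref{generalizedquadraticgausssum}(2) then gives $\G_a\ePK\left(\frac{(a)_K}{K}\right)\G$, which completes this case.

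The one step needing care is that $\ePK$ is a congruence: sums and products of congruent elements of $\ZKh$ stay congruent. This is immediate from the definition, since the set of multiples of $\Phi_K$ is an ideal of $\ZKh$. It is also essential that all exponents involved ($c-b^2/4a$, $\beta$) lie in $\ZK$, which is where $K\nmid 2a$ is used. No further obstacle is expected.
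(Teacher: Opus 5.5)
Your proof is correct and follows essentially the same route as the paper: the case $a\equiv_K0$ reduces to Lemma~\ref{qazsum}, and for $a\not\equiv_K0$ you complete the square in $\ZK$, replace the shift $\frac{b}{2a}$ by its residue, reindex, and apply Lemma~\ref{generalizedquadraticgausssum}(2). The only difference is cosmetic: the paper first reduces all exponents to residues such as $(a)_K$ and $(c-\frac{b^2}{4a})_K$ before reindexing, while you keep $q^{c-b^2/4a}$ and $a$ unreduced, which is equally valid by Proposition~\ref{modKmodphiK}.
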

\begin{proof} If $a\eK0$ then $q^{az^2+bz+c}\ePK{}q^{bz+c}$ and the sum is given by Lemma \ref{qazsum}. Suppose now that $a\not\equiv_K0$. Completing the square, $az^2+bz+c=a\left(z+\frac{b}{2a}\right)^2+\delta$ where $\delta=c-\frac{b^2}{4a}$. Since $a\neK0$, thus $\frac{b}{2a},\delta\in\ZK$.  By Lemma \ref{modKmodphiK},
\[
\sum_{z=0}^{K-1}q^{az^2+bz+c}\ePK\sum_{z=0}^{K-1}q^{(az^2+bz+c)_K}
\ePK\sum_{z=0}^{K-1}q^{(a)_K\left(z+\left(\frac{b}{2a}\right)_K\right)^2+\left(\delta\right)_K}
=q^{\left(\delta\right)_K}\sum_{z=0}^{K-1}q^{(a)_K\left(z+\left(\frac{b}{2a}\right)_K\right)^2}
\]
As $z$ ranges over $0,\ldots,K-1$, its shift by the integer $\left(\frac{b}{2a}\right)_K$ ranges over all classes mod $K$, so that the last summation is congruent modulo $\Phi_K$ to $\G_a$ whose value is given by Lemma \ref{generalizedquadraticgausssum}, and the result follows.
\end{proof}

\begin{exmp}
For $K=3$, $\G=1+q+q^4$, $\tilde\G=1+q^{-1}+q^{-4}$, $\Phi_3=1+q+q^2$ and so
    \[\G\tilde\G=3+(q^2-1+2q^{-1}-q^{-2}+q^{-4})\Phi_3
    \]
\end{exmp}

\subsection{\textsc{Divisibility Properties}}
Since $\ZK$ is an integral domain thus the ring $\ZKh$ is an integral domain. The invertibles of $\ZKh$ are formal power series whose constant terms are not divisible by $K$ (that is, whose constant terms are invertible in $\ZK$). In particular, $K,\G,\tilde\G,\Phi_K\in\ZKh$ all have constant term $K$ and so are not invertible.

\begin{lem}\label{KGPhiirreducible} The elements $K, \G, \tilde\G, \Phi_K$ are \textbf{irreducible} in $\ZKh$.
\end{lem}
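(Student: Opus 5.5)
The plan is to exploit the fact that each of the four elements has constant term exactly $K$, which has $K$-adic order exactly one. Let $x\in\{K,\G,\tilde\G,\Phi_K\}$ and suppose $x=ab$ with $a=\sum_n a_nh^n$ and $b=\sum_n b_nh^n$ in $\ZKh$. Comparing constant terms gives $a_0b_0=K$ in $\ZK$. Since $v_K$ is additive and $a_0,b_0\in\ZK$ have non-negative valuation, we get $v_K(a_0)+v_K(b_0)=v_K(K)=1$. Hence exactly one of $v_K(a_0),v_K(b_0)$ is zero, say $v_K(a_0)=0$. Then $a_0$ is invertible in $\ZK$, and so $a$ is invertible in $\ZKh$ by the characterisation of units recalled just before the statement.

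It then remains to check that $x$ is itself non-zero and not a unit, which is the other half of the definition of irreducibility. Non-zero is clear since the constant term is $K\neq0$. Non-unit holds because the constant term $K$ is not invertible in $\ZK$. Both facts were already noted in the paper before the statement.

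The only genuine input is therefore the constant terms, and I would verify each explicitly:
\begin{itemize}
\item $K$ is a constant, so its constant term is trivially $K$.
\item $\G=\sum_{z=0}^{K-1}q^{z^2}$ has constant term $\sum_{z=0}^{K-1}1=K$, since each $q^{z^2}=(1+h)^{z^2}$ has constant term $1$.
\item The same argument applies to $\tilde\G$, where $q^{-z^2}\in\Z[[h]]$ also has constant term $1$.
\item $\Phi_K(q)=\sum_{r=0}^{K-1}\binom{K}{r+1}h^r$ has constant term $\binom{K}{1}=K$.
\end{itemize}

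I expect no real obstacle. The one point requiring care is to use the valuation argument on $\ZK$ rather than on $\Z$, since $a_0$ and $b_0$ are merely rationals with denominators prime to $K$. Additivity of $v_K$ on $\Q^\times$, together with the fact that $\ZK$ consists exactly of the non-negative-valuation elements plus zero, handles this cleanly.
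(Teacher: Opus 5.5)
Your proposal is correct and is essentially the paper's argument: both rest on the constant term of each element being $K$, which has $K$-adic valuation exactly one, so the constant terms of two factors cannot both lie in $K\ZK$. The paper phrases this as a contradiction ($a_0b_0\in K^2\ZK$ versus $K\notin K^2\ZK$), while you phrase it directly via $v_K(a_0)+v_K(b_0)=1$, and you additionally make explicit the non-zero/non-unit checks and the constant-term computations that the paper takes from the preceding discussion.
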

\begin{proof} Let $x$ be one of $K,\G,\tilde\G,\Phi_K\in\ZKh$. Suppose otherwise that $x$ is not irreducible, that is, it can be written as $x = ab$ where $a,b$ are both not units. Then both $a_0, b_0  \in K \ZK$ which implies that $a_0 b_0 \in K^2 \ZK $, however the constant term in $x$ is $K\not\in{}K^2\ZK$, a contradiction.
\end{proof}

\begin{prop} The ideal $<K,h>$ is \textbf{the unique maximal ideal} in $\ZKh$, that is any proper ideal in $\ZKh$ is contained in $<K,h>$.
\end{prop}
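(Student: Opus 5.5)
The plan is to identify the non-units of $\ZKh$ exactly and show that they form the ideal $\langle K,h\rangle$. A proper ideal contains no units, so this is enough.

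\textbf{Step 1.} We use the characterisation of units already recalled above. An element $\sum_{n\ge0}a_nh^n\in\ZKh$ is invertible if and only if $a_0$ is invertible in $\ZK$. For $\ZK$ that means exactly $v_K(a_0)=0$, i.e.\ $a_0\not\equiv_K0$. For completeness we recall why the inverse of such a series exists: its coefficients are solved recursively, $b_0=a_0^{-1}$ and $b_n=-a_0^{-1}\sum_{k=1}^{n}a_kb_{n-k}$. This recursion only ever divides by $a_0$, which is a unit of $\ZK$, so every $b_n$ stays in $\ZK$.

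\textbf{Step 2.} We describe $\langle K,h\rangle$ explicitly and claim
\[
\langle K,h\rangle=\Big\{\sum_{n\ge0}a_nh^n\in\ZKh\;\Big|\;a_0\equiv_K0\Big\}.
\]
For ``$\subseteq$'': every element of $\langle K,h\rangle$ has the form $Kx+hy$ with $x,y\in\ZKh$. Its constant term is $K$ times the constant term of $x$, which is divisible by $K$ in $\ZK$. For ``$\supseteq$'': suppose $a_0\equiv_K0$, so $a_0=Kc$ with $c\in\ZK$. Then
\[
\sum_n a_nh^n = K\cdot c + h\cdot\sum_{n\ge1}a_nh^{n-1},
\]
and both $c$ and $\sum_{n\ge1}a_nh^{n-1}$ lie in $\ZKh$. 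In particular $\langle K,h\rangle$ is exactly the set of non-units, by Step 1.

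\textbf{Step 3.} Let $I$ be a proper ideal of $\ZKh$. If $I$ contained a unit $u$, then $1=u^{-1}u\in I$ and $I$ would not be proper. So every element of $I$ is a non-unit, and by Step 2 it lies in $\langle K,h\rangle$. Hence $I\subseteq\langle K,h\rangle$.

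It remains to check that $\langle K,h\rangle$ is itself proper, and hence maximal and unique. This holds because $1$ has constant term $1\not\equiv_K0$, so $1\notin\langle K,h\rangle$.

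There is no genuine obstacle in this argument. The only point needing care is Step 1, namely that the inverse power series has coefficients in $\ZK$ and not merely in $\Q$. The recursion above handles this, since the only division performed is by the unit $a_0$.
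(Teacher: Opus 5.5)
Your proposal is correct and follows essentially the same route as the paper. Both identify the non-units of $\ZKh$ (constant term divisible by $K$) with $\langle K,h\rangle$ via the splitting $a(h)=K\cdot\frac{a_0}{K}+h\sum_{r\geq0}a_{r+1}h^r$, and then note that a proper ideal contains no units; your recursion showing that inverses have coefficients in $\ZK$, and your check that $1\notin\langle K,h\rangle$, usefully fill in points the paper leaves implicit.
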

\begin{proof} Let $S$ be the set of all non-invertible elements in $\ZKh$, we will prove that $S = <K,h>$.\\
$(<K,h> \subseteq S)$  Suppose that $a(h)\in <K,h>$, so that $a(h)= Kb(h)+ hc(h)$ for some $b(h),c(h)\in\ZKh$. Since $a_0=Kb_0\eK0$, thus $a(h)$ is not invertible, that is $a(h)\in{}S$.\\
 $(S\subseteq <K,h>)$ Suppose that $a(h)=\sum_{r=0}^\infty{}a_rh^r\in{}S$, that is, it is a non-invertible element of $\ZKh$. Then $a_0\eK0$. Thus $a(h)=K\cdot\frac{a_0}{K}+h\sum_{r=0}^\infty{}a_{r+1}h^r\in<K,h>$ as required.

\noindent Suppose $I$ is a proper ideal in $\ZKh$. Then $I\subset{}S$, for if not, then $I$ would contain an element $x\in{}I\backslash{}S$. By the above, $x$ is invertible and so since $I$ is an ideal, it would contain all of $\ZKh$, a contradiction.
\end{proof}

\noindent For an odd prime $K$, the ring $\ZK$ is the localization of $\Z$ at the prime ideal $K\Z$. Since $<K>$ is the unique maximal ideal in $\ZK$, thus $\ZK$  is a Noetherian regular local ring. The ring of formal power series $\ZKh$ itself is also a Noetherian local ring with maximal ideal $<K,h>$. The Auslander–Buchsbaum theorem states that every regular local ring is a unique factorization domain and hence $\ZKh$ is a UFD. It follows from Lemma \ref{KGPhiirreducible} that the ideals generated by singleton elements $K, \Phi_K, \G, \tilde\G$ in $\ZKh$ are prime in $\ZKh$. Since the quotient of a commutative ring by a prime ideal is an integral domain, the following lemma follows, which we will use in \S3.5.

\begin{lem}\label{ZKhisaUFD}
The quotient ring $\ZKh/ < \Phi_K > $ is an integral domain.
\end{lem}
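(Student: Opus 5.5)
The plan is to show that the principal ideal $\langle\Phi_K\rangle$ is prime in $\ZKh$; the quotient of a commutative ring by a prime ideal is an integral domain. I will assemble the argument already sketched before the statement into two ingredients. First, $\ZKh$ is a unique factorization domain. Second, $\Phi_K$ is irreducible in $\ZKh$. In a UFD every irreducible element is prime, so $\langle\Phi_K\rangle$ is a prime ideal.

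For the first ingredient, $\ZK$ is a discrete valuation ring: it is the localization of $\Z$ at $K\Z$, local with maximal ideal $K\ZK$, and regular of dimension one. By the standard fact that a formal power series ring over a regular local ring is again regular local, $\ZKh$ is a regular local ring of dimension two with maximal ideal $\langle K,h\rangle$. This matches the preceding proposition, and $\langle K,h\rangle$ is generated by two elements, as regularity in dimension two requires. The Auslander–Buchsbaum theorem then gives that $\ZKh$ is a UFD. This step is where the real input lies. If one prefers to avoid the regular-local-ring machinery, there is an alternative: for a PID $R$, $R[[h]]$ is a UFD by Samuel's criterion for power series over a DVR, and one could cite that instead.

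For the second ingredient, Lemma \ref{KGPhiirreducible} already shows that $\Phi_K$ is irreducible. The constant term of $\Phi_K$ is $K$, which has $K$-adic valuation exactly one. Any factorization into two non-units would have both constant terms in $K\ZK$, so their product would lie in $K^2\ZK$, which is a contradiction. We must also note that $\Phi_K$ is nonzero and not a unit, which holds because its constant term is $K\notin\ZK^\times$.

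The conclusion then runs as follows. Suppose $ab\in\langle\Phi_K\rangle$. Since $\Phi_K$ is irreducible and hence prime in the UFD $\ZKh$, we get $\Phi_K\mid a$ or $\Phi_K\mid b$. Therefore $\ZKh/\langle\Phi_K\rangle$ has no zero divisors. It is also nonzero, since $\Phi_K$ is not a unit. Hence it is an integral domain.

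The main obstacle is justifying that $\ZKh$ is a UFD. Factoriality is not in general inherited by power series rings, so a specific theorem must be quoted here: regularity of $\ZKh$ combined with Auslander–Buchsbaum, or Samuel's result for DVRs. Everything else is immediate from Lemma \ref{KGPhiirreducible}.
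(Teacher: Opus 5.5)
Your proposal is correct and follows the paper's own argument: $\ZKh$ is a UFD by Auslander--Buchsbaum, and $\Phi_K$ is irreducible by Lemma~\ref{KGPhiirreducible}, hence prime, so the quotient is an integral domain. You also state explicitly that $\ZKh$ is \emph{regular} local, being a power series ring over the DVR $\ZK$; Auslander--Buchsbaum needs this, and the paper only calls $\ZKh$ ``Noetherian local'', so your remark usefully fills that gap.
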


\subsection{\textsc{Squaks}}

\begin{defn}\label{defsquak}
    By a \textbf{squak} is meant a map $\sA$ from the set of primes (possibly omitting a finite number) to $\Q[[h]]$ such that $\sA(K)\in\ZKh$ for all sufficiently large prime $K$.
\end{defn}

\noindent Squaks form a ring under addition and multiplication of power series. For example, $K$, $\G$, $\tilde\G$, $\Phi_K$ all define squaks since they lie in $\Zh$. The involution $\psi$ defined in the previous subsection by $q\mapsto{}q^{-1}$ acts on squaks, preserves trivial squaks, equivalence of squaks and the properties of being independent of $K$ and almost independent of $K$. Since $\psi(\Phi_K)=q^{1-K}\Phi_K$ and $q^{1-K}$ is invertible in $\Zh$, thus $\psi$ preserves $K$-equivalence.

\begin{defn}
    A squak $\sA$ is said to be \textbf{independent of $K$} if there exists $a(h)\in\Qh$ such that $\sA(K)=a(h)$ for all sufficiently large prime $K$. A squak $\sA$ is said to be \textbf{trivial} if $\sA(K)=0$ for all sufficiently large prime $K$.   Two squaks are said to be \textbf{equivalent} if their difference is trivial.
\end{defn}

\begin{defn}\label{almostindependentdefn} A squak $\sA$ is said to be \textbf{almost independent of $K$}  if there exist integers $\cN,K_0\in\N$ such that for all prime $K_1,K_2>K_0$,
\[K_1\equiv_\cN{}K_2 \hspace{5pt} \Rightarrow \hspace{5pt} \sA(K_1)=\sA(K_2)\>.\]
\end{defn}

\noindent Thus a squak $\sA$ which is almost independent of $K$ is specified by a finite set of power series $a^{(i)}(h)\in\Qh$ for $1<i<\cN$ coprime to $\cN$ and then
\[\sA(K)=a^{(i)}(h)\hbox{ for all sufficiently large prime $K\equiv_\cN{}i$}\]
The minimal such integer $\cN$ will be called the \textbf{characteristic} of the squak.

\begin{lem}\label{Legendresymbolalmostindep}
    Suppose $a\in\Q^\times$. Then $\left(\frac{(a)_K}{K} \right)$ is a squak almost independent of $K$.
\end{lem}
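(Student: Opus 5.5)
The plan is to reduce to quadratic reciprocity. First I will check that $\left(\frac{(a)_K}{K}\right)$ is a squak at all. Write $a=\pm\frac{m}{n}$ with $m,n\in\N$ coprime. For every prime $K$ not dividing $mn$ we have $a\in\ZK$ and $(a)_K\neq0$. So the Legendre symbol is defined and equals $\pm1\in\Z\subset\ZKh$. The finitely many primes dividing $2mn$ are simply omitted, as Definition~\ref{defsquak} allows.

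Next I will use multiplicativity of the Legendre symbol on $(\Z/K\Z)^\times$. Since $(a)_K\equiv \pm m\,\overline{n}$ mod $K$ and $\overline{n}\equiv n\,\overline{n}^{\,2}$, we get
\[
\left(\frac{(a)_K}{K}\right)=\left(\frac{\pm1}{K}\right)\left(\frac{m}{K}\right)\left(\frac{n}{K}\right).
\]
Factoring $mn$ into primes, the symbol becomes a product of $\left(\frac{-1}{K}\right)$ (if the sign is negative), possibly $\left(\frac{2}{K}\right)$, and $\left(\frac{p}{K}\right)$ for odd primes $p\mid mn$, each raised to the power $v_p(mn)$.

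Each factor will then be shown to depend only on $K$ modulo a fixed integer. The two supplementary laws give $\left(\frac{-1}{K}\right)=(-1)^{(K-1)/2}$, which depends on $K$ mod $4$, and $\left(\frac{2}{K}\right)=(-1)^{(K^2-1)/8}$, which depends on $K$ mod $8$. For an odd prime $p\neq K$, quadratic reciprocity gives
\[
\left(\frac{p}{K}\right)=(-1)^{\frac{p-1}{2}\frac{K-1}{2}}\left(\frac{K}{p}\right),
\]
which depends only on $K$ mod $4$ and $K$ mod $p$, hence on $K$ mod $4p$.

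Finally I will take $\cN=8\prod_{p\mid mn}p$ and $K_0=\max(2,mn)$. Then for primes $K_1,K_2>K_0$ with $K_1\equiv_\cN K_2$, every factor agrees, so the symbols are equal and Definition~\ref{almostindependentdefn} is satisfied. The only real input is quadratic reciprocity with its supplements, quoted as standard. The one place needing care is that $\left(\frac{(a)_K}{K}\right)$ is well defined via the remainder map on $\ZK$ and agrees with the product formula above, which holds because $(a)_K\equiv \pm m\overline{n}$ mod $K$. There is no substantial obstacle.
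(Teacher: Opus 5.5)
Your proof is correct and follows essentially the same route as the paper. Both arguments use multiplicativity to reduce $\left(\frac{(a)_K}{K}\right)$ to the Legendre symbol of $mn$ (the paper via $(a)_K n^2\equiv_K mn$, you via $\overline{n}\equiv n\,\overline{n}^{\,2}$), and then use the supplementary law for $2$ and quadratic reciprocity to show dependence only on $K$ modulo a fixed integer. Your prime-by-prime factorisation, with the sign of $a$ handled separately through $\left(\frac{-1}{K}\right)$, is slightly more careful than the paper's direct application of reciprocity to the possibly negative odd part of $mn$.
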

\begin{proof}
    Set $a=\frac{m}{n}$ in reduced form, $m\in\Z^\times$, $b\in\N$ coprime. Write $mn=2^kl$ with $k\in\Zpos$ and $l\in\Z^\times$ odd. For odd $K$ coprime to $m$ and $n$, $an^2=mn$ and so $(a)_Kn^2\eK{}mn$ so that
    \[
\left(\frac{(a)_K}{K} \right)=\left(\frac{mn}{K} \right)=\left(\frac{2}{K} \right)^k\cdot\left(\frac{l}{K} \right)    \]
Recall that $\left(\frac{2}{K} \right)=(-1)^{\frac{K^2-1}{8}}$ depends only on $K$ modulo 8. Meanwhile by quadratic reciprocity, $\left(\frac{l}{K} \right)=(-1)^{\frac{K-1}{2}\cdot\frac{l-1}{2}}\left(\frac{K}{l} \right)$. Since $(-1)^{\frac{K-1}{2}}$ depends only on $K$ modulo 4 and $\left(\frac{K}{l} \right)$ depends only on $K$ modulo $l$, the result follows.
\end{proof}

\begin{defn}
 Two squaks $\sA$ and $\sB$ are said to be $K$-equivalent if for all sufficiently large prime $K$, $\sA(K)\ePK\sB(K)$.
\end{defn}

\noindent It is immediate that $K$-equivalence of squaks defines an equivalence relation.

\begin{defn}\label{almostindependentmodPhiKdef} A squak $\sA$ is said to be \textbf{almost independent of $K$ mod $\Phi_K$} if there exists a squak $\sB$ which is almost independent of $K$ and is $K$-equivalent to $\sA$.
\end{defn}

\begin{exmp} Suppose $c\in\Q^\times$, written in reduced form as $c=\frac{c_1}{c_2}$ with $c_1\in\Z$, $c_2\in\N$ coprime.
\begin{itemize}
    \item  $\sA=q^c$ defines a squak which is independent of $K$, with $\sA(K)\in\ZKh$ for $K$ coprime to  $c_2$.
    \item  $\sB=q^{\{Kc\}}$ defines a squak almost independent of $K$ with characteristic $\cN=c_2$ since the fractional part $\{cK\}$ depends only on $K$ modulo $c_2$.
    \item  $\sC=q^{-\lfloor{Kc}\rfloor}$ is not almost independent of $K$ but is almost independent of $K$ mod $\Phi_K$ since $\sB$ and $\sC$ are $K$-equivalent squaks because $\{Kc\}=Kc-\lfloor{Kc}\rfloor\eK-\lfloor{Kc}\rfloor$ for $(K,c_2)=1$, so that by Lemma \ref{modKmodphiK}, $q^{\{Kc\}}\ePK{}q^{-\lfloor{Kc}\rfloor}$.
\end{itemize}
\end{exmp}

\begin{prop}\label{almostindepKequivtrivial} A squak which is almost independent of $K$ and $K$-equivalent to a trivial squak is trivial.
\end{prop}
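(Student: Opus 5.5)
Let $\sA$ be almost independent of $K$, with integers $\cN,K_0$ as in Definition~\ref{almostindependentdefn}. Suppose $\sA$ is $K$-equivalent to a trivial squak. Then $\sA(K)\ePK0$ for all sufficiently large prime $K$. Fix a residue class $i$ mod $\cN$ coprime to $\cN$. There is a single power series $a(h)=\sum_n a_nh^n\in\Qh$ with $\sA(K)=a(h)$ for every large prime $K\equiv_\cN i$. By Dirichlet's theorem there are infinitely many such primes. So it suffices to show that a fixed $a(h)\in\Qh$ which is divisible by $\Phi_K(q)$ in $\ZKh$ for infinitely many primes $K$ must vanish. Classes $i$ not coprime to $\cN$ contain at most one prime, so they do not affect any statement about sufficiently large $K$.

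I will prove $a_n=0$ by induction on $n$, using the explicit form of $\Phi_K$: it is monic of degree $K-1$ in $h$, its constant term is $K$, and all its lower coefficients are divisible by $K$. Write $a(h)=\Phi_K(q)\,c^{(K)}(h)$ with $c^{(K)}\in\ZKh$. Take $K$ larger than $n+1$, so that $K-1>n$. Then every coefficient of $h^j$ in $\Phi_K$ with $j\le n$ lies in $K\ZK$. Comparing coefficients of $h^j$ for $j\le n$ gives $a_j\in K\ZK$, that is, $v_K(a_j)\ge1$, for all $j\le n$ and infinitely many primes $K$. A fixed nonzero rational has $v_K=0$ for all but finitely many primes $K$. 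Hence $a_j=0$. This argument in fact gives every coefficient directly, with no induction needed: for each fixed $n$, choose $K>n+1$ among the infinitely many admissible primes.

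Therefore $a(h)=0$ for every class $i$, so $\sA(K)=0$ for all sufficiently large prime $K$, and $\sA$ is trivial. The only point needing care is to take the threshold large enough: beyond $K_0$, beyond the finitely many primes dividing $\cN$, and large enough that the $K$-equivalence holds. For each coefficient index $n$, the argument additionally uses only primes $K>n+1$, and these exist in abundance by Dirichlet's theorem. I expect no real obstacle here; the key input is just that the low-degree coefficients of $\Phi_K$ are all divisible by $K$.
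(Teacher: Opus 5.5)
Your proof is correct and takes essentially the same approach as the paper. Both fix a residue class mod $\cN$ and use Dirichlet's theorem to get infinitely many primes $K'$ in it sharing one power series value. Both then use that the coefficients of $\Phi_{K'}$ in degrees below $K'-1$ are divisible by $K'$ (the paper writes this as $\Phi_{K'}\equiv_{K'}h^{K'-1}$), so each fixed coefficient is divisible by infinitely many primes and hence vanishes.
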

\begin{proof}  Suppose that $\sA$ is almost independent of $K$ and $K$-equivalent to a trivial squak. By definition, there exists $K_0$ so that for all prime $K>K_0$, $\sA(K)\ePK0$ while $\sA(K)$ depends only on $K$ (mod $\cN$) for some characteristic $\cN\in\N$. Thus there exists another squak $\sB$ so that $\sA(K)=\sB(K)\cdot\Phi_K$ for all prime $K>K_0$, while for primes $K_1,K_2>K_0$,
\[K_1\equiv_\cN{}K_2 \implies \sA(K_1)=\sA(K_2)\]
We need to prove that $\sA(K)=0$ for all sufficiently large $K$. Suppose that $K>\max(K_0,\cN)$ is prime. For any prime $K'\equiv_\cN{}K$ with $K'>K_0$,
\[\sA(K)=\sA(K')\in\ZKph\]
Since $\Phi_{K'}\equiv_{K'}h^{K'-1}$,
\[\sA(K)=\sB(K')\Phi_{K'}\equiv_{K'}\sB(K')h^{K'-1}\]
so that the coefficient of $h^r$ in $\sA(K)$ is divisible by $K'$ for all prime $K'$ for which $r<K'-1$, $K'\equiv_\cN{}K$ and $K'>K_0$. For fixed $r$, by Dirichlet's theorem, there are infinitely many such primes $K'$ and so all the coefficients of $\sA(K)$ must vanish, that is $\sA$ is a trivial squak.
    \end{proof}

\begin{cor}\label{uniqueuptophiK} If $\sA$ is a squak almost independent of $K$ mod $\Phi_K$ then there exists a \textbf{\textit{unique squak}} $\sB$ up to equivalence which is almost independent of $K$ and such that $\sA$, $\sB$ are $K$-equivalent.
\end{cor}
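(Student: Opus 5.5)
Existence of $\sB$ is part of Definition~\ref{almostindependentmodPhiKdef}: since $\sA$ is almost independent of $K$ mod $\Phi_K$, there is a squak $\sB$ that is almost independent of $K$ and $K$-equivalent to $\sA$. So the content of the corollary is uniqueness up to equivalence. I will reduce this to Proposition~\ref{almostindepKequivtrivial}.

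Suppose $\sB_1$ and $\sB_2$ are both almost independent of $K$ and both $K$-equivalent to $\sA$. Consider the difference squak $\sD=\sB_1-\sB_2$. Squaks form a ring, so $\sD$ is a squak. I then check two properties.

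First, $\sD$ is almost independent of $K$. Let $\sB_i$ have integers $\cN_i,K_i$ as in Definition~\ref{almostindependentdefn}. Take $\cN=\cN_1\cN_2$ (or their lcm) and $K_0=\max(K_1,K_2)$. For primes $K,K'>K_0$ with $K\equiv_\cN K'$, we have $K\equiv_{\cN_i}K'$ for each $i$. Hence $\sB_i(K)=\sB_i(K')$ for $i=1,2$, and so $\sD(K)=\sD(K')$.

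Second, $\sD$ is $K$-equivalent to the trivial squak. For all sufficiently large primes $K$ (the maximum of the two thresholds), we have $\sB_1(K)\ePK\sA(K)$ and $\sA(K)\ePK\sB_2(K)$. Divisibility by $\Phi_K$ in $\ZKh$ is preserved under sums, so $\sD(K)\ePK0$.

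Proposition~\ref{almostindepKequivtrivial} now gives that $\sD$ is trivial, so $\sB_1$ and $\sB_2$ are equivalent. The only point needing care is combining the moduli and thresholds, which is routine; there is no real obstacle.
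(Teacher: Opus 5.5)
Your proposal is correct and follows the paper's proof: existence comes from the definition, and for uniqueness you apply Proposition~\ref{almostindepKequivtrivial} to the difference of two candidate squaks, which is almost independent of $K$ and $K$-equivalent to the trivial squak. You also spell out the combination of the moduli and thresholds, which the paper leaves implicit.
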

\begin{proof} By definition of almost independence mod $\Phi_K$, $\sB$ exists. To prove uniqueness, suppose that $\sC$ is another squak almost independent of $K$ such that $\sA$ and $\sC$ are $K$-equivalent. By transitivity of $K$-equivalence, $\sB$ and $\sC$ are $K$-equivalent, so that $\sB-\sC$ is a squak $K$-equivalent to the trivial squak. Additionally $\sB-\sC$ is almost independent of $K$ and so by Proposition \ref{almostindepKequivtrivial}, $\sB-\sC$ is a trivial squak, that is $\sB$ and $\sC$ are equivalent squaks.
\end{proof}

\begin{lem}\label{Kmodphiisnotalmostind} There does not exist a squak almost independent of $K$ which is $K$-equivalent to the squak $K$ (whose value on a prime $K$ is the formal power series with only the constant term $K$).
\end{lem}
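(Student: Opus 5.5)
We argue by contradiction. Suppose $\sB$ is a squak almost independent of $K$, with characteristic $\cN$ and threshold $K_0$, such that $\sB(K)\ePK K$ for all sufficiently large prime $K$. Write $\sB(K)=b^{(i)}(h)\in\Qh$ for prime $K>K_0$ with $K\equiv_\cN i$. The plan is to compare constant terms only, since these are the most rigid data under congruence modulo $\Phi_K$.

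The key observation is the one already used in the proof of Proposition~\ref{modKmodphiK}. If $p(h)\ePK 0$, then $p=c\,\Phi_K$ with $c\in\ZKh$, and the constant term of $\Phi_K$ is $K$. Hence the constant term of $p$ is divisible by $K$ in $\ZK$. Apply this to $p=\sB(K)-K$, whose constant term is $b^{(i)}_0-K$. This gives $b^{(i)}_0\eK 0$, i.e. $b^{(i)}_0\in K\ZK$, for every sufficiently large prime $K\equiv_\cN i$.

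Now fix a residue class $i$ coprime to $\cN$. By Dirichlet's theorem there are infinitely many primes $K\equiv_\cN i$, and $b^{(i)}_0$ is a single rational number that does not depend on $K$. A fixed rational number divisible by infinitely many distinct primes must be zero, because a nonzero rational $a/b$ has $v_K(a/b)=0$ for all primes $K$ not dividing $ab$. So $b^{(i)}_0=0$. Returning to the congruence, the constant term of $\sB(K)-K$ is then $-K$.

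To reach a contradiction we need more than divisibility by $K$. The step I expect to be the main obstacle is extracting $K^2$-level information; at first sight the constant term only gives divisibility by $K$, and $-K$ is certainly divisible by $K$. I would try the next coefficient. Write $\Phi_K=K+\binom{K}{2}h+\cdots$ and $\sB(K)-K=c\,\Phi_K$ with $c=c_0+c_1h+\cdots\in\ZKh$. The constant term gives $c_0K=-K$, so $c_0=-1$. The coefficient of $h$ gives
\[
b^{(i)}_1=c_0\binom{K}{2}+c_1K=-\tfrac{K(K-1)}{2}+c_1K,
\]
which is again only a $K$-divisibility condition on the fixed number $b^{(i)}_1$ and forces nothing new. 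So coefficient-by-coefficient comparison will not finish the argument, and I would abandon it.

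The cleaner route is Proposition~\ref{almostindepKequivtrivial}. Since $\Phi_K$ has degree $K-1$ and $\Phi_K\equiv_K h^{K-1}$, we have $K\ePK K-\Phi_K\cdot(\text{unit adjustments})$, but this does not immediately help either. Instead I would evaluate at roots of unity: a congruence modulo $\Phi_K$ between elements of $\ZKh$ means equality in $\ZKh/\langle\Phi_K\rangle$. Specialising $h\mapsto\zeta_K-1$ (valid for polynomials, and I would extend to power series by the $\langle K,h\rangle$-adic completion) gives $b^{(i)}(\zeta_K-1)=K$ in $\Z_K[\zeta_K]$. Here $b^{(i)}$ is a fixed power series with zero constant term, while $K=\pi^{K-1}\cdot(\text{unit})$ for $\pi=\zeta_K-1$. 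Hence $b^{(i)}(\pi)$ has $\pi$-adic valuation $K-1$, which is unbounded in $K$. On the other hand, if $b^{(i)}\neq0$ has lowest nonzero coefficient $b_m h^m$, then for large $K$ (with $b_m$ a $K$-adic unit) the valuation equals $m$. Thus $m=K-1$ for infinitely many $K$, which is impossible. If instead $b^{(i)}=0$, then $K\ePK 0$, which contradicts the irreducibility of $\Phi_K$ and $K$ as non-associates, since $K/\Phi_K\notin\ZKh$ because $\Phi_K$ is not a unit multiple of $K$. This valuation argument is the heart of the proof, and justifying the specialisation for power series is the technical point to check.
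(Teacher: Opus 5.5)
Your final argument is correct, and it takes a different route from the paper's.

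\textbf{Your route.} The specialisation you flag as the technical point is harmless, and you need no completion. Only coefficients up to $h^{K-1}$ matter, so use the composite $\ZKh\to\ZKh/(h^K)\to\ZK[\zeta_K]/(\pi^K)$, $h\mapsto\pi=\zeta_K-1$. This is a ring homomorphism that kills $\Phi_K$. Moreover $\ZK[\zeta_K]$ is a discrete valuation ring with uniformiser $\pi$ and $v_\pi(K)=K-1$. So $b^{(i)}(\pi)\equiv K$ mod $\pi^K$ forces the $h$-order $m$ of $b^{(i)}$ to equal $K-1$, once its leading coefficient is a $K$-unit. That fails for a second prime in the residue class. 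The case $b^{(i)}=0$ is already absurd, since $K\not\equiv0$ mod $\pi^K$. So the irreducibility detour can be cut, as can the aside about Proposition~\ref{almostindepKequivtrivial} and ``unit adjustments''.

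\textbf{The paper's route.} The paper stays inside $\ZKh$. It writes $\sA(K)=\sA(K')=K'+\sB(K')\Phi_{K'}$ for primes $K'\equiv_\cN K$, and uses $\Phi_{K'}\equiv_{K'}h^{K'-1}$. Hence every coefficient of $h^r$ in the fixed series $\sA(K)$ is divisible by infinitely many primes $K'$, so it is zero. Thus $\sA$ is trivial, and the contradiction is $K\not\equiv_{\Phi_K}0$.

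\textbf{The route you abandoned.} This is exactly the coefficient-by-coefficient argument you gave up on, and your reason was mistaken. The $h^1$ condition does force something new: $b_1^{(i)}\in K\ZK$ for infinitely many $K$, hence $b_1^{(i)}=0$, by the very argument you used for $b_0^{(i)}$. The same holds for every $b_r^{(i)}$. No $K^2$-level information is needed, because the final obstruction lives in degree $K-1$, where $\Phi_K$ has leading coefficient $1$.

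\textbf{What each buys.} Your valuation argument gives a sharper, per-prime statement. A fixed nonzero series of $h$-order $m$ can be $\equiv_{\Phi_K}K$ only for $K=m+1$, apart from the finitely many primes dividing its leading coefficient. So two primes in one class suffice, and the non-divisibility step is absorbed. The paper's argument is more elementary: it needs only $\Phi_K\equiv_K h^{K-1}$, not the total ramification of $K$ in $\Q(\zeta_K)$. It is also the same template the paper reuses in Proposition~\ref{almostindepKequivtrivial} and Lemma~\ref{GPhilinearcomblemma}.
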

\begin{proof} Suppose otherwise that $\sA$ is a squak almost independent of $K$ which is $K$-equivalent to $K$. Then there exist integers $K_0$ and $\cN$ so that $\sA(K)\ePK{}K$ for all primes $K>K_0$ while $\sA(K)=\sA(K')$ for all primes $K,K'>K_0$ congruent mod $\cN$. Then
\[\sA(K)=K+\sB(K)\Phi_K\]
for some squak $\sB$. Since $\Phi_{K'}\equiv_{K'}h^{K'-1}$, thus for all primes $K,K'>K_0$ congruent mod $\cN$,
\[\sA(K)=\sA(K')=K'+\sB(K')\Phi_{K'}\equiv_{K'}\sB(K')h^{K'-1}
\]
which means that the coefficient of $h^r$ in $\sA(K)$ is divisible by $K'$ for all primes $K'>K_0$ for which $r<K'-1$ and $K'\equiv_\cN{}K$. For any fixed $r$,  this is true for infinitely many primes $K'$ and so $\sA(K)=0$ for all prime $K>K_0$, that is $\sA$ is a trivial squak. However this is a contradiction since $K$ is not $K$-equivalent to the trivial squak as $K$ is not divisible by $\Phi_K$ in $\ZKh$.
\end{proof}

\begin{prop}\label{noalmostindepG} There does not exist a squak almost independent of $K$ which is $K$-equivalent to the Gauss sum squak $\G$.
\end{prop}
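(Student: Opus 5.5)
We argue by contradiction and reduce to Lemma~\ref{Kmodphiisnotalmostind}, using the relation $\G\tilde\G\ePK K$ from Lemma~\ref{generalizedquadraticgausssum}. Suppose that $\sA$ is a squak almost independent of $K$, with characteristic $\cN$ and threshold $K_0$, such that $\sA(K)\ePK\G$ for all prime $K>K_0$. Apply the involution $\psi:q\mapsto q^{-1}$. As noted in the paper, $\psi$ preserves $K$-equivalence (since $\psi(\Phi_K)=q^{1-K}\Phi_K$) and preserves almost independence of $K$. We have $\psi(\G)=\tilde\G$, so $\psi(\sA)$ is almost independent of $K$ and is $K$-equivalent to $\tilde\G$.

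Next form the product squak $\sC=\sA\cdot\psi(\sA)$. If $\sA(K_1)=\sA(K_2)$ whenever $K_1\equiv_\cN K_2$, then the same holds for $\psi(\sA)$, hence also for $\sC$. So $\sC$ is almost independent of $K$ with the same $\cN$. Moreover $K$-equivalence is compatible with products in $\ZKh$: if $a\ePK a'$ and $b\ePK b'$ then $ab-a'b'=a(b-b')+(a-a')b'$ is divisible by $\Phi_K$. Hence $\sC(K)\ePK\G\tilde\G\ePK K$ for all large prime $K$, using part 1 of Lemma~\ref{generalizedquadraticgausssum}.

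Thus $\sC$ is a squak almost independent of $K$ which is $K$-equivalent to the squak $K$. This contradicts Lemma~\ref{Kmodphiisnotalmostind}, which proves the proposition.

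The only delicate point is checking that $\psi$ maps squaks almost independent of $K$ to squaks almost independent of $K$, and that $\psi(\sA)(K)\in\ZKh$ for large $K$. This holds because $q^{-1}=\sum_n(-1)^nh^n\in\Zh$, so substituting $q\mapsto q^{-1}$ is a continuous ring endomorphism of $\ZKh$ that acts the same way for every $K$. If one wanted to avoid $\psi$, an alternative is to square instead: $\G^2\ePK\G_{-1}\G\cdot\left(\frac{-1}{K}\right)\ePK\left(\frac{-1}{K}\right)K$. Then $\left(\frac{-1}{K}\right)\sA^2$ is almost independent of $K$ (Lemma~\ref{Legendresymbolalmostindep}) and $K$-equivalent to $K$, giving the same contradiction.
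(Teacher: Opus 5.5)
Your proof is correct and is the paper's own argument: apply $\psi$ to get a squak almost independent of $K$ that is $K$-equivalent to $\tilde\G$, multiply to get one $K$-equivalent to $\G\tilde\G\ePK K$, and contradict Lemma~\ref{Kmodphiisnotalmostind}. Part~1 of Lemma~\ref{generalizedquadraticgausssum} is the right citation for $\G\tilde\G\ePK K$, and your squaring alternative via $\G^2\ePK\left(\frac{-1}{K}\right)K$ is also valid.
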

\begin{proof} Suppose otherwise that $\sA$ is a squak almost independent of $K$ which is $K$-equivalent to $\G$. Then $\psi(\sA)$ is a squak almost independent of $K$ which is $K$-equivalent to $\psi(\G)=\tilde\G$ and so their product $\sA\psi(\sA)$ is a squak almost independent of $K$ which is $K$-equivalent to $\G\tilde\G$. By Lemma \ref{easyGauss}, $\G\tilde\G$ is $K$-equivalent to $K$, which gives a contradiction by Proposition \ref{Kmodphiisnotalmostind}.
\end{proof}

\begin{lem}\label{GPhilinearcomblemma} The only squak almost independent of $K$ which is equivalent to a linear combination of  $\Phi_K$ and $\G$ is trivial.
\end{lem}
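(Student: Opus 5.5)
I read ``linear combination'' as having coefficients which are themselves squaks almost independent of $K$. Suppose $\sA$ is almost independent of $K$ and
\[\sA(K)=\alpha(K)\Phi_K+\beta(K)\G\]
for all sufficiently large prime $K$, where $\alpha,\beta$ are squaks almost independent of $K$. The plan is to show first that $\beta$ is trivial, and then that $\sA$ is trivial. The idea is to eliminate $\G$ using the involution $\psi$, exactly as in the proof of Proposition~\ref{noalmostindepG}, and to reduce to an argument of the type used in Lemma~\ref{Kmodphiisnotalmostind}.

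\textbf{Step 1 (removing $\G$).} Reducing mod $\Phi_K$ gives $\sA\ePK\beta\G$. Since $\psi$ preserves $K$-equivalence and almost independence, $\psi(\sA)$ is $K$-equivalent to $\psi(\beta)\tilde\G$. Multiplying and using Lemma~\ref{generalizedquadraticgausssum}(1), $\G\tilde\G\ePK K$, I get
\[\sD:=\sA\,\psi(\sA)\ \text{ is $K$-equivalent to }\ K\cdot\sC,\qquad \sC:=\beta\,\psi(\beta).\]
Both $\sD$ and $\sC$ are almost independent of $K$, with a common modulus $\cN$.

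\textbf{Step 2 (showing $\sD$ is trivial).} Write $\sD(K)=K\,\sC(K)+\sE(K)\Phi_K$ for some squak $\sE$. Fix a large prime $K$ and take primes $K'\equiv_\cN K$. Then $\sD(K)=\sD(K')$ and $\sC(K)=\sC(K')$, so
\[\sD(K)=K'\sC(K)+\sE(K')\Phi_{K'}\equiv_{K'}\sE(K')h^{K'-1}.\]
For each fixed $r$, the coefficient of $h^r$ in $\sD(K)$ is therefore divisible by infinitely many primes $K'$, by Dirichlet's theorem. Hence $\sD$ is trivial.

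\textbf{Step 3 (showing $\sC$ and then $\beta$ are trivial).} By Step 2, $K\sC(K)\ePK0$ for all large $K$. By Lemma~\ref{ZKhisaUFD} the quotient $\ZKh/\langle\Phi_K\rangle$ is an integral domain. Moreover $K\notin\langle\Phi_K\rangle$: if $K=u\Phi_K$ with $u\in\ZKh$, then since $\Phi_K$ is monic of degree $K-1$ in $h$ with all lower coefficients divisible by $K$, comparing coefficients modulo $K$ gives $u\,h^{K-1}\equiv 0$ mod $K$, hence $u\equiv0$ mod $K$, and then the constant terms give $K\in K^2\ZK$, a contradiction. It follows that $\sC(K)\ePK0$. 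Since $\sC$ is almost independent of $K$, Proposition~\ref{almostindepKequivtrivial} shows $\sC$ is trivial. Thus $\beta(K)\,\psi(\beta)(K)=0$ in the integral domain $\Q[[h]]$. Because $\psi$ is an automorphism, this forces $\beta(K)=0$ for all large $K$.

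\textbf{Step 4 (conclusion).} Now $\sA(K)=\alpha(K)\Phi_K\ePK0$, and $\sA$ is almost independent of $K$, so Proposition~\ref{almostindepKequivtrivial} gives that $\sA$ is trivial.

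\textbf{Main obstacle.} The delicate point is Step 3: passing from $K\sC\ePK0$ to $\sC\ePK0$. This requires checking that $K$ is genuinely not divisible by $\Phi_K$, so that it is nonzero in the integral domain $\ZKh/\langle\Phi_K\rangle$. The coefficient comparison above handles this, or one can argue via the irreducibility of $K$ and $\Phi_K$ (Lemma~\ref{KGPhiirreducible}) together with the fact that they are not associates. A secondary issue is making sure the moduli of $\sA$, $\alpha$ and $\beta$ are merged into a single $\cN$ before running the Dirichlet argument in Step 2.
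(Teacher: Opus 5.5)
\textbf{Gap.} Your opening reinterpretation adds a hypothesis that the statement does not have. The statement, and the paper's proof, allow the coefficients to be arbitrary squaks: $\sA(K)=\sB(K)\Phi_K+\sC(K)\G$. That generality is exactly what the lemma is used for.
In Lemma~\ref{almostindependentmodK} one uses $K\ePK\G\tilde\G$, so the $\G$-coefficient is $\tilde\G\cdot\sA/K$. In Lemma~\ref{nonontrivalGdecompofzero}, and hence Theorem~\ref{uniquenessinarbitrarydimension}, the $\G$-coefficient is $-\sum_{r>m}\sB_r\G^{r-m-1}$. Neither of these is almost independent of $K$.

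For such coefficients your argument breaks in three places:
\begin{enumerate}
\item Step 2 uses $\sC(K)=\sC(K')$.
\item Step 3 applies Proposition~\ref{almostindepKequivtrivial} to $\beta\psi(\beta)$, which is no longer almost independent.
\item The intermediate goal ``$\beta$ is trivial'' is false in general, e.g.\ $0=(-\G)\Phi_K+\Phi_K\G$.
\end{enumerate}
So you prove, correctly (including your check that $K\notin\langle\Phi_K\rangle$), a strictly weaker lemma. That weaker version does not support the uniqueness of $\G$-decompositions.

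\textbf{Repair.} In Step 2, keep $\sC(K')$ rather than replacing it by $\sC(K)$. Write $\sD(K)=\sD(K')=K'\sC(K')+\mathbf{E}(K')\Phi_{K'}$. Here $\sC(K')=\beta(K')\psi(\beta(K'))\in\ZKph$ simply because $\beta$ is a squak and $\psi$ preserves $\ZKph$.
Hence $\sD(K)\equiv_{K'}\mathbf{E}(K')h^{K'-1}$, and Dirichlet's theorem gives $\sA(K)\psi(\sA(K))=0$. This uses only the almost independence of $\sA$.
Then apply the argument in the last line of your Step 3 to $\sA$ itself instead of to $\beta$. Since $\Q[[h]]$ is an integral domain and $\psi(\sA(K))\neq0$ whenever $\sA(K)\neq0$, $\sA$ is trivial.
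This is precisely the paper's proof. Your Steps 3--4, and the analysis of $K$ modulo $\Phi_K$, become unnecessary even under your own stronger reading.
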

\begin{proof} Suppose that $\sA$ is a squak almost independent of $K$ and that
\[\sA(K)=\sB(K)\cdot\Phi_K+\sC(K)\cdot\G\]
for some squaks $\sB,\sC$ and all sufficiently large prime $K$. Then there exist integers $K_0$ and $\cN$ so that $\sA(K)=\sA(K')$ for all primes $K,K'>K_0$ congruent mod $\cN$. Fix $K>K_0$ prime. For all primes $K'\equiv_\cN{}K$ with $K'>K_0$,
\[
\sA(K)=\sA(K')=\sB(K')\cdot\Phi_{K'}+\sC(K')\cdot\G'\ePKp\sC(K')\cdot\G'
\]
where $\mathbb{G}'$ denotes the Gauss sum at $K'$. Applying the involution $\psi$,
\[
\psi(\sA(K))\ePKp\psi(\sC(K'))\cdot\tilde\G'
\]
and the product gives
\[\sA(K)\psi(\sA(K))\ePKp\sC(K')\psi(\sC(K'))\G'\tilde\G'\ePKp{}K'\sC(K')\psi(\sC(K'))\]
That is, for some squak $\sD$,
\[\sA(K)\psi(\sA(K))=K'\sC(K')\psi(\sC(K'))+\Phi_{K'}\cdot\sD(K')\]
Since $\Phi_{K'}\equiv_{K'}h^{K'-1}$, thus $\sA(K)\psi(\sA(K))\equiv_{K'}h^{K'-1}\sD(K')$ and so the coefficient of $h^r$ in $\sA(K)\psi(\sA(K))$ is divisible by $K'$ for all primes $K'>K_0$ for which $r<K'-1$ and $K'\equiv_\cN{}K$. For any fixed $r$,  this is true for infinitely many primes $K'$ and so $\sA(K)\psi(\sA(K))=0$ for all primes $K>K_0$. Suppose otherwise that $\sA$ is not a trivial squak. Then there exists $K>K_0$ prime for which $\sA(K)\not=0$, say its first non-zero coefficient in the power series is of $h^m$. Then the coefficient of $h^m$ in $\psi(\sA(K))$ will also be non-zero and so the coefficient of $h^{2m}$ in the product will be non-zero, a contradiction.
\end{proof}

\begin{lem}\label{almostindependentmodK} A squak which is almost independent of $K$ and is divisible by $K$ must be trivial.
\end{lem}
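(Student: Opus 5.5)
The plan is to follow the same Dirichlet-type argument used in Proposition~\ref{almostindepKequivtrivial} and Lemma~\ref{Kmodphiisnotalmostind}, only simpler, because divisibility by $K$ acts coefficientwise. Suppose $\sA$ is almost independent of $K$, with characteristic $\cN$ and threshold $K_0$. Suppose also that $\sA$ is divisible by $K$, meaning that for all sufficiently large primes $K$ (enlarging $K_0$ if necessary) there is a squak $\sB$ with $\sA(K)=K\cdot\sB(K)$ and $\sB(K)\in\ZKh$. Equivalently, every coefficient of $\sA(K)$, viewed as a rational number, lies in $K\ZK$, that is, has $v_K\geq1$ or is zero.

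Next we fix a prime $K>K_0$ and an index $r\geq0$, and let $a_r$ be the coefficient of $h^r$ in $\sA(K)$. For every prime $K'>K_0$ with $K'\equiv_\cN K$, almost independence gives $\sA(K')=\sA(K)$. Hence $a_r$ is also the coefficient of $h^r$ in $\sA(K')$, and divisibility by $K'$ gives $v_{K'}(a_r)\geq1$ whenever $a_r\neq0$. By Dirichlet's theorem on primes in arithmetic progressions (applicable since $K$ is coprime to $\cN$ once $K>\cN$), there are infinitely many such primes $K'$. A nonzero rational number has only finitely many primes dividing its numerator, so we conclude $a_r=0$.

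Since $r$ was arbitrary, $\sA(K)=0$ for every prime $K>\max(K_0,\cN)$, so $\sA$ is trivial. The only point needing care is bookkeeping. First, the primes $K$ at which divisibility holds must eventually cover all large primes in the residue class, which follows from the definitions holding "for all sufficiently large prime $K$". Second, we need $\gcd(K,\cN)=1$ to invoke Dirichlet, which follows from taking $K>\cN$. I expect no real obstacle. Unlike the $\Phi_K$ case, we do not need the congruence $\Phi_{K'}\equiv_{K'}h^{K'-1}$ or any restriction $r<K'-1$, because divisibility by the constant $K'$ already constrains every coefficient directly.
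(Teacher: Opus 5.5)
Your proof is correct, and it takes a more direct route than the paper's.

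The paper's proof is one line. Since $\G\tilde\G\ePK K$ (Lemma~\ref{generalizedquadraticgausssum}), every multiple of $K$ lies in the ideal generated by $\Phi_K$ and $\G$. Lemma~\ref{GPhilinearcomblemma} then says an almost independent squak in that ideal is trivial. That lemma is itself a Dirichlet argument, but its hypothesis involves $\G$ rather than $K$. So it must multiply by the image $\psi(\sA)$ under $q\mapsto q^{-1}$, so that $\G'\tilde\G'$ turns back into $K'$ modulo $\Phi_{K'}$. It then uses $\Phi_{K'}\equiv_{K'}h^{K'-1}$ with $r<K'-1$, and finally recovers $\sA=0$ from $\sA\psi(\sA)=0$ by looking at lowest-order coefficients. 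For the present lemma, that detour converts $K$ into $\G\tilde\G$ only to convert it back again.

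You avoid the detour entirely. Literal divisibility by $K$ is already a coefficientwise condition, so Dirichlet plus the finiteness of the prime divisors of a nonzero rational finishes at once, with no Gauss sums and no involution.

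What the paper's route gains is generality at no extra cost. It only needs $\sA$ to lie in $\langle\Phi_K,\G\rangle$, so it also covers squaks that are divisible by $K$ only modulo $\Phi_K$ (that is, $\sA\ePK K\sB$). That weaker reading is what the paper's later remark implicitly uses when it calls Lemma~\ref{Kmodphiisnotalmostind} a special case. Under that hypothesis your closing comment no longer holds: you would need $\Phi_{K'}\equiv_{K'}h^{K'-1}$ and the restriction $r<K'-1$ after all. The change is routine, though, because for fixed $r$ only finitely many primes $K'$ violate $r<K'-1$.
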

\begin{proof} This follows immediately from Lemma \ref{GPhilinearcomblemma} since $K\in<\Phi_K,\G>$ because $\G\tilde\G\ePK{}K$.
\end{proof}

 \noindent Note: Proposition \ref{noalmostindepG} is a special case of Lemma \ref{GPhilinearcomblemma}, while Lemma \ref{Kmodphiisnotalmostind} is a special case of Lemma \ref{almostindependentmodK}.

\subsection{{\textsc{$\G$ decompositions}}}

\begin{defn}\label{Gdecomposition}
A squak $\sA$ is said to admit a $\G$-decomposition if there exists $d\in\N$ and squaks $\sB_0,\ldots,\sB_d$  which are almost independent of $K$ such that $\sA$ is $K$-equivalent to $\sum\limits_{r=0}^d\sB_r\G^r$. The minimal value of $d$ is called the level of the $\G$-decomposition.
\end{defn}

\begin{lem}\label{nonontrivalGdecompofzero} The only $\G$-decomposition of a trivial squak is trivial.
\end{lem}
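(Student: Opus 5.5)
We must show: if $\sB_0,\ldots,\sB_d$ are squaks almost independent of $K$ and $\sum_{r=0}^d\sB_r\G^r$ is $K$-equivalent to the trivial squak, then every $\sB_r$ is trivial. The idea is to use the relation $\G\tilde\G\ePK K$ from Lemma \ref{generalizedquadraticgausssum} to reduce powers of $\G$. This rewrites the combination as something in the ideal $\langle\Phi_K,\G\rangle$, and then we peel off coefficients one at a time with Lemma \ref{GPhilinearcomblemma}. The argument is an induction on $d$, together with a descent that removes powers of $K$.

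\textbf{Step 1: $\sB_0$ is trivial.} Suppose that for all sufficiently large prime $K$ we have $\sum_{r=0}^d\sB_r(K)\G^r\ePK0$. Then
\[
\sB_0(K)=-\sum_{r\ge1}\sB_r(K)\G^r+(\text{multiple of }\Phi_K),
\]
so $\sB_0$ is equivalent to a linear combination of $\Phi_K$ and $\G$. Since $\sB_0$ is almost independent of $K$, Lemma \ref{GPhilinearcomblemma} shows that $\sB_0$ is trivial. This also settles the base case $d=0$.

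\textbf{Step 2: reduce to a shorter decomposition.} Once $\sB_0$ is trivial, we have $\G\cdot\sum_{r=1}^d\sB_r\G^{r-1}\ePK0$. We want to cancel $\G$, but we cannot cancel it directly. Instead, multiply by $\tilde\G$ and use $\G\tilde\G\ePK K$, which gives
\[
K\cdot\sum_{r=1}^d\sB_r(K)\G^{r-1}\ePK0 .
\]
Now we must cancel $K$ modulo $\Phi_K$. By Lemma \ref{ZKhisaUFD}, $\ZKh/\langle\Phi_K\rangle$ is an integral domain. Also $K\not\equiv_{\Phi_K}0$, since the constant term of $\Phi_K$ is $K$ and a quotient $K/\Phi_K$ would have to be a unit times $h^{K-1}$-type data; in any case $K$ and $\Phi_K$ are non-associate irreducibles, because $\Phi_K$ is not a unit multiple of $K$. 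Hence $K$ is a nonzero element of the domain and can be cancelled. This yields
\[
\sum_{r=1}^{d}\sB_r\G^{r-1}\ePK0,
\]
which is a $\G$-decomposition of a trivial squak of length $d-1$. By the induction hypothesis, $\sB_1,\ldots,\sB_d$ are all trivial.

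\textbf{Main obstacle.} The delicate point is the cancellation in Step 2. First, one must check that $\Phi_K\nmid K$ in $\ZKh$. This holds because $K$ and $\Phi_K$ are both irreducible (Lemma \ref{KGPhiirreducible}) but not associate: the ratio $K/\Phi_K$ has constant term $1$, and its $h$-coefficient is $-\binom K2/K$ times a unit; multiplying back, $\Phi_K$ is monic of degree $K-1$ while $K$ is constant, so $\Phi_K$ cannot equal a unit times $K$. Second, one must check that all squaks appearing have coefficients in $\ZKh$ for large $K$, which holds by definition.

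If the cancellation by $K$ proves awkward, a fallback avoids it. Multiply by $\tilde\G^{\,j}$ to turn the relation into one of the form $K^j(\cdots)$, then argue via Lemma \ref{almostindependentmodK} together with Lemma \ref{GPhilinearcomblemma}.
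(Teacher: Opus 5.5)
Your argument is correct and is essentially the paper's: peel off the lowest nontrivial coefficient by cancelling in the integral domain $\ZKh/\langle\Phi_K\rangle$ (Lemma \ref{ZKhisaUFD}), then kill that coefficient with Lemma \ref{GPhilinearcomblemma}. The one difference is that the paper cancels $\G^m$ directly, which is legitimate because $\G\tilde\G\ePK K\not\equiv_{\Phi_K}0$ shows $\G$ is nonzero in the quotient, whereas you detour through $\tilde\G$ and cancel $K$; your check that $\Phi_K\nmid K$ is cleanest stated as: every coefficient of $Ku$ with $u\in\ZKh$ lies in $K\ZK$, while the $h^{K-1}$-coefficient of $\Phi_K$ is $1$.
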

\begin{proof}
Suppose that $\sB_0,\ldots,\sB_d$ are squaks which are almost independent of $K$ and such that $\sum\limits_{r=0}^d\sB_r\G^r$ is $K$-equivalent to the trivial squak. To obtain a contradiction, suppose otherwise that $\sB_r$ are not all trivial; let $m$ denote the smallest integer for which $\sB_m$ is non-trivial. Then for all sufficiently large primes $K$,
\[
\G^m\left(\sum\limits_{r=m}^d\sB_r(K)\G^{r-m}\right)=\sum\limits_{r=0}^d\sB_r(K)\G^r\ePK0
\]
By Lemma \ref{ZKhisaUFD},  $\ZKh/<\Phi_K>$ has no zero divisors and so
$\sum\limits_{r=m}^d\sB_r(K)\G^{r-m}\ePK0$.
Equivalently, for some squak $\sC$, then $\sum\limits_{r=m}^d\sB_r(K)\G^{r-m}=\sC(K)\Phi_K$, which can be rearranged as
\[
\sB_m(K)=\sC(K)\Phi_K-\G\sum\limits_{r=m+1}^d\sB_r(K)\G^{r-m-1}
\]
for all sufficiently large primes $K$. This expresses the squak $\sB_m$, which is almost independent of $K$, as equivalent to a linear combination of $\G$ and $\Phi_K$. By Lemma \ref{GPhilinearcomblemma}, this implies that $\sB_m$ is trivial, a contradiction.\end{proof}

\begin{thm}\label{uniquenessinarbitrarydimension}\ If a squak admits a $\G$-decomposition then it is unique up to equivalence of squaks.
\end{thm}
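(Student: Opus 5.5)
The plan is to reduce uniqueness to Lemma \ref{nonontrivalGdecompofzero} by taking the difference of two decompositions.

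Suppose a squak $\sA$ admits two $\G$-decompositions. That is, there are squaks $\sB_0,\ldots,\sB_d$ and $\sB'_0,\ldots,\sB'_{d'}$, all almost independent of $K$, such that $\sA$ is $K$-equivalent both to $\sum_{r=0}^d\sB_r\G^r$ and to $\sum_{r=0}^{d'}\sB'_r\G^r$. First I will pad the shorter list with trivial squaks, so that both sums run up to $D=\max(d,d')$. The trivial squak is clearly almost independent of $K$, so the padded lists still consist of admissible coefficients.

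Next I set $\sC_r=\sB_r-\sB'_r$ for $r=0,\ldots,D$. Each $\sC_r$ is almost independent of $K$. To see this, suppose $\sB_r$ has data $(\cN,K_0)$ and $\sB'_r$ has data $(\cN',K_0')$. Then $\sC_r$ works with modulus $\cN\cN'$ and threshold $\max(K_0,K_0')$: primes congruent mod $\cN\cN'$ are congruent mod both $\cN$ and $\cN'$, so both summands agree at such primes. By transitivity and linearity of $K$-equivalence (a difference of two multiples of $\Phi_K$ in $\ZKh$ is again such a multiple, for all $K$ beyond the maximum of the finitely many thresholds), $\sum_{r=0}^D\sC_r\G^r$ is $K$-equivalent to $\sA-\sA=0$, the trivial squak.

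Thus $(\sC_r)$ is a $\G$-decomposition of the trivial squak. Lemma \ref{nonontrivalGdecompofzero} then says every $\sC_r$ is trivial, so $\sB_r$ and $\sB'_r$ are equivalent squaks for each $r$. In particular, the coefficients of index above $\min(d,d')$ in the longer decomposition are trivial, so the two decompositions agree up to equivalence, including their effective top degree.

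There is no real obstacle here. The substantive work is already in Lemma \ref{nonontrivalGdecompofzero}, which rests on Lemma \ref{GPhilinearcomblemma} and the integral-domain property of Lemma \ref{ZKhisaUFD}. The only points needing care are bookkeeping: that almost independence is closed under differences, via the common modulus $\cN\cN'$, and that all the "sufficiently large $K$" thresholds can be taken simultaneously.
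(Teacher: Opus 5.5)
Your proposal is correct and follows the same route as the paper: take the difference of two $\G$-decompositions and apply Lemma~\ref{nonontrivalGdecompofzero}. The padding by trivial squaks, the closure of almost independence under differences via the modulus $\cN\cN'$, and the common threshold for large $K$ are bookkeeping the paper leaves implicit, and you handle them correctly.
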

\begin{proof} By taking the difference of two $\G$-decompositions, the result immediately follows from Lemma \ref{nonontrivalGdecompofzero}.
\end{proof}

\section{\textsc{Bernoulli Polynomials \& Symbolic Integration}}
In this section we introduce a symbolic notation where a power of the formal symbol $B$ is interpreted as a Bernoulli number, as well as integration whose limits are of the form $a+B$ and double integration over polygonal regions, again shifted by a formal symbol $B$. Similar notation (without integration) appears in the literature under the name \textbf{umbral calculus}.

\subsection{\textsc{Bernoulli numbers and symbolic notation  }}
\begin{defn} The Bernoulli numbers $B_n$ are defined by their generating function
\begin{align}\frac{t}{e^t-1}=\sum\limits_{n=0}^\infty\frac{B_nt^n}{n!}\label{Bnumgenfn}
\end{align}

\end{defn}
\noindent The first few Bernoulli numbers are $B_0=1$, $B_1=-\frac12$, $B_2=\frac16$. Replacing $t$ by $-t$ in the generating function,
\[\sum\limits_{n=0}^\infty\frac{(-1)^nB_nt^n}{n!}=\frac{t}{1-e^{-t}}=t+\frac{t}{e^t-1}=t+\sum\limits_{n=0}^\infty\frac{B_nt^n}{n!}
\]
from which it follows that $(-1)^nB_n=B_n$ for $n>1$, that is, odd Bernoulli numbers vanish after $B_1$.

We choose to use a \textbf{symbolic notation} in which polynomials in the symbol $B$ are evaluated by replacing $B^n$ by $B_n$ and extending linearly. This extends to polynomials in other variables, whose coefficients are polynomials in $B$. A power series in $h$ whose coefficients are polynomials in $B$ is similarly evaluated to give a power series in $h$ whose coefficients are real numbers. Since $B_mB_n\not=B_{m+n}$ this map is not a ring homomorphism. Similarly, if $p(x)\in\R[x]$ and $a(B),b(B)\in\R[B]$ then  $\int\limits_{a(B)}^{b(B)}p(x)dx$ will be understood by first evaluating the integral and obtaining a polynomial in $B$ and only then evaluating powers of $B$ by replacement with Bernoulli numbers to give a numerical value.
\begin{defn} Let $\p(x) \in \Rxh$ be a formal power series in $h$ whose coefficients are polynomials in $x$, say $\p(x) = \sum_{n=0}^{\infty} p_n (x) h^n  $ and $a(B),b(B)\in\R[B]$. Then we define the symbolic integral $\int_{a(B) }^{b(B)}\p(x) dx$ by term-by-term integration,
\begin{align*}
    \int_{a(B)}^{b(B)} \p(x) dx := \sum_{n=0}^\infty  \left(\int_{a(B)}^{b(B)} p_n(x) dx \right) h^n\>.
\end{align*}
\end{defn}
\noindent Always the evaluation of powers of $B$ comes at the last stage of the calculation. Since the symbolic integral is defined as an evaluation of a real integral (with $B$ as a parameter), it is additive under subdivision of the range of integration.

\begin{defn}The Bernoulli polynomials $\{B_n (x)\}_{n=0}^\infty$ are defined through the generating function,
\begin{align}
        \frac{t e^{xt} }{e^t -1} \equiv \sum_{n=0}^\infty B_n (x) \frac{t^n }{n!}\>,\quad\hbox{for $|t|<2\pi$}\label{Bpolygenfn}
\end{align}
\end{defn}
\begin{lem}\label{Bpolyprop}
\begin{enumerate}
    \item The first few Bernoulli polynomials are $B_0(x)=1$, $B_1(x)=x-\frac12$, $B_2(x)=x^2-x+\frac16$.
    \item $B_n(0)=B_n$
    \item $B_n(1-x)=(-1)^nB_n(x)$. That is, even [odd] Bernoulli polynomials are symmetric [antisymmetric] around $x= \frac{1}{2}$ (see \cite{L40}).
    \item $B'_n(x)=nB_{n-1}(x)$ for $n>0$.
    \item $B_n(x)=(x+B)^n$.
\end{enumerate}
\end{lem}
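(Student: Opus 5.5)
The plan is to read every item off the generating function \eqref{Bpolygenfn}, writing $F(x,t)=\frac{te^{xt}}{e^t-1}=\sum_{n\ge0}B_n(x)\frac{t^n}{n!}$, and to compare coefficients of $t^n$ after a suitable manipulation of $F$.

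\textbf{Item 5 first.} Since $F(x,t)=e^{xt}\cdot\frac{t}{e^t-1}$, the Cauchy product with \eqref{Bnumgenfn} gives
\[
B_n(x)=\sum_{k=0}^n\binom{n}{k}B_kx^{n-k},
\]
which is exactly the symbolic evaluation of $(x+B)^n$ with $B^k$ replaced by $B_k$.

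\textbf{Items 1 and 2.} Item 2 follows by setting $x=0$ in $F$ and comparing with \eqref{Bnumgenfn}; equivalently, put $x=0$ in item 5. Item 1 is then a direct computation from item 5, using $B_0=1$, $B_1=-\frac12$ and $B_2=\frac16$. For example,
\[
B_2(x)=x^2+2B_1x+B_2=x^2-x+\tfrac16 .
\]

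\textbf{Item 4.} Differentiating item 5 term by term in $x$ gives
\[
B_n'(x)=\sum_{k=0}^{n}\binom{n}{k}(n-k)B_kx^{n-k-1}.
\]
Since $\binom{n}{k}(n-k)=n\binom{n-1}{k}$, this equals $n\sum_{k=0}^{n-1}\binom{n-1}{k}B_kx^{n-1-k}=nB_{n-1}(x)$. Alternatively, apply $\partial_x$ to $F$, which multiplies it by $t$.

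\textbf{Item 3.} This is the only step needing a small trick. Compute
\[
F(1-x,t)=\frac{te^{t}e^{-xt}}{e^t-1}=\frac{te^{-xt}}{1-e^{-t}}=\frac{(-t)e^{x(-t)}}{e^{-t}-1}=F(x,-t).
\]
Comparing coefficients of $t^n$ gives $B_n(1-x)=(-1)^nB_n(x)$, and the symmetry statement about $x=\tfrac12$ is a restatement of this.

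There is no real obstacle here. The only care needed is in item 5: the symbolic identification must be made by expanding $(x+B)^n$ as a polynomial in $B$ before substituting Bernoulli numbers, in accordance with the paper's convention that evaluation of powers of $B$ is done last.
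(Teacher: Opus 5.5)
Your proposal is correct and follows essentially the paper's own proof: every item is read off the generating function \eqref{Bpolygenfn}, with item 3 obtained by the same substitution $F(1-x,t)=F(x,-t)$ and item 5 by the same Cauchy product with \eqref{Bnumgenfn}. The only difference is cosmetic. You derive items 1, 2 and 4 from the explicit formula of item 5, using $\binom{n}{k}(n-k)=n\binom{n-1}{k}$ for item 4, whereas the paper reads them directly from the generating function: expanding it for item 1, setting $x=0$ for item 2, and differentiating in $x$ for item 4.
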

\begin{proof}
\begin{enumerate}
\item Explicit expansion of $\frac{te^{xt}}{e^t-1}=(1+xt+\frac{x^2t^2}{2}+\cdots)(1+\frac{t}{2}+\frac{t^2}{6}+\cdots)^{-1}$.
\item After substituting $x=0$, \eqref{Bpolygenfn} coincides with \eqref{Bnumgenfn}.
\item Replacing $x$ by $1-x$ in \eqref{Bpolygenfn} gives
$\sum\limits_{n=0}^\infty B_n (1-x) \frac{t^n }{n!}=\frac{t e^{t-xt} }{e^t -1}=\frac{-te^{-xt}}{e^{-t}-1}
=\sum\limits_{n=0}^\infty B_n (x) \frac{(-t)^n }{n!}$
from which it follows by equating coefficients that $B_n(1-x)=(-1)^nB_n(x)$.
\item Differentiating \eqref{Bpolygenfn} partially with respect to $x$ gives
$\sum\limits_{n=0}^\infty B'_n (x) \frac{t^n }{n!}=\frac{t^2 e^{xt} }{e^t -1}=\sum\limits_{n=0}^\infty B_n (x) \frac{t^{n+1} }{n!}$
from which it follows by equating coefficients that $B'_n(x)=nB_{n-1}(x)$ for $n>0$.
\item Comparing \eqref{Bpolygenfn} with \eqref{Bnumgenfn},
$\sum\limits_{k=0}^\infty B_k (x) \frac{t^k }{k!}=\sum\limits_{n=0}^\infty\frac{B_nt^n}{n!}e^{xt}=\sum\limits_{m,n=0}^\infty\frac{B_nt^{n+m}x^m}{n!m!}$
from which it follows by equating coefficients that $B_k(x)=\sum\limits_{m=0}^k\binom{k}{m}B_{k-m}x^m=(x+B)^k$.
\end{enumerate}
\end{proof}

\noindent An important use of Bernoulli polynomials comes from the fact that they evaluate power summations (see \cite{C92}).
\[
    \sum_{x=0}^{N-1} x^m  =  1^m + 2^m +  ... + (N-1)^m = \frac{1}{m+1} \left( B_{m+1}(N)- B_{m+1} \right)
    = \int_{0}^{N} B_m (x) dx=\int_{B}^{N + B } x^m dx \>.
\]

\begin{lem}\label{EulerMaclaurin}  \textbf{(Euler-Maclaurin formula)} Let $f(x) \in \R[x]$ with anti-derivative
$F(x) \in \R[x]$. Then for all $a,b \in \Z$ with $a<b$
\begin{align*}
    \sum_{i=a}^{b-1} f(i) = \int_{a+ B }^{b+ B }f(x) dx = F(b+ B ) - F(a+ B ).
\end{align*}
\end{lem}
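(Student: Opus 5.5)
By linearity in $f$ it is enough to treat monomials $f(x)=x^m$, taking $F(x)=\frac{x^{m+1}}{m+1}$; the symbolic evaluation $B^n\mapsto B_n$ is linear, so the general case follows by summing. I would then reduce to a telescoping sum over $i=a,\dots,b-1$. The key ingredient is the identity displayed just before the statement,
\[
\sum_{x=0}^{N-1}x^m=\frac{1}{m+1}\left(B_{m+1}(N)-B_{m+1}\right)=\int_B^{N+B}x^m\,dx,
\]
together with part 5 of Lemma~\ref{Bpolyprop}, namely $B_n(x)=(x+B)^n$.

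The first step is to check that for every integer (indeed every real) $y$,
\[
F(y+1+B)-F(y+B)=f(y).
\]
Evaluating $(y+B)^{m+1}$ symbolically gives $B_{m+1}(y)$. Hence the claim is equivalent to the difference equation
\[
B_{m+1}(y+1)-B_{m+1}(y)=(m+1)y^m.
\]
This follows from the generating function \eqref{Bpolygenfn}, since
\[
\frac{te^{(y+1)t}}{e^t-1}-\frac{te^{yt}}{e^t-1}=te^{yt},
\]
and comparing coefficients of $t^{m+1}/(m+1)!$ gives the identity. The point to be careful about here is that symbolic evaluation is not multiplicative. However, both sides are linear in the coefficients of $F$, and for each monomial $x^{k+1}/(k+1)$ the expression $(y+B)^{k+1}$ is expanded by the binomial theorem before $B^j$ is replaced by $B_j$. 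This is exactly how part 5 was proved, so the argument is legitimate.

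Summing the identity over $y=a,\dots,b-1$ telescopes to
\[
F(b+B)-F(a+B)=\sum_{i=a}^{b-1}f(i).
\]
Finally, $\int_{a+B}^{b+B}f(x)\,dx=F(b+B)-F(a+B)$ holds by the definition of the symbolic integral: the integral is computed as a polynomial in $B$ via the fundamental theorem of calculus, and only afterwards is $B^n$ replaced by $B_n$. Since this is an identity of polynomials in $B$ before evaluation, it survives the evaluation.

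The only real obstacle is this bookkeeping of when evaluation happens. I would state explicitly that every identity used (binomial expansion, the antiderivative relation, and linearity) holds in $\R[B][y]$ before the linear evaluation map is applied, so that the non-multiplicativity of $B^n\mapsto B_n$ never enters. The hypothesis $a<b$ is needed only so that the sum is nonempty; the formula would also hold trivially when $a=b$.
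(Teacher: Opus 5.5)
Your argument is correct, but it is not quite the route the paper takes. The paper gives no separate proof. It states the lemma immediately after quoting the classical power-sum identity $\sum_{x=0}^{N-1}x^m=\frac{1}{m+1}\bigl(B_{m+1}(N)-B_{m+1}\bigr)=\int_B^{N+B}x^m\,dx$ from the literature, which gives the case $a=0$ by linearity. Arbitrary integers $a<b$ then need either a difference of two such sums (for $a\ge 0$) or a translation $f(x)\mapsto f(x+a)$ (for $a<0$).

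You instead derive the one-step identity $F(y+1+B)-F(y+B)=f(y)$, i.e.\ $B_{m+1}(y+1)-B_{m+1}(y)=(m+1)y^m$, directly from the generating function \eqref{Bpolygenfn}, and then telescope. Although you announce the power-sum identity as your key ingredient, you never actually use it: your argument proves it (take $a=0$, $b=N$) rather than relying on it.

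Your route buys three things. It is self-contained. It treats negative endpoints uniformly, with no extra shift argument. And its intermediate identity holds for all real $y$. That is exactly the fact $\int_{x+B}^{x+1+B}h(z)\,dz=h(x)$ that the paper later uses at non-integer $x$, in the lemma preceding Lemma~\ref{Krighttrianglehomogeneous} and in the final Example with $a=\frac14$, even though the lemma as stated only covers integer endpoints.

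Your bookkeeping about replacing $B^n$ by $B_n$ only after binomial expansion is right. The one detail to add is that an arbitrary antiderivative $F$ differs from your monomial choices by a constant, which cancels in $F(b+B)-F(a+B)$.
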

\noindent One advantage of working with symbolic integrals is that they (the middle term in Lemma \ref{EulerMaclaurin}) are well-defined for non-integer values of $a,b$ while the ordinary summation (left hand side) is not.

\begin{lem}\label{integralcombination} If $n\in\Zpos$ then $\int_B^{x+ B } \binom{z}{n} dz=\binom{x}{n+1}  $ for all $x\in\R$.
\end{lem}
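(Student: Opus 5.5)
The identity is between polynomials in $x$, so the plan is to check it on the natural numbers and then extend by polynomiality.

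Both sides are polynomials in $x$. The right-hand side $\binom{x}{n+1}$ is a polynomial of degree $n+1$. For the left-hand side, let $F(z)$ be an antiderivative of the polynomial $\binom{z}{n}$. The symbolic integral is $F(x+B)-F(B)$. Expanding $F(x+B)$ binomially in powers of $B$ and then replacing $B^k$ by $B_k$ gives a polynomial in $x$ with real coefficients. The replacement is linear and acts only on the powers of $B$, not on $x$, so this procedure produces a genuine polynomial in $x$. A polynomial identity in one variable holds for all $x\in\R$ as soon as it holds for infinitely many values. So it suffices to prove the identity for $x=N\in\N$.

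For $x=N\in\N$ with $N\geq 1$, apply the Euler--Maclaurin formula (Lemma \ref{EulerMaclaurin}) with $a=0$, $b=N$ and $f(z)=\binom{z}{n}$:
\[
\int_B^{N+B}\binom{z}{n}\,dz=\sum_{i=0}^{N-1}\binom{i}{n}.
\]
By the hockey-stick identity, $\sum_{i=0}^{N-1}\binom{i}{n}=\binom{N}{n+1}$. This follows from Pascal's rule $\binom{i+1}{n+1}-\binom{i}{n+1}=\binom{i}{n}$ by telescoping, using $\binom{0}{n+1}=0$. For $N=0$ both sides vanish trivially: the integral from $B$ to $B$ is zero before any evaluation of powers of $B$, and $\binom{0}{n+1}=0$.

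The only point needing care is the first step, namely that the symbolic integral is a polynomial function of $x$ to which the identity principle applies. This is clear from the definition, since evaluation of $B$-powers is $\R$-linear and commutes with treating $x$ as a free variable.
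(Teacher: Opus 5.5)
Your proposal is correct and follows essentially the same route as the paper: observe that the symbolic integral is a polynomial in $x$, evaluate it on natural numbers via the Euler--Maclaurin formula (Lemma \ref{EulerMaclaurin}) to get $\sum_{i=0}^{N-1}\binom{i}{n}=\binom{N}{n+1}$, and extend to all $x\in\R$ by the polynomial identity principle. Your explicit justification of the hockey-stick identity and the separate $N=0$ case only add detail (and the $N=0$ case is not needed, since infinitely many $N\geq1$ already suffice).
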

\begin{proof} Since $\binom{z}{n}$ is a polynomial of degree $n$ in $z$ thus \[g(x):= \int_B ^{x+ B }\binom{z}{n} dz\] is a polynomial of degree $n+1$ in $x$. For $x\in\N$, by Lemma \ref{EulerMaclaurin},
$g(x)=  \sum_{z=0}^{x-1}\binom{z}{n }=\binom{x}{n+1}$ and so this identity holds for all $x\in\R$.
\end{proof}

\begin{lem} \label{BinteasyGauss} Suppose that $f(z)=az^2+bz+c\in\ZK[z]$ with $a\not\equiv_K0$. Then
   $\int_B ^{K+ B } q^{f(z )}dz\ePK\left( \frac{(a)_K}{K}  \right) q^{f_0} \G $
where $f_0$ is the value of $f(z)$ at its critical point.
\end{lem}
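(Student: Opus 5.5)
The symbolic integral has to be brought back to an honest finite sum, and then Lemma~\ref{easyGauss} applied. Expand $q^{f(z)}=\sum_{n\ge0}\binom{f(z)}{n}h^n$ coefficientwise in $h$. Each coefficient $\binom{f(z)}{n}$ is a polynomial in $z$, so by the Euler--Maclaurin formula (Lemma~\ref{EulerMaclaurin}) with $a=0$, $b=K$ integers,
\[
\int_B^{K+B}\binom{f(z)}{n}\,dz=\sum_{z=0}^{K-1}\binom{f(z)}{n}.
\]
Summing over $n$ against $h^n$ gives the identity of formal power series
\[
\int_B^{K+B}q^{f(z)}\,dz=\sum_{z=0}^{K-1}q^{f(z)}\in\Q[[h]].
\]
Each term on the right lies in $\ZKh$ by Lemma~\ref{qpoweralpha}, since $f(z)\in\ZK$ for integer $z$. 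So the left side lies in $\ZKh$, and it makes sense to reduce it modulo $\Phi_K$.

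Next apply Lemma~\ref{easyGauss} in the case $a\not\equiv_K0$:
\[
\sum_{z=0}^{K-1}q^{az^2+bz+c}\ePK q^{c-\frac{b^2}{4a}}\left(\frac{(a)_K}{K}\right)\G .
\]
The critical point of $f$ is $z=-\frac{b}{2a}$, where $f$ takes the value $f_0=c-\frac{b^2}{4a}$, so this is exactly the claimed expression. Note that $f_0\in\ZK$ because $2a$ is invertible in $\ZK$ for odd $K$, so $q^{f_0}\in\ZKh$ and the congruence is meaningful.

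The one point needing care is the passage from the symbolic integral to the sum. Lemma~\ref{EulerMaclaurin} is stated for real polynomials, while the integrand here is a power series in $h$. Since the symbolic integral is defined term by term in $h$, and each coefficient of $h^n$ is a polynomial of degree $2n$ in $z$, this step is harmless. No convergence issue arises, because everything is formal in $h$.
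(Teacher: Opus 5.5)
Your proposal is correct and follows the paper's own proof: convert the symbolic integral to $\sum_{z=0}^{K-1}q^{f(z)}$ via Lemma~\ref{EulerMaclaurin}, then apply Lemma~\ref{easyGauss} with $f_0=c-\frac{b^2}{4a}$. You also spell out details the paper leaves implicit: that Euler--Maclaurin applies coefficientwise in $h$, and that both sides lie in $\ZKh$.
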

\begin{proof}
    By Lemma \ref{EulerMaclaurin}, $\int_B ^{K+ B } q^{f(z )}dz=\sum_{m=0}^{K-1}q^{f(m)}$ which gives the result by Lemma \ref{easyGauss}, since $f_0=c-\frac{b^2}{4a}$.
\end{proof}

\begin{lem}\label{Bintegralbelonging} If $\p(z;q) \in\PzZKh$ then $\int_B ^{z_0 + B } \p (w;q) dw \in\ZKh$ for all $z_0\in\ZK$. Equivalently, $\int_B ^{z+ B }\p(w;q) dw \in\PzZKh$.
\end{lem}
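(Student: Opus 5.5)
Write $\p(w;q)=\sum_{n\ge0}p_n(w)h^n$ with each $p_n(w)\in\PzZK$. Since the symbolic integral is defined coefficient by coefficient in $h$, it is enough to show, for each $n$, that $g_n(z):=\int_B^{z+B}p_n(w)\,dw$ lies in $\PzZK$. Evaluating at $z_0\in\ZK$ and applying Lemma \ref{PZKonZK} then gives $g_n(z_0)\in\ZK$ for every $n$, hence $\int_B^{z_0+B}\p(w;q)\,dw\in\ZKh$. The two formulations in the statement are equivalent by exactly this step in one direction, and by Lemma \ref{pzqtoZKh} (values at integers suffice) together with Lemma \ref{P(z,R)characterization} in the other.

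To prove that $g_n\in\PzZK$, expand $p_n(w)=\sum_r a_r\binom{w}{r}$ with $a_r\in\ZK$ and only finitely many nonzero. The symbolic integral is linear in the integrand, because it is an ordinary integral with $B$ as a parameter, followed by the linear evaluation $B^k\mapsto B_k$. So
\[
g_n(z)=\sum_r a_r\int_B^{z+B}\binom{w}{r}\,dw=\sum_r a_r\binom{z}{r+1},
\]
by Lemma \ref{integralcombination}, which holds for all real $z$ and hence as a polynomial identity in $z$. The right-hand side is visibly in $\PzZK$ by Definition \ref{P(z,R)definition}.

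The main thing to check is that Lemma \ref{integralcombination} may be applied with $z$ treated as a formal variable rather than a fixed real number. Both sides are polynomials in $z$ once the powers of $B$ have been evaluated, and they agree for all real $z$, so they agree identically. This justifies the identity in $\PzZK$. The remaining work is bookkeeping: interchanging the sum over $n$ with the finite sum over $r$, which is harmless because only finitely many $a_r$ are nonzero for each $n$.
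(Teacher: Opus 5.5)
Your proof is correct and is essentially the paper's own argument: you expand each coefficient $p_n$ in the binomial basis, integrate term by term with Lemma \ref{integralcombination} to get $\sum_r a_r\binom{z}{r+1}\in\PzZK$, and then evaluate at $z_0\in\ZK$ via Lemma \ref{PZKonZK}. For the converse direction of the stated equivalence you only need Lemma \ref{P(z,R)characterization} applied to each coefficient (values in $\ZK$ at integers), not Lemma \ref{pzqtoZKh}, but this misattribution is harmless.
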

\begin{proof} Suppose that $\p(z;q) \in\P(z;\ZK)[[h]]$, so that $\p(z;q) = \sum_{n=0}^\infty p_n (z) h^n  $ with $p_n (z) \in \P(z;\ZK)$ for all $n$. That is, $p_n (z)= \sum_{r=0}^{\deg(p_n) }p_{n,r} \binom{z}{r}  $ for some $p_{n,r}\in\Z_{(K)}$, $0\leq{}r\leq\deg(p_n)$. By definition
\[\int_B ^{z + B } \p(w;q) dw = \sum_{n=0}^\infty h^n \int_B ^{z + B } p_n (w) dw
=\sum_{n=0}^\infty h^n  \sum_{r=0}^{\deg(p_n) }p_{n,r}\int_B ^{z + B }\binom{w}{r } dw
=\sum_{n=0}^\infty h^n\sum_{r=0}^{\deg(p_n) }p_{n,r}\binom{z_0}{r+1}
\] where in the last step we used Lemma \ref{integralcombination}. The RHS is now a power series whose coefficients in $\P(z;\ZK)$ as required. Equivalently by Lemma \ref{P(z,R)characterization}, for any $z_0\in\ZK$, the symbolic integral lies in $\ZKh$.

Alternatively, the lemma can be proved using Lemma \ref{pzqtoZKh} since the symbolic integral is an element of $\Q[z_0][[h]]$ and for integer $z_0$ is equal to a sum of elements in $\ZKh$ which thus lies in $\ZKh$.
\end{proof}

\noindent Subtracting two such integrals as in Lemma \ref{Bintegralbelonging} with the same integrand and different values of $z_0$, allows symbolic integration between $x_0+B$ and $y_0+B$ for any $x_0,y_0\in\ZK$. Combining with  Lemma \ref{qpowerfz}, this provides a source of squaks as well as the possibility of iterating the integration.

\begin{lem}\label{Bintegralgqpowerf} If $f(z),g(z)\in\PzZK$, then $\int_{ B}^{z+ B } q^{f(w)}g(w) dw\in\PzZKh$. In particular, for all $x_0,y_0\in\ZK$, $\int_{x_0+ B}^{y_0+ B } q^{f(w)}g(w) dw\in\ZKh$.
\end{lem}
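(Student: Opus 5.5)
The plan is to reduce the statement to Lemma \ref{Bintegralbelonging} by showing that the integrand $q^{f(w)}g(w)$ lies in $\PzZKh$. The ``in particular'' part will then follow by additivity of the symbolic integral.

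First, by Lemma \ref{qpowerfz}, since $f(z)\in\PzZK$ we have $q^{f(z)}\in\PzZKh$, say $q^{f(z)}=\sum_{n\ge0}p_n(z)h^n$ with each $p_n(z)\in\PzZK$. Multiplying by $g(z)$ gives $q^{f(z)}g(z)=\sum_{n}p_n(z)g(z)h^n$. It therefore suffices to know that $\PzZK$ is closed under multiplication.

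For this closure I use Lemma \ref{P(z,R)characterization}. The product $p_n(z)g(z)$ is a polynomial in $\Q[z]$. For every $m\in\Z$, the values $p_n(m)$ and $g(m)$ lie in $\ZK$, again by Lemma \ref{P(z,R)characterization}, so $p_n(m)g(m)\in\ZK$. Hence $p_n(z)g(z)\in\PzZK$, and so $q^{f(z)}g(z)\in\PzZKh$.

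Lemma \ref{Bintegralbelonging} then gives $\int_B^{z+B}q^{f(w)}g(w)\,dw\in\PzZKh$. This is the first claim, and the lemma also shows that evaluating it at any $z_0\in\ZK$ gives an element of $\ZKh$.

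For the second claim, fix $x_0,y_0\in\ZK$. Additivity of the symbolic integral under subdivision of the range gives
\[
\int_{x_0+B}^{y_0+B}=\int_{B}^{y_0+B}-\int_{B}^{x_0+B}.
\]
Both terms on the right lie in $\ZKh$ by the previous step, so their difference does too.

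There is no real obstacle here. The only point needing care is that $\PzZK$ is closed under products, and this follows from the value characterisation rather than from manipulating binomial bases directly.
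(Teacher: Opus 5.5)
Your proof is correct and follows the paper's approach: the paper gives no separate proof, but its preceding remark obtains this lemma by combining Lemma \ref{qpowerfz} with Lemma \ref{Bintegralbelonging} and subtracting two integrals with lower limit $B$, which is exactly your argument. Your explicit check, via Lemma \ref{P(z,R)characterization}, that multiplying by $g$ keeps each coefficient in $\PzZK$ fills in the one step the paper leaves unstated.
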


\begin{lem}\label{Bintegralgqpowersquak} If $f(z),g(z)\in\Q[z]$ and $x_0,y_0\in\Q$ then $\int_{x_0+B}^{y_0+B}q^{f(w)}g(w)dw$ defines a squak.
\end{lem}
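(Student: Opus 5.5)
The aim is to reduce Lemma \ref{Bintegralgqpowersquak} to Lemma \ref{Bintegralgqpowerf} by showing that, for all but finitely many primes $K$, every input lies in the ring $\ZK$ or $\PzZK$. By Definition \ref{defsquak}, a squak is a map assigning to each prime $K$ (possibly omitting finitely many) an element of $\Qh$ that lies in $\ZKh$ for all sufficiently large $K$. The symbolic integral $\int_{x_0+B}^{y_0+B}q^{f(w)}g(w)\,dw$ does not depend on $K$, so the statement amounts to two claims. First, this integral is a well-defined element of $\Qh$, indeed of $\Rh$ with rational coefficients. Second, it lies in $\ZKh$ for all primes $K$ outside a finite set.

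For well-definedness, expand $q^{f(w)}=\sum_n\binom{f(w)}{n}h^n$. Each coefficient of $h^n$ in $q^{f(w)}g(w)$ is then a polynomial in $w$ with rational coefficients. Integrating term by term between $x_0+B$ and $y_0+B$ produces a polynomial in $B$ with rational coefficients, and replacing $B^k$ by the rational number $B_k$ gives a rational coefficient for each $h^n$. So the symbolic integral lies in $\Qh$.

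For integrality, let $D$ be a common denominator of all the coefficients of $f$ and $g$ (written in the monomial basis) and of $x_0,y_0$. For any prime $K\nmid D$, all these numbers lie in $\ZK$. Hence $f,g\in\ZK[z]\subset\PzZK$ by the last assertion of Lemma \ref{P(z,R)characterization}, and $x_0,y_0\in\ZK$. Lemma \ref{Bintegralgqpowerf} then applies directly and gives $\int_{x_0+B}^{y_0+B}q^{f(w)}g(w)\,dw\in\ZKh$ for every prime $K\nmid D$, which excludes only finitely many primes. If one wants the intermediate step explicitly: by Lemma \ref{qpowerfz}, $q^{f(w)}\in\PzZKh$. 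Multiplying by $g$ keeps each coefficient in $\PzZK$, since by Lemma \ref{P(z,R)characterization} this class is characterised by taking $\ZK$ values at integers and is therefore closed under products. Lemma \ref{Bintegralbelonging} applied at the endpoints $y_0$ and $x_0$, followed by subtraction, then gives the claim.

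No real obstacle is expected. The only point needing care is that the excluded set of primes must be finite and independent of the power of $h$. This holds because $D$ is determined by $f$, $g$, $x_0$, $y_0$ alone, and Lemma \ref{Bintegralgqpowerf} gives integrality of all coefficients of the power series simultaneously for each prime $K\nmid D$.
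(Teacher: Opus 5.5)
Your proof is correct and takes essentially the paper's route. The paper gives no separate proof and treats the lemma as an immediate consequence of Lemma \ref{Bintegralgqpowerf}, that is, Lemma \ref{qpowerfz} combined with Lemma \ref{Bintegralbelonging} and subtraction of the two endpoint integrals, applied for every prime $K$ not dividing the denominators of $f$, $g$, $x_0$ and $y_0$. Your explicit checks that the integral lies in $\Qh$ and that the excluded primes do not depend on the power of $h$ are details the paper leaves implicit.
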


\noindent Furthermore, equivalence in $\P(z;\ZK)[[h]]$ modulo $\Phi_K$ is  preserved under symbolic integration.

\begin{lem} \label{Bintegralcongruence} If $\p(z;q)\equiv_{\Phi_K}\f(z;q)$ in $\PzZKh$, then \[\int_B ^{z+ B} \p(w;q) dw \equiv_{\Phi_K} \int_B ^{z+ B } \f(w;q) dw\] in $\PzZKh$, while for all $z_0\in\ZK$ it holds that $\int_B ^{z_0+ B} \p(w;q) dw \equiv_{\Phi_K} \int_B ^{z_0+ B } \f(w;q) dw$ in $\ZKh$.
\end{lem}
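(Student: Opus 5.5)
The plan is to reduce the congruence of symbolic integrals to a congruence of the integrands, using linearity of the symbolic integral and the earlier characterisation of divisibility by $\Phi_K$. Set $\mathfrak{d}(z;q)=\p(z;q)-\f(z;q)\in\PzZKh$. The hypothesis says $\mathfrak{d}\ePK0$ in $\PzZKh$, so there exists $\c(z;q)\in\PzZKh$ with $\mathfrak{d}(z;q)=\Phi_K(q)\,\c(z;q)$. The symbolic integral is defined term by term in $h$ and is linear, so
\[
\int_B^{z+B}\p(w;q)\,dw-\int_B^{z+B}\f(w;q)\,dw=\int_B^{z+B}\mathfrak{d}(w;q)\,dw .
\]
It therefore suffices to show that this last integral is divisible by $\Phi_K$ in $\PzZKh$.

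The key step is to pull $\Phi_K(q)$ out of the integral. $\Phi_K(q)$ is a polynomial in $h$ that does not depend on $w$, so $\int_B^{z+B}\Phi_K(q)\c(w;q)\,dw=\Phi_K(q)\int_B^{z+B}\c(w;q)\,dw$. To justify this, expand $\c=\sum_n c_n(w)h^n$ and write $\Phi_K=\sum_{r}\phi_r h^r$. The coefficient of $h^m$ in $\Phi_K\c$ is the finite sum $\sum_{r+n=m}\phi_r c_n(w)$. Integrating this coefficient term by term gives the coefficient of $h^m$ in $\Phi_K\int\c$. This holds because the symbolic integral is linear over $\R$ in the polynomial coefficients, and the final substitution of Bernoulli numbers is also linear. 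The point needing care is that symbolic evaluation $B^n\mapsto B_n$ is not multiplicative. This does no harm here, since $\Phi_K$ involves no $B$, and multiplying by a constant in $h$ commutes with the linear evaluation map.

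Lemma \ref{Bintegralbelonging} gives $\int_B^{z+B}\c(w;q)\,dw\in\PzZKh$. So the difference equals $\Phi_K$ times an element of $\PzZKh$, which is the first claim. For the second claim, fix $z_0\in\ZK$ and specialise $z=z_0$. By Lemma \ref{Bintegralbelonging} again, $\int_B^{z_0+B}\c(w;q)\,dw\in\ZKh$, and specialisation commutes with multiplication by $\Phi_K$. Both integrals lie in $\ZKh$ by the same lemma, so the congruence holds in $\ZKh$.

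An alternative route is to pass through Lemma \ref{pzqmodPhicharacterisation}. One checks the congruence at integer $z_0$ using Lemma \ref{EulerMaclaurin}, where the integral becomes a finite sum of terms each congruent mod $\Phi_K$, and then extends to all of $\PzZKh$ by (2)$\Rightarrow$(1). I expect no real obstacle. The only subtle point is the commutation of the $h$-constant factor $\Phi_K$ with the non-multiplicative Bernoulli evaluation, which is handled coefficientwise as above.
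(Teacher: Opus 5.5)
Your proposal is correct and follows the paper's proof: you write $\p=\f+\c\,\Phi_K$ with $\c\in\PzZKh$, pull the $w$-independent factor $\Phi_K$ out of the symbolic integral by linearity, apply Lemma \ref{Bintegralbelonging} to $\int_B^{z+B}\c(w;q)\,dw$, and then specialise at $z_0\in\ZK$. Your coefficientwise argument that $\Phi_K$ commutes with the non-multiplicative Bernoulli evaluation simply spells out what the paper compresses into ``by linearity''.
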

\begin{proof} By assumption,  $\p (z;q) = \f(z;q) + \c(z;q) \Phi_K$ for some  $\c (z;q) \in \P(z;\ZK)[[h]]$. By linearity, integrating symbolically gives,
\[
\int_B ^{z+ B } \p (w;q) dw
= \int_B ^{z+ B } \f(w;q) dw + \left( \int_B^{z+B} \c(w;q)dw \right)\Phi_K \equiv_{\Phi_K} \int_B ^{z+ B } \f(w;q) dw
\]
in $\P(z;\ZK)[[h]]$ by Lemma \ref{Bintegralbelonging}. On evaluation at any $z_0\in\ZK$, all the terms lie in $\ZKh$ and so the congruence also holds in $\ZKh$.
\end{proof}

\subsection{\textsc{Symbolic Iterated Integrals}}
An alternative way of defining the symbolic integral $\int_{a+B}^{b+B}p(x)dx$ of the previous subsection, is by using the following operator on polynomials.
\begin{defn} Define an $\R$-linear operator $\B : \R[x] \rightarrow \R [x]$ by $\B(x^m) := B_m (x) $ for $m\geq0$.
\end{defn}
\noindent In particular, $\B[C]=C$ for constants $C\in\R$. Since $B_m(x)=(x+B)^m$ in the notation of the previous section, thus for any polynomial $p(x)\in\R[x]$ then $\B(p(x))=p(x+B)$. This extends to $\Rxh$.
\begin{lem}\label{intaBbB} If $a,b \in \R$ and $p(x) \in \R[x]$ then $ \int_{a+ B }^{b+ B } p(x) dx = \int_a ^b \B[p(x)] dx $.
\end{lem}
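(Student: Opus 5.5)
The identity is linear in $p$, and both sides are computed by first evaluating a real integral and only afterwards replacing $B^n$ by $B_n$. So it suffices to check it on monomials $p(x)=x^m$ and then extend by $\R$-linearity.

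First I make the left side explicit. For $p(x)=x^m$ the antiderivative is $F(x)=\frac{x^{m+1}}{m+1}$. By the definition of the symbolic integral,
\[\int_{a+B}^{b+B}x^m\,dx=\frac{(b+B)^{m+1}-(a+B)^{m+1}}{m+1}.\]
This is first expanded as a polynomial in $B$ by the binomial theorem, and then each $B^k$ is evaluated as $B_k$. By Lemma \ref{Bpolyprop}(5) this evaluation gives
\[\frac{B_{m+1}(b)-B_{m+1}(a)}{m+1}.\]

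Next I compute the right side. By definition $\B[x^m]=B_m(x)$. Lemma \ref{Bpolyprop}(4) gives $B_{m+1}'(x)=(m+1)B_m(x)$, so $\frac{B_{m+1}(x)}{m+1}$ is an antiderivative of $B_m(x)$. The fundamental theorem of calculus then gives
\[\int_a^b\B[x^m]\,dx=\frac{B_{m+1}(b)-B_{m+1}(a)}{m+1}.\]
The two sides agree, so the identity holds for every monomial.

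Both sides are $\R$-linear in $p$: the symbolic evaluation map is linear, the integral is linear, and $\B$ is linear by definition. Hence the identity holds for all $p(x)\in\R[x]$.

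The one delicate point is the order of operations. The substitution $B^k\mapsto B_k$ is not multiplicative, so one must not evaluate $B$ before integrating. Here this causes no trouble, because the binomial expansion of $(b+B)^{m+1}$ is linear in the powers of $B$, and Lemma \ref{Bpolyprop}(5) applies to it directly.
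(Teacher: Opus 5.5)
Your proposal is correct and follows essentially the same approach as the paper: reduce to monomials, use Lemma \ref{Bpolyprop}(5) to identify $\frac{1}{m+1}\big((b+B)^{m+1}-(a+B)^{m+1}\big)$ with $\frac{1}{m+1}\big(B_{m+1}(b)-B_{m+1}(a)\big)$, and use Lemma \ref{Bpolyprop}(4) to see that this equals $\int_a^b B_m(x)\,dx$. Your added remark on the order of evaluation (binomially expand in $B$ first, then substitute $B^k\mapsto B_k$) is a correct clarification of a step the paper leaves implicit.
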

\begin{proof} On monomials, $\int_{a+ B }^{b+ B }  x^m dx = \frac{1}{m+1} ((b+B)^{m+1}-(a+B)^{m+1})=\frac1{m+1}( B_{m+1} (b) - B_{ m+1} (a) )$ which is equal to $\int_a^bB_m(x)dx$ because $\frac{d}{dx} B_{m+1}(x) = (m+1) B_m(x)$ by Lemma \ref{Bpolyprop}(4).
\end{proof}
\begin{lem} \label{diffBcommute} Differentiation and the $\B$-operator commute, that is, $\frac{d}{dx} \B[ p(x)] = \B [p'(x)]$ for all $p(z) \in \R [z]$.
\end{lem}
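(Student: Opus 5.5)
The plan is to use linearity and reduce to monomials. Both $\frac{d}{dx}$ and $\B$ are $\R$-linear operators on $\R[x]$, so it is enough to check the identity $\frac{d}{dx}\B[p(x)]=\B[p'(x)]$ for $p(x)=x^m$, $m\geq0$.

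For $m=0$ both sides vanish. On the left, $\B[1]=1$ since $\B$ fixes constants, and its derivative is $0$. On the right, $p'=0$, so $\B[0]=0$.

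For $m>0$ we use the definition $\B(x^m)=B_m(x)$ together with Lemma~\ref{Bpolyprop}(4):
\[
\frac{d}{dx}\B[x^m]=B_m'(x)=mB_{m-1}(x)=m\,\B[x^{m-1}]=\B[mx^{m-1}]=\B\!\left[\frac{d}{dx}x^m\right].
\]
This settles the monomial case, and linearity extends it to all $p\in\R[x]$.

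There is an alternative route through the symbolic description $\B(p(x))=p(x+B)$. There, differentiation in $x$ commutes with the formal shift by $B$ because evaluation of powers of $B$ happens last and is linear in the coefficients of $x^k$. The monomial computation is cleaner, though, so I would use it.

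The only point needing care is the degenerate case $m=0$, which Lemma~\ref{Bpolyprop}(4) excludes; I handle it separately above. I expect no real obstacle. If needed, the statement extends coefficientwise to $\Rxh$.
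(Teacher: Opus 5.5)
Your proof is correct and matches the paper's argument: reduce by linearity to monomials $x^m$, then apply $B_m'(x)=mB_{m-1}(x)$ from Lemma~\ref{Bpolyprop}(4). Your separate treatment of the constant case $m=0$ is a small bit of care that the paper leaves implicit.
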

\begin{proof} By linearity, it suffices to check for $p(x)=x^m$ where the fact follows from $\frac{d}{dx} B_m (x) = m B_{m-1}(x) $.
\end{proof}
\begin{lem}\label{intBcommute} On $\R[x]$, the indefinite integral (up to a constant) and $\B$ commute. For the definite integral, $ \int_{a+ B }^{b+ B } p(x) dx =\left( \B  \left[ \int p(x) dx \right]\right) \big|_{x=a}^{x=b}$ for any $p(x) \in \R[x]$ and constants $a,b \in \R$.
\end{lem}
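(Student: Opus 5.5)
The plan is to deduce both assertions from Lemma \ref{intaBbB} and Lemma \ref{diffBcommute}, using linearity to reduce everything to checks on monomials. The natural meaning of ``the indefinite integral commutes with $\B$'' is: if $P(x)$ is any antiderivative of $p(x)$, then $\B[P(x)]$ is an antiderivative of $\B[p(x)]$. Equivalently, $\B\left[\int p(x)\,dx\right]$ and $\int \B[p(x)]\,dx$ agree up to an additive constant.

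\emph{Step 1: the indefinite integral.} Let $P(x)\in\R[x]$ satisfy $P'=p$. By Lemma \ref{diffBcommute},
\[
\frac{d}{dx}\B[P(x)]=\B[P'(x)]=\B[p(x)],
\]
so $\B[P]$ is an antiderivative of $\B[p]$. A different choice of antiderivative changes $P$ by a constant $C$. Since $\B[C]=C$ and $\B$ is linear, $\B[P]$ also changes only by $C$. Hence the two operations commute up to a constant. As a direct check on monomials, with $P=x^{m+1}/(m+1)$ we get $\B[P]=B_{m+1}(x)/(m+1)$, and its derivative is $B_m(x)=\B[x^m]$ by Lemma \ref{Bpolyprop}(4).

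\emph{Step 2: the definite integral.} Lemma \ref{intaBbB} gives
\[
\int_{a+B}^{b+B}p(x)\,dx=\int_a^b \B[p(x)]\,dx .
\]
By Step 1, $\B[P]$ is an antiderivative of $\B[p]$, so the fundamental theorem of calculus turns the right-hand side into $\B[P](b)-\B[P](a)$. This is $\left(\B\left[\int p(x)\,dx\right]\right)\big|_{x=a}^{x=b}$. The evaluation does not depend on the constant of integration, because the constant cancels in the difference.

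Every step is routine. The only point requiring care is to make precise the phrase ``up to a constant'', that is, to check that $\B$ maps constants to themselves. This holds because $\B[x^0]=B_0(x)=1$.
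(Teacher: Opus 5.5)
Your proposal is correct and follows essentially the same route as the paper. You obtain the indefinite-integral statement from Lemma \ref{diffBcommute}, with the same explicit monomial check via $B_{m+1}'=(m+1)B_m$, and then the definite-integral formula by combining this with Lemma \ref{intaBbB} and the fundamental theorem of calculus.
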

\begin{proof}
    For the indefinite integral, the property follows immediately from Lemma \ref{diffBcommute}. It can also be explicitly checked:  $\B\left[\int x^m dx\right] = \B \left[\frac{x^{m+1}}{m+1}+C\right] = \frac{1}{m+1} B_{m+1} (x)+C$ while $\int \B[x^m ] dx =  \int B_m (x) dx = \frac{1}{m+1}B_{m+1}(x)+C$, since $\frac{d}{dx}B_{m+1}(x)=(m+1)B_m(x)$, so that the result holds on monomials and thus by linearity also on arbitrary polynomials. The second half on definite integrals follows immediately by combining with Lemma \ref{intaBbB}.
\end{proof}

\begin{lem}\label{symbolicinttranslationinversion}{\bf (Translation and central inversion of symbolic integrals)}
    Suppose that $f(x)\in\R[x]$, $c\in\R$. Then
    \begin{align*}
    \int_{a+ B }^{b+ B } f(x+c) dx &= \int_{a+c+B}^{b+c+B} f(x) dx\\
    \int_{1-b+B}^{1-a+B}f(-x)dx&=\int_{a+B}^{b+B} f(x) dx
\end{align*}
\end{lem}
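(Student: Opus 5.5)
Both identities are statements about polynomials, so I would prove them with the $\B$-operator description of symbolic integrals (Lemma \ref{intaBbB} and Lemma \ref{intBcommute}). By linearity in $f$, it suffices to treat monomials $f(x)=x^m$, or equivalently arbitrary $f$ with an antiderivative $F\in\R[x]$, $F'=f$. Throughout, the symbol $B$ is only evaluated at the very end, so any algebraic manipulation that holds for polynomials with $B$ treated as a formal parameter is legitimate.

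For the translation identity, note that $x\mapsto F(x+c)$ is an antiderivative of $f(x+c)$. By definition of the symbolic integral (evaluate the ordinary integral with $B$ as a parameter, then replace $B^n$ by $B_n$), the left side equals $F(b+B+c)-F(a+B+c)$ before evaluation. The right side is $F(b+c+B)-F(a+c+B)$ before evaluation. These are the same polynomial in $B$, so they give the same value after evaluation. This part is immediate.

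For the central inversion identity, an antiderivative of $f(-x)$ is $-F(-x)$. The left side is therefore $F(b-1-B)-F(a-1-B)$ before evaluation, while the right side is $F(b+B)-F(a+B)$. The key fact to establish is that, after evaluation, $p(-1-B)$ and $p(B)$ agree for every polynomial $p$ (with the shift absorbed into $p$). Applying this to $p(y)=F(b+y)$ and $p(y)=F(a+y)$ then gives the result.

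To prove this fact, it suffices by linearity and by expanding binomially around $t$ to show that $(t-1-B)^n$ and $(t+B)^n$ evaluate to the same thing for all real $t$ and $n\ge0$. By Lemma \ref{Bpolyprop}(5), $(t+B)^n=B_n(t)$. For the other side,
\[
(t-1-B)^n=(-1)^n(1-t+B)^n=(-1)^nB_n(1-t),
\]
which equals $B_n(t)$ by Lemma \ref{Bpolyprop}(3).

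The main obstacle is bookkeeping rather than any deep step. The symbolic evaluation $B^n\mapsto B_n$ is not a ring homomorphism, so I must make sure every manipulation is linear in powers of $B$. Writing $F(\text{const}\pm B)$ as a linear combination of $(\text{const}\pm B)^n$ with real coefficients, and only then using the identity $(-1)^n(1-t+B)^n=(t+B)^n$ termwise, does exactly this. Finally, both identities extend to formal power series in $h$ with polynomial coefficients by applying them coefficientwise, as the symbolic integral is defined term by term.
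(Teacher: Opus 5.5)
Your proof is correct and is essentially the paper's argument. You use the primitives $F(x+c)$ and $-F(-x)$, and you reduce the inversion identity to the evaluated equality $(t-1-B)^n=(t+B)^n$ via Lemma \ref{Bpolyprop}(3),(5); this is exactly the paper's identity $F(1-x+B)=F(-x-B)$ with $t=1-x$, and the $\B$-operator framing you announce at the outset is never actually needed.
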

\begin{proof}
    Pick a primitive $F$ for $f$. Then $F(x+c)$ is a primitive for $f(x+c)$ while $-F(-x)$ is a primitive for $f(-x)$ and so
  \begin{align*}
    \int_{a+ B }^{b+ B } f(x+c) dx &= F(b+c+B)-F(a+c+B)=\int_{a+c+B}^{b+c+B} f(x) dx\\
    \int_{1-b+B}^{1-a+B}f(-x)dx&=(-F(a-1-B))-(-F(b-1-B))=F(b+B)-F(a+B)=\int_{a+B}^{b+B} f(x) dx
\end{align*}
The second equality in the second line follows from Lemma \ref{Bpolyprop}(3),(5) which gives that $(1-x+B)^n=B_n(1-x)=(-1)^nB_n(x)=(-1)^n(x+B)^n=(-x-B)^n$ and so for any polynomial $F$, $F(1-x+B)=F(-x-B)$.
\end{proof}

\medskip\noindent \textbf{More than one variable:} On polynomials in more than one variable, different $\B$ operators can be defined for each variable. Thus $\B_x$ and $\B_y$ will be two commuting operators on $\R[x,y]$ defined by
\[\B_x(x^my^n):=B_m(x)y^n,\qquad\B_y(x^my^n):=x^mB_n(y)\]

\begin{defn} (\textbf{Symbolic double integral}) For $g(x),h(x)\in\R[x]$, define the symbolic repeated integral $\int_{x_1+B}^{x_2+B}\int_{g(x)+B}^{h(x)+B}p(x,y)dydx$ for $x_1,x_2\in\R$, polynomials $g(x),h(x)\in\R[x]$ and $p(x,y)\in\R[x,y]$ to be $\R$-linear in $p$ with
\begin{align*}\int_{x_1+B}^{x_2+B}\int_{g(x)+B}^{h(x)+B}x^my^ndydx
&:=\int_{x_1+B}^{x_2+B}\frac{x^m}{n+1}\big(B_{n+1}(h(x))-B_{n+1}(g(x))\big)dx\\
&=\int_{x_1+B}^{x_2+B}\left(\int_{g(x)}^{h(x)}\B_y(x^my^n)dy\right)dx
=\int_{x_1}^{x_2}\B_x\left[\left(\int_{g(x)}^{h(x)}\B_y(x^my^n)dy\right)\right]dx
\end{align*}\end{defn}
\noindent On a rectangular domain,
\[
\int_{x_1+B}^{x_2+B}\int_{y_1+B}^{y_2+B}x^my^ndydx
=\int_{x_1}^{x_2}\B_x\left[\int_{y_1}^{y_2}\B_y[x^my^n]dy\right]dx
=\int_{x_1}^{x_2}\left(\int_{y_1}^{y_2}\B_x[\B_y[x^my^n]]dy\right)dx
\]
and so for all polynomials $p(x,y)$,
\[
\int_{x_1+B}^{x_2+B}\int_{y_1+B}^{y_2+B}p(x,y)dydx
=\int_{x_1}^{x_2}\int_{y_1}^{y_2}\B_x\B_y[p(x,y)]dydx
\]
where the order of integration can be reversed.

\noindent
Applying \ref{EulerMaclaurin} twice, we see that if $x_1,x_2\in\Z$ with $x_1<x_2$ and $g(x),h(x)\in\P(x;\Z)$ with $g(x)<h(x)$ for $x\in\Z$ with $x_1\leq{}x<x_2$, then
\[\int_{x_1+B}^{x_2+B}\int_{g(x)+B}^{h(x)+B}p(x,y)dydx=\sum_{m=x_1}^{x_2-1}\sum_{n=g(m)}^{h(m)-1}p(m,n)
\]

\begin{thm}\label{2dBintegralbelonging} Suppose that $g(z) \in \P(z;\ZK)$, $w_0,z_0 \in\ZK$ and $\p(z,w;q) \in \PzwZKh$. Then
\[\int_B ^{z_0 + B }\int_B ^{w_0 - g(z) + B  } \p(z,w;q) dw dz\in\ZKh\>.\]
\end{thm}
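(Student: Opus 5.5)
The plan is to reduce the double symbolic integral to two successive applications of the one-variable result, Lemma~\ref{Bintegralbelonging}, keeping track of the variable $z$ as a polynomial parameter.

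\textbf{Step 1: the inner integral.} Write $\p(z,w;q)=\sum_n h^n\sum_{r,s}p_{n,r,s}\binom{z}{r}\binom{w}{s}$ with $p_{n,r,s}\in\ZK$. By the definition of the symbolic double integral, the inner integral is linear in the integrand. Lemma~\ref{integralcombination}, applied in the variable $w$, gives
\[
\int_B^{u+B}\binom{w}{s}\,dw=\binom{u}{s+1}
\]
as a polynomial identity in $u$. Hence
\[
I(z,u;q):=\int_B^{u+B}\p(z,w;q)\,dw=\sum_n h^n\sum_{r,s}p_{n,r,s}\binom{z}{r}\binom{u}{s+1},
\]
which lies in $\P(z,u;\ZK)[[h]]$.

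\textbf{Step 2: substitute the upper limit.} Now set $u=w_0-g(z)$. The coefficient of each $h^n$ is then a polynomial in $z$ of the form $\sum_{r,s} p_{n,r,s}\binom{z}{r}\binom{w_0-g(z)}{s+1}$. We claim it lies in $\PzZK$. For integer $z$, $g(z)\in\ZK$ by Lemma~\ref{PZKonZK}, so $w_0-g(z)\in\ZK$. Lemma~\ref{combinationbelongs} then gives $\binom{w_0-g(z)}{s+1}\in\ZK$, and $\binom{z}{r}\in\Z$. So each coefficient takes $\ZK$ values on $\Z$ (or on $\N$), and Lemma~\ref{P(z,R)characterization} places it in $\PzZK$. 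Thus $J(z;q):=I(z,w_0-g(z);q)\in\PzZKh$.

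One point needs care here: the double-integral definition evaluates the inner Bernoulli symbol $\B_y$ before the outer $\B_x$. Substituting $h(x)=w_0-g(x)$ into $B_{n+1}(h(x))-B_{n+1}(0)$ is exactly the polynomial $I(z,w_0-g(z))$ viewed as a polynomial in $z$, with the outer $\B$ not yet applied. So the iterated structure is compatible with first forming $J(z;q)$ as a genuine polynomial in $z$ and then integrating.

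\textbf{Step 3: the outer integral.} The double integral now equals $\int_B^{z_0+B}J(z;q)\,dz$, and Lemma~\ref{Bintegralbelonging} gives that this lies in $\ZKh$ for $z_0\in\ZK$. This completes the argument.

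The main obstacle is Step 2: verifying that substituting a $\P(z;\ZK)$ polynomial into $\binom{u}{s+1}$ stays inside $\P(z;\ZK)$, and that doing so is legitimate within the symbolic (umbral) evaluation order. The integrality half is handled by the values-on-integers criterion of Lemma~\ref{P(z,R)characterization} together with Lemma~\ref{combinationbelongs}. The order-of-evaluation half is a direct unwinding of the definition, which I expect to be routine bookkeeping.
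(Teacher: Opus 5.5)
Your proposal is correct and follows essentially the same route as the paper: evaluate the inner integral with Lemma~\ref{integralcombination}, show that each resulting coefficient $\sum p_{n,r,s}\binom{z}{r}\binom{w_0-g(z)}{s+1}$ lies in $\PzZK$ using Lemma~\ref{combinationbelongs} and Lemma~\ref{P(z,R)characterization}, and then handle the outer integral. The only difference is cosmetic: the paper re-expands that coefficient in the binomial basis and applies Lemma~\ref{integralcombination} and Lemma~\ref{combinationbelongs} again, whereas you cite Lemma~\ref{Bintegralbelonging}, which packages exactly that argument.
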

\begin{proof} Write $\p(z,w;q) = \sum_{n=0}^\infty  h^n p_n(z,w)$ for some $p_n(z,w)\in\PzwZK$. By definition, the iterated integral is
\[
F(z_0,w_0):=\int_{B}^{z_0 +B } \int_{ B}^{w_0 -g(z) +B  }\p(z,w;q) dw dz
=\sum\limits_{n=0}^\infty{}c_n(z_0,w_0)h^n\]
where $c_n(z_0,w_0)=\int_B ^{z_0 + B } \int_B ^{w_0 -g(z) + B } p_n(z,w)  dw dz$.
We need to show that $c_n(z_0,w_0)\in\ZK$ for all $z_0,w_0\in\ZK$ and $n\in\Z^{\geq0}$.
Since $p_n(z,w)\in\PzwZK$, by definition $p_n(z,w)=\sum_{k,l=0}^{N_n}a_{k,l,n}\binom{z}{k}\binom{w}{l}$ for some $N_n\in\N$ and $a_{k,l,n}\in\ZK$ for $0\leq{}k,l\leq{}N_n$. Now
\[c_n(z_0,w_0)=\sum_{k,l=0}^{N_n}a_{k,l,n}\int_B ^{z_0 + B } \int_B ^{w_0 -g(z) + B } \binom{z}{k}\binom{w}{l}  dw dz
=\sum_{k,l=0}^{N_n}a_{k,l,n}\int_B ^{z_0 + B }\binom{z}{k}\binom{w_0-g(z)}{l+1} dz
\]
where the inner symbolic integral was evaluated by Lemma \ref{integralcombination}. For fixed $k,l$, the integrand in the expression on the RHS takes values in $\ZK$ for all $z\in\Z$ because $w_0\in\ZK$ and $g(z)\in\ZK$ along with Lemma \ref{combinationbelongs}. This integrand is a polynomial in $z$ and hence by Lemma \ref{P(z,R)characterization} lies in $\P(z;\ZK)$, meaning that
\[
\binom{z}{k}\binom{w_0-g(z)}{l+1}=\sum\limits_{r=0}^{M_{k,l}}b_{k,l,r}\binom{z}{r}
\]
for some $M_{k,l}\in\N$ and coefficients $b_{k,l,r}\in\ZK$. Substituting this in the previous expression for $c_n(z_0,w_0)$ we get
\[
c_n(z_0,w_0)=\sum_{k,l=0}^{N_n}a_{k,l,n}\sum_{r=0}^{M_{k,l}}b_{k,l,r}\int_B ^{z_0 + B }\binom{z}{r} dz
=\sum_{k,l=0}^{N_n}a_{k,l,n}\sum_{r=0}^{M_{k,l}}b_{k,l,r}\binom{z_0}{r+1}
\]
where the last symbolic integral evaluation was by Lemma \ref{integralcombination}. Finally since $z_0,a_{k,l,n},b_{k,l,r}\in\ZK$ along with Lemma \ref{combinationbelongs}, we conclude that $c_n(z_0,w_0)\in\ZK$ as required.
\end{proof}

\begin{lem}\label{BBintegralsquakgqpowerf}
If $z_0,w_0\in\Q$, $g(z)\in\Q[z]$ and $p(z,w),h(z,w)\in\Q[z,w]$ then
\[\int_B ^{z_0 + B }\int_B ^{w_0 - g(z) + B  } h(z,w)q^{p(z,w)} dw dz\]
defines a squak.
\end{lem}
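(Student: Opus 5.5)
To show the double symbolic integral defines a squak, I need that it lies in $\ZKh$ for all sufficiently large primes $K$. The plan is to reduce to Theorem~\ref{2dBintegralbelonging}, exactly as Lemma~\ref{Bintegralgqpowersquak} follows from Lemma~\ref{Bintegralgqpowerf} in one variable.

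\textbf{Step 1: choose the excluded primes.} Only finitely many rational numbers enter the data: $z_0$, $w_0$, the coefficients of $g$, $h$ and $p$. Let $D\in\N$ be a common denominator of all of them. For every prime $K>2$ with $K\nmid D$, each of these numbers lies in $\ZK$. Hence $z_0,w_0\in\ZK$, and $g(z)\in\ZK[z]\subset\PzZK$ by the last assertion of Lemma~\ref{P(z,R)characterization}. Likewise $p(z,w),h(z,w)\in\ZK[z,w]$.

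\textbf{Step 2: the integrand lies in $\PzwZKh$.} Since $\binom{z}{m}\binom{w}{n}$ has integer values, any polynomial in $\ZK[z,w]$ takes values in $\ZK$ at integer points. By Proposition~\ref{2dPxyRcharacterisation}, this gives $\ZK[z,w]\subset\PzwZK$. In particular $p(z,w)\in\PzwZK$, so Lemma~\ref{qpowerfzw} gives $q^{p(z,w)}\in\PzwZKh$.

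Next, multiplying by $h(z,w)\in\PzwZK$ keeps us in $\PzwZKh$. To see this, note that each coefficient of $h^n$ in the product is a polynomial in $z,w$. Evaluated at integer points it gives a finite $\ZK$-linear combination of values in $\ZK$, so Proposition~\ref{2dPxyRcharacterisation} applies again. Therefore the integrand $\p(z,w;q)=h(z,w)q^{p(z,w)}$ lies in $\PzwZKh$.

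\textbf{Step 3: conclude.} Theorem~\ref{2dBintegralbelonging} applies to $\p$, $g$, $z_0$ and $w_0$. It shows the double integral lies in $\ZKh$ for every prime $K\nmid 2D$, which is all but finitely many primes. The integral is also a well-defined element of $\Q[[h]]$ for every $K$, since it does not depend on $K$ at all. So it is a squak, in fact one independent of $K$.

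\textbf{Main obstacle.} The only point needing care is the closure claim in Step 2, that $\PzwZKh$ is stable under multiplication by elements of $\PzwZK$. I will handle it, as above, through the integer-value characterization in Proposition~\ref{2dPxyRcharacterisation}, applied coefficientwise in $h$. Each coefficient of $h^n$ is a polynomial, so the characterization applies directly.
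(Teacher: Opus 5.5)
Your proposal is correct and matches the paper's intended argument. The paper states this lemma without proof, directly after Theorem~\ref{2dBintegralbelonging}, as the two-variable analogue of Lemma~\ref{Bintegralgqpowersquak}. The argument is exactly yours: exclude the finitely many primes dividing denominators so that $z_0,w_0\in\ZK$, $g\in\PzZK$ and $h(z,w)q^{p(z,w)}\in\PzwZKh$, then apply that theorem. Your explicit coefficientwise check, via Proposition~\ref{2dPxyRcharacterisation}, that multiplying by the polynomial $h(z,w)$ stays in $\PzwZKh$ supplies the one detail the paper leaves implicit.
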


\subsection{\textsc{The Carry-Over Theorems}}
\begin{thm}\label{carry-overlemma2}
Suppose that $\p(z;q)\in\PzZKh$ and put $\c(z;q)=\p(z+ K;q)-\p(z;q)$. If $f(z)\in\ZKh$ and $a\in\ZK$ then in $\ZKh$
\[
\int_{a+B}^{a+K+B} q^{f(z)} \p(z;q) dz \ePK\int_B^{K+B} q^{f(z)} \p(z;q)dz +\int_B^{a+B}q^{f(z )}\c(z;q) dz
 \]
\end{thm}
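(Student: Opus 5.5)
The plan is to split the range of integration, translate the shifted piece back by $K$, and then replace $q^{f(z+K)}$ by $q^{f(z)}$ modulo $\Phi_K$. I read the hypothesis on $f$ as $f(z)\in\ZK[z]$, or more generally $f\in\PzZK$ with $f(z_0+K)\eK f(z_0)$ for all integers $z_0$. This is what is needed, and it holds for the quadratic $f$ in the applications.

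\textbf{Step 1 (splitting).} The symbolic integral is additive under subdivision of the range, so
\[
\int_{a+B}^{a+K+B}=\int_{B}^{K+B}+\int_{K+B}^{a+K+B}-\int_{B}^{a+B}.
\]
For the middle term, apply the translation rule of Lemma \ref{symbolicinttranslationinversion} with $c=K$:
\[
\int_{K+B}^{a+K+B}q^{f(z)}\p(z;q)\,dz=\int_B^{a+B}q^{f(z+K)}\p(z+K;q)\,dz .
\]
Since everything is a formal power series in $h$ with polynomial coefficients, this is applied coefficientwise. It is then enough to show
\[
\int_B^{a+B}q^{f(z+K)}\p(z+K;q)\,dz\ePK\int_B^{a+B}q^{f(z)}\p(z+K;q)\,dz .
\]
Once this holds, subtracting $\int_B^{a+B}q^{f(z)}\p(z;q)\,dz$ produces exactly $\int_B^{a+B}q^{f(z)}\c(z;q)\,dz$, which is the claimed congruence.

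\textbf{Step 2 (integrality).} All the terms must lie in $\ZKh$. By Lemma \ref{qpowerfz}, $q^{f(z)}$ and $q^{f(z+K)}$ lie in $\PzZKh$. The shifted series $\p(z+K;q)$ also lies in $\PzZKh$, because its values at integers lie in $\ZKh$ (Lemma \ref{P(z,R)characterization}). Products of such series stay in $\PzZKh$ by the same characterisation. Lemma \ref{Bintegralbelonging} then places each symbolic integral with limits in $\ZK$ inside $\ZKh$, and by subtraction this covers the limits $K+B$ and $a+K+B$ as well.

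\textbf{Step 3 (the key congruence, main obstacle).} The crucial claim is $q^{f(z+K)}\ePK q^{f(z)}$ in $\PzZKh$. By Lemma \ref{pzqmodPhicharacterisation} it suffices to check this at each integer $z_0$. There $f(z_0+K)-f(z_0)\eK 0$, because $f$ has $\ZK$ coefficients, and Proposition \ref{modKmodphiK} gives $q^{f(z_0+K)}\ePK q^{f(z_0)}$. Multiplying by $\p(z+K;q)\in\PzZKh$ preserves the congruence. Lemma \ref{Bintegralcongruence}, which holds equally for integrals from $B$ to $a+B$ with $a\in\ZK$, then carries it through the symbolic integral, completing the argument.

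I expect the only delicate point to be this step, namely justifying $f(z_0+K)\eK f(z_0)$. If $f$ were only assumed to lie in $\PzZK$, this could fail for binomial coefficients of degree at least $K$. The remedy is either to take $K>\deg f$, which is harmless since only sufficiently large primes matter, or to assume $f\in\ZK[z]$ as intended.
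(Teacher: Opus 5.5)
Your proposal is correct and follows the paper's proof essentially step for step. You split the range at $K+B$, translate the middle piece by $K$, obtain $q^{f(z+K)}\ePK q^{f(z)}$ in $\PzZKh$ via Lemma \ref{pzqmodPhicharacterisation} and Proposition \ref{modKmodphiK}, and carry the congruence through the symbolic integral with Lemma \ref{Bintegralcongruence}. Your reading of the hypothesis as $f(z)\in\ZK[z]$ is also what the paper's own proof uses; the "$f(z)\in\ZKh$" in the statement is a typo.
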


\begin{proof}
Since $f(z)\in\ZK[z]$ then by Lemma \ref{qpowerfz}, $q^{f(z)}\in\PzZKh$ so that the integrands lie in $\PzZKh$ and thus by Lemma \ref{Bintegralbelonging}, all the integrals lie in $\ZKh$. Splitting the range of integration,
\[
\int_{a+B}^{a+K+B}q^{f(z)}\p(z;q)dz
=\int_{B}^{K+B}q^{f(z)}\p(z;q)dz+\int_{K+B}^{a+K+B}q^{f(z)}\p(z;q)dz-\int_{B}^{a+B}q^{f(z)}\p(z;q)dz
\]
Since $f(z)\in\ZK[z]$, thus for all $z_0\in\Z$, $f(z_0+K)\eK{}f(z_0)$ and so by Lemma \ref{modKmodphiK}, $q^{f(z_0+K)}\ePK{}q^{f(z_0)}$ in $\ZKh$. By Lemma \ref{pzqmodPhicharacterisation}, $q^{f(z+K)}\ePK{}q^{f(z)}$ in $\PzZKh$. Since $\p(z+ K;q)=\ p(z;q) + \c(z;q)$, thus
\[ q^{f(z+K)}\p(z+ K;q)\ePK{}q^{f(z)}\p(z;q) + q^{f(z)}\c(z;q) \]
in $\PzZKh$ and thus the result follows by translation of the integrand and Lemma \ref{Bintegralcongruence},
\[
\int_{K+B}^{a+K+B}q^{f(z)}\p(z;q)dz
=\int_B^{a+B}q^{f(z+K)}\p(z+ K;q)dz\ePK\int_B^{a+B}q^{f(z)}\p(z;q)dz + \int_B^{a+B}q^{f(z)}\c(z;q)dz
\]
\end{proof}

\begin{thm}\label{generalcarry-over}  Suppose that $\p(z;q),\c(z;q)\in\PzZKh$ satisfy $\p(z+K;q)\ePK\p(z;q)+\c(z;q)$ and $\c(z+K;q)\ePK\c(z;q)$. If $f(z)\in\ZK[z]$ and  $a,\alpha\in\ZK$, then in $\ZKh$
\[
\int_{a+B}^{a+\alpha{}K+B}q^{f(z)}\p(z;q)dz
\ePK\alpha\int_B^{K+B}q^{f(z)} \p(z;q)dz + \alpha\int_B^{a+B}q^{f(z)}\c(z;q)dz
    + \frac12\alpha(\alpha-1)\int_B^{K+B}q^{f(z)}\c(q;z)dz
\]
\end{thm}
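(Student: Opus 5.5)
The plan is to reduce everything to the function $F(x):=\int_{B}^{x+B}q^{f(z)}\p(z;q)\,dz\in\PzZKh$ (Lemma \ref{Bintegralgqpowerf}), together with $C(x):=\int_B^{x+B}q^{f(z)}\c(z;q)\,dz$ and the constants $I_p=F(K)$, $I_c=C(K)$. The left hand side of the statement is $F(a+\alpha K)-F(a)$. It therefore suffices to establish, as an identity in $\P(x;\ZK)[[h]]$ in the single variable $x$ (with $a$ fixed),
\[
F(a+xK)-F(a)\ePK x I_p + x C(a) + \tfrac12 x(x-1) I_c ,
\]
and then substitute $x=\alpha\in\ZK$.

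\textbf{Step 1: a difference equation.} Take the general carry-over computation of Theorem \ref{carry-overlemma2}, but use the congruence hypotheses $\p(z+K)\ePK\p(z)+\c(z)$ and $q^{f(z+K)}\ePK q^{f(z)}$ (from Proposition \ref{modKmodphiK} and Lemma \ref{pzqmodPhicharacterisation}) instead of an exact difference. Translation (Lemma \ref{symbolicinttranslationinversion}) and Lemma \ref{Bintegralcongruence} then give
\[
F(y+K)-F(y)\ePK I_p + C(y)\quad\text{in }\P(y;\ZK)[[h]].
\]
Applying the same argument to $\c$, using $\c(z+K)\ePK\c(z)$, gives $C(y+K)-C(y)\ePK I_c$.

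\textbf{Step 2: integer values of $x$.} Let $D(x)$ denote the difference between the two sides of the target congruence; it is a polynomial in $x$ with coefficients in $\Q[[h]]$. For $x=n\in\N$, telescope Step 1 over $y=a,a+K,\dots,a+(n-1)K$:
\[
F(a+nK)-F(a)\ePK nI_p+\sum_{j=0}^{n-1}C(a+jK)\ePK nI_p+nC(a)+\tbinom n2 I_c .
\]
This shows $D(n)\ePK0$ for all $n\in\N$. Negative integers follow the same way, or Lemma \ref{pzqmodPhicharacterisation} only needs $\N$ via Lemma \ref{P(z,R)characterization}.

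\textbf{Step 3: passing from integers to $\alpha\in\ZK$.} This is the main obstacle. I would first check that $D(x)\in\P(x;\ZK)[[h]]$. The term $F(a+xK)$ is integral at integer $x$ by Lemma \ref{Bintegralgqpowerf}. The term $\tfrac12x(x-1)I_c=\binom x2 I_c$ is fine because $K$ is odd. Lemma \ref{P(z,R)characterization} then puts $D$ in $\P(x;\ZK)[[h]]$. Once that holds, Lemma \ref{pzqmodPhicharacterisation} upgrades "$D(n)\ePK0$ for all integers $n$" to $D(x)\ePK0$ identically, and hence to $D(\alpha)\ePK0$ for all $\alpha\in\ZK$.

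The delicate point is that the telescoping congruences in Step 2 hold pointwise in $n$ but with remainders depending on $n$. Lemma \ref{pzqmodPhicharacterisation} handles exactly this, since it works coefficientwise on $\Phi_K^{-1}D(x)$, whose coefficients are polynomials in $x$ taking $\ZK$ values on integers. I would verify carefully that $\Phi_K^{-1}D(x)$ has polynomial coefficients, which holds because $\Phi_K$ is invertible in $\Q[[h]]$.
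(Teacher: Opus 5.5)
Your proposal is correct and is essentially the paper's proof. Both arguments use the polynomial dependence on $\alpha$ together with Lemma~\ref{pzqmodPhicharacterisation} to reduce to integer $\alpha$, and then telescope the one-step carry-over (Theorem~\ref{carry-overlemma2}, applied first to $\c$ and then to $\p$) over intervals of length $K$. Your Step 1 re-derives that one-step congruence directly from the hypotheses $\p(z+K;q)\ePK\p(z;q)+\c(z;q)$ and $\c(z+K;q)\ePK\c(z;q)$, whereas the paper cites Theorem~\ref{carry-overlemma2}, whose $\c$ is an exact difference; your version is if anything slightly more careful.
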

\begin{proof}
Since $q^{f(z)}\p(z;q)\in\PzZKh$, thus the integral on the left hand side of the equality to be proved is an element of $\ZKh$, all of whose coefficients depend polynomially  on $\alpha$, that is, it is an element of $\PalphaZKh$. The right hand side is also an element of $\PalphaZKh$ and so by Lemma \ref{pzqmodPhicharacterisation}, it is sufficient to prove their congruence modulo $\Phi_K$ for all $\alpha\in\Z$ in order to guarantee their congruence modulo $\Phi_K$ for all $\alpha\in\ZK$.

\noindent By Theorem \ref{carry-overlemma2} applied to $\c(z;q)$, $\int_{a+B}^{a+K+B} q^{f(z)} \c(z;q) dz \ePK\int_B^{K+B} q^{f(z)} \c(z;q)dz$. Combining several intervals of length $K$, thus for all $m\in\Z$,
\[\int_{a+B}^{a+mK+B} q^{f(z)} \c(z;q) dz \ePK{}m\int_B^{K+B} q^{f(z)} \c(z;q)dz\]
Consequently, applying Theorem \ref{carry-overlemma2} to $\p(z;q)$,
\begin{align*}\int_{a+mK+B}^{a+(m+1)K+B}q^{f(z)}\p(z;q)dz
&\ePK\int_{B}^{K+B}q^{f(z)}\p(z;q)dz+\int_B^{a+mK+B}q^{f(z )}\c(z;q) dz\\
&\ePK\int_{B}^{K+B}q^{f(z)}\p(z;q)dz+\int_B^{a+B}q^{f(z )}\c(z;q) dz+m\int_B^{K+B}q^{f(z )}\c(z;q) dz
\end{align*}
Summing this over $m$ ranging from $0,1,\ldots,n-1$ gives the result in the theorem for $\alpha=n\in\N$, while summing it for $m$ ranging over $-1,\ldots,-n$ gives the result for $\alpha=-n$, $n\in\N$.
\end{proof}

\noindent When $\p(z;q)=1$, $\c(z;q)=0$ we obtain Lemma 5.0.2 \cite{Law95} as a corollary of Theorem \ref{generalcarry-over}.
\begin{lem}\label{1dcarry-overcorollary} If $f(z)\in\ZK[z]$ and $a\eK{}b$ in $\ZK$, then $\int\limits_{a+ B }^{b+ B } q^{f(z)} dz \ePK \frac{b-a}{K} \int\limits_B ^{K+ B} q^{f(z)} dz$ in $\ZKh$.
\end{lem}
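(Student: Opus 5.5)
The plan is to specialise Theorem \ref{generalcarry-over} to the trivial choice $\p(z;q)=1$ and $\c(z;q)=0$. Both lie in $\PzZKh$, since $1=\binom{z}{0}$. The hypotheses hold trivially: $\p(z+K;q)=1=\p(z;q)+\c(z;q)$ and $\c(z+K;q)=0=\c(z;q)$, with actual equalities, so certainly congruences mod $\Phi_K$.

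Given $a\eK b$ in $\ZK$, set $\alpha=\frac{b-a}{K}$. Since $v_K(b-a)\geq1$, this gives $\alpha\in\ZK$, and $b=a+\alpha K$. Theorem \ref{generalcarry-over} then gives, in $\ZKh$,
\[
\int_{a+B}^{b+B}q^{f(z)}dz=\int_{a+B}^{a+\alpha K+B}q^{f(z)}dz\ePK\alpha\int_B^{K+B}q^{f(z)}dz+0+0.
\]
The two correction terms vanish because their integrands contain the factor $\c=0$. This is exactly the claimed congruence.

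The only point needing a remark is well-definedness: both sides must lie in $\ZKh$. Since $f(z)\in\ZK[z]$, Lemma \ref{qpowerfz} gives $q^{f(z)}\in\PzZKh$. Lemma \ref{Bintegralbelonging}, applied by subtracting two integrals with endpoints $a$ and $b$ in $\ZK$, then places both integrals in $\ZKh$. Multiplying by $\alpha\in\ZK$ keeps the right-hand side in $\ZKh$. There is no genuine obstacle here, because the substance (passing from integer $\alpha$ to $\alpha\in\ZK$ via Lemma \ref{pzqmodPhicharacterisation}) is already contained in Theorem \ref{generalcarry-over}.
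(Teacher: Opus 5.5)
Your proposal is correct and follows the paper's route: the lemma is obtained as the special case $\p(z;q)=1$, $\c(z;q)=0$ of Theorem \ref{generalcarry-over}, with $\alpha=\frac{b-a}{K}\in\ZK$, and both correction terms vanish. Your extra remark on well-definedness, via Lemmas \ref{qpowerfz} and \ref{Bintegralbelonging}, is a reasonable addition but not a change of approach.
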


\noindent Similarly this holds with an additional parameter.

\begin{lem}\label{1dcarry-overcorollaryzw} If $f(z,w)\in\ZK[z,w]$ and $a(z)\eK{}b(z)$ in $\PzZK$, then in $\PzZKh$,
\[\int\limits_{a(z)+ B }^{b(z)+ B } q^{f(z,w)} dw \ePK \frac{b(z)-a(z)}{K} \int\limits_B ^{K+ B} q^{f(z,w)} dw\]
\end{lem}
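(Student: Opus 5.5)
The plan is to reduce Lemma~\ref{1dcarry-overcorollaryzw} to the one-variable argument of Theorem~\ref{generalcarry-over} with $\p=1$, $\c=0$, carried out with $z$ as a parameter. The comparison is then made by evaluating at integer points: both sides can be regarded as elements of $\P(z;\ZK)[[h]]$, so by Lemma~\ref{pzqmodPhicharacterisation} it suffices to prove the congruence after substituting an arbitrary $z_0\in\Z$. I read the hypothesis as $f(z,w)\in\PzwZK$ (so $f(z_0,w)\in\ZK[w]$ up to the $\P$-basis), and $a(z)\eK b(z)$ as $\frac{b(z)-a(z)}{K}\in\PzZK$.

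\textbf{Step 1: integrality.} First I would check that both sides lie in $\PzZKh$.
\begin{itemize}
\item For the left-hand side, Lemma~\ref{qpowerfzw} gives $q^{f(z,w)}\in\PzwZKh$. The symbolic integral from $B$ to $a(z)+B$ (and to $b(z)+B$) is then in $\PzZKh$ by the two-variable analogue of Lemma~\ref{Bintegralbelonging}. The proof is the same as there: expand in binomials, use Lemma~\ref{integralcombination} for $\int_B^{a(z)+B}\binom{w}{l}dw=\binom{a(z)}{l+1}$, and note that $\binom{z}{k}\binom{a(z)}{l+1}$ takes $\ZK$-values at integers, so Lemma~\ref{P(z,R)characterization} applies.
\item For the right-hand side, $\frac{b(z)-a(z)}{K}\in\PzZK$ by hypothesis, and $\int_B^{K+B}q^{f(z,w)}dw$ lies in $\PzZKh$ for the same reason.
\end{itemize}

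\textbf{Step 2: specialization.} Fix $z_0\in\Z$. Then $f(z_0,w)$ has $\ZK$-values at integer $w$, so $q^{f(z_0,w)}\in\P(w;\ZK)[[h]]$. Also $a(z_0),b(z_0)\in\ZK$ with $a(z_0)\eK b(z_0)$. Set $\alpha=\frac{b(z_0)-a(z_0)}{K}\in\ZK$ and apply Theorem~\ref{generalcarry-over} with $\p=1$, $\c=0$, integrand $q^{f(z_0,w)}$, and starting point $a(z_0)$. This gives exactly the specialized congruence
\[
\int_{a(z_0)+B}^{b(z_0)+B}q^{f(z_0,w)}\,dw\;\ePK\;\alpha\int_B^{K+B}q^{f(z_0,w)}\,dw .
\]

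\textbf{Technical point.} Theorem~\ref{carry-overlemma2} and Theorem~\ref{generalcarry-over} are stated for $f\in\ZK[z]$, whereas $f(z_0,\cdot)$ may only lie in $\P(w;\ZK)$. What their proofs actually use is that $f(w_0+K)\eK f(w_0)$ for integer $w_0$. This holds for $\P(w;\ZK)$-polynomials, because $\binom{w_0+K}{r}\eK\binom{w_0}{r}$ when $r<K$. That condition holds for $K$ large relative to the degree. Alternatively, this point disappears entirely if the hypothesis is read literally as $f\in\ZK[z,w]$.

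\textbf{Step 3: conclusion.} Since the congruence holds at every $z_0\in\Z$, Lemma~\ref{pzqmodPhicharacterisation} lifts it to a congruence in $\PzZKh$.

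The main obstacle is Step 1: making sure the difference of the two sides genuinely lies in $\PzZKh$, so that Lemma~\ref{pzqmodPhicharacterisation} applies. This requires the parametric version of Lemma~\ref{Bintegralbelonging}, handled by the binomial-expansion argument of Theorem~\ref{2dBintegralbelonging}.
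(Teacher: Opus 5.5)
Your proof is correct and follows the paper's intended route. The paper gives no separate argument beyond saying that Lemma~\ref{1dcarry-overcorollary} (the case $\p=1$, $\c=0$ of Theorem~\ref{generalcarry-over}) holds ``similarly with an additional parameter''; your specialization to $z_0\in\Z$, followed by lifting through Lemma~\ref{pzqmodPhicharacterisation}, is the standard way the paper passes between integer evaluations and congruences in $\PzZKh$, so it makes that remark rigorous. Since the hypothesis literally gives $f\in\ZK[z,w]$, your remark about $\P(w;\ZK)$-polynomials is not needed, though it is correct once $K$ exceeds the $w$-degree of $f$.
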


\subsection{\textsc{Bernoulli Multiplication Theorems}}

\begin{prop}\label{complementaryBintegrals}  For an even polynomial $p(z) \in \R[z^2]$ then $\int_{B}^{z+ B} p(w) dw =\B_z[ \int_0^z p(w) dw]+\frac12p(0)$. Furthermore
$\int_{B}^{1- z+ B} p(w) dw =p(0) -\int_{B}^{z+ B} p(w) dw$ and in particular $\int_{B}^{1+ B} p(w) dw =p(0)$ .
Similarly for $\p(z;q)\in\Rztwoh$ then $\int_{B}^{1-z+ B}\p(z;q)dz = \p(0;q)-\int_B^{z+B}\p(z;q)dz$.
\end{prop}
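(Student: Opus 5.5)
By linearity it suffices to treat monomials $p(w)=w^{2k}$, $k\geq0$; the statement for $\p(z;q)\in\Rztwoh$ then follows coefficientwise in $h$, since each coefficient is an even polynomial.

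\textbf{First identity.} By Lemma \ref{intBcommute},
\[
\int_B^{z+B}w^{2k}\,dw=\B_z\!\left[\frac{z^{2k+1}}{2k+1}\right]-\B_z\!\left[\frac{z^{2k+1}}{2k+1}\right]\Big|_{z=0}
=\B_z\!\left[\int_0^z w^{2k}dw\right]-\frac{B_{2k+1}}{2k+1}.
\]
Since odd Bernoulli numbers vanish beyond $B_1$, the constant term is $0$ for $k\geq1$. For $k=0$ it is $-B_1=\frac12=\frac12p(0)$. Meanwhile $p(0)=0$ for $k\geq1$ and $p(0)=1$ for $k=0$, which gives the first claim.

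\textbf{Complementary identity.} Write $G(z)=\int_B^{z+B}p(w)dw=\frac{1}{2k+1}\big(B_{2k+1}(z)-B_{2k+1}\big)$. By Lemma \ref{Bpolyprop}(3), $B_{2k+1}(1-z)=-B_{2k+1}(z)$, so
\[
G(1-z)=-\frac{1}{2k+1}\big(B_{2k+1}(z)+B_{2k+1}\big)=-G(z)-\frac{2B_{2k+1}}{2k+1}.
\]
The constant $-\frac{2B_{2k+1}}{2k+1}$ vanishes for $k\geq1$ and equals $1$ for $k=0$; in both cases it equals $p(0)$. This gives $G(1-z)=p(0)-G(z)$. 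Setting $z=0$ and using $G(0)=0$ yields $\int_B^{1+B}p(w)dw=p(0)$.

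\textbf{Alternative route.} One could instead apply the central inversion in Lemma \ref{symbolicinttranslationinversion} with $f$ even, giving $\int_{z+B}^{1+B}p=\int_{B}^{1-z+B}p$, together with the additivity $\int_B^{1+B}=\int_B^{z+B}+\int_{z+B}^{1+B}$.

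The only point needing care is the $k=0$ boundary term, which is where the constant $p(0)$ (and the $\frac12p(0)$) arises through $B_1=-\frac12$. Everything else is routine.
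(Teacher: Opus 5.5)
Your proof is correct and is essentially the paper's argument. You reduce by linearity to even monomials, note that the constant $-B_{2k+1}/(2k+1)$ survives only for $k=0$ (giving $\frac12p(0)$ via $B_1=-\frac12$), and obtain the complementary identity from $B_{2k+1}(1-z)=-B_{2k+1}(z)$ in Lemma~\ref{Bpolyprop}(3). Your alternative central-inversion route would also work, provided you take $\int_B^{1+B}p(w)\,dw=p(0)$ from Lemma~\ref{EulerMaclaurin} rather than from the identity being proved.
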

\begin{proof} Since $p(z)$ is even, thus  $p(z)= \sum_{n=0}^{\frac{1}{2}\deg(p)} a_{n}z ^{2n} $ for some $a_n\in\R$. Integrating,
\begin{align*}
 \int_{B}^{z+ B} &p(w) dw =\int_0^z \mathbb{B}_w [p(w)] dw = \int_0^z  \left(\sum_{n=0}^{\frac{1}{2}\deg(p)} a_{n} B_{2n}(w)
 \right) dw \\
 &=  \sum_{n=0}^{\frac{1}{2} \deg(p)} \frac{a_{n}}{2n+1} (B_{2n+1} (z) - B_{2n+1})
 =\sum_{n=0}^{\frac{1}{2} \deg(p)} \frac{a_{n}}{2n+1} B_{2n+1} (z) - a_0B_1
 =  \mathbb{B}_z \left[\int_0^z p(w) dw \right] +\frac12p(0)
\end{align*}
where the last but one step follows from the vanishing of odd-indexed Bernoulli numbers except for $B_1=-\frac12$ while $a_0 = p(0)$. By Lemma \ref{Bpolyprop}(3), $B_{2n+1}(1-z)=-B_{2n+1}(z)$ from which it follows that  $\int_{B}^{1- z+ B} p(w) dw =p(0) -\int_{B}^{z+ B} p(w) dw$. Setting $z=0$ gives $\int_B^{1+B}p(w)dw=p(0)$.
    \end{proof}

\begin{lem}\label{Bernoullimultiplicationlemma} For any $m\in \Z^{\geq0}$ and $x\in\R$
\begin{enumerate}
    \item Multiplication Theorem: $\sum\limits_{r=0}^{m-1}  B_n \left( x +  \frac{r}{m} \right) = m^{1-n} B_n(mx)$;
    \item Reverse Multiplication Theorem: $\sum\limits_{r=0}^{m-1} B_n \left( x - \frac{r}{m}\right) = (-1)^n m^{1-n} B_n \big(m (1-x)\big) $.
\end{enumerate}
\end{lem}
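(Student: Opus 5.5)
Both identities will be proved from the generating function \eqref{Bpolygenfn}, comparing coefficients of $t^n$; the second will then be deduced from the first using the reflection symmetry in Lemma \ref{Bpolyprop}(3). Throughout, $m\geq1$; the case $m=0$ is degenerate, since the sums are empty.

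For the Multiplication Theorem, sum \eqref{Bpolygenfn} over the shifted arguments $x+\frac{r}{m}$:
\[
\sum_{r=0}^{m-1}\sum_{n=0}^\infty B_n\!\left(x+\tfrac{r}{m}\right)\frac{t^n}{n!}
=\frac{te^{xt}}{e^t-1}\sum_{r=0}^{m-1}e^{rt/m}
=\frac{te^{xt}}{e^t-1}\cdot\frac{e^t-1}{e^{t/m}-1}
=\frac{te^{xt}}{e^{t/m}-1}.
\]
Next substitute $s=t/m$ in \eqref{Bpolygenfn}, evaluated at the point $mx$:
\[
\frac{te^{xt}}{e^{t/m}-1}=m\cdot\frac{s\,e^{(mx)s}}{e^s-1}=m\sum_{n}B_n(mx)\frac{t^n}{m^n n!}.
\]
Equating the coefficients of $t^n/n!$ gives $\sum_r B_n(x+\frac rm)=m^{1-n}B_n(mx)$. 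The only care needed is convergence: for $|t|$ small both sides are analytic, so identifying power series coefficients is legitimate. Alternatively, one can treat all of these purely as formal power series in $t$.

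For the reverse version, apply Lemma \ref{Bpolyprop}(3) to each term:
\[
B_n\!\left(x-\tfrac rm\right)=(-1)^nB_n\!\left(1-x+\tfrac rm\right).
\]
Summing over $r$ and applying part 1 with $x$ replaced by $1-x$ gives
\[
\sum_{r=0}^{m-1}B_n\!\left(x-\tfrac rm\right)=(-1)^n m^{1-n}B_n\big(m(1-x)\big),
\]
which is exactly the stated formula.

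I expect no real obstacle in either part. The only step requiring attention is the geometric-sum simplification $\sum_{r=0}^{m-1}e^{rt/m}=\frac{e^t-1}{e^{t/m}-1}$ together with the rescaling of $t$. If a more elementary route is preferred, part 1 could instead be checked in the umbral form $B_n(x)=(x+B)^n$ from Lemma \ref{Bpolyprop}(5), but the generating function argument is cleaner.
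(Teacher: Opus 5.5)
Your proof is correct. Part 1 is the same generating-function computation as in the paper.

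Part 2 takes a different route. The paper does not use part 1 there. It repeats the generating-function calculation with the reversed geometric sum $\sum_{r=0}^{m-1}e^{-rt/m}=\frac{1-e^{-t}}{1-e^{-t/m}}$ and simplifies using $\frac{1-e^{-t}}{e^t-1}=e^{-t}$. It then rescales by $s=-t/m$ to read off $B_n(m(1-x))$, with the sign $(-1)^n$ coming from $s^n$.

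You instead get part 2 in one line: apply part 1 at $1-x$ and use the reflection $B_n(1-y)=(-1)^nB_n(y)$ of Lemma~\ref{Bpolyprop}(3). Your route is shorter and shows that the ``reverse'' theorem is just the multiplication theorem plus reflection symmetry. The paper's route is self-contained and presents the two identities as parallel computations. Lemma~\ref{Bpolyprop}(3) is itself proved by the substitution $t\mapsto -t$ in the generating function, so the two arguments are ultimately the same calculation, packaged differently.

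You are right to exclude $m=0$, but not only because the sums are empty. For $m=0$ the right-hand side $m^{1-n}$ is undefined once $n\geq 2$. The statement should therefore be read with $m\in\N$, as in the later Theorem~\ref{Bernoullimultiplicationlemma2}.
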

\begin{proof}  By the generating function for Bernoulli polynomials \eqref{Bpolygenfn},
\begin{align*}
    \sum_{r=0}^{m-1} \sum_{n=0}^\infty B_n \left(x + \frac{r}{m} \right) \frac{t^n}{n!}
    &=\sum_{r=0}^{m-1} \frac{te^{t(x+ \frac{r}{m}  )}}{e^t -1}
    = \frac{te^{tx}}{e^t -1} \cdot \frac{ e^t-1  }{ e^{\frac{t}{m}}-1}
    =m \frac{\frac{t}{m} e^{tx }  }{e^{t/m }-1} = m \sum_{n=0}^\infty  B_n (mx)\frac{(\frac{t}{m})^n }{n!}\>,\\
 \sum_{r=0}^{m-1}\sum_{n=0}^\infty{}B_n \left( x - \frac{r}{m} \right)   \frac{t^n }{n!}
 &=  \sum_{r=0}^{m- 1}\frac{t e^{ t (x - \frac{r}{m}) } }{e^t -1}
 = \frac{t e^{t x} }{e^t -1}\cdot \frac{1-e^{-t}}{1 - e^{-\frac{t}{m}}}
 =m\frac{-\frac{t}{m} e^{ t(x-1) }}{e^{-\frac{t}{m}}-1}
 = m \sum_{n=0}^\infty B_n(m(1-x)) \frac{(-\frac{t}{m})^n }{n!}
\end{align*}
from which the results follow by equating coefficients of $t^n$.
\end{proof}

\noindent This lemma can be interpreted by saying that while the Bernoulli polynomials are not homogeneous of degree $n$, averaging over cosets $\frac{r}{m}$ exactly corrects the failure,
\[
B_n(mx)=m^n\cdot\frac1m\sum_{r=0}^{m-1}B_n\left(x+\frac{r}{m}\right)
\]

\begin{thm}\label{Bernoullimultiplicationlemma2} Let $m \in \N$ and $z_0 \in \R$. Then,
\begin{align*}
    &\textbf{\textit{1. }}\sum_{r=0}^{m-1} \left(\int_{ \frac{r}{m} + B  }^{z_0 + \frac{r}{m} + B }   f(w)dw \right) = \int_B ^{mz_0 + B } f(m^{-1}w  )dw \>,\quad\hbox{ for any $f(z) \in \R[z]\>;$}\\
    &\textbf{\textit{2. }} \sum_{r=0}^{m-1} \left( \int_{-\frac{r}{m} + B }^{z_0 - \frac{r}{m} + B }f(w)dw \right) = \int_{m (1-z_0)+B}^{m+ B } f(m^{-1} w) dw\>, \quad\hbox{ for any \textbf{even }$f(z) \in \R[z^2]$.}
\end{align*}
\end{thm}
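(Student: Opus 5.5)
Both identities are linear in $f$, so it suffices to prove them for monomials: $f(w)=w^n$ with $n\geq0$ arbitrary in part 1, and $n$ even in part 2. For each monomial I will turn the symbolic integrals into differences of Bernoulli polynomials and then apply Lemma~\ref{Bernoullimultiplicationlemma}. The only real point is that the reverse multiplication theorem produces a sign $(-1)^{n+1}$. This sign is what forces the evenness hypothesis in part 2, since an odd power would give the opposite sign.

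\textbf{Part 1.} By Lemma~\ref{intaBbB} together with $\frac{d}{dx}B_{n+1}=(n+1)B_n$,
\[\int_{a+B}^{b+B}w^n\,dw=\frac{1}{n+1}\big(B_{n+1}(b)-B_{n+1}(a)\big).\]
Taking $a=\frac rm$, $b=z_0+\frac rm$ and summing over $r=0,\ldots,m-1$, I apply the multiplication theorem (Lemma~\ref{Bernoullimultiplicationlemma}(1)) with index $n+1$ twice. With $x=z_0$ it gives $\sum_r B_{n+1}(z_0+\frac rm)=m^{-n}B_{n+1}(mz_0)$, and with $x=0$ it gives $\sum_r B_{n+1}(\frac rm)=m^{-n}B_{n+1}$. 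Hence the left-hand side equals
\[\frac{m^{-n}}{n+1}\big(B_{n+1}(mz_0)-B_{n+1}\big).\]
On the right-hand side, $f(m^{-1}w)=m^{-n}w^n$, and the same evaluation gives
\[m^{-n}\int_B^{mz_0+B}w^n\,dw=\frac{m^{-n}}{n+1}\big(B_{n+1}(mz_0)-B_{n+1}(0)\big).\]
Since $B_{n+1}(0)=B_{n+1}$ by Lemma~\ref{Bpolyprop}(2), the two sides agree.

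\textbf{Part 2.} Here $n$ is even. The left-hand side is
\[\frac1{n+1}\sum_r\Big(B_{n+1}\big(z_0-\tfrac rm\big)-B_{n+1}\big(-\tfrac rm\big)\Big).\]
I apply the reverse multiplication theorem (Lemma~\ref{Bernoullimultiplicationlemma}(2)) with index $n+1$, which is odd, so $(-1)^{n+1}=-1$. With $x=z_0$ this gives $\sum_r B_{n+1}(z_0-\frac rm)=-m^{-n}B_{n+1}(m(1-z_0))$, and with $x=0$ it gives $\sum_r B_{n+1}(-\frac rm)=-m^{-n}B_{n+1}(m)$. Hence the left-hand side equals
\[\frac{m^{-n}}{n+1}\big(B_{n+1}(m)-B_{n+1}(m(1-z_0))\big).\]
The right-hand side is
\[m^{-n}\int_{m(1-z_0)+B}^{m+B}w^n\,dw=\frac{m^{-n}}{n+1}\big(B_{n+1}(m)-B_{n+1}(m(1-z_0))\big),\]
which is identical.

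Linearity then extends both identities from monomials to all $f\in\R[z]$ in part 1, and to all $f\in\R[z^2]$ in part 2. There is no substantive obstacle; the only care needed is tracking the index shift from $n$ to $n+1$ and the sign in the reverse theorem.
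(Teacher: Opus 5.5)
Your proof is correct and follows the paper's argument. You reduce to monomials by linearity, write each symbolic integral as a difference of values of $B_{n+1}$, apply the (reverse) multiplication theorem at $x=z_0$ and at $x=0$, and subtract. The only cosmetic difference is that the paper normalises to $f(z)=(n+1)z^n$ to clear the factor $\frac{1}{n+1}$.
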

\begin{proof} Both properties are linear in $f(z)$ and so it suffices to prove them for $f(z)=(n+1)z^n$, $n\in\Z^{\geq0}$, where in the second property, $n$ is assumed even. In this case, the properties read
\begin{align*}
\sum_{r=0}^{m-1}\left[B_{n+1}\left(z_0+\frac{r}{m}\right)-B_{n+1}\left(\frac{r}{m}\right)\right]
&=m^{-n}\big(B_{n+1}(mz_0)-B_{n+1}(0)\big)\quad\forall{}n\in\Z^{\geq0}\\
\sum_{r=0}^{m-1}\left[B_{n+1}\left(z_0-\frac{r}{m}\right)-B_{n+1}\left(-\frac{r}{m}\right)\right]
&=m^{-n}\big(B_{n+1}(m)-B_{n+1}(m-mz_0)\big)\quad\forall\hbox{ even $n\in\Z^{\geq0}$}
\end{align*}
which follow from Lemma \ref{Bernoullimultiplicationlemma} by subtracting applications at $x=0,z_0$.
\end{proof}

\noindent Note that similar statements hold for power series with polynomial coefficients, the first for $\p(z;q)\in\Rzh$ and the second for $\p(z;q)\in\Rztwoh$, for example in the case $\mathfrak{p}(z;q) = q^{ \chi z^2}$.

\begin{rem} The summation analogue of the first fact is the identity
\[
    \sum_{z=0}^{m z_0 -1} f( m ^{-1}z) = \sum_{r=0}^{m-1} \sum_{z=0}^{z_0 -1} f \big(m^{-1} ( m z + r  ) \big)= \sum_{r=0}^{m -1} \sum_{z=0}^{z_0 -1} f \left(z+ \frac{r}{m} \right)
\]
which holds for any function $f(z):\R\rightarrow\R$.
\end{rem}

\begin{thm}\label{Bernoullimultiplicationlemma3} Suppose that $m\in\N$, $z\in\R$ and $f(z)\in\R[z^2]$ is an even polynomial. Then
\begin{align*}
    &\textbf{\textit{1. }}\sum_{r=0}^{m-1} \int_B ^{z + \frac{r}{m }+ B } f(mw) dw = \int_B ^{mz + B } f(w) dw + \frac{m-1}{2}f(0),\\
    &\textbf{\textit{2. }}\sum_{r=0}^{m-1} \int_B ^{z- \frac{r}{m} + B } f(mw) dw = \int_{ m(1-z)+ B  }^B f(w) dw + \frac{m+1}{2}f(0).
\end{align*}
Analogous statements hold for power series $\p(z;q) \in \Rztwoh$ with \textbf{even} polynomial coefficients.
\end{thm}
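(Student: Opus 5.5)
The plan is to reduce both parts to Theorem \ref{Bernoullimultiplicationlemma2} by splitting each integration range at the shift point, and to evaluate the leftover "offset" sums with Proposition \ref{complementaryBintegrals}. Throughout, set $p(w)=f(mw)$. Since $f$ is even, $p\in\R[w^2]$ is even and $p(0)=f(0)$, so Proposition \ref{complementaryBintegrals} applies to $p$.

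\textbf{Part 1.} By additivity of symbolic integrals under subdivision,
\[
\int_B^{z+\frac rm+B}p(w)\,dw=\int_B^{\frac rm+B}p(w)\,dw+\int_{\frac rm+B}^{z+\frac rm+B}p(w)\,dw .
\]
Sum over $r=0,\ldots,m-1$. Theorem \ref{Bernoullimultiplicationlemma2}(1) applied to $p$ turns the sum of the second terms into $\int_B^{mz+B}p(m^{-1}w)\,dw=\int_B^{mz+B}f(w)\,dw$. It remains to show
\[
S:=\sum_{r=0}^{m-1}I_r=\frac{m-1}2 f(0),\qquad I_r:=\int_B^{\frac rm+B}p(w)\,dw .
\]
The $r=0$ term vanishes. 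For $1\le r\le m-1$, Proposition \ref{complementaryBintegrals} gives $I_{m-r}=\int_B^{1-\frac rm+B}p=p(0)-I_r$. Reindexing $r\mapsto m-r$ therefore gives $S=(m-1)p(0)-S$, so $S=\frac{m-1}2p(0)=\frac{m-1}{2}f(0)$, as required.

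\textbf{Part 2.} Split in the same way:
\[
\int_B^{z-\frac rm+B}p=\int_B^{-\frac rm+B}p+\int_{-\frac rm+B}^{z-\frac rm+B}p .
\]
Theorem \ref{Bernoullimultiplicationlemma2}(2) handles the second terms, and it is here that evenness is genuinely needed. Their sum is
\[
\int_{m(1-z)+B}^{m+B}f(w)\,dw=\int_{m(1-z)+B}^{B}f(w)\,dw+\int_B^{m+B}f(w)\,dw .
\]
So it suffices to show
\[
\sum_{r=0}^{m-1}\int_B^{-\frac rm+B}p=\frac{m+1}2f(0)-\int_B^{m+B}f(w)\,dw .
\]
Writing $-\frac rm=1-\left(1+\frac rm\right)$, Proposition \ref{complementaryBintegrals} gives $\int_B^{-\frac rm+B}p=p(0)-\int_B^{1+\frac rm+B}p$. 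Summing over $r$ and using Part 1 at $z=1$ to evaluate $\sum_r\int_B^{1+\frac rm+B}p=\int_B^{m+B}f+\frac{m-1}2f(0)$, the total is
\[
mf(0)-\int_B^{m+B}f-\frac{m-1}2f(0)=\frac{m+1}2f(0)-\int_B^{m+B}f,
\]
which is exactly what was needed.

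The only delicate point is the bookkeeping of the constant $f(0)$ terms. This comes from the $B_1=-\frac12$ correction in Proposition \ref{complementaryBintegrals}, and the pairing trick $r\leftrightarrow m-r$ in Part 1 handles it cleanly. For power series $\p(z;q)\in\Rztwoh$, every identity used is linear, so the statement follows by applying the polynomial case coefficientwise in $h$.
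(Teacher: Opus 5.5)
Your proof is correct, but it takes a different route from the paper.

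\textbf{The paper's route.} It reduces by linearity to monomials $f(z)=(2n+1)z^{2n}$ and rewrites both sides as differences of values of $B_{2n+1}$. It then applies the multiplication and reverse multiplication theorems of Lemma \ref{Bernoullimultiplicationlemma} directly. For $n\geq 1$ the lower-limit terms vanish because $B_{2n+1}=0$. The constants $\frac{m\mp1}{2}f(0)$ only appear through a separate hand check of the case $n=0$, where $B_1=-\frac12$.

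\textbf{Your route.} You stay at the level of symbolic integrals. Splitting off $\int_B^{\pm r/m+B}$ lets you use Theorem \ref{Bernoullimultiplicationlemma2}, which carries no correction term. The correction is then isolated as $\sum_r\int_B^{r/m+B}f(mw)\,dw$ and evaluated by the reflection pairing $r\leftrightarrow m-r$ from Proposition \ref{complementaryBintegrals}. This avoids the case split and shows conceptually where $\frac{m-1}{2}f(0)$ comes from. The price is that you lean on two earlier results rather than on the multiplication theorem and vanishing of odd Bernoulli numbers alone.

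\textbf{A shorter Part 2.} Your method gives Part 2 more cheaply than you wrote it. For each $r$, Proposition \ref{complementaryBintegrals} gives $\int_B^{z-\frac rm+B}p=p(0)-\int_B^{(1-z)+\frac rm+B}p$. Summing over $r$ and applying Part 1 at $1-z$ gives $mf(0)-\int_B^{m(1-z)+B}f-\frac{m-1}{2}f(0)$, which is the claim. This needs neither Theorem \ref{Bernoullimultiplicationlemma2}(2) nor the detour through Part 1 at $z=1$.

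\textbf{A wording point.} Evenness is just as essential in Part 1, through Proposition \ref{complementaryBintegrals}, as in Part 2. For instance, Part 1 fails for $f(w)=w$ whenever $m>1$: the two sides differ by $\frac{1-m^2}{12}$.
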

\begin{proof} Both properties are linear in $f(z)$ and so it suffices to prove them for $f(z)=(2n+1)z^{2n}$, $n\in\Z^{\geq0}$, where in the second property, $n$ is assumed even. In this case, $f(0)=\delta_{n=0}$ and the properties read
\begin{align*}
m^{2n}\sum_{r=0}^{m-1} \left[B_{2n+1}\left(z+\frac{r}{m }\right)-B_{2n+1}\right] &= B_{2n+1}(mz)-B_{2n+1}+ \frac{m-1}{2}\delta_{n=0},\\
m^{2n}\sum_{r=0}^{m-1} \left[B_{2n+1}\left(z-\frac{r}{m}\right)-B_{2n+1}\right] &= B_{2n+1}-B_{2n+1}\big(m(1-z)\big) + \frac{m+1}{2}\delta_{n=0}.
\end{align*}
For $n>0$, $B_{2n+1}=0$ and the properties follow immediately from Lemma \ref{Bernoullimultiplicationlemma} with $2n+1$ in place of $n$. When $n=0$, $B_1(x)=x-\frac12$ and so the equalities read $\sum_{r=0}^{m-1}(z+\frac{r}{m})=mz+\frac{m-1}2$ and $\sum_{r=0}^{m-1}(z-\frac{r}{m})=-m(1-z)+\frac{m+1}2$ which are correct. Since symbolic integration of power series is defined term-by-term, the corresponding identities hold on any power series with even polynomial coefficients.
\end{proof}

\section{\textsc{One-dimensional result}}

In this section, we prove (1)-(4) of Conjecture \ref{quantumpickconjecture} for $d=1$. For $x\in\R$, denote its fractional part by $\{x\}=x-\lfloor{}x\rfloor$.

\begin{defn} \label{defBint} Let $\alpha,\beta\in\R$. For $f(z)\in \R[z]$,  define the symbolic $B$-integral over an interval by
\begin{align*}
    &\Bint_{[\alpha,\beta]} f(z) dz := \int_{ \{-\alpha\} + B }^{1- \{\beta\}+ B  } f(z) dz\in\R\>.
\end{align*}
For $\p(z;q)\in\Rzh$, $\Bintt_{[\alpha,\beta]} \p(z;q) dz\in\Rh$ is defined by term-by-term integration of the power series.
   \end{defn}
\noindent If $\alpha,\beta\in\Q$ and $f(z)\in\Q[z]$ then $\Bintt_{[\alpha,\beta]}f(z)dz\in\Q$. By Lemma \ref{Bintegralbelonging}, if $\alpha,\beta\in\ZK$ and $f(z)\in\PzZK$, then $\Bintt_{[\alpha,\beta]}f(z)dz\in\ZK$.
Similarly if $\alpha,\beta\in\ZK$ and $\p(z;q)\in\PzZKh$ then $\Bintt_{[\alpha,\beta]}\p(z;q)dz\in\ZKh$.

\begin{lem}\label{BIntegralProperties} Suppose $f(z)\in\Q[z]$.
\begin{enumerate}
    \item Localization on a Point: $\Bintt_{ [\alpha,\alpha] } f(z)dz =f(0)\delta_{\alpha \in \Z}$.
    \item Additivity up to Double Counting:
    $\Bintt_{[\alpha,\beta]}\p(z;q)dz+\Bintt_{[\beta,\gamma]}f(z)dz=\Bintt_{[\alpha,\gamma]} f(z) dz + f(0)\delta_{\beta \in \Z}$.
    \item Central inversion: $\Bintt_{[-\beta,-\alpha]}f(-z)dz=\Bintt_{[\alpha,\beta]}f(z)dz$.
    \item If $\{\alpha\} + \{\beta\}=1$ and $f$ is even, then
    $\Bintt_{[0,\alpha]}f(z)dz+\Bintt_{[0,\beta]}f(z)dz =f(0)$.
    \item If $f$ is even, $\Bintt_{ [0,\alpha]} f(z) dz +\Bintt_{ [0,-\alpha] } f(z) dz =(1 + \delta_{\alpha \in \Z })f(0)$.
    \item Almost Independence: $\Bintt_{[K\alpha,K\beta]} q^{f(z)} dz$ defines a squak almost independent of $K$.
\end{enumerate}
\end{lem}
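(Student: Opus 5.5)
Each property follows by unwinding Definition~\ref{defBint} and using the elementary facts about symbolic integrals from \S4. Throughout, fix a primitive $F$ of $f$, so that $\Bintt_{[\alpha,\beta]}f(z)\,dz=F(1-\{\beta\}+B)-F(\{-\alpha\}+B)$. The symbolic integral is additive in its endpoints, and $\int_{x+B}^{x+1+B}f=f(x)$ for integer $x$ by Lemma~\ref{EulerMaclaurin}.

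\emph{(1) Localization.} We have $\Bintt_{[\alpha,\alpha]}f=\int_{\{-\alpha\}+B}^{1-\{\alpha\}+B}f$.
\begin{itemize}
\item If $\alpha\in\Z$, both fractional parts vanish and the integral is $\int_B^{1+B}f=f(0)$ by Lemma~\ref{EulerMaclaurin}.
\item If $\alpha\notin\Z$, then $\{-\alpha\}=1-\{\alpha\}$, so the limits coincide and the integral is $0$.
\end{itemize}

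\emph{(2) Additivity.} The left side equals
\[
\int_{\{-\alpha\}+B}^{1-\{\beta\}+B}f+\int_{\{-\beta\}+B}^{1-\{\gamma\}+B}f
=\int_{\{-\alpha\}+B}^{1-\{\gamma\}+B}f+\int_{\{-\beta\}+B}^{1-\{\beta\}+B}f .
\]
The last term is $\Bintt_{[\beta,\beta]}f=f(0)\delta_{\beta\in\Z}$ by (1). The statement writes $\p$ in the first term, which I read as a typo for $f$.

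\emph{(3) Central inversion.} By definition,
\[
\Bintt_{[-\beta,-\alpha]}f(-z)\,dz=\int_{\{\beta\}+B}^{1-\{-\alpha\}+B}f(-z)\,dz .
\]
Apply the second identity of Lemma~\ref{symbolicinttranslationinversion} with $b=1-\{\beta\}$ and $a=\{-\alpha\}$. This turns the integral into $\int_{a+B}^{b+B}f$, which is exactly $\Bintt_{[\alpha,\beta]}f$.

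\emph{(4).} Here $\Bintt_{[0,\alpha]}f=\int_B^{1-\{\alpha\}+B}f$. If $\{\alpha\}+\{\beta\}=1$, then $1-\{\beta\}=\{\alpha\}$, so $\Bintt_{[0,\beta]}f=\int_B^{\{\alpha\}+B}f$. Proposition~\ref{complementaryBintegrals} with $z=\{\alpha\}$ gives $\int_B^{1-z+B}f=f(0)-\int_B^{z+B}f$, which is the claim.

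\emph{(5).}
\begin{itemize}
\item If $\alpha\notin\Z$, then $\{\alpha\}+\{-\alpha\}=1$ and (4) applies, giving $f(0)$.
\item If $\alpha\in\Z$, both integrals equal $\int_B^{1+B}f=f(0)$, so the sum is $2f(0)$.
\end{itemize}

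\emph{(6) Almost independence.} This is the only step with real content. Write $\alpha=a_1/a_2$ and $\beta=b_1/b_2$, and let $\cN=\mathrm{lcm}(a_2,b_2)$. The fractional parts $\{-K\alpha\}$ and $\{K\beta\}$ depend only on $K\bmod\cN$. The integrand $q^{f(z)}$ does not depend on $K$. Hence $\int_{\{-K\alpha\}+B}^{1-\{K\beta\}+B}q^{f(z)}\,dz$ takes the same value in $\Qh$ for all primes $K$ in a fixed residue class mod $\cN$.

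It remains to check that this defines a squak, i.e.\ that it lies in $\ZKh$ for large $K$.
\begin{itemize}
\item For $K$ coprime to $\cN$ and to the denominators of the coefficients of $f$, we have $f\in\ZK[z]\subset\PzZK$, so $q^{f}\in\PzZKh$ by Lemma~\ref{qpowerfz}.
\item The endpoints $\{-K\alpha\}$ and $1-\{K\beta\}$ are rationals with denominator dividing $\cN$, hence lie in $\ZK$.
\item Lemma~\ref{Bintegralgqpowerf} (equivalently Lemma~\ref{Bintegralgqpowersquak}) then puts the integral in $\ZKh$.
\end{itemize}

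I do not expect a genuine obstacle here. The points that need care are the fractional-part identities $\{-x\}=1-\{x\}$ for $x\notin\Z$, and the parity hypotheses in (4) and (5), which are needed to invoke Proposition~\ref{complementaryBintegrals}.
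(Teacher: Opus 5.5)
Your proof is correct and follows the paper's argument essentially step for step. You unwind Definition~\ref{defBint}, use Euler--Maclaurin for (1), splitting plus (1) for (2), the central inversion lemma for (3), and Proposition~\ref{complementaryBintegrals} for (4)--(5). The differences are cosmetic: in (5) you case-split on $\alpha\in\Z$ and reuse (4), where the paper splits the second integral at $\{\alpha\}+B$ and invokes part (1); in (6) you explicitly check membership in $\ZKh$ via Lemma~\ref{Bintegralgqpowerf}, which the paper leaves to the remark after the definition.
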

\begin{proof}
\textbf{\textit{1.}} By definition, $\Bintt_{ [\alpha,\alpha] } f(z)dz=\int_{\{-\alpha\}+B}^{1-\{\alpha\}+B}\!f(z)dz$.
When $\alpha\notin\Z$, $\{-\alpha\}=1-\{\alpha\}$ and so this integral is immediately zero. When $\alpha\in\Z$, $\{\alpha\}=\{-\alpha\}=0$ and so the integral becomes $\int_B^{1+B}f(z)dz=f(0)$.\\
\textbf{\textit{2.}} By definition the LHS is
$\int_{\{-\alpha\}+B}^{1-\{\beta\}+B}f(z)dz+\int_{\{-\beta\}+B}^{1-\{\gamma\}+B}f(z)dz
=\int_{\{-\alpha\}+B}^{1-\{\gamma\}+B}f(z)dz+\int_{\{-\beta\}+B}^{1-\{\beta\}+B}f(z)dz$ by additivity of symbolic integrals under splitting their interval and the result follows from part 1. \\
\textbf{\textit{3.}} By definition, $\Bintt_{[\-\beta,-\alpha]}f(-z)dz=\int_{ \{\beta\} + B }^{1- \{-\alpha\}+ B  } f(-z) dz$. The result follows by the central inversion of symbolic integrals in Lemma \ref{symbolicinttranslationinversion}.
\\
\textbf{\textit{4.}} The left hand side becomes $\int_{B}^{\{\beta\}+B}f(z)dz+\int_{B}^{\{\alpha\}+B}f(z)dz=f(0)$ by Lemma \ref{complementaryBintegrals}.\\
\textbf{\textit{5.}} The LHS is $\int_{B}^{1-\{\alpha\}+B}f(z)dz+\int_{B}^{1-\{-\alpha\}+B}f(z)dz
= \int_{B}^{1-\{\alpha\}+B}f(z)dz+\int_{B}^{\{\alpha\}+B}\!f(z)dz+\int_{\{\alpha\}+B}^{1-\{-\alpha\}+B}f(z)dz$. The sum of the first two terms is $f(0)$ by Lemma \ref{complementaryBintegrals} while the last term evaluates using part 1.\\
\textbf{\textit{6.}} By definition, the dependence on $K$ of the symbolic $B$-integral is only through the limits $\{-K\alpha\}$ and $\{K\beta\}$, that is, it depends only on $K$ modulo $\cN$ where $\cN$ is the lcm of the denominators of $\alpha$,$\beta$.
\end{proof}

\begin{thm}\label{1dqpt} (\textbf{\textit{Conjecture \ref{quantumpickconjecture} in 1D.}}) Let $\alpha,\beta \in \Q$ with $\alpha<\beta$ and $f(z) \in \Q[z]$ be a quadratic. Then
\[S_K([\alpha,\beta])\equiv\sum\limits_{z\in[K\alpha,K\beta]\cap\Z}q^{f(z)}\]
defines a squak with level one $\G$-decomposition
\[
    S_K([\alpha,\beta]) \ePK \Bint_{[K\alpha,K\beta]} q^{f(z)} dz
    +(\beta-\alpha) \left( \frac{(a)_K}{K}  \right) q^{f_0} \G =Z^{(0)}_K([\alpha,\beta])+Z^{(1)}_K([\alpha,\beta])\G
\]
for all but finitely many primes $K$, where $f_0$ is the value of $f(z)$ at its critical point and $a$ is the leading coefficient in $f(z)$.
\end{thm}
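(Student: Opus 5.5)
We write the lattice sum as a symbolic integral, split off a whole number of periods of length $K$, evaluate those with the carry-over lemma, and recognise the remainder as the $B$-integral. Let $m=\lceil K\alpha\rceil=K\alpha+\{-K\alpha\}$ and $M=\lfloor K\beta\rfloor=K\beta-\{K\beta\}$; both are integers. The integer points of $[K\alpha,K\beta]$ are $m,m+1,\ldots,M$. By the Euler--Maclaurin formula (Lemma \ref{EulerMaclaurin}),
\[S_K([\alpha,\beta])=\int_{m+B}^{M+1+B}q^{f(z)}dz .\]

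\textbf{Squak property.} For $K$ coprime to the denominators of $\alpha,\beta$ and of the coefficients of $f$, we have $f(z)\in\ZK[z]$. Then $q^{f(z)}\in\PzZKh$ by Lemma \ref{qpowerfz}, so the finite sum of the $q^{f(n)}$ lies in $\ZKh$ (Lemma \ref{Bintegralgqpowerf}). Hence $S_K$ is a squak.

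\textbf{Splitting the range.} Subdivide the range of integration at $K\alpha+\{-K\alpha\}$ and $K\beta+1-\{K\beta\}$:
\[
\int_{m+B}^{M+1+B}q^{f(z)}dz=\int_{K\alpha+B}^{K\beta+B}q^{f(z)}dz+\left(\int_{K\alpha+\{-K\alpha\}+B}^{K\alpha+B}+\int_{K\beta+B}^{K\beta+1-\{K\beta\}+B}\right)q^{f(z)}dz .
\]
Since $K\alpha,K\beta\in\ZK$ and $K\beta-K\alpha=(\beta-\alpha)K$ with $\beta-\alpha\in\ZK$, Lemma \ref{1dcarry-overcorollary} (with $a=K\alpha$, $b=K\beta$) gives that the first term is $\ePK(\beta-\alpha)\int_B^{K+B}q^{f(z)}dz$. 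By Lemma \ref{BinteasyGauss} this is $(\beta-\alpha)\left(\frac{(a)_K}{K}\right)q^{f_0}\G$, using $a\not\equiv_K 0$ for large $K$ since $f$ is quadratic.

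\textbf{The boundary terms (main obstacle).} We must show the two boundary terms combine, modulo $\Phi_K$, to $\int_{\{-K\alpha\}+B}^{1-\{K\beta\}+B}q^{f(z)}dz$. This amounts to shifting the endpoints $K\alpha$ and $K\beta$ to $0$. The plan is to write
\[\int_{K\alpha+\{-K\alpha\}+B}^{K\alpha+B}=\int_{B}^{K\alpha+B}-\int_B^{K\alpha+\{-K\alpha\}+B},\]
and similarly at $K\beta$, so that everything becomes integrals of the form $\int_{x+B}^{y+B}$ with $x\equiv_K y$ in $\ZK$.

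The tool is Lemma \ref{1dcarry-overcorollary} applied to pairs of congruent endpoints: $K\alpha\equiv_K 0$, and $K\alpha+\{-K\alpha\}\equiv_K\{-K\alpha\}$ because both differ by elements of $K\ZK$. This gives
\[\int_{\{-K\alpha\}+B}^{K\alpha+\{-K\alpha\}+B}q^{f}\ePK \alpha\int_B^{K+B}q^{f},\qquad \int_{B}^{K\alpha+B}q^f\ePK\alpha\int_B^{K+B}q^f .\]
Taking the difference, $\int_{K\alpha+\{-K\alpha\}+B}^{K\alpha+B}q^f\ePK\int_{\{-K\alpha\}+B}^{B}q^f$, since the two $\G$-multiples cancel. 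The same argument at $\beta$ gives $\int_{K\beta+B}^{K\beta+1-\{K\beta\}+B}q^f\ePK\int_B^{1-\{K\beta\}+B}q^f$. The two terms sum to $\Bint_{[K\alpha,K\beta]}q^{f(z)}dz$, which gives the displayed congruence. One care point is that Lemma \ref{1dcarry-overcorollary} needs both endpoints in $\ZK$; this holds since the fractional parts are rationals whose denominators divide those of $\alpha$ and $\beta$.

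\textbf{Almost independence and level.} $Z^{(0)}_K$ is almost independent of $K$ by Lemma \ref{BIntegralProperties}(6). $Z^{(1)}_K$ is almost independent of $K$ by Lemma \ref{Legendresymbolalmostindep}, together with the fact that $q^{f_0}$ and $\beta-\alpha$ do not depend on $K$. Uniqueness follows from Theorem \ref{uniquenessinarbitrarydimension}. The level is exactly one because $Z^{(1)}_K$ is non-trivial (as $\beta>\alpha$), so a level-zero decomposition would contradict uniqueness.
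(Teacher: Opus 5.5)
Your proof is correct and follows essentially the paper's route: Euler--Maclaurin, then the carry-over Lemma \ref{1dcarry-overcorollary} on pieces whose length lies in $K\ZK$, then Lemma \ref{BinteasyGauss} for $\int_B^{K+B}q^{f(z)}dz$. The paper cuts directly at $\{-K\alpha\}+B$ and $1-\{K\beta\}+B$, so the two outer pieces have lengths $-K\alpha$ and $K\beta$; this avoids your detour through $K\alpha$ and $K\beta$ with the cancelling $\alpha$- and $\beta$-multiples, and your closing argument that the level is exactly one is a correct addition the paper omits.
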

\begin{proof} For any odd prime $K$ not dividing the denominators of the coefficients of $f(z)$, then $f(z)\in\ZK[z]$ and so $S_K([\alpha,\beta])\in\ZKh$ by Lemma \ref{qpoweralpha}. Hence $S_K([\alpha,\beta])$ is a squak. Writing the sum as a symbolic integral by Lemma \ref{EulerMaclaurin} and then splitting the interval of integration,
\begin{align*}
    S_K ([\alpha,\beta])
    &= \sum_{z= \lceil K\alpha \rceil  }^{\lfloor K\beta \rfloor } q^{f(z)}
    = \int_{\lceil K\alpha \rceil + B } ^{ \lfloor K\beta \rfloor + 1 + B  } q^{f(z)} dz\\
    &= \int_{\lceil K\alpha \rceil + B } ^{\lceil K\alpha \rceil -K\alpha + B  } q^{f(z)} dz + \int_{\lceil K\alpha \rceil- K\alpha + B } ^{ \lfloor K\beta \rfloor + 1- K\beta + B  } q^{f(z)} dz + \int_{\lfloor K\beta \rfloor -K\beta +  1  + B } ^{ \lfloor K\beta \rfloor + 1 + B  } q^{f(z)} dz
\end{align*}
Applying Lemma \ref{1dcarry-overcorollary} to the first and last terms and noting that $\{x\}=x-\lfloor{}x\rfloor$ while $\{-x\}=\lceil{}x\rceil-x$ gives
\begin{align*}
  S_K ([\alpha,\beta])&\ePK  \int_{ \{-K\alpha\}+ B  }^{ 1- \{K\beta\}+ B  } q^{f(z)} dz
  +(\beta - \alpha) \int_B ^{K+ B } q^{f(z )}dz\\
  &\ePK \Bint_{[K\alpha,K\beta]} q^{f(z)} dz
    +(\beta-\alpha) \left( \frac{(a)_K}{K}  \right) q^{f_0} \G
\end{align*}
by Definition \ref{defBint} and Lemma \ref{BinteasyGauss}, for $K$ so that $a\not\equiv_K0$. That this is a level one $\G$-decomposition follows from Lemma \ref{BIntegralProperties}(6) and Lemma \ref{Legendresymbolalmostindep}.
\end{proof}

\noindent Theorem \ref{1dqpt} provides a level one $\G$-decomposition of the discrete sum squak
\[S_K([\alpha,\beta]) \ePK Z^{(0)}_K([\alpha,\beta])+Z^{(1)}_K([\alpha,\beta])\G\]
where \[Z_K^{(0)}([\alpha,\beta])=\Bint_{[K\alpha,K\beta]} q^{f(z)} dz\] is the codimension zero contribution, a squak almost independent of $K$ (it is a $B$-integral over $K$ times the interval) while
\[Z^{(1)}_K([\alpha,\beta])=(\beta-\alpha) \left( \frac{(f_1)_K}{K}  \right) q^{f_0}\]
is the codimension one (vertex) contribution, a squak almost independent of $K$ (a multiple of the length of the interval).
Under concatenation of intervals,  $Z_K^{(1)}$ is additive (since length is additive) while by Lemma \ref{BIntegralProperties}(2),
\[
Z^{(0)}_K([\alpha,\beta])+Z^{(0)}_K([\beta,\gamma])=Z^{(0)}_K([\alpha,\gamma])+q^{f(0)}\delta_{K\beta\in\Z}
\]
On the other hand, $S_K$ has an error from being additive which is the possible contribution of the overlapping interior point,
\[
S_K([\alpha,\beta])+S_K([\beta,\gamma])=S_K([\alpha,\gamma])+q^{f(K\beta)}\delta_{K\beta\in\Z}
\ePK S_K([\alpha,\gamma])+q^{f(0)}\delta_{K\beta\in\Z}
\]
since $f(K\beta)\eK{}f(0)$ and so $q^{f(K\beta)}\ePK{}q^{f(0)}$ by Lemma \ref{modKmodphiK}.

\section{\textsc{Two-dimensional result}}

A sum over a rectangular domain of a separable summand $q^{ f_1 z^2 + f_3 w^2  }$ splits immediately as a product of one-dimensional sums,
\begin{align*}
        S_K ( [\alpha,\beta]\times [\gamma,\delta ] )
        &\equiv_{\Phi_K}S^z_K ( [\alpha,\beta] )\cdot S^w_K ([\gamma, \delta])\\
        &\equiv_{\Phi_K} \underbrace{Z^{(0)z}_K( [\alpha,\beta] )Z^{(0)w}_K ([\gamma,\delta])    }_{Z_K  (\rectangle)}+
        (\text{cross terms}) \G+   \underbrace{(\beta-\alpha)(\delta-\gamma)}_{A (\rectangle )} \left(\frac{ \overline{f_1} }{K} \right) \left(\frac{ \overline{f_3} }{K} \right) \G^2
    \end{align*}
implying that for any quadratic $f(z,w)$ with no cross term, one has a level two $\G$ decomposition of the sum on a rectangular domain whose face term is
$Z^{(0)}_K ([ \alpha,\beta ]\times [\gamma,\delta])=Z^{(0)z}_K ( [\alpha,\beta] ) Z^{(0)w}_K ( [\gamma,\delta] )=
 \left( \Bintt_{  [\alpha, \beta ] }  q^{f_1 z^2 }dz \right) \left( \Bintt_{ [\gamma, \delta ] }  q^{f_3 w^2 }dw  \right)$ which one could decide to denote by $ \Biint _{ [\alpha,\beta]\times[\gamma, \delta]} q^{ f_1 z^2 + f_3 w^2  } dw dz$.
However this method doesn't work for domains of other shapes or non-separable summands.

\subsection{\textsc{Study of non-homogeneous two-variable quadratics}}
Throughout this section, $f$ will denote a quadratic function of two variables with rational coefficients, $f(z,w) := f_1 z^2 + f_2 zw + f_3 w^2 + f_4 z + f_5 w + f_6$. We assume that the homogeneous part of $f$ defines a non-degenerate quadratic form $f_h (z,w) := f_1 z^2 + f_2 zw + f_3 w^2$. Define two numerical quantities
\begin{align*}
    &\Delta:= 4f_1 f_3-f_2^2\>  ,   &&\bfD := 4f_1 f_3 f_6 - f_3 f_4 ^2 -f_2^2 f_6 - f_1 f_5^2  + f_2 f_4 f_5\> .
\end{align*}
$\Delta\not=0$ is the determinant of the Hessian matrix of $f$ (it is equal to minus the discriminant of the one-variable quadratic $f_h(z,1)$) while $\bfD$ is (half) the determinant of the Hessian matrix of the homogeneous quadratic form in three variables corresponding to $f$ namely $t^2f(\frac{z}{t},\frac{w}{t})$. Both are invariant under interchange of $z$ and $w$. Notice that $\bfD=0$ if $f=f_h$.

\noindent Let $\bfA$ denote the matrix of the quadratic form $f_h$, that is
$\bfA=\begin{pmatrix}
        f_1 & \frac12f_2 \\
        \frac12f_2 & f_3
    \end{pmatrix}$
and put $\bfz=\begin{pmatrix}z\\ w\end{pmatrix}$, $\bfb=\begin{pmatrix}f_4\\ f_5\end{pmatrix}$.
Then    $f_h(\bfz)=\bfz^T\bfA\bfz$ while $f(\bfz)=\bfz^T\bfA\bfz+\bfz^T\bfb+f_6$. Under a translation,
\[
f(\bfz+\bfz_0)=\bfz^T\bfA\bfz+\bfz^T(2\bfA\bfz_0+\bfb)+(\bfz^T_0\bfA\bfz_0+\bfz^T_0\bfb+f_6)
\]
Choosing $\bfz_0$ so as to make the linear term vanish, $\bfz_0=-\frac12\bfA^{-1}\bfb$ then $f(\bfz+\bfz_0)=f_h(\bfz)+\frac{\bfD}{\Delta}$. Explicitly, $\bfz_0=(z_0,w_0)^T$ with
\[f(z,w) = f_h (z - z_0, w - w_0) + \frac{\bfD}{\Delta}
\quad\hbox{ where }z_0=\frac{f_2f_5-2f_3f_4}{\Delta}\>,\quad{}w_0=\frac{f_2f_4-2f_1f_5}{\Delta}
\]
Here $(z,w)=(z_0,w_0)$ is the unique stationary point of $f(z,w)$ and the value attained there is $\frac{\bfD}{\Delta}$.

\begin{lem} \label{generalquadraticslice} For arbitrary $\bfa,\bfm\in\R^2$ with $f_h(\bfm)\not=0$,
\[
    f(t\bfm+\bfa ) = f_h (\bfm) \left( t+\frac{\bfm\cdot(\bfA\bfa+\frac12\bfb)}{f_h(\bfm)}\right)^2
    +\frac{\Delta}{4f_h(\bfm)}\big(\bfm^\perp\cdot(\bfa-\bfz_0)\big)^2+\frac{D}{\Delta}\>,
\]
where $\bfm^\perp$ denotes the rotation of $\bfm$ through $\frac\pi2$.
\end{lem}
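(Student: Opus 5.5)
The proof is a direct computation, and I would do it in matrix notation. First I would use the translation identity already established in the text, $f(\bfz)=f_h(\bfz-\bfz_0)+\frac{\bfD}{\Delta}$. Put $\bfx=\bfa-\bfz_0$. Then
\[
f(t\bfm+\bfa)=f_h(t\bfm+\bfx)+\frac{\bfD}{\Delta}=t^2f_h(\bfm)+2t\,\bfm^T\bfA\bfx+f_h(\bfx)+\frac{\bfD}{\Delta}.
\]
Since $\bfz_0=-\frac12\bfA^{-1}\bfb$, we have $\bfA\bfx=\bfA\bfa+\frac12\bfb$. So the linear coefficient is $2\,\bfm\cdot(\bfA\bfa+\frac12\bfb)$. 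Completing the square in $t$ gives the first term of the claimed formula. The constant left over is
\[
f_h(\bfx)-\frac{(\bfm^T\bfA\bfx)^2}{f_h(\bfm)}+\frac{\bfD}{\Delta}.
\]

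The key step is to prove the two-dimensional Lagrange-type identity
\[
(\bfm^T\bfA\bfm)(\bfx^T\bfA\bfx)-(\bfm^T\bfA\bfx)^2=\det(\bfA)\,(\bfm^\perp\cdot\bfx)^2 ,
\]
where $\det\bfA=f_1f_3-\frac14f_2^2=\frac{\Delta}{4}$. Two routes would work. The first: the left side is the Gram determinant $\det\big([\bfm\ \bfx]^T\bfA[\bfm\ \bfx]\big)=\det(\bfA)\det[\bfm\ \bfx]^2$, and $\det[\bfm\ \bfx]=\pm\,\bfm^\perp\cdot\bfx$; the sign is irrelevant because it is squared. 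The second: expand both sides as polynomials in the components of $\bfm$ and $\bfx$ and compare coefficients. I prefer the determinant route since it avoids any bookkeeping. Dividing the identity by $f_h(\bfm)\neq0$ turns the leftover constant into $\frac{\Delta}{4f_h(\bfm)}\big(\bfm^\perp\cdot(\bfa-\bfz_0)\big)^2+\frac{\bfD}{\Delta}$, which is exactly the claimed formula.

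The only real care needed is consistency of notation. I would check that the $D$ in the statement is the $\bfD$ defined above, and that the factor $\frac12$ in $\bfA$ matches the $\frac12\bfb$ in the shift. One test of both is the value at the critical point, $f(\bfz_0)=\frac{\bfD}{\Delta}$, which was already stated in the text and which the formula should reproduce. The convention for the direction of rotation in $\bfm^\perp$ does not matter, since it only appears squared.
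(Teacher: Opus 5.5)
Your proposal is correct and matches the paper's proof: translate by $\bfz_0$, expand, complete the square, and apply $(\bfm^T\bfA\bfm)(\bfx^T\bfA\bfx)-(\bfm^T\bfA\bfx)^2=\frac{\Delta}{4}(\bfm^\perp\cdot\bfx)^2$. The paper only asserts this identity, whereas you justify it via $\det(M^T\bfA M)=\det\bfA\,(\det M)^2$ with $M=[\bfm\ \bfx]$, and you also make explicit the step $\bfA(\bfa-\bfz_0)=\bfA\bfa+\frac12\bfb$ that the paper leaves implicit.
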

\begin{proof} Since $f$ is a translation of $f_h$ up to a constant and $f_h(\bfx)=\bfx^T\bfA\bfx$,
\begin{align*}
    f(t\bfm+\bfa)&=f_h(t\bfm+\bfa-\bfz_0)+\frac{D}{\Delta}
    =(t\bfm^T+\bfa^T-\bfz^T_0)\bfA(t\bfm+\bfa-\bfz_0)+\frac{D}{\Delta}\\
    &=t^2f_h(\bfm)+2t\bfm^T\bfA(\bfa-\bfz_0)+f_h(\bfa-\bfz_0)+\frac{D}{\Delta}\\
    &=f_h(\bfm)\left(t+\frac{\bfm^T\bfA(\bfa-\bfz_0)}{f_h(\bfm)}\right)^2
    +\frac{\Delta}{4f_h(\bfm)}\big(\bfm^\perp\cdot(\bfa-\bfz_0)\big)^2+\frac{D}{\Delta}
\end{align*}
where in the last step we use the identity
$(\bfm^T\bfA\bfm)(\bfx^T\bfA\bfx)\kern-.5pt-\kern-.5pt(\bfm^T\bfA\bfx)^2=\frac{\Delta}{4}(m_2x_1-m_1x_2)^2=\frac{\Delta}{4}(\bfm^\perp\cdot\bfx)^2$.
\end{proof}

\begin{lem}\label{Gaussslice} For odd $K$ prime with $f(z,w)\in\ZK[z,w]$, $m_1,m_2,a_1,a_2\in\ZK$ and $\Delta,f_h(m_1,m_2)\neK0$,
\[
\int_B^{K+B}q^{f(m_1x+a_1,m_2x+a_2)}dx\ePK \left( \frac{(f_h (m_1,m_2))_K}{K} \right)\G
q^{\frac{D}{\Delta}+\frac{\Delta(m_1(a_2-w_0)+m_2(z_0-a_1))^2}{4f_h(m_1,m_2)}}
\]
\end{lem}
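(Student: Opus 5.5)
The plan is to reduce the statement to the one-variable Gauss integral of Lemma \ref{BinteasyGauss}, using the slice formula of Lemma \ref{generalquadraticslice} with $\bfm=(m_1,m_2)^T$ and $\bfa=(a_1,a_2)^T$. Set
\[g(x):=f(m_1x+a_1,m_2x+a_2).\]
This is a one-variable quadratic in $x$ whose coefficients are polynomial expressions in $f_1,\ldots,f_6,m_1,m_2,a_1,a_2$, all of which lie in $\ZK$. Hence $g(x)\in\ZK[x]$. Its leading coefficient is $f_h(m_1,m_2)$, which is $\not\equiv_K0$ by hypothesis. So Lemma \ref{BinteasyGauss} applies and gives
\[\int_B^{K+B}q^{g(x)}dx\ePK\left(\frac{(f_h(m_1,m_2))_K}{K}\right)q^{g_0}\G,\]
where $g_0$ is the value of $g$ at its critical point.

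It remains to identify $g_0$. By Lemma \ref{generalquadraticslice}, $g$ is $f_h(\bfm)$ times a perfect square in $x$ plus a constant. The critical value is therefore that constant:
\[g_0=\frac{\Delta}{4f_h(\bfm)}\big(\bfm^\perp\cdot(\bfa-\bfz_0)\big)^2+\frac{\bfD}{\Delta}.\]
Taking $\bfm^\perp=(-m_2,m_1)$ gives $\bfm^\perp\cdot(\bfa-\bfz_0)=m_1(a_2-w_0)-m_2(a_1-z_0)$. This equals $m_1(a_2-w_0)+m_2(z_0-a_1)$, which is the expression in the exponent of the statement. The constant term $\frac{D}{\Delta}$ in Lemma \ref{generalquadraticslice} is the same quantity as $\frac{\bfD}{\Delta}$.

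The main obstacle is bookkeeping in $\ZK$. Lemma \ref{BinteasyGauss} only needs $g\in\ZK[x]$ with leading coefficient a unit mod $K$; it then yields $q^{f_0}$ with $f_0=c-\frac{b^2}{4a}\in\ZK$. I will check that the displayed form of $g_0$ is the same rational number as $c-\frac{b^2}{4a}$, which is a purely algebraic identity. This ensures the exponent lies in $\ZK$ even though the individual pieces $z_0,w_0,\frac{\bfD}{\Delta}$ involve division by $\Delta$. The hypothesis $\Delta\not\equiv_K0$ guarantees these pieces are themselves in $\ZK$, so the formula is meaningful term by term. Since the statement is a congruence in $\ZKh$, $q^{g_0}$ is the same power series however $g_0$ is written, and no further mod-$K$ reduction is needed.
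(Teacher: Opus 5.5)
Your proposal is correct and follows the paper's proof. The paper combines the slice formula of Lemma~\ref{generalquadraticslice}, which supplies the leading coefficient $f_h(m_1,m_2)$ and the critical value, with the Gauss sum evaluation of Lemma~\ref{easyGauss}; your appeal to Lemma~\ref{BinteasyGauss} is that same lemma, with the Euler--Maclaurin conversion of $\int_B^{K+B}$ into $\sum_{x=0}^{K-1}$ already built in, and your sign check $\bfm^\perp\cdot(\bfa-\bfz_0)=m_1(a_2-w_0)+m_2(z_0-a_1)$ together with the $\ZK$-bookkeeping from $\Delta,f_h(m_1,m_2)\not\equiv_K0$ is exactly what is needed.
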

\begin{proof}
    The proof follows immediately by combining Lemma \ref{generalquadraticslice} with Lemma \ref{easyGauss}.
\end{proof}

\begin{lem}\label{generalquadraticsimplified}  Let $K$ be an odd prime such that $f(z,w)\in\ZK[z,w]$ and $f_1,f_3,\Delta\neK0$. Then
    \[
    \int_B^{K+B} q^{f(z,w)}dz\ePK\left(\frac{(f_1)_K}{K} \right)\G{}q^{\frac{D}{\Delta}+\frac{\Delta(w-w_0)^2}{4f_1}}\>,\quad
    \int_B^{K+B} q^{f(z,w)}dw\ePK\left(\frac{(f_3)_K}{K} \right)\G{}q^{\frac{D}{\Delta}+\frac{\Delta(z-z_0)^2}{4f_3}}\>.
    \]
while $\int_B^{K+B}\int_B^{K+B}q^{f(z,w)}dw dz\ePK\left(\frac{(\Delta)_K}{K}\right)\G^2q^{\frac{\bfD}{\Delta}}$.\end{lem}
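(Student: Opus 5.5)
The two single-variable congruences are special cases of Lemma \ref{Gaussslice}, and the double integral follows by integrating one of them over the remaining variable. I will treat $w$ as a free parameter throughout and use the polynomial versions of the earlier results: Lemma \ref{pzqmodPhicharacterisation} and Lemma \ref{Bintegralcongruence} allow congruences in $\PzZKh$ to be checked pointwise and then integrated.

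For $\int_B^{K+B}q^{f(z,w)}dz$, take $\bfm=(1,0)$ and $\bfa=(0,w_0')$, where $w_0'\in\Z$ is a fixed value of $w$. Then $f_h(\bfm)=f_1\neK0$ and $m_1(a_2-w_0)+m_2(z_0-a_1)=w_0'-w_0$. Lemma \ref{Gaussslice} gives the stated congruence at every integer $w$. Note that $z_0,w_0,\bfD/\Delta\in\ZK$ because $\Delta\neK0$, and $1/(4f_1)\in\ZK$. Both sides are therefore elements of $\P(w;\ZK)[[h]]$: the left side by Lemma \ref{Bintegralgqpowerf} with the roles of the variables swapped, and the right side by Lemma \ref{qpowerfz}, since the exponent is a polynomial in $w$ with $\ZK$ coefficients. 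Lemma \ref{pzqmodPhicharacterisation} then upgrades the pointwise congruence to a congruence in $\P(w;\ZK)[[h]]$. The $w$-integral is handled the same way with $\bfm=(0,1)$, which gives $f_h(\bfm)=f_3$ and slice distance $z_0-a_1$, squared to $(z-z_0)^2$.

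For the double integral, first integrate over $z$ using the congruence just obtained, which is valid in $\P(w;\ZK)[[h]]$. By Lemma \ref{Bintegralcongruence}, applied with $w$ as the integration variable and evaluated at upper limit $K$,
\[
\int_B^{K+B}\int_B^{K+B}q^{f}dz\,dw\ePK\left(\frac{(f_1)_K}{K}\right)\G q^{\frac{\bfD}{\Delta}}\int_B^{K+B}q^{\frac{\Delta(w-w_0)^2}{4f_1}}dw .
\]
Both Bernoulli shifts use the constant limits $B$ and $K+B$, so the order of integration does not matter, as in the rectangular-domain remark. Two routine points need checking here: that multiplying by the constant factors $\G q^{\bfD/\Delta}\in\ZKh$ commutes with the symbolic integral, and that the integrand lies in $\P(w;\ZK)[[h]]$.

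The remaining one-dimensional integral is evaluated by Lemma \ref{BinteasyGauss}. Its leading coefficient is $\Delta/(4f_1)\neK0$ and the critical value is $0$, so the integral is congruent to $\left(\frac{(\Delta/4f_1)_K}{K}\right)\G$. The product of Legendre symbols is
\[
\left(\frac{(f_1)_K}{K}\right)\left(\frac{(\Delta\cdot 4f_1)_K}{K}\right)\left(\frac{(1/(16f_1^2))_K}{K}\right)=\left(\frac{(\Delta)_K}{K}\right),
\]
using multiplicativity of the Legendre symbol and the facts that $4$ and $f_1^{-2}$ are squares. The result is $\left(\frac{(\Delta)_K}{K}\right)\G^2q^{\bfD/\Delta}$.

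The main obstacle is justifying that symbolic integration in $w$ respects the congruence. The congruence is between power series whose coefficients are polynomials in $w$, one side carrying the factor $\G$, and Lemma \ref{Bintegralcongruence} is stated only for the single-variable case. I expect this to go through by writing the difference as $\c(w;q)\Phi_K$ with $\c\in\P(w;\ZK)[[h]]$, which Lemma \ref{pzqmodPhicharacterisation} supplies, and then integrating term by term.
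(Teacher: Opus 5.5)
Your proposal is correct and is essentially the paper's proof: the single integrals come from Lemma \ref{Gaussslice} with $\bfm=(1,0)$ and $\bfm=(0,1)$, and the double integral follows by substituting one congruence into the remaining symbolic integration and evaluating the resulting one-variable Gauss sum. The only difference in route is the order: the paper does the inner $w$-integral first via the second congruence, whereas you swap the order on the rectangle and integrate over $z$ first, which makes no difference.

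Your upgrade of the pointwise congruence to one in $\P(w;\ZK)[[h]]$ via Lemma \ref{pzqmodPhicharacterisation} makes explicit a step the paper leaves implicit. Once that is done, Lemma \ref{Bintegralcongruence} applies verbatim in the single variable $w$, so the obstacle you flag is not a real one. One small citation point: membership of $\int_B^{K+B}q^{f(z,w)}dz$ in $\P(w;\ZK)[[h]]$ is better justified via Lemmas \ref{qpowerfzw} and \ref{integralcombination}. Lemma \ref{Bintegralgqpowerf} concerns a variable upper limit, not a parameter in the integrand.
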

\begin{proof}
Applying Lemma \ref{Gaussslice} with $\bfm=(1,0),(0,1)$ gives the two single integrals. For the iterated integral,
\[
\int_B^{K+B}\!\!\!\!\int_B^{K+B}\!\!\!q^{f(z,w)}dw dz
\ePK\left(\frac{(f_3)_K}{K} \right)\!\G{}q^\frac{D}{\Delta}\int_B^{K+B}q^\frac{\Delta(z-z_0)^2}{4f_3}dz
=\left(\frac{(\Delta)_K}{K} \right)\G^2q^{\frac{\bfD}{\Delta}}
\]
performing a second symbolic integration and using Lemma \ref{easyGauss} again,
\end{proof}

\subsection{\textsc{Line Segments of arbitrary rational slope}}
In this subsection, we consider sums over those lattice points on the scaling of a line segment, rather than the interior of a polygon. Let $\cL$ denote the line segment joining the rational points $(\alpha,\gamma)$ and $(\beta,\delta)$.  Let $\cLb$ denote the extended line, to infinity in both directions. Write $\bfm=\begin{pmatrix}m_1\cr m_2\end{pmatrix}$, a reduced vector in $\Z^2$ parallel (and not anti-parallel) to $\begin{pmatrix}\beta-\alpha\cr \delta-\gamma\end{pmatrix}$ and thus to $\cL$. Let $\bfm^\perp=\begin{pmatrix}-m_2\cr m_1\end{pmatrix}$ be the rotation of $\bfm$ through $\frac\pi2$; it will be a reduced vector in $\Z^2$ normal to $\cL$.

\noindent Then the extended line $\cLb$ has equation $\bfr\cdot\bfm^\perp=c$ where $c=m_1\gamma-m_2\alpha=m_1\delta-m_2\beta$. By definition, $(\beta-\alpha):(\delta-\gamma)=m_1:m_2$. Let $\lambda(\cL)\in\Q^{\geq 0}$ be the {\it length parameter} defined so that $\beta-\alpha=m_1\lambda(\cL)$ and $\delta-\gamma=m_2\lambda(\cL)$. In the case of a horizontal or vertical segment, $(m_1,m_2)=(\pm1,0)$ or $(0,\pm1)$ respectively, and the length parameter $\lambda(\cL)$ will be the actual length of the segment $\cL$.

\begin{thm}\label{lineinvariant} Suppose that $f(z,w)\in\Q[z,w]$ with $f_h(m_1,m_2)\not=0$. Then the sum
\[S_K(\cL)\equiv\sum\limits_{(z,w)\in{}K\cL\cap\Z^2}q^{f(z,w)}\]
defines a squak which is trivial for $c\not\in\Z$ and which for $c\in\Z$ has level one $\G$-decomposition,
\begin{align*}\label{diagonalzperspective}
    S_K (\cL)\ePK \Bint_{[Ka,Kb]} q^{   f(m_1t, m_2 t)} dt
    +\lambda(\cL) \left( \frac{(f_h (m_1, m_2))_K}{K} \right)  q^{\frac{\textbf{D}}{\Delta} + \frac{\Delta(m_1w_0-m_2z_0)^2}{4f_h (m_1,m_2)} } \G \equiv Z^{(0)}_K(\cL)+Z^{(1)}_K(\cL)\G
\end{align*}
for all but finitely many $K$, where $a,b\in\Q$ are determined by the endpoints of $\cL$.
\end{thm}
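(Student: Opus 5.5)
The plan is to parametrise the lattice points of $K\cLb$ and so reduce the statement to the one-dimensional Theorem \ref{1dqpt}, applied to the restriction of $f$ to that line. The scaled line $K\cLb$ has equation $\bfr\cdot\bfm^\perp=Kc$. Since $\bfm^\perp$ is a reduced integer vector, the integer points satisfy $-m_2z+m_1w\in\Z$, so there are none unless $Kc\in\Z$. For $K$ coprime to the denominator of $c$, this forces $c\in\Z$; this handles the case $c\notin\Z$ (the squak is $0$ for all large $K$).

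Now suppose $c\in\Z$. Because $\gcd(m_1,m_2)=1$, pick integers $u_1,u_2$ with $-m_2u_1+m_1u_2=1$ and set $\bfa=c\,(u_1,u_2)$. Then $\bfa\in\Z^2$ lies on $\cLb$, and $Kc\,(u_1,u_2)=K\bfa$ lies on $K\cLb$. The integer points of $K\cLb$ are exactly $K\bfa+t\bfm$ for $t\in\Z$. Write the endpoints as $(\alpha,\gamma)=\bfa+a\bfm$ and $(\beta,\delta)=\bfa+b\bfm$ with $a,b\in\Q$, so that $b-a=\lambda(\cL)$. The segment $K\cL$ then corresponds to $t\in[Ka,Kb]$, and
\[S_K(\cL)=\sum_{t\in[Ka,Kb]\cap\Z}q^{f(m_1t+Ka_1,\,m_2t+Ka_2)}.\]
The summand is $q^{g_K(t)}$ for a one-variable quadratic $g_K$ with leading coefficient $f_h(\bfm)$. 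Modulo $K$, $g_K(t)\eK f(m_1t,m_2t)$ coefficientwise, because the shift $K\bfa$ is divisible by $K$. By Lemma \ref{modKmodphiK} and Lemma \ref{pzqmodPhicharacterisation}, we may therefore replace the summand by $q^{f(m_1t,m_2t)}$ modulo $\Phi_K$.

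Next I would run the proof of Theorem \ref{1dqpt} on this sum. Write the sum as a symbolic integral via Lemma \ref{EulerMaclaurin} and split it into three pieces. The two end pieces are handled by Lemma \ref{1dcarry-overcorollary}, and the middle piece is $\Bint_{[Ka,Kb]}q^{f(m_1t,m_2t)}dt$. This gives
\[S_K(\cL)\ePK \Bint_{[Ka,Kb]}q^{f(m_1t,m_2t)}dt+\lambda(\cL)\int_B^{K+B}q^{f(m_1t,m_2t)}dt .\]
The remaining integral is evaluated by Lemma \ref{Gaussslice} with $\bfa=0$, which is valid once $K$ is chosen so that $f_h(\bfm),\Delta\neK0$. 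This produces the factor $\left(\frac{(f_h(m_1,m_2))_K}{K}\right)\G\,q^{\frac{\bfD}{\Delta}+\frac{\Delta(m_1w_0-m_2z_0)^2}{4f_h(m_1,m_2)}}$, as required. Note that $f_h(\bfm)\ne0$ holds automatically when $f_h$ is definite.

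It remains to check the level-one $\G$-decomposition. The coefficient $Z^{(1)}_K$ is almost independent of $K$ by Lemma \ref{Legendresymbolalmostindep}, since its exponent is a fixed rational. The coefficient $Z^{(0)}_K$ is almost independent of $K$ by Lemma \ref{BIntegralProperties}(6). Uniqueness of the decomposition follows from Theorem \ref{uniquenessinarbitrarydimension}.

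The main obstacle is the bookkeeping in the parametrisation. One must confirm that $a,b$ can be chosen independently of $K$, that the replacement $g_K\to f(m_1t,m_2t)$ is legitimate modulo $\Phi_K$ even though the integral limits $Ka,Kb$ are not integers, and that the finite set of excluded primes is accounted for. For the second point I would first try applying Lemma \ref{Bintegralcongruence} to the polynomial congruence $q^{g_K(t)}\ePK q^{f(m_1t,m_2t)}$ in $\P(t;\ZK)[[h]]$, and then evaluate at the $\ZK$-valued limits.
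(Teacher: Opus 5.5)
Your proposal is correct and follows the paper's proof essentially step for step. It uses the same base point $Kc(u_1,u_2)$ (the paper's $(-cn_2K,cn_1K)$), the same termwise reduction of the summand to $q^{f(m_1t,m_2t)}$ via Lemma \ref{modKmodphiK}, and the same one-dimensional argument, with the Gauss sum evaluated through Lemma \ref{Gaussslice} at $\bfa=0$, which is equivalent to the paper's use of Lemma \ref{generalquadraticslice}. The obstacle you flag about non-integer limits does not arise, because the replacement can be made term by term in the finite sum over integer $t$ before any symbolic integral is introduced, which is exactly what the paper does.
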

\begin{proof}
 When $K$ prime is such that $f(z,w)\in\ZK[z,w]$ then $f(z,w)\in\ZK$ at all integer points $(z,w)$, so that by Lemma \ref{qpoweralpha}, the summand $q^{f(z,w)}$ lies in $\ZKh$ and hence its sum $S_K(\cL)$ defines a squak.

 \noindent Since $m_1,m_2$ are coprime, we can choose $n_1,n_2\in\Z$ so that $m_1n_1+m_2n_2=1$; in the case of a horizontal or vertical line segment, $\bfm=(\pm1,0),(0,\pm1)$, we choose $n_1=m_1$, $n_2=m_2$.   Note that
 the extended line $K\cLb:\bfr\cdot\bfm^\perp=cK$ will have a non-empty intersection with $\Z^2$ precisely when $cK\in\Z$.
 When $c\in\Q\backslash\Z$, then for all sufficiently large prime $K$, $cK\not\in\Z$ and so $K\cL\cap\Z^2=\emptyset$ making $S_K(\cL)$ a trivial squak. In the case that $cK\in\Z$, then $(-cn_2K,cn_1K)\in{}K\cLb\cap\Z^2$ and so
 \[K\cLb\cap\Z^2=\begin{pmatrix}-cn_2K\cr cn_1K\end{pmatrix}+\Z\begin{pmatrix}m_1\cr m_2\end{pmatrix}\>.\]
This gives a parametrisation of $K\cLb$ by $z=m_1x-cn_2K, w=m_2x+cn_1K$ by the parameter $x\in\R$ where integer points $(z,w)\in\Z^2$ correspond precisely to $x\in\Z$. The section $K\cL$ of $K\cLb$ is given by $Ka\leq{}x\leq{}Kb$ where
\[
a=\frac{\alpha+cn_2}{m_1}=\frac{\gamma-cn_1}{m_2},\qquad{}b=\frac{\beta+cn_2}{m_1}=\frac{\delta-cn_1}{m_2}
\]
Note that by the choice of signs on $m_1,m_2$, always $a\leq{}b$. In the case of a horizontal line segment, $a=m_1\alpha$, $b=m_1\beta$ while for a vertical line segment, $a=m_2\gamma$, $b=m_2\delta$. Hence the line segment sum can be written as
\[
S_K(\cL)=\sum_{x\in{}K[a,b]\cap\Z}q^{f(m_1x-cn_2K,m_2x+cn_1K)}
\]
When $f(z,w)\in\ZK[z,w]$, the exponent in the summand is congruent modulo $K$ to $f(m_1x,m_2x)$ in $\ZK$; hence by Lemma \ref{modKmodphiK}
\[
S_K(\cL)\ePK\sum_{x\in{}K[a,b]\cap\Z}q^{f(m_1x,m_2x)}
\]
The exponent in this sum is $f(m_1x,m_2x)\in\Q[x]$ which is quadratic and by Lemma \ref{generalquadraticslice} with $\bfa={\bf0}$, it has leading coefficient $f_h(m_1,m_2)$ and the value at the critical point is
$f_0=\frac{\Delta}{4f_h(\bfm)}\big(\bfm^\perp\cdot\bfz_0\big)^2+\frac{D}{\Delta}$.
Applying Lemma \ref{1dqpt}, we obtain the level one $\G$ decomposition
\[
S_K(\cL)\ePK\Bint_{[Ka,Kb]} q^{f(m_1z,m_2z)} dz
    +(b-a) \left( \frac{(f_h(m_1,m_2))_K}{K}  \right) q^{f_0} \G
\]
which gives the result stated, upon noting that $b-a=\lambda(\cL)$.
\end{proof}

\noindent {\bf Remark 1.} Note that interchanging the endpoints $(\alpha,\gamma)$ and $(\beta,\delta)$ changes $\bfm$ to $-\bfm$. We can choose $\bfn$ to be similarly reversed in sign. Then $c$ changes sign but $\lambda$ stays the same while $a,b$ become $-b,-a$ respectively. Certainly $S_K(\cL)$ stays the same, while, being homogeneous, $f_h(-m_1,-m_2)=f_h(m_1,m_2)$. Finally, the symbolic integral giving $Z_K^{(0)}(\cL)$ changes to a central inversion of itself, which by Lemma \ref{BIntegralProperties}(3) has unchanged value.

\medskip\noindent{\bf Remark 2.} In the case that $f_h(m_1,m_2)=0$ but $f$ is not constant in the direction $\bfm$, that is $f_4m_1+f_5m_2\not=0$, then a level one $\G$-decomposition for $S_K(\cL)$ still exists, simply $Z^{(1)}_K(\cL)=0$.

\subsection{\textbf{\textsc{Decomposition of symbolic integral over width K right trapezium}}}
In this section we find (level two) $G$-decompositions for symbolic double integrals over right trapizoidal regions of expressions $q^{f(z,w)}$ where $f$ is a general quadratic. We do this in stages, first using symmetry for $f$ homogeneous and over a right triangular region whose diagonal passes through the origin, then shifting to a more general right trapezium and finally for a general inhomogeneous quadratic.

\begin{lem} Suppose $m\in\Q$. Then for the homogeneous part $f_h(z,w)=f_1z^2+f_2zw+f_3w^2$,
\[ \int_B ^{K+ B} \int_{B }^{mz+1 + B }q^{f_h(z,w) } dw dz
\ePK \frac{m}{2} \left( \frac{(\Delta)_K}{K} \right)\G^2  +   \frac12 \left( \frac{(f_1)_K}{K} \right)\G - \frac{m}{2} \left(\frac{(f_3)_K}{K} \right) \G+ \frac12 \left( \frac{(f_h(1,m))_K}{K} \right) \G\]
 so long as $K$ is an odd prime such that $m\in\ZK$, $f_h(z,w)\in\ZK[z,w]$ and $f_1,f_3,f_h(1,m),\Delta\neK0$.
\end{lem}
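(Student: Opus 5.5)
The plan is to exploit the central symmetry $f_h(-z,-w)=f_h(z,w)$ together with periodicity modulo $\Phi_K$. The goal is to show that twice the given triangular integral is congruent to the integral over a full $K\times mK$ rectangle plus three one-dimensional boundary slices. Write
\[
I:=\int_B^{K+B}\int_B^{mz+1+B}q^{f_h(z,w)}dwdz,\qquad J:=\int_B^{K+B}\int_{mz+1+B}^{mK+1+B}q^{f_h(z,w)}dwdz .
\]
Then $I+J=\int_B^{K+B}\int_B^{mK+1+B}q^{f_h}dwdz$. Splitting off $\int_{mK+B}^{mK+1+B}$ and applying Lemma~\ref{1dcarry-overcorollaryzw} to $\int_B^{mK+B}$ (valid since $mK\eK0$), the inner integral becomes $m\int_B^{K+B}q^{f_h}dw$. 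Lemma~\ref{generalquadraticsimplified} then gives the term $m\left(\frac{(\Delta)_K}{K}\right)\G^2$. The leftover strip $\int_B^{K+B}\int_{mK+B}^{mK+1+B}$ equals $\int_B^{K+B}q^{f_h(z,mK)}dz\ePK\int_B^{K+B}q^{f_1z^2}dz\ePK\left(\frac{(f_1)_K}{K}\right)\G$, using Lemma~\ref{modKmodphiK} and Lemma~\ref{easyGauss}.

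Next I relate $J$ to $I$ by central inversion. Apply Lemma~\ref{symbolicinttranslationinversion} in $w$ (substitute $w\mapsto mK+1-w$, i.e.\ the rule $\int_{a+B}^{b+B}g(x)dx=\int_{1-b+B}^{1-a+B}g(-x)dx$ combined with a translation). Then apply it in $z$ (substitute $z\mapsto K+1-z$; the outer limits $B,K+B$ go to $1-K+B,1+B$, and a shift by $K$ is absorbed by Theorem~\ref{carry-overlemma2}). This turns $J$ into an integral of $q^{f_h(K-z',\,mK-w')}$ over a region of the same triangular shape as $I$. By periodicity, i.e.\ Lemma~\ref{modKmodphiK} applied pointwise and then Lemma~\ref{pzqmodPhicharacterisation} and Lemma~\ref{Bintegralcongruence} in each variable, this integrand is congruent to $q^{f_h(-z',-w')}=q^{f_h(z',w')}$. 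The inverted region differs from that of $I$ only by unit shifts in its limits: the hypotenuse $w=mz$ and the vertical edge each move by one unit. Consequently $J\ePK I-(\hbox{diagonal slice})+(\hbox{vertical edge slice})$, up to signs to be fixed by the computation.

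The diagonal slice is $\int_B^{K+B}q^{f_h(z,mz)}dz$. By Lemma~\ref{Gaussslice} with $(m_1,m_2)=(1,m)$ and $\bfa=0$ (allowed since $m\in\ZK$ and $f_h(1,m)\neK0$), it is $\ePK\left(\frac{(f_h(1,m))_K}{K}\right)\G$; the exponent correction vanishes because $f$ is homogeneous, so $\bfz_0=0$ and $\bfD=0$. The vertical edge at $z\equiv0$ has length $mK$. By Lemma~\ref{1dcarry-overcorollary} it contributes $m\int_B^{K+B}q^{f_3w^2}dw\ePK m\left(\frac{(f_3)_K}{K}\right)\G$. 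Solving $2I\ePK(I+J)-(J-I)$ and collecting terms should give exactly $\frac m2\left(\frac{(\Delta)_K}{K}\right)\G^2+\frac12\left(\frac{(f_1)_K}{K}\right)\G-\frac m2\left(\frac{(f_3)_K}{K}\right)\G+\frac12\left(\frac{(f_h(1,m))_K}{K}\right)\G$. Division by $2$ is harmless because $2$ is a unit in $\ZK$.

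The main obstacle is the bookkeeping in the inversion step. The inner limits depend on $z$, so the $+1$ shifts from central inversion of symbolic integrals have to be tracked carefully, and one must decide which boundary lines (diagonal, $w=0$, $z=0$, $z=K$) are counted once, twice or not at all. I would first do the computation at the level of actual sums with integer $m$ and integer $mK$, where everything is a finite lattice count and the boundary corrections can be read off directly. I would then lift to symbolic integrals and general $m\in\ZK$ using polynomial dependence on the limits together with Lemma~\ref{pzqmodPhicharacterisation}, exactly as in the proof of Theorem~\ref{generalcarry-over}.
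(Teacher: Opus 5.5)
Your proposal is correct and is essentially the paper's argument. Like the paper, you use the central symmetry $f_h(-z,-w)=f_h(z,w)$ together with periodicity modulo $\Phi_K$ to identify the triangle with its complement in the $K\times mK$ rectangle, then collect the rectangle term $m\left(\frac{(\Delta)_K}{K}\right)\G^2$ and the horizontal, diagonal and vertical boundary slices before dividing by $2$. The only cosmetic difference is that the paper inverts $I$ itself and adds $I$ to both sides, whereas you invert the complement $J$; your relation $J\ePK I-\int_B^{K+B}q^{f_h(z,mz)}dz+m\left(\frac{(f_3)_K}{K}\right)\G$ does hold with exactly the signs you wrote, so the fallback through integer $m$ is unnecessary.
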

\begin{proof}
Applying twice the central inversion formula for symbolic integrals, $\int_{a+b}^{b+B}f(-x)dx=\int_{1-b+B}^{1-a+B}f(x)dx$,
\[
\int_B ^{K+ B} \int_{B }^{mz+1 + B }q^{f_h(-z,-w) } dw dz
=\int_B ^{K+ B} \int_{-mz+B }^{1 + B }q^{f_h(-z,w) } dw dz
=\int_{1-K+B} ^{1+ B} \int_{mz+B }^{1 + B }q^{f_h(z,w) } dw dz
\]
Changing variables by $z'=z+K$ in the right hand side and observing that $f_h(-z,-w)=f_h(z,w)$, we get
\[
\int_B ^{K+ B} \!\!\!\!\int_{B }^{mz+1 + B }\!\!\!\!q^{f_h(z,w) } dw dz
\!=\!\!\!\int_{1+B} ^{1+K+ B}\!\!\! \int_{mz'-mK+B }^{1 + B }\!q^{f_h(z'-K,w) } dw dz'
\!=\!\!\!\int_{1+B} ^{1+K+ B}\!\!\! \int_{mz'+B }^{1 +mK+ B }\!\!\!q^{f_h(z'-K,w'-mK) } dw' dz'
\]
where in the last step we translated the inner integral by $w'=w+mK$. Since $f_h(z,w)\in\ZK[z,w]$, thus $f_h(z-K,w-mK)\eK{}f_h(z,w)$ for all $z,w\in\Z$ and so by Lemma \ref{modKmodphiK}, $q^{f_h(z-K,w-mK)}\ePK{}q^{f_h(z,w)}$ in $\ZKh$ for all $z,w\in\Z$. By the two variable version of Lemma \ref{pzqmodPhicharacterisation}, thus $q^{f_h(z-K,w-mK)}\ePK{}q^{f_h(z,w)}$ in $\PzwZKh$. By Lemma \ref{Bintegralcongruence} applied twice it follows that
\[
\int_B ^{K+ B} \int_{B }^{mz+1 + B }q^{f_h(z,w) } dw dz
\ePK\int_{1+B} ^{1+K+ B} \int_{mz+B }^{1 +mK+ B }q^{f_h(z,w) } dw dz
\]
Applying $\int_{1+B}^{1+K+B}g(z)dz=\int_B^{K+B}g(z)dz-g(0)+g(K)$  to $g(z)=\int_{mz+B }^{1 +mK+ B }q^{f_h(z,w) } dw$ we get
\[
\int_B ^{K+ B} \int_{B }^{mz+1 + B }q^{f_h(z,w) } dw dz
\ePK\int_{B} ^{K+ B} \int_{mz+B }^{1 +mK+ B }q^{f_h(z,w) } dw dz-\int_{B }^{1 +mK+ B }q^{f_h(0,w) } dw
+q^{f_h(0,0) }
\]
By cutting up the integral and recalling that $\int_{x+B}^{x+1+B}h(z)dz=h(x)$, we can replace the inner integral on the RHS
\[\int_{mz+B }^{1 +mK+ B }q^{f_h(z,w) } dw
=\int_{mz+1+B }^{mK+ B }q^{f_h(z,w) } dw+q^{f_h(z,mz)}+q^{f_h(z,mK)}
\]
and similarly $\int_{B }^{1 +mK+ B }q^{f_h(0,w) } dw-q^{f_h(0,0)}\ePK\int_{B }^{mK+ B }q^{f_h(0,w) } dw$ giving finally
\[
\int_B ^{K+ B} \!\!\!\int_{B }^{mz+1 + B }\!\!\!q^{f_h(z,w) } dw dz
\ePK\!\!\int_{B} ^{K+ B} \!\!\!\int_{mz+1+B }^{mK+ B }\!q^{f_h(z,w) } dw dz+\!\int_B^{K+B}\!\!(q^{f_h(z,mz)}+q^{f_h(z,0)})dz-\!\int_{B }^{mK+ B }\!\!\!q^{f_h(0,w) } dw
\]
Adding the left hand side to both sides, we get
\[
2\!\int_B ^{K+ B} \!\!\!\!\int_{B }^{mz+1 + B }\!\!\!q^{f_h(z,w) } dw dz
\ePK\!\!\int_{B} ^{K+ B} \!\!\!\int_{B }^{mK+ B }\!q^{f_h(z,w) } dw dz+\!\int_B^{K+B}\!\!(q^{f_h(z,mz)}+q^{f_h(z,0)})dz-\!\int_{B }^{mK+ B }\!\!\!q^{f_h(0,w) } dw
\]
The first term on the right hand side is now a symbolic repeated integral over a rectangle and so the order of integration can be reversed without changing the result. The integrals $\int_B^{K+B}q^{f(z,w)}dz$, $\int_B^{K+B}q^{f_h(z,mz)}dz$ and $\int_B^{K+B}q^{f_h(z,0)}dz$ are all Gauss sums which can be evaluated by Lemmas \ref{generalquadraticsimplified} and \ref{Gaussslice} to give
\begin{align*}
2\int_B ^{K+ B} \int_{B }^{mz+1 + B }q^{f_h(z,w) } dw dz
\ePK\left( \frac{(f_1)_K}{K} \right)\G\int_{B }^{mK+ B }{}q^\frac{\Delta{}w^2}{4f_1} dw
&+\left( \frac{(f_h(1,m))_K}{K} \right) \G+\left( \frac{(f_1)_K}{K} \right)\G\\
&-\int_{B }^{mK+ B }q^{f_3w^2 } dw
\end{align*}
The result follows from Lemma \ref{1dcarry-overcorollary} which allows the integrals $\int_B^{mK+B}$ to be evaluated modulo $\Phi_K$ as $m$ times integrals $\int_B^{K+B}$, which are themselves Gauss sums whose values are given by Lemma \ref{easyGauss}.
\end{proof}

\begin{lem}\label{Krighttrianglehomogeneous} Suppose $m,c,d\in\Q$. Then
 \begin{align*}
     \int_{B} ^{K+ B}\!\! \int_{d+ B }^{mz+ c+1 + B }\!\!\!q^{f_h(z,w) } dw dz
     \ePK \frac{m}{2} &\left( \frac{(\Delta)_K}{K} \right)\G^2
     -\left(\frac{(f_1)_K}{K} \right)\G\left[\int_B ^{d+ B }  q^{\frac{ \Delta }{4f_1}w^2 }dw -\frac12\right]\\
     & - \frac{m}{2} \left(\frac{(f_3)_K}{K} \right) \G
     + \left(\frac{ (f_h(1,m))_K }{K} \right)\G\left[\int_{B} ^{c+1+ B } q^{ \frac{\Delta }{4f_h(1,m)}w^2 } dw -\frac12\right]
 \end{align*}
 so long as $K$ is an odd prime such that $m,c,d\in\ZK$, $f_h\in\ZK[z,w]$ and $f_h(1,0),f_h(0,1),f_h(1,m),\Delta\not\equiv_K0$.
\end{lem}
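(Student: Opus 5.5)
The plan is to reduce Lemma~\ref{Krighttrianglehomogeneous} to the preceding lemma (the case $c=d=0$) by splitting the inner integration range. For every $z$,
\[
\int_{d+B}^{mz+c+1+B} q^{f_h(z,w)}\,dw=\int_{B}^{mz+1+B} q^{f_h(z,w)}\,dw-\int_{B}^{d+B} q^{f_h(z,w)}\,dw+\int_{mz+1+B}^{mz+c+1+B} q^{f_h(z,w)}\,dw .
\]
This holds because symbolic integrals are additive under subdivision. Integrating each piece over $z$ from $B$ to $K+B$ gives three terms, all of which lie in $\ZKh$ by Theorem~\ref{2dBintegralbelonging}, so congruences modulo $\Phi_K$ may be manipulated freely. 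The first term is the previous lemma. It supplies $\frac m2\left(\frac{(\Delta)_K}{K}\right)\G^2$, the term $-\frac m2\left(\frac{(f_3)_K}{K}\right)\G$, and the two halves $\frac12\left(\frac{(f_1)_K}{K}\right)\G$ and $\frac12\left(\frac{(f_h(1,m))_K}{K}\right)\G$. These halves are exactly the $-\frac12$ corrections inside the square brackets of the claimed formula.

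For the second term, the $w$-limits do not depend on $z$, so the region is a rectangle and the order of integration can be reversed:
\[
\int_B^{K+B}\int_B^{d+B} q^{f_h(z,w)}\,dw\,dz=\int_B^{d+B}\int_B^{K+B} q^{f_h(z,w)}\,dz\,dw .
\]
By Lemma~\ref{generalquadraticsimplified} (with $z_0=w_0=0$ and $\bfD=0$ for $f_h$), the inner integral is congruent to $\left(\frac{(f_1)_K}{K}\right)\G\, q^{\Delta w^2/(4f_1)}$ in $\P(w;\ZK)[[h]]$. Lemma~\ref{Bintegralcongruence} then gives
\[
\left(\frac{(f_1)_K}{K}\right)\G\int_B^{d+B} q^{\Delta w^2/(4f_1)}\,dw ,
\]
which carries the required minus sign.

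For the third term, substitute $w=mz+u$ in the inner integral using translation invariance (Lemma~\ref{symbolicinttranslationinversion}). This gives
\[
\int_B^{K+B}\int_{1+B}^{c+1+B} q^{f_h(z,mz+u)}\,du\,dz .
\]
The $u$-limits are again constant, so I would swap the order. The step I expect to be the main obstacle is justifying this swap: the integrand is now a genuine function of $(z,u)$, and one must check that the two-variable symbolic integral over a rectangle equals the iterated integral in either order. The definition via $\B_z\B_u$ gives this, provided one verifies that translating the inner variable commutes with the $\B$-operator, which is exactly Lemma~\ref{symbolicinttranslationinversion}.

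After the swap, the inner $z$-integral $\int_B^{K+B} q^{f_h(z,mz+u)}\,dz$ is a slice Gauss sum along the direction $(1,m)$ through the point $(0,u)$. Lemma~\ref{Gaussslice} gives it as
\[
\left(\frac{(f_h(1,m))_K}{K}\right)\G\, q^{\Delta u^2/(4f_h(1,m))} .
\]
The remaining range from $1+B$ to $c+1+B$ then differs from the stated range from $B$ to $c+1+B$ by $\int_B^{1+B}$, which equals the integrand at $0$, namely $1$. This accounts for the bracketed expression $\left[\int_B^{c+1+B} q^{\Delta w^2/(4f_h(1,m))}\,dw-1\right]$. Combining it with the $+\frac12$ from the previous lemma gives the stated $-\frac12$. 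The $d$-bracket is assembled from its $\frac12$ in the same way.

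Finally, I would check the hypotheses. The conditions $m,c,d\in\ZK$ and $f_h\in\ZK[z,w]$ ensure that every integrand lies in the relevant $\P(\cdot;\ZK)[[h]]$. The non-vanishing of $f_1$, $f_3$, $f_h(1,m)$ and $\Delta$ modulo $K$ is exactly what the Gauss-sum lemmas require.
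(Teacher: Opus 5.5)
Your proposal is correct and follows essentially the same route as the paper. Both split the inner range into the $c=d=0$ piece handled by the previous lemma, minus the rectangle up to $d+B$, plus a translated strip. Both then swap the order of integration on the two rectangular pieces, evaluate the inner $z$-integrals by Lemma \ref{Gaussslice} along $(1,0)$ and $(1,m)$, and absorb the $\frac12$ terms from the previous lemma into the brackets. The paper translates by $mz+1$ and shifts by $1$ at the end, whereas you translate by $mz$ and peel off $\int_B^{1+B}=1$; this is the same computation.

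The order swap you flag as the main obstacle is in fact immediate. The paper already notes that a symbolic repeated integral over a rectangle equals $\int\int\B_x\B_y[p]$, so it is symmetric in the order of integration.
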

\begin{proof}
    Split the inner symbolic integral into three and then use translation
    \begin{align*}
&\int_B ^{K+ B} \int_{d+ B }^{mz+ c+1 + B }q^{f_h(z,w) } dw dz \\
&=\int_B^{K+B} \int_B^{mz+1+B}q^{f_h(z,w)}dwdz -\int_B^{K+B}\int_B^{d+B}q^{f_h(z,w)}dwdz
+\int_B^{K+B}\int_{mz+1+B}^{mz+1+c+B}q^{f_h(z,w)}dwdz\\
&=\int_B^{K+B} \int_B^{mz+1+B}q^{f_h(z,w)}dwdz -\int_B^{K+B}\int_B^{d+B}q^{f_h(z,w)}dwdz+\int_B^{K+B}\int_B^{c+B}q^{f_h(z,w+mz+1)}dwdz
    \end{align*}
The first integral is evaluated by the previous lemma. Since the second and third integrals are symbolic repeated integrals over rectangular domains, the order of integration can be reversed and then the new inner integrals are generalised Gauss sums which can be evaluated by Lemma \ref{Gaussslice} with $\bfm=(1,0)$, $\bfa=(0,w)$  for the second integral and  $\bfm=(1,m)$, $\bfa=(0,w+1)$ for the third integral
\[
\int_B^{K+B}q^{f_h(z,w)}dz\ePK\left(\frac{(f_h(1,0))_K}{K}\right)\G{}q^{\frac{\Delta{w}^2}{4f_h(1,0)}},\quad
\int_B^{K+B}q^{f_h(z,w+mz+1)}dz
\ePK\left(\frac{(f_h(1,m))_K}{K}\right)\G{}q^{\frac{\Delta(w+1)^2}{4f_h(1,m)}}
\]
Combining
 \begin{align*}
     \int_{B} ^{K+ B}\!\! \int_{d+ B }^{mz+ c+1 + B }\!\!\!q^{f_h(z,w) } dw dz
     \ePK \frac{m}{2} &\left( \frac{(\Delta)_K}{K} \right)\G^2  +   \frac12 \left( \frac{(f_1)_K}{K} \right)\G - \frac{m}{2} \left(\frac{(f_3)_K}{K} \right) \G + \frac12 \left( \frac{(f_h(1,m))_K}{K} \right) \G\\
     & -\left(\frac{(f_1)_K}{K} \right)\G\int_B ^{d+ B }  q^{\frac{ \Delta w^2 }{4f_1}}dw
     + \left(\frac{ (f_h(1,m))_K }{K} \right)\G\int_{B} ^{c+ B } q^{ \frac{\Delta (w+1)^2}{4f_h(1,m)} } dw
 \end{align*}
 which rearranged gives the required result after shifting the last integral by 1.
\end{proof}

\begin{lem}\label{plemma}
    Suppose that $K$ is an odd prime for which $f(z,w)\in\ZK[z,w]$ while $\Delta,f_h(0,1)\neK0$. Then for any $m,c\in\ZK$,
    \[\p(z;q):=\int_{B}^{mz+c+B}q^{f(z,w)}dw\]
    defines an element of $\P(z;\ZK)[[h]]$ satisfying
    $\p(z+K;q)\ePK\p(z;q)+m\left(\frac{(f_h(0,1))_K}{K}\right)\G{}q^{\frac{D}{\Delta}+\frac{\Delta}{4f_h(0,1)}(z-z_0)^2}$
\end{lem}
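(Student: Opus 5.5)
Membership in $\PzZKh$ comes first. By Lemma \ref{qpowerfzw}, $q^{f(z,w)}\in\PzwZKh$. Viewing $z$ as a parameter, the argument of Lemma \ref{Bintegralbelonging} (or the first step in the proof of Theorem \ref{2dBintegralbelonging}) shows that $\int_B^{u+B}q^{f(z,w)}dw$ has coefficients of the form $\sum a_{k,l}\binom{z}{k}\binom{u}{l+1}$ with $a_{k,l}\in\ZK$. Substituting $u=mz+c$ gives polynomials in $z$ that take $\ZK$-values at all integers, by Lemma \ref{combinationbelongs} since $m,c\in\ZK$. Lemma \ref{P(z,R)characterization} then puts each coefficient in $\PzZK$, so $\p(z;q)\in\PzZKh$.

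For the congruence, write
\[
\p(z+K;q)=\int_B^{mz+c+B}q^{f(z+K,w)}dw+\int_{mz+c+B}^{mz+c+mK+B}q^{f(z+K,w)}dw .
\]
In the first term, $f(z+K,w)\eK f(z,w)$ for all integers $z,w$. Hence $q^{f(z+K,w)}\ePK q^{f(z,w)}$ in $\PzwZKh$ by Proposition \ref{modKmodphiK} and the two-variable version of Lemma \ref{pzqmodPhicharacterisation}. Lemma \ref{Bintegralcongruence}, with $z$ as a parameter and the upper limit $mz+c$ substituted as above, then gives $\int_B^{mz+c+B}q^{f(z,w)}dw=\p(z;q)$ modulo $\Phi_K$. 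In the second term I again replace $f(z+K,w)$ by $f(z,w)$. The endpoints $mz+c$ and $mz+c+mK$ differ by $mK\eK0$, so Lemma \ref{1dcarry-overcorollaryzw} gives
\[
\frac{mK}{K}\int_B^{K+B}q^{f(z,w)}dw .
\]
By Lemma \ref{generalquadraticsimplified}, applicable because $f_3=f_h(0,1)\neK0$ and $\Delta\neK0$, this integral is congruent to $\left(\frac{(f_3)_K}{K}\right)\G q^{\frac{\bfD}{\Delta}+\frac{\Delta(z-z_0)^2}{4f_3}}$. That is the stated correction term.

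The main obstacle is that these congruences are being used in $\PzZKh$ with $z$ as a free variable, whereas the earlier lemmas are phrased for a fixed point or a single variable. My plan is to verify each congruence after evaluating at arbitrary $z_0\in\Z$. There all limits lie in $\ZK$ and Lemmas \ref{Bintegralcongruence} and \ref{1dcarry-overcorollary} apply directly. I then lift back to $\PzZKh$ via Lemma \ref{pzqmodPhicharacterisation}, since both sides lie in $\PzZKh$: the Gauss-sum term does because $\frac{\Delta}{4f_3},z_0,\frac{\bfD}{\Delta}\in\ZK$, using Lemma \ref{qpowerfz}. One also has to check that Lemma \ref{generalquadraticsimplified} holds with $z$ symbolic rather than fixed; this reduces in the same pointwise way to Lemma \ref{easyGauss}.
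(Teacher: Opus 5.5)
Your proposal is correct and follows essentially the paper's route: replace $q^{f(z+K,w)}$ by $q^{f(z,w)}$, isolate the piece from $mz+c+B$ to $mz+c+mK+B$, convert it to $m\int_B^{K+B}q^{f(z,w)}dw$ by Lemma~\ref{1dcarry-overcorollaryzw}, and evaluate that integral as a Gauss sum. The differences are minor. You split the integral before replacing the integrand, whereas the paper replaces first on the whole interval up to $mz+mK+c$. You also supply two things the paper leaves implicit: the proof that $\p(z;q)\in\PzZKh$, and the pointwise-then-lift argument via Lemma~\ref{pzqmodPhicharacterisation} that justifies using the one-variable lemmas with $z$ as a parameter. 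Finally, you correctly note that only $f_h(0,1),\Delta\neK0$ are needed for the $dw$ Gauss sum.
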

\begin{proof}
    Since $f(z,w)$ is a polynomial with coefficients in $\ZK$, thus $f(z+K,w)\eK{}f(z,w)$ in $\ZK[z,w]$. By Lemma \ref{qpowerfzw}, $q^{f(z+K,w)}\ePK{}q^{f(z,w)}$ in $\P(z,w;\ZK)[[h]]$. By Lemma \ref{Bintegralcongruence},
    \[
\p(z+K;q)\ePK\!\!\int_{B}^{mz+mK+c+B}\!\!q^{f(z,w)}dw=\p(z;q)+\int_{mz+c+B}^{mz+mK+c+B}\!\!q^{f(z,w)}dw
\ePK\p(z;q)+m\!\int_{B}^{K+B}\!\!q^{f(z,w)}dw
    \]
    using Lemma \ref{1dcarry-overcorollaryzw} in the last step. The result now follows by evaluating the last integral by Lemma \ref{generalquadraticsimplified}.
\end{proof}
\noindent As a corollary, by taking the difference of two such expressions as in the previous lemma, we see that any expression obtained by symbolic integration between limits of the form (linear function of $z$)$+B$ will yield a similar property, with the coefficient $m$ on the right hand side replaced by the difference in slopes coming from the upper and lower limits of integration.

\begin{lem} \label{nonhomogeneouscmonster}
For $a,b,m,c,d\in\Q$ and a general quadratic $f(z,w)\in\Q[z,w]$ with homogeneous part $f_h$,
\begin{align*}
    &\int_{a+B} ^{a+bK+ B }\!\!\!\! \int_{d+B} ^{mz+c +1 + B } \!\!\!q^{f(z,w) } dw dz
\ePK q^\frac{D}{\Delta} \Bigg[  \frac{b^2m}{2} \left(\frac{(\Delta)_K}{K} \right) \G^2
 \!-b \left( \frac{(f_h(1,0))_K}{K}  \right) \G \left[ \int_B^{d-w_0+B} \!\!\!q^{ \frac{\Delta w^2 }{4f_h(1,0)}} dw - \frac12 \right] \\
&+b\left(\frac{(f_h (1,m))_K}{K} \right)\G \left[ \int_{B} ^{c+mz_0-w_0+1 + B } \!\!q^{\frac{\Delta w^2}{4f_h (1,m)}} dw -\frac12 \right]
\!+ bm \left(\frac{(f_h(0,1))_K}{K} \right)\G \left[\int_B ^{a-z_0 + B } \!\!\!q^{\frac{ \Delta z^2 }{4f_h(0,1)}} dz  - \frac{1}{2} \right] \Bigg]
\end{align*}
for all odd primes $K$ for which $f(z,w)\!\in\ZK[z,w]$, $a,b,m,c,d\in\ZK$ with $\Delta,f_h(1,0),f_h(0,1),f_h(1,m)\neK0$.
       \end{lem}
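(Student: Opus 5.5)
The plan is to reduce to Lemma \ref{Krighttrianglehomogeneous} in three moves: translate to remove the linear part of $f$, use Theorem \ref{generalcarry-over} to pass from width $bK$ starting at $a$ to width $K$ starting at $0$, and then evaluate the width-$K$ trapezium integral.

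\textbf{Step 1: translation.} Write $f(z,w)=f_h(z-z_0,w-w_0)+\frac{\bfD}{\Delta}$. Translate both variables by Lemma \ref{symbolicinttranslationinversion}, setting $z'=z-z_0$ and $w'=w-w_0$. The integral becomes
\[q^{\frac{\bfD}{\Delta}}\int_{a-z_0+B}^{a-z_0+bK+B}\int_{d-w_0+B}^{mz'+c+mz_0-w_0+1+B}q^{f_h(z',w')}dw'dz'.\]
This explains the shifted constants $d-w_0$, $c+mz_0-w_0$ and $a-z_0$ that appear in the statement. One point needs care: $z_0,w_0$ involve $\Delta^{-1}$, but $\Delta\neK0$, so they still lie in $\ZK$ and all the integrals remain in $\ZKh$. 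Write $a'=a-z_0$, $c'=c+mz_0-w_0$ and $d'=d-w_0$.

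\textbf{Step 2: carry-over in $z$.} Set $\p(z;q)=\int_{d'+B}^{mz+c'+1+B}q^{f_h(z,w)}dw$. By Lemma \ref{plemma} and the corollary remark after it (the lower limit has slope $0$), we get $\p(z+K;q)\ePK\p(z;q)+\c(z;q)$ with
\[\c(z;q)=m\left(\frac{(f_h(0,1))_K}{K}\right)\G\, q^{\frac{\Delta z^2}{4f_h(0,1)}}.\]
The stationary point of $f_h$ is $0$ and its stationary value is $0$, so $\c(z+K)\ePK\c(z)$ by Lemma \ref{modKmodphiK}. The integrand is a polynomial in $z$ times $q^{f_h}$, so the argument of Theorem \ref{generalcarry-over} applies with $q^{f(z)}$ absorbed into $\p$; I would check that nothing there used more than membership in $\PzZKh$. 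Applying it with $\alpha=b$ gives
\[b\int_B^{K+B}\p\,dz+b\int_B^{a'+B}\c\,dz+\tfrac12 b(b-1)\int_B^{K+B}\c\,dz.\]
In this expression:
\begin{itemize}
\item the last term, by Lemma \ref{easyGauss}, equals $\tfrac12 b(b-1)m\left(\frac{(f_h(0,1))_K}{K}\right)\left(\frac{(\Delta f_h(0,1))_K}{K}\right)\G^2=\tfrac12 b(b-1)m\left(\frac{(\Delta)_K}{K}\right)\G^2$;
\item the middle term is $bm\left(\frac{(f_h(0,1))_K}{K}\right)\G\int_B^{a'+B}q^{\frac{\Delta z^2}{4f_h(0,1)}}dz$.
\end{itemize}

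\textbf{Step 3: the width-$K$ piece.} The term $b\int_B^{K+B}\p\,dz$ is exactly $b$ times the integral evaluated in Lemma \ref{Krighttrianglehomogeneous}, taken with $c',d'$ in place of $c,d$. It contributes:
\begin{itemize}
\item $\frac{bm}{2}\left(\frac{(\Delta)_K}{K}\right)\G^2$;
\item the $d'$-bracket term with coefficient $-b$;
\item the $c'$-bracket term with coefficient $+b$;
\item $-\frac{bm}{2}\left(\frac{(f_h(0,1))_K}{K}\right)\G$.
\end{itemize}

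\textbf{Step 4: collect terms.} The $\G^2$ coefficients combine as $\frac{bm}{2}+\frac{b(b-1)m}{2}=\frac{b^2m}{2}$. The lone $-\frac{bm}{2}\left(\frac{(f_h(0,1))_K}{K}\right)\G$ merges with the middle carry-over term to give the bracket $\bigl[\int_B^{a'+B}\cdots-\tfrac12\bigr]$. The result is exactly the statement.

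The main obstacle is Step 2. It requires Theorem \ref{generalcarry-over} with the integrand $q^{f_h(z,w)}$ folded into a $\p$ that is defined by an integral over $w$ with a $z$-dependent limit, and with congruences holding in $\PzZKh$ rather than pointwise. I also need to confirm that all the Legendre symbol manipulations are legitimate under the stated nondegeneracy hypotheses: for the last term, $\left(\frac{(f_h(0,1))_K}{K}\right)^2=1$ and $4f_h(0,1)$ is a nonzero square times $f_h(0,1)$.
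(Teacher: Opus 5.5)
Your proposal is correct and follows the paper's proof step for step. You translate to the stationary point to reduce to $f_h$, then apply Lemma \ref{plemma} and Theorem \ref{generalcarry-over} with $\alpha=b$ to $\p(z;q)=\int_{d-w_0+B}^{mz+mz_0+c-w_0+1+B}q^{f_h(z,w)}dw$. You evaluate the width-$K$ piece by Lemma \ref{Krighttrianglehomogeneous}, and you merge $\frac{bm}{2}+\frac{b(b-1)m}{2}=\frac{b^2m}{2}$ and the two $\left(\frac{(f_h(0,1))_K}{K}\right)\G$ terms into the bracket.

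Your attention to the nonzero lower limit in Lemma \ref{plemma} and to $z_0,w_0\in\ZK$ fills points the paper passes over silently. The worry about absorbing $q^{f(z)}$ is unnecessary, since you may simply take $f=0$ in Theorem \ref{generalcarry-over}.
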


\begin{proof} By lemma \ref{generalquadraticslice}, $f(z,w)=f_h(z-z_0,w-w_0)+\frac{D}{\Delta}$ so that
\[
\int_{a+B} ^{a+b K+ B  }\!\!\!\!\int_{d+B} ^{mz +c+ 1 + B }\!\!\! q^{f(z,w)} dw dz
=q^\frac{D}{\Delta}\!\!\int_{a-z_0+B} ^{ a+bK-z_0+ B  }\!\!\!\! \int_{d-w_0+B} ^{mz +mz_0+c+ 1-w_0 + B }\!\!\! q^{f_h(z,w)} dw dz
=\!q^\frac{D}{\Delta}\!\!\int_{a-z_0+B} ^{ a+bK-z_0+ B  }\!\!\!\! \p(z;q) dz
\]
where $\p(z;q)=\int_{d-w_0+B} ^{mz +mz_0+c+ 1-w_0 + B } q^{f_h(z,w)} dw$. By Lemma \ref{plemma},
$\p(z+K;q)-\p(z;q)\ePK{}m\left(\frac{(f_3)_K}{K}\right)\G{}q^{\frac{\Delta{}z^2}{4f_3}}$ and so applying Theorem \ref{generalcarry-over},
\[
LHS
=q^\frac{D}{\Delta}\left[b\int_{B} ^{ K+ B  }\p(z;q)dz+bm\left(\frac{(f_3)_K}{K}\right)\G\int_B^{a-z_0+B}q^{\frac{\Delta{}z^2}{4f_3}}dz+\frac{b(b-1)}{2}m\left(\frac{(\Delta)_K}{K}\right)\G^2\right]
\]
The first integral can be evaluated by Lemma \ref{Krighttrianglehomogeneous}
\begin{align*}
\int_{B} ^{ K+ B  }\p(z;q)dz\ePK \frac{m}{2} &\left( \frac{(\Delta)_K}{K} \right)\G^2  -\left(\frac{(f_1)_K}{K} \right)\G\left[\int_B ^{d-w_0+ B }  q^{\frac{ \Delta }{4f_1}w^2 }dw -\frac12\right]\\
     & - \frac{m}{2} \left(\frac{(f_3)_K}{K} \right) \G
     + \left(\frac{ (f_h(1,m))_K }{K} \right)\G\left[\int_{B} ^{mz_0+c-w_0+1+ B } q^{ \frac{\Delta }{4f_h(1,m)}w^2 } dw -\frac12\right]
\end{align*}
which, after rearranging terms,  completes the proof.
\end{proof}

\subsection{\textbf{\textsc{Sum over right trapezium with rational slope}}}

\begin{thm}\label{decompositionoftrapezium} Suppose that $\alpha,\beta,\gamma,m,c\in\Q$ with $\alpha<\beta$ and $\gamma-c\leq{}m\alpha,m\beta$ while $f_1,f_3,f_h(1,m)\not=0$. Consider the right trapezium $\trap: \alpha \leq z \leq \beta$, $\gamma\leq w \leq mz + c $. Then the sum
\[S_K (\trap) = \sum_{ (z,w) \in K\trap \cap \Z^2  } q^{f(z,w)}\] defines a squak which has a level two $\G$-decomposition, for all but finitely many primes $K$,
\begin{align*}
    S_K (\trap) \ePK q^\frac{D}{\Delta} \left( \frac{(\Delta)_K}{K} \right)  A (\trap)  \G^2 +  \left( Z_{K,\rightarrow} + Z_{K,\uparrow} + Z_{K,\swarrow} +Z_{K,\downarrow}  \right)\G + Z_K (\trap)
\end{align*}
    where $A(\trap)$ is the area of $\trap$ and  $Z_{K,\rightarrow}$, $Z_{K,\uparrow}$, $Z_{K,\swarrow}$, $Z_{K,\downarrow}$, $Z_K (\trap)$ are almost independent of $K$. The terms $Z_{K,\rightarrow}$, $Z_{K,\uparrow}$, $Z_{K,\swarrow}$, $Z_{K,\downarrow}$ are associated with the boundary of $\trap$ taken in an anticlockwise orientation, namely the bottom horizontal edge, right hand vertical edge, top diagonal edge and left hand vertical edge, respectively, in the sense that they depend only on the data of these particular segments and not on that of the rest of $\trap$.
\end{thm}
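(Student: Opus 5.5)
The plan is to mirror the one-dimensional proof of Theorem \ref{1dqpt}: write the discrete sum as a symbolic double integral with integer limits, then peel off width-$K$ slabs using the carry-over machinery. After that, Lemma \ref{nonhomogeneouscmonster} produces the $\G^2$ and $\G$ terms, and what remains is a symbolic integral whose limits depend only on fractional parts of $K\alpha$, $K\beta$, $K\gamma$, $Kc$.

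\textbf{Step 1 (squak and double sum).} For $K$ not dividing the denominators of the coefficients of $f,\alpha,\beta,\gamma,m,c$, the summand lies in $\ZKh$, so $S_K(\trap)$ is a squak. Write $mz+c$ with $m=m_1/m_2$ in reduced form and set $z=\lceil K\alpha\rceil+x$. The lattice points are
\[
\lceil K\alpha\rceil\le z\le\lfloor K\beta\rfloor,\qquad \lceil K\gamma\rceil\le w\le\lfloor mz+Kc\rfloor .
\]
By applying Lemma \ref{EulerMaclaurin} twice, the inner sum becomes $\int_{\lceil K\gamma\rceil+B}^{\lfloor mz+Kc\rfloor+1+B}$. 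The floor depends on $z$ only through $z$ mod $m_2$. I would therefore split $z$ into the $m_2$ residue classes $z=m_2y+r$. On each class $\lfloor mz+Kc\rfloor=m_1y+\lfloor (m_1r/m_2)+Kc\rfloor$ is an honest linear function of $y$, so each class gives a symbolic double integral in $(y,w)$ over a right trapezium with integral-polynomial upper limit. The quadratic $f(m_2y+r,w)$ has discriminant $m_2^2\Delta$, so the Legendre symbols are unchanged.

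\textbf{Step 2 (peel off width-$K$ pieces).} In each class, write the $y$-range as the interval $[\{-K\alpha'\}, 1-\{K\beta'\}]$ plus pieces congruent to $0$ mod $K$. Handle the $w$-limits in the same way, splitting into a constant lower limit and the shifted diagonal upper limit. The pieces of full width $bK$ with $b=\beta-\alpha$ (rescaled), and lower limit $d$ and upper limit $mz+c+1$ with $K$-multiples, are treated by Lemma \ref{nonhomogeneouscmonster}; this is where Theorem \ref{generalcarry-over} and Lemma \ref{plemma} are used. The leftover thin strips near $w=K\gamma$ (height multiple of $K$ in $w$, short in $z$) are handled by Lemma \ref{1dcarry-overcorollaryzw}, which turns them into $\G$ times one-dimensional symbolic integrals. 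Summing over the $m_2$ residue classes, the $\G^2$ coefficients add to $\frac{b^2m}{2}$ (triangle) plus the rectangle part. Together these give $q^{D/\Delta}\left(\frac{(\Delta)_K}{K}\right)A(\trap)$, in agreement with Lemma \ref{generalquadraticsimplified}.

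\textbf{Step 3 (identify the pieces).} The $\G$-coefficients are sums of terms of the form $\left(\frac{(f_h(\bfm))_K}{K}\right)\int_B^{\theta+B}q^{\Delta w^2/4f_h(\bfm)}dw$ with $\theta$ a fractional-part expression. I would group each term with the edge whose normal direction it involves: $f_h(1,0)$ with the bottom edge, $f_h(0,1)$ with the vertical edges, and $f_h(1,m)$ with the diagonal. The constant $\pm\frac12$ corrections should be distributed between adjacent edges. Sums over the $m_2$ classes should be collapsed using Theorems \ref{Bernoullimultiplicationlemma2} and \ref{Bernoullimultiplicationlemma3}, so that each coefficient visibly depends only on that segment's data, compatibly with Theorem \ref{lineinvariant}. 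Almost independence follows because every limit depends only on $\{K\cdot\}$ of fixed rationals, together with Lemma \ref{Legendresymbolalmostindep}. The remaining $\G^0$ term $Z_K(\trap)$ is a symbolic double integral with such limits, which is also almost independent.

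\textbf{Main obstacle.} I expect the hardest part to be Step 3: making the multiplication-theorem collapse of the $m_2$ residue classes work cleanly for the diagonal edge, and showing that the $\G$-coefficients really split edge by edge with no corner cross terms. If the collapse fails, my fallback is to reparametrize along the diagonal using $(n_1,n_2)$ as in Theorem \ref{lineinvariant}. That would reduce the diagonal contribution to a one-dimensional $\Bintt$ over $[Ka,Kb]$.
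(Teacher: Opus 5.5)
Your plan is essentially the paper's proof (in your notation $m=m_1/m_2$, $z=m_2y+r$): split by residue class, convert to symbolic double integrals, and move the inner $w$-limits to $\{-K\gamma\}+B$ and $m_1y+\lfloor m_1r/m_2+Kc\rfloor-Kc+1+B$ by carry-over in $w$, which gives the $(c-\gamma)$ Gauss terms. Then apply Lemma \ref{nonhomogeneouscmonster} to two slabs of widths $K\alpha/m_2$ and $K\beta/m_2$ anchored at cut points almost independent of $K$, one slab subtracted so that the $\G^2$ parts give $\tfrac{m}{2}(\beta^2-\alpha^2)$, which together with $(c-\gamma)(\beta-\alpha)$ is the area; finally collapse the residue-class sums for the vertical and diagonal edges with Lemma \ref{symbolicinttranslationinversion} and Theorem \ref{Bernoullimultiplicationlemma3}. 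The one detail to correct is the cut points in class $r$: they must be $\lceil (K\alpha-r)/m_2\rceil-K\alpha/m_2=\{(r-K\alpha)/m_2\}-r/m_2$ and $\lfloor (K\beta-r)/m_2\rfloor+1-K\beta/m_2$, not $\{-K\alpha/m_2\}$ and $1-\{K\beta/m_2\}$, because with the latter the peeled-off lengths are $K\alpha/m_2-1$ or $K\beta/m_2-1$ for some $r$, which do not lie in $K\ZK$, so Lemma \ref{nonhomogeneouscmonster} would not apply.
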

\begin{proof} Choose $K$ to be an odd prime such that $f(z,w)\in\ZK[z,w]$ with $\Delta,f_h(1,0),f_h(0,1),f_h(1,m)\neK0$ while $\alpha,\beta,\gamma, m ,c \in \Z_{(K)}$. Write $m = \frac{m_2}{m_1}$ in reduced form, $m_1\in\N$, $m_2\in\Z$ coprime; since $m \in \Z_{(K)}$ thus $(m_1, K)=1$. Since $f(z,w)\in\ZK$ for all $z,w\in\Z$, thus by Lemma \ref{qpoweralpha}, $q^{f(z,w)}\in\ZKh$ for all $z,w\in\Z$ and hence the sum $S_K(\trap)$ defines a squak. Split the domain of summation according to the residue of $z$ modulo $m_1$,
\begin{align*}
 K\trap\cap\Z^2
 &=\big\{(z,w)\in\Z^2\big|\,K\alpha\leq{}z\leq{}K\beta,\quad{}K\gamma\leq{}w\leq{}mz+Kc\big\}\\
 &= \bigcup\limits_{r=0}^{m_1-1}\!\bigg\{(m_1z+r,w) \bigg|\, (z,w)\in\Z^2, \,
 \left\lceil\frac{K\alpha-r}{m_1}\right\rceil\leq z\leq\left\lfloor\frac{K\beta-r}{m_1}\right\rfloor,\,
 \lceil{}K\gamma\rceil\leq{}w\leq{}m_2z+\lfloor{}mr+Kc\rfloor\bigg\}
\end{align*}
Then
\[
 S_K (\trap)
 =\sum_{r=0}^{m_1-1}\left(\sum_{z=a_r}^{b_r}
 \sum_{w=\lceil{}K\gamma\rceil}^{m_2z+\lfloor{}mr+Kc\rfloor}q^{f(m_1z+r,w)}\right)
 = \sum_{r=0}^{m_1 -1 }  \left( \int_{ a_r + B }^{  b_r+1  + B   }  \int_{ \lceil K\gamma \rceil+B  }^{ m_2 z +  \lfloor    mr+ Kc  \rfloor  +1 + B }  q^{f(m_1 z+r,w)} dw dz \right).
\]
where $a_r=\left\lceil\frac{K\alpha-r}{m_1}\right\rceil$ and $b_r=\left\lfloor\frac{K\beta-r}{m_1}\right\rfloor$.
We now split the inner and outer integrals in order to obtain limits of symbolic integration which are almost independent of $K$, starting with the inner integral. Set $c_r=\lfloor{}mr+Kc\rfloor-Kc$  and note that $\lceil{}K\gamma\rceil-K\gamma=\{-K\gamma\}$. Then splitting the inner integral and using Lemma \ref{1dcarry-overcorollary} on the intervals
$[\{-K\gamma\}+B,\lceil{}K\gamma\rceil+B]$ and $[m_2z+c_r+1+B,m_2 z +  \lfloor    mr+ Kc  \rfloor  +1 + B]$ of lengths $K\gamma$, $Kc$ respectively, gives
\begin{align*}
 \int_{ \lceil K\gamma \rceil+B  }^{ m_2 z +  \lfloor    mr+ Kc  \rfloor  +1 + B }  q^{f(m_1 z+r,w)} dw
 &\ePK \int_{ \{-K\gamma\}+B  }^{ m_2 z +  c_r  +1 + B }  q^{f(m_1 z+r,w)} dw +(c-\gamma)\int_B^{K+B} q^{f(m_1 z+r,w)} dw \\
 &\ePK\int_{ \{-K\gamma\}+B  }^{ m_2 z +  c_r  +1 + B }  q^{f(m_1 z+r,w)} dw +(c-\gamma)\left(\frac{(f_3)_K}{K}\right)\G{}q^{\frac{D}{\Delta}+\frac{\Delta(m_1z+r-z_0)^2}{4f_3}}
\end{align*}
using Lemma \ref{Gaussslice}. We now have
\[
S_K(\trap)
=\sum_{r=0}^{m_1 -1 } \!\!\!\left( \int_{ a_r + B }^{  b_r+1  + B   }\!\!\!  \int_{ \{-K\gamma\}+B  }^{ m_2 z + c_r  +1 + B } \!\!\! q^{f(m_1 z+r,w)} dw dz \right)
+(c-\gamma)\left(\frac{(f_3)_K}{K}\right)\G\!\!\sum_{r=0}^{m_1 -1 }\int_{ a_r + B }^{  b_r+1  + B   }\!\!q^{\frac{D}{\Delta}+\frac{\Delta(m_1z+r-z_0)^2}{4f_3}}dz
\]
Set $x_r= a_r - \frac{K\alpha}{m_1}$ and $y_r= b_r-\frac{K\beta }{m_1}$; they are almost independent of $K$.
Split the outer integral in the summand into three by $x_r+B$ and $y_r+1+B$. Lemma \ref{nonhomogeneouscmonster} applies on the segments of lengths divisible by $K$ in $\ZKh$. In particular, for the segment from $x_r+B$ to $a_r+B$ apply Lemma \ref{nonhomogeneouscmonster} with $a,b,c,d,m$ replaced by $x_r,\frac{\alpha}{m_1},c_r,\{-K\gamma\},m_2$ respectively and $f(z,w)$ replaced by $f(m_1z+r,w)$, noting that $f_1,f_3,f_h(1,m_2),\Delta,D,z_0,w_0$ have become $m_1^2f_1,f_3,f_h(m_1,m_2),m_1^2\Delta,m_1^2{}D,\frac{z_0-r}{m_1},w_0$ respectively, to give
\begin{align*}
 &\int_{ x_r + B }^{  a_r  + B   }  \int_{ \{- K\gamma \}+B  }^{ m_2 z + c_r  +1 + B }  q^{f(m_1 z+r,w)} dw dz   \\
& \ePK q^\frac{D}{\Delta} \Bigg[  \frac{\alpha^2m_2}{2m_1^2} \left(\frac{(\Delta)_K}{K} \right) \G^2
 +\frac{\alpha}{m_1}\left(\frac{(f_h (m_1,m_2))_K}{K} \right)\G \left\{ \int_{B} ^{c_r+mz_0-mr-w_0+1 + B } q^{\frac{m_1^2\Delta w^2}{4f_h (m_1,m_2)}} dw -\frac12 \right\}  \\
&\,-\frac{\alpha}{m_1} \left( \frac{(f_1)_K}{K}  \right) \G \left\{ \int_B^{\{-K\gamma\}-w_0+B} q^{ \frac{\Delta w^2 }{4f_1}} dw - \frac12 \right\}
+ \frac{\alpha{}m_2}{m_1} \left(\frac{(f_3)_K}{K} \right)\G \left\{\int_B ^{x_r-\frac{z_0-r}{m_1} + B } q^{\frac{ m_1^2\Delta z^2 }{4f_3}} dz  - \frac{1}{2} \right\} \Bigg]
\end{align*}
Similarly on the segment from $y_r+1+B$ to $b_r+1+B$ we get
\begin{align*}
 &\int_{ y_r +1+ B }^{  b_r + 1+B   }  \int_{ \{- K\gamma\}+B  }^{ m_2 z + c_r  +1 + B }  q^{f(m_1 z+r,w)} dw dz   \\
& \ePK q^\frac{D}{\Delta} \Bigg[  \frac{\beta^2m_2}{2m_1^2} \left(\frac{(\Delta)_K}{K} \right) \G^2
 +\frac{\beta}{m_1}\left(\frac{(f_h (m_1,m_2))_K}{K} \right)\G \left\{ \int_{B} ^{c_r+mz_0-mr-w_0+1 + B } q^{\frac{m_1^2\Delta w^2}{4f_h (m_1,m_2)}} dw -\frac12 \right\}  \\
&\,-\frac{\beta}{m_1} \left( \frac{(f_1)_K}{K}  \right) \G \left\{ \int_B^{\{-K\gamma\}-w_0+B} q^{ \frac{\Delta z^2 }{4f_1}} dz - \frac12 \right\}
+ \frac{\beta{}m_2}{m_1} \left(\frac{(f_3)_K}{K} \right)\G \left\{\int_B ^{y_r+1-\frac{z_0-r}{m_1} + B } q^{\frac{ m_1^2\Delta z^2 }{4f_3}} dz  - \frac{1}{2} \right\} \Bigg]
\end{align*}
The final segment from $x_r+B$ to $y_r+1+B$ is almost independent of $K$. We also split the single integral in the same manner using Lemmas \ref{1dcarry-overcorollary} and \ref{easyGauss}
\[
\int_{ a_r + B }^{  b_r+1  + B   }\!\!q^{\frac{\Delta(m_1z+r-z_0)^2}{4f_3}}dz
=\int_{ x_r + B }^{  y_r+1  + B   }\!\!q^{\frac{\Delta(m_1z+r-z_0)^2}{4f_3}}dz
+\frac{\beta-\alpha}{m_1}\left(\frac{(\Delta/f_3)_K}{K}\right)\G
\]
Combining,
\begin{align*}
S_K (\trap)
 &= \left[\frac12m(\beta-\alpha)^2+(c-\gamma)(\beta-\alpha)\right]\!\!\left(\frac{(\Delta)_K}{K}\right)\G^2q^\frac{D}{\Delta}+\!\!\sum_{r=0}^{m_1 -1 }  \bigg[ \int_{ x_r + B }^{  y_r+1  + B   }  \int_{ \{- K\gamma\}+B }^{ m_2 z +c_r+1+B }  q^{f(m_1 z+r,w)} dw dz  \bigg]\\
 &+q^\frac{D}{\Delta}\G
 \frac{\beta-\alpha}{m_1}\left(\frac{(f_h(m_1,m_2))_K}{K}\right)
 \sum_{r=0}^{m_1-1}\left[\int_B^{c_r+mz_0-mr-w_0+1+B}q^\frac{m_1^2\Delta{}w^2}{4f_h(m_1,m_2)}dw-\frac12\right]\\
 &-q^\frac{D}{\Delta}\G\frac{\beta-\alpha}{m_1}\left(\frac{(f_1)_K}{K}\right)\sum_{r=0}^{m_1-1}\left[\int_B^{\{-K\gamma\}-w_0+B}q^\frac{\Delta{}w^2}{4f_1}dw-\frac12\right]\\
 &+\left(\frac{(f_3)_K}{K}\right)q^\frac{D}{\Delta}\G\sum_{r=0}^{m_1-1}\bigg(m\beta\left[\int_B^{y_r+1-\frac{z_0-r}{m_1}+B}q^\frac{m_1^2\Delta{}z^2}{4f_3}dz-\frac12\right]
 -m\alpha\left[\int_B^{x_r-\frac{z_0-r}{m_1}+B}q^\frac{m_1^2\Delta{}z^2}{4f_3}dz-\frac12\right]\\
 &\hspace{150pt}+(c-\gamma)\int_{ x_r + B }^{  y_r+1  + B   }\!\!q^{\frac{\Delta(m_1z+r-z_0)^2}{4f_3}}dz\bigg)
 \end{align*}
which is a level two $\G$ decomposition in the form of the theorem, where $A(\trap)=\frac12m(\beta-\alpha)^2+(c-\gamma)(\beta-\alpha)$ is the area of the trapezium and
\begin{align}
Z_K(\trap)&=\sum_{r=0}^{m_1 -1 }  \bigg( \int_{ x_r + B }^{  y_r+1  + B   }  \int_{ \{- K\gamma\}+B }^{ m_2 z +c_r+1+B }  q^{f(m_1 z+r,w)} dw dz  \bigg)\\
Z_{K,\rightarrow}&=-q^\frac{D}{\Delta}\left(\frac{(f_1)_K}{K}\right)(\beta-\alpha)\left[\int_B^{\{-K\gamma\}-w_0+B}q^\frac{\Delta{}w^2}{4 f_1}dw-\frac12\right]\\
Z_{K,\uparrow}&=q^\frac{D}{\Delta}\left(\frac{(f_3)_K}{K}\right)(m\beta+c-\gamma)
\sum_{r=0}^{m_1-1}\left[\int_B^{y_r+1+\frac{r-z_0}{m_1}+B}q^\frac{m_1^2\Delta{}z^2}{4f_3}dz-\frac12\right]\notag\\
Z_{K,\swarrow}&= q^\frac{D}{\Delta}\left(\frac{(f_h(m_1,m_2))_K}{K}\right)\frac{\beta-\alpha}{m_1}
 \sum_{r=0}^{m_1-1}\left[\int_B^{c_r+mz_0-mr-w_0+1+B}q^\frac{m_1^2\Delta{}w^2}{4f_h(m_1,m_2)}dw-\frac12\right]\notag \\
Z_{K,\downarrow}&=-q^\frac{D}{\Delta}\left(\frac{(f_3)_K}{K}\right)(m\alpha+c-\gamma)
\sum_{r=0}^{m_1-1}\left[\int_B^{x_r+\frac{r-z_0}{m_1}+B}q^\frac{m_1^2\Delta{}z^2}{4f_3}dz-\frac12\right]\notag
\end{align}
where we have translated and split the integral in the last line for $S_K(\trap)$ above,
\[
\int_{ x_r + B }^{  y_r+1  + B   }\!\!q^{\frac{\Delta(m_1z+r-z_0)^2}{4f_3}}dz
=\int_{ x_r+\frac{r-z_0}{m_1} + B }^{  y_r+1+\frac{r-z_0}{m_1} + B   }\!\!q^{\frac{m_1^2\Delta{}z^2}{4f_3}}dz
=\int_{B }^{  y_r+1+\frac{r-z_0}{m_1} + B   }\!\!q^{\frac{m_1^2\Delta{}z^2}{4f_3}}dz
-\int_{B }^{  x_r+\frac{r-z_0}{m_1} + B   }\!\!q^{\frac{m_1^2\Delta{}z^2}{4f_3}}dz
\]
which now contributes to both $Z_{K,\uparrow}$ and $Z_{K,\downarrow}$.
Additionally, the fourth term in the above expression for $S_K(\trap)$, which was identified as $Z_{K,\rightarrow}$, contains a sum over $r$ whose summand is independent of $r$ and thus becomes $m_1$ times this constant summand. We now proceed to simplify the remaining expressions, starting with the vertical edge contributions.
Recall that $y_r=\left\lfloor\frac{K\beta-r}{m_1}\right\rfloor-\frac{K\beta }{m_1}$
and so $y_r+\frac{r}{m_1}=-\left\{\frac{K\beta-r}{m_1}\right\}$.
The expression $\left\{\frac{K\beta-r}{m_1}\right\}$ as a function of $r$ is periodic with period $m_1$
and its smallest value is $\frac{\{K\beta\}}{m_1}$;
thus its range is $\Big\{\frac{\{K\beta\}+s}{m_1}\Big|s=0,1,\ldots,m_1-1\Big\}$. Hence
\begin{align*}
\sum_{r=0}^{m_1-1}\left[\int_B^{y_r+1+\frac{r-z_0}{m_1}+B}q^\frac{m_1^2\Delta{}z^2}{4f_3}dz-\frac12\right]
&=\sum_{s=0}^{m_1-1}\left[\int_B^{1-\frac{\{K\beta\}+s+z_0}{m_1}+B}q^\frac{m_1^2\Delta{}z^2}{4f_3}dz-\frac12\right]\\
&=-\sum_{s=0}^{m_1-1}\left[\int_B^{\frac{\{K\beta\}+s+z_0}{m_1}+B}q^\frac{m_1^2\Delta{}z^2}{4f_3}dz-\frac12\right]
\hbox{, by inversion Lemma \ref{uniquenessinarbitrarydimension}}\\
&=-\left[\int_B^{\{K\beta\}+z_0+B}q^\frac{\Delta{}z^2}{4f_3}dz-\frac12\right]
\hbox{, by Theorem \ref{Bernoullimultiplicationlemma3}}
\end{align*}
Similarly $x_r=\left\lceil\frac{K\alpha-r}{m_1}\right\rceil- \frac{K\alpha}{m_1}$ and so
$x_r+\frac{r}{m_1}=\left\{\frac{r-K\alpha}{m_1}\right\}$. This is also periodic in $r$ of period $m_1$ and takes values $\frac{s+\{-K\alpha\}}{m_1}$ for $s=0,1,\ldots,m_1-1$. Thus
\[
\sum_{r=0}^{m_1-1}\left[\int_B^{x_r+\frac{r-z_0}{m_1}+B}q^\frac{m_1^2\Delta{}z^2}{4f_3}dz-\frac12\right]
=\sum_{s=0}^{m_1-1}\left[\int_B^{\frac{s+\{-K\alpha\}-z_0}{m_1}+B}q^\frac{m_1^2\Delta{}z^2}{4f_3}dz-\frac12\right]
=\int_B^{\{-K\alpha\}-z_0+B}q^\frac{\Delta{}z^2}{4f_3}dz-\frac12
\]
by the Bernoulli multiplication Theorem \ref{Bernoullimultiplicationlemma3}. Thus the simplified contributions from the vertical edges are
\begin{align}
Z_{K,\uparrow}
&=q^\frac{D}{\Delta}\left(\frac{(f_3)_K}{K}\right)\lambda(\uparrow)
\left[\int_B^{1-\{K\beta\}-z_0+B}q^\frac{\Delta{}z^2}{4f_3}dz-\frac12\right]\\
Z_{K,\downarrow}&=-q^\frac{D}{\Delta}\left(\frac{(f_3)_K}{K}\right)\lambda(\downarrow)
\left[\int_B^{\{-K\alpha\}-z_0+B}q^\frac{\Delta{}z^2}{4f_3}dz-\frac12\right]
\end{align}
Finally for the diagonal term, recall that
$c_r=\lfloor{}mr+Kc\rfloor-Kc$ so that $c_r-mr=-\{mr+Kc\}$. Since $m_1$, $m_2$ are coprime, $mr=\frac{m_2r}{m_1}$ modulo 1 takes on all multiples of $\frac1{m_1}$ as $r$ ranges over the integers. Thus $\{mr+Kc\}$ is a periodic function of $r$ with period $m_1$ which has smallest value $\frac{\{Kcm_1\}}{m_1}$, that is its values are $\frac{\{Kcm_1\}+s}{m_1}$ for $s=0,1,\ldots,m-1$. Hence
\begin{align*}
\sum_{r=0}^{m_1-1}\left[\int_B^{c_r+mz_0-mr-w_0+1+B}q^\frac{m_1^2\Delta{}w^2}{4f_h(m_1,m_2)}dw-\frac12\right]
&=\sum_{s=0}^{m_1-1}\left[\int_B^{mz_0-w_0-\frac{\{Kcm_1\}+s}{m_1}+1+B}q^\frac{m_1^2\Delta{}w^2}{4f_h(m_1,m_2)}dw-\frac12\right]\\
&=-\sum_{s=0}^{m_1-1}\left[\int_B^{w_0-mz_0+\frac{\{Kcm_1\}+s}{m_1}+B}q^\frac{m_1^2\Delta{}w^2}{4f_h(m_1,m_2)}dw-\frac12\right]\\
&=-\left[\int_B^{m_1w_0-m_2z_0+\{Kcm_1\}+B}q^\frac{\Delta{}w^2}{4f_h(m_1,m_2)}dw-\frac12\right]
\end{align*}
using the inversion lemma \ref{symbolicinttranslationinversion} and Bernoulli multiplication theorem \ref{Bernoullimultiplicationlemma3} finally giving
\[
Z_{K,\swarrow}=q^\frac{D}{\Delta}\left(\frac{(f_h(m_1,m_2))_K}{K}\right)\lambda(\swarrow)
\left[\int_B^{m_2z_0-m_1w_0-\{Kcm_1\}+1+B}q^\frac{\Delta{}w^2}{4f_h(m_1,m_2)}dw-\frac12\right]
 \]
\end{proof}

\noindent {\bf Remark:} For the case of a rectangle, $m_1=1$, $m_2=0$ so that $a_0=\lceil{}K\alpha\rceil$, $b_0=\lfloor{}K\beta\rfloor$, $c_0=\lfloor{}Kc\rfloor-Kc=-\{Kc\}$ while $x_0=a_0-K\alpha=\{-K\alpha\}$, $y_0=b_0-K\beta=-\{K\beta\}$. The formula for the $\G^0$ term for the rectangle bounded by $z=\alpha,\beta$ and $w=\gamma,c$ becomes
\begin{align}
Z_K(\squarefilled)=\int_{\{-K\alpha\}+B}^{1-\{K\beta\}+B}\int_{\{-K\gamma\}+B}^{1-\{Kc\}+B}q^{f(z,w)}dwdz\label{Z0square}
\end{align}

\noindent For a downward pointing right trapezium a similar result follows by central inversion.

\begin{thm}\label{downwardtrapezium} Suppose that $\alpha,\beta,\gamma,m,c\in\Q$ with $\alpha<\beta$ and $\gamma-c\geq{}m\alpha,m\beta$ with $f_1,f_3,f(1.m)\not=0$. Consider the downward pointing right trapezium $\dtrap: \alpha \leq z \leq \beta$, $mz+c\leq w \leq \gamma$. Then the sum
\[S_K (\dtrap) = \sum_{ (z,w) \in K\dtrap \cap \Z^2  } q^{f(z,w)}\] defines a squak which has a level two $\G$-decomposition, for all but finitely many prime $K$,
\begin{align*}
    S_K (\dtrap) \ePK q^\frac{D}{\Delta} \left( \frac{(\Delta)_K}{K} \right)  A (\dtrap)  \G^2 +  \left( Z_{K,\leftarrow} + Z_{K,\downarrow} + Z_{K,\searrow} +\Z_{K,\uparrow}  \right)\G + Z_K (\dtrap)
\end{align*}
    where $A(\dtrap)$ is the area of $\dtrap$ and  $Z_{K,\leftarrow}$, $Z_{K,\downarrow}$, $Z_{K,\searrow}$, $Z_{K,\uparrow}$, $Z_K (\dtrap)$ are almost independent of $K$. The terms $Z_{K,\leftarrow}$, $Z_{K,\downarrow}$, $Z_{K,\searrow}$, $Z_{K,\uparrow}$ are associated with the boundary of $\dtrap$ taken in an anticlockwise orientation, namely the top horizontal edge, left hand vertical edge, bottom diagonal edge and right hand vertical edge, respectively.
\end{thm}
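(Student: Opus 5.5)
The plan is to reduce Theorem \ref{downwardtrapezium} to Theorem \ref{decompositionoftrapezium} by the central inversion $(z,w)\mapsto(-z,-w)$. Set $\tilde f(z,w):=f(-z,-w)$. It has the same homogeneous part $f_h$, so $\Delta$ and the nonvanishing conditions on $f_h(1,0),f_h(0,1),f_h(1,m)$ are unchanged. The linear coefficients $f_4,f_5$ change sign, which gives $\tilde\bfD=\bfD$ and stationary point $(-z_0,-w_0)$. The lattice $\Z^2$ is preserved by the inversion, and $K\dtrap$ maps bijectively onto $K\trap'$, where $\trap'$ is the upward right trapezium
\[
-\beta\leq z\leq-\alpha,\qquad -\gamma\leq w\leq mz-c .
\]
Indeed, $mz'+c\leq w'\leq\gamma$ with $z=-z'$, $w=-w'$ becomes $-\gamma\leq w\leq mz-c$. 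The hypothesis $\gamma-c\geq m\alpha,m\beta$ turns into $(-\gamma)-(-c)\leq m(-\beta),m(-\alpha)$, which is exactly the hypothesis of Theorem \ref{decompositionoftrapezium} for $\trap'$ with parameters $(-\beta,-\alpha,-\gamma,m,-c)$. It follows that $S_K(\dtrap)$, computed with $f$, equals $S_K(\trap')$, computed with $\tilde f$, identically as power series. In particular it is a squak.

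Next I apply Theorem \ref{decompositionoftrapezium} to $\trap'$ with $\tilde f$. This gives a level two $\G$-decomposition whose $\G^2$ coefficient is $q^{\bfD/\Delta}\left(\frac{(\Delta)_K}{K}\right)A(\trap')$. Since the inversion preserves area, $A(\trap')=A(\dtrap)$. The $\G^0$ term and the four edge terms are almost independent of $K$. Under the inversion, the edges of $\trap'$ (bottom horizontal, right vertical, top diagonal, left vertical, traversed anticlockwise) correspond to the top horizontal, left vertical, bottom diagonal and right vertical edges of $\dtrap$. The inversion is a rotation by $\pi$, so it preserves orientation and anticlockwise order, and the arrows become $\leftarrow,\downarrow,\searrow,\uparrow$. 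So I define $Z_{K,\leftarrow}:=Z'_{K,\rightarrow}$, $Z_{K,\downarrow}:=Z'_{K,\uparrow}$, $Z_{K,\searrow}:=Z'_{K,\swarrow}$ and $Z_{K,\uparrow}:=Z'_{K,\downarrow}$.

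The remaining point, and the only real work, is checking that each of these terms depends only on the data of the corresponding edge of $\dtrap$. From the explicit formulas in the proof of Theorem \ref{decompositionoftrapezium}:
\begin{itemize}
\item $Z'_{K,\rightarrow}$ involves only $\beta-\alpha$, $\{K\gamma\}$ and $w_0$, which is the data of the top edge $w=\gamma$.
\item $Z'_{K,\uparrow}$ involves $\lambda$ of the edge $z=-\alpha$ together with $\{-K\alpha\}$ and $-z_0$, which is the data of the left edge of $\dtrap$.
\item The diagonal term involves $\{-Kcm_1\}$, $m_1(-w_0)-m_2(-z_0)$ and the length parameter of the diagonal.
\end{itemize}
If desired, I would rewrite these in terms of the original data using the inversion identities Lemma \ref{symbolicinttranslationinversion} and Proposition \ref{complementaryBintegrals}, together with Lemma \ref{BIntegralProperties}(3)--(5), to express, for example, $\{K\beta\}$ versus $\{-K\beta\}$ appropriately. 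This is bookkeeping rather than a real obstacle.

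Finally, uniqueness of the decomposition follows from Theorem \ref{uniquenessinarbitrarydimension}. The finitely many excluded primes are those dividing the denominators of $\alpha,\beta,\gamma,m,c$ and the coefficients of $f$, or dividing $\Delta$, $f_1$, $f_3$ or $f_h(1,m)$.
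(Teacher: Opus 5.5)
Your proof is correct, but it uses a different symmetry from the paper's.

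\textbf{The paper's route.} The paper reflects in the horizontal axis, setting $f(z,-w)$ in place of $f$. This turns $\dtrap$ into the upward trapezium $\alpha\le z\le\beta$, $-\gamma\le w\le -mz-c$, and then Theorem \ref{decompositionoftrapezium} is applied. The gain is that the vertical edges stay at $z=\alpha,\beta$ and $z_0$ is unchanged, so $Z_{K,\uparrow}$ and $Z_{K,\downarrow}$ carry over verbatim. The cost is that $m_2,f_2,f_5,w_0$ flip sign and the reflection reverses orientation. The horizontal and diagonal terms then need Lemma \ref{symbolicinttranslationinversion} to reach their final form.

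\textbf{Your route.} Your central inversion keeps $f_h$ (including $f_2$), $\bfD$ and the slope $m$ fixed. It also preserves orientation, so anticlockwise edges go to anticlockwise edges, which fits the oriented-edge bookkeeping more naturally. The price is that every edge moves and $(z_0,w_0)$ changes sign, so all four edge terms need rewriting.

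\textbf{Checking the rewriting.} Each edge needs exactly one use of Proposition \ref{complementaryBintegrals}, $\int_B^{1-x+B}p=p(0)-\int_B^{x+B}p$. This applies because the integrands $q^{\Delta w^2/(4f_1)}$, $q^{\Delta z^2/(4f_3)}$ and $q^{\Delta w^2/(4f_h(m_1,m_2))}$ are even.

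\begin{itemize}
\item $Z'_{K,\uparrow}$, the edge $z=-\alpha$ of $\trap'$, has upper limit $1-\{-K\alpha\}+z_0$, and the identity turns it into \eqref{Zdown} at $z=\alpha$.
\item $Z'_{K,\downarrow}$ becomes \eqref{Zup} at $z=\beta$.
\item $Z'_{K,\rightarrow}$ becomes \eqref{Zleft}.
\item $Z'_{K,\swarrow}$ becomes \eqref{Zne}.
\end{itemize}

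These are exactly the paper's final formulas. The statement as posed does not need them, but the gluing in Theorem \ref{2DquantumPick} relies on $Z_{K,e}+Z_{K,-e}=Z^{(1)}_K(\cL)$ in these specific forms. So you should carry out this step rather than call it optional bookkeeping.

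\textbf{Cosmetic points.} Literally, central inversion sends $\swarrow$ to $\nearrow$, not $\searrow$. The label $\searrow$ in the statement is only a name for the bottom diagonal edge, and its formula is the $\nearrow$-type one in \eqref{Zne}. Also, the diagonal data is $m_1w_0-m_2z_0$ rather than its negative as you wrote, which is the same information, so nothing breaks.
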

\begin{proof}
    Define $\fbar(z,w)=f(z,-w)$. Then $S_K(\dtrap)$ can be considered as a sum over an upward pointing trapezium defined by $\trap:\alpha \leq z \leq \beta$, $-\gamma\leq w \leq -mz-c$,
    \[
S_K(\dtrap)=\sum_{ (z,w) \in K\trap \cap \Z^2  } q^{\fbar(z,w)}
    \]
    Apply Theorem \ref{decompositionoftrapezium} with $m,c,\gamma$ replaced by their negations and $f$ replaced by $\fbar$. Then $m_2,f_2,f_5,w_0$ change sign while the rest of the parameters stay fixed, including $\fbar_h(m_1,-m_2)=f_h(m_1,m_2)$. Hence the formulae for $Z_{K,\uparrow}$ and $Z_{K,\downarrow}$ stay unchanged while
\begin{align*}
Z_K(\dtrap)&=\sum_{r=0}^{m_1 -1 }  \bigg( \int_{ x_r + B }^{  y_r+1  + B   }  \int_{ \{ K\gamma\}+B }^{ -m_2 z +\cbar_r+1+B }  q^{\fbar(m_1 z+r,w)} dw dz  \bigg)\\
Z_{K,\leftarrow}&=-q^\frac{D}{\Delta}\left(\frac{(f_1)_K}{K}\right)\lambda(\leftarrow)\left[\int_B^{\{K\gamma\}+w_0+B}q^\frac{\Delta{}w^2}{4 f_1}dw-\frac12\right]\\
Z_{K,\searrow}&= q^\frac{D}{\Delta}\left(\frac{(f_h(m_1,m_2))_K}{K}\right)\lambda(\searrow)
 \left[\int_B^{-m_2z_0+m_1w_0-\{-Kcm_1\}+1+B}q^\frac{\Delta{}w^2}{4f_h(m_1,m_2)}dw-\frac12\right]
\end{align*}
Applying the inversion Lemma \ref{symbolicinttranslationinversion} gives
\begin{align}
Z_K(\dtrap)&=\sum_{r=0}^{m_1 -1 }  \bigg( \int_{ x_r + B }^{  y_r+1  + B   }  \int_{ m_2z-\cbar_r+B }^{1-\{K\gamma\}+B }  q^{f(m_1 z+r,w)} dw dz  \bigg)\\
Z_{K,\leftarrow}&=q^\frac{D}{\Delta}\left(\frac{(f_1)_K}{K}\right)\lambda(\leftarrow)
\left[\int_B^{1-\{K\gamma\}-w_0+B}q^\frac{\Delta{}w^2}{4 f_1}dw-\frac12\right]\\
Z_{K,\searrow}&=-q^\frac{D}{\Delta}\left(\frac{(f_h(m_1,m_2))_K}{K}\right)\lambda(\searrow)
 \left[\int_B^{m_2z_0-m_1w_0+\{-Kcm_1\}+B}q^\frac{\Delta{}w^2}{4f_h(m_1,m_2)}dw-\frac12\right]
\end{align}
\end{proof}

\subsection{\textbf{\textsc{Oriented edge contributions}}}

We see that the formulae of Theorems \ref{decompositionoftrapezium} and \ref{downwardtrapezium} involve expressions for edge contributions which depend only on $f(z,w)$ and the data of the edge itself and not on data from other parts of the two-dimensional region over which we are summing. We collect here the formulae for horizontal edges at height $w=\gamma$ and  vertical edges at position $z=\alpha$ in the two orientations,
\begin{align}
Z_{K,\rightarrow}&=-q^\frac{D}{\Delta}\left(\frac{(f_1)_K}{K}\right)\lambda(\rightarrow)\left[\int_B^{\{-K\gamma\}-w_0+B}q^\frac{\Delta{}w^2}{4 f_1}dw-\frac12\right]\label{Zright}\\
Z_{K,\leftarrow}&=q^\frac{D}{\Delta}\left(\frac{(f_1)_K}{K}\right)\lambda(\leftarrow)
\left[\int_B^{1-\{K\gamma\}-w_0+B}q^\frac{\Delta{}w^2}{4 f_1}dw-\frac12\right]\label{Zleft}\\
Z_{K,\uparrow}
  &=q^\frac{D}{\Delta}\left(\frac{(f_3)_K}{K}\right)\lambda(\uparrow)
\left[\int_B^{1-\{K\alpha\}-z_0+B}q^\frac{\Delta{}z^2}{4f_3}dz-\frac12\right]\label{Zup}\\
Z_{K,\downarrow}&=-q^\frac{D}{\Delta}\left(\frac{(f_3)_K}{K}\right)\lambda(\downarrow)
\left[\int_B^{\{-K\alpha\}-z_0+B}q^\frac{\Delta{}z^2}{4f_3}dz-\frac12\right]\label{Zdown}
\end{align}
Observe that
\begin{align*}
    Z_{K,\rightarrow}+Z_{K,\leftarrow}&=q^\frac{D}{\Delta}\left(\frac{(f_1)_K}{K}\right)\lambda(-)
\int_{\{-K\gamma\}-w_0+B}^{1-\{K\gamma\}-w_0+B}q^\frac{\Delta{}w^2}{4 f_1}dw
=q^{\frac{D}{\Delta}+\frac{\Delta{}w_0^2}{4f_1}}\left(\frac{(f_1)_K}{K}\right)\lambda(-)\delta_{K\gamma\in\Z}
=Z^{(1)}_K(-)\\
Z_{K,\uparrow}+Z_{K,\downarrow}&=q^\frac{D}{\Delta}\left(\frac{(f_3)_K}{K}\right)\lambda(|)
\int_{\{-K\alpha\}-z_0+B}^{1-\{K\alpha\}-z_0+B}q^\frac{\Delta{}z^2}{4f_3}dz
=q^{\frac{D}{\Delta}+\frac{\Delta{}z_0^2}{4f_3}}\left(\frac{(f_1)_K}{K}\right)\lambda(|)\delta_{K\alpha\in\Z}
=Z^{(1)}_K(|)
\end{align*}
by Lemma \ref{BIntegralProperties}(1), which match the $\G$ coefficient of the corresponding line segment sums $S_K(-)$ and $S_K(|)$ in Theorem \ref{lineinvariant}.
For diagonal edges $w=mz+c$ where $\begin{pmatrix}m_1\\ m_2\end{pmatrix}$ is the reduced direction vector of the segment with $m_1\in\N$ and $m_2\in\Z$ coprime,
\begin{align}
Z_{K,\nearrow}&=-q^\frac{D}{\Delta}\left(\frac{(f_h(m_1,m_2))_K}{K}\right)\lambda(\nearrow)
 \left[\int_B^{m_2z_0-m_1w_0+\{-Kcm_1\}+B}q^\frac{\Delta{}w^2}{4f_h(m_1,m_2)}dw-\frac12\right]\label{Zne}\\
 Z_{K,\swarrow}&=q^\frac{D}{\Delta}\left(\frac{(f_h(m_1,m_2))_K}{K}\right)\lambda(\swarrow)
\left[\int_B^{m_2z_0-m_1w_0-\{Kcm_1\}+1+B}q^\frac{\Delta{}w^2}{4f_h(m_1,m_2)}dw-\frac12\right]\label{Zsw}
\end{align}
and so
\begin{align*}
Z_{K,\swarrow}+Z_{K,\nearrow}&=q^\frac{D}{\Delta}\left(\frac{(f_h(m_1,m_2))_K}{K}\right)\lambda(\cL)
\int_{m_2z_0-m_1w_0+\{-Kcm_1\}+B}^{m_2z_0-m_1w_0+\{-Kcm_1\}+1+B}q^\frac{\Delta{}w^2}{4f_h(m_1,m_2)}dw\\
&=q^{\frac{D}{\Delta}+\frac{\Delta(m_2z_0-m_1w_0)^2}{4f_h(m_1,m_2)}}\left(\frac{(f_h(m_1,m_2))_K}{K}\right)\lambda(\cL)\delta_{Km_1c\in\Z}=Z^{(1)}_K(\cL)
\end{align*}
by Lemma \ref{BIntegralProperties}(1), which again matches the coefficient of $\G$ in the level one $\G$ decomposition of $S_K(\cL)$ in Theorem \ref{lineinvariant}.

\noindent Note that \eqref{Zne} and \eqref{Zsw} specialise to \eqref{Zright} and \eqref{Zleft} when $\bfm=(\pm1,0)$.

\subsection{\textbf{\textsc{Extension to Arbitrary Polygons}}}

\begin{thm}\label{2DquantumPick} \textbf{\textit{(Conjecture \ref{quantumpickconjecture} in 2D)}}  Let $f(z,w)$ be a quadratic over $\Q$ with non-degenerate homogeneous part $f_h$ with $f_1,f_3\not=0$ and $\mathcal{P}$ be a rational simple polygon whose boundary edges are not parallel to null directions for $f_h$. Then the sum $S_K (\mathcal{P}) = \sum_{ K\mathcal{P} \cap \Z^2   }  q^{f(z,w)} $ defines a squak with a level two $\G$-decomposition
\begin{align*}
    S_K (\mathcal{P}) \equiv_{\Phi_K} \sum_{r=0}^2  Z_K^{(r)} (\mathcal{P}) \cdot \G^r
\end{align*}
where the coefficients $Z_K ^{(r)} (\cP)\in\ZKh$ are almost $K$-independent squaks. Furthermore,
\[
Z_K ^{(2)}(\cP) = A(\cP)q^{\frac{D}{\Delta}}  \left(\frac{(\Delta)_K}{K}\right),
\qquad{}Z_K^{(1)}(\cP) = \sum_{e\in\partial\cP} Z_{K,e}
\]
where $A(\cP)$ is the area of $\cP$, $Z_{K,e}$ is the oriented edge contribution defined by \eqref{Zup}, \eqref{Zdown}, \eqref{Zne}, \eqref{Zsw} and $\Delta$ is the discriminant of the homogeneous part of $f$.
\end{thm}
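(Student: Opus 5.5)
The plan is to reduce $S_K(\cP)$ to a signed combination of sums over right trapezia and line segments, and then use the decompositions already proved for those pieces (Theorems \ref{decompositionoftrapezium}, \ref{downwardtrapezium} and \ref{lineinvariant}).

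\textbf{Decomposition of $\cP$.} Draw vertical lines $z=\alpha_i$ through every vertex of $\cP$. Each vertical strip $\alpha_i\le z\le\alpha_{i+1}$ meets $\cP$ in a finite union of trapezoidal pieces. Each piece is bounded above by one edge $w=m_+z+c_+$ of $\partial\cP$ and below by another edge $w=m_-z+c_-$. Choose a rational horizontal level $\gamma$ lying below the whole polygon. Then every such piece is the difference of the upward right trapezium $\gamma\le w\le m_+z+c_+$ and the upward right trapezium $\gamma\le w< m_-z+c_-$. At the level of lattice sums this is exact, provided we account for the lower edge: in the second term that edge is excluded, so we add back $S_K$ of that segment. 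Adjacent pieces share vertical segments $z=\alpha_i$, which are double counted; we subtract $S_K$ of each shared vertical segment once. All the added and subtracted segment sums are line sums, handled by Theorem \ref{lineinvariant}.

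\textbf{Assembling the decomposition.} Write $S_K(\cP)$ as an integer combination of trapezium sums and segment sums, with the combination independent of $K$ because the geometry scales. Apply the level two decompositions to the trapezia and the level one decompositions to the segments, and add. Each coefficient is a finite integer combination of almost independent squaks, so it is almost independent with $\cN$ equal to the lcm of the individual $\cN$'s. Membership of $S_K(\cP)$ in $\ZKh$ for large $K$ follows as in the earlier theorems. Uniqueness of the decomposition is Theorem \ref{uniquenessinarbitrarydimension}.

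\textbf{Identifying $Z^{(2)}$ and $Z^{(1)}$.} The $\G^2$ coefficient is $q^{D/\Delta}\left(\frac{(\Delta)_K}{K}\right)$ times the signed sum of trapezium areas, since segments contribute nothing at level two. That signed sum is $A(\cP)$.

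For $\G^1$, collect the oriented edge terms $Z_{K,e}$ coming from all trapezia, together with the $Z^{(1)}_K$ of the correction segments. The following cancellations are the main point.
\begin{itemize}
\item Each horizontal bottom edge at height $\gamma$ appears with both orientations, once from an added trapezium and once from a subtracted one. These give $Z_{K,\rightarrow}$ and $-Z_{K,\rightarrow}$, which cancel exactly because $\gamma$ and the $z$-interval are common.
\item An interior vertical side $z=\alpha_i$ appears in neighbouring trapezia with opposite orientations. The pair sums to $Z_{K,\uparrow}+Z_{K,\downarrow}=Z^{(1)}_K(|)$, and this is cancelled by the subtracted segment correction. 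This uses the identities computed in the subsection on oriented edge contributions.
\item A lower edge of a piece enters through the subtracted trapezium, giving $-Z_{K,\nearrow\text{ or }\swarrow}$, plus the added segment term $Z^{(1)}_K(\cL)$. By the identity $Z_{K,\swarrow}+Z_{K,\nearrow}=Z^{(1)}_K(\cL)$, this net is the opposite-oriented contribution, which is exactly the anticlockwise orientation of a lower boundary edge.
\item Upper edges enter directly with the anticlockwise orientation $\swarrow$.
\end{itemize}
Vertical edges of $\cP$ itself survive with their correct boundary orientation. What remains is $\sum_{e\in\partial\cP}Z_{K,e}$. An edge of $\cP$ cut by several strips is recovered by additivity of $\lambda$: the bracketed factor depends only on the line, not on the endpoints.

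\textbf{Main obstacle.} I expect the hardest part to be the careful bookkeeping of boundary lattice points and orientations. This means ensuring that closed and half-open domains are matched exactly by segment corrections, including vertex points, which only affect $\G^0$ and level one terms and are absorbed by the segment decompositions. It also means checking that the trapezium hypotheses $\gamma-c\le m\alpha,m\beta$ and non-null slopes hold for every piece. To handle this I would first treat a convex polygon, and then a general simple polygon via the signed trapezoidal decomposition, verifying the edge cancellation locally edge by edge.
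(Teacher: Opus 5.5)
Your proposal is correct and is essentially the paper's proof. You slice $\cP$ by vertical lines through the vertices and write each strip piece as $S_K(\cT_2)-S_K(\cT_1)+S_K(\cL)$, using upward right trapezia based at a common low level $\gamma$. Internal edges then cancel via $Z_{K,\uparrow}+Z_{K,\downarrow}=Z^{(1)}_K(|)$ and $-Z_{K,\swarrow}+Z^{(1)}_K(\cL)=Z_{K,\nearrow}$, and multiply counted vertices affect only the $\G^0$ term.

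The differences are cosmetic. You fold the paper's intermediate step for general trapezia into the strip decomposition, and you make explicit the $\lambda$-additivity needed when an edge of $\cP$ is cut by several strips. One small correction: the two bottom edges at height $\gamma$ cancel because they carry the same orientation with opposite signs, not because they have opposite orientations.
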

\begin{proof}
The proof proceeds by gluing to establish the result on progressively more general regions.
By Theorems \ref{decompositionoftrapezium} and \ref{downwardtrapezium}, the result holds for right trapezia. Next we deal with a more general trapezium, $\cT$ defined by $\alpha\leq{}z\leq\beta$ and $mz+c\leq{}w\leq{}m'z+c'$. Choose $\gamma$ sufficiently small that $\gamma<m\alpha+c,m\beta+c$. Consider the two upward pointing trapezia
\[
\cT_1:\alpha\leq{}z\leq\beta, \gamma\leq{}w\leq{}mz+c,\qquad
\cT_2:\alpha\leq{}z\leq\beta, \gamma\leq{}w\leq{}m'z+c'
\]
and let $\cL$ denote the edge $\alpha\leq{}z\leq\beta$, $w=mz+c$. Then
\[
S_K(\cT)=S_K(\cT_2)-S_K(\cT_1)+S_K(\cL)
\]
Applying Theorems \ref{decompositionoftrapezium}, \ref{downwardtrapezium} and \ref{lineinvariant} to the three terms we obtain level two $\G$ decompositions for the first two terms and a level one $\G$ decomposition for the third term, which combine to give a level two $\G$ decomposition of the combination,
\[
S_K(\cT)\ePK{}Z^{(2)}_K(\cT)\G^2+Z^{(1)}_K(\cT)\G+Z^{(0)}_K(\cT)
\]
where
\[
Z^{(2)}_K(\cT)=Z^{(2)}_K(\cT_2)-Z^{(2)}_K(\cT_1)
=q^\frac{D}{\Delta} \left( \frac{(\Delta)_K}{K} \right)  \big(A (\cT_2)-A(\cT_1)\big)
=q^\frac{D}{\Delta} \left( \frac{(\Delta)_K}{K} \right) A(\cT)
\]
since the areas are additive $A(\cT)=A(\cT_2)-A(\cT_1)$. The $\G$ term is
\[
Z^{(1)}_K(\cT)=Z^{(1)}_K(\cT_2)-Z^{(1)}_K(\cT_1)+Z^{(1)}_K(\cL)
\]
where $Z^{(1)}_K(\cT_1)$, $Z^{(1)}_K(\cT_2)$ each split as sums of four contributions from the respective boundary edges of $\cT_1$ and $\cT_2$. The vertical edge contributions from $\cT_1$ and $\cT_2$ are additive and combine to give the corresponding vertical edge contributions from $\cT$, since by \eqref{Zup} and \eqref{Zdown}, they are proportional to the lengths of the segments, which are additive. The horizontal edge contribution cancels. The upper slanting contribution is immediately what it should be while the lower slanting contribution works because
\[
-Z_{K,\swarrow}+Z^{(1)}(\cL)=Z_{K,\nearrow}
\]
as noted in the previous section. The term without $\G$ is
\[
Z^{(0)}_K(\cT)=Z^{(0)}_K(\cT_2)-Z^{(0)}_K(\cT_1)+Z^{(0)}_K(\cL)
\]
which is a combination of integrals, two double symbolic integrals and one ordinary symbolic integral, all of which are almost independent of $K$. A special case of this more general trapezium, when the left or right vertical edge has zero length, is a general triangle.

Finally for a general rational simple polygon $\cP$, one can slice it by vertical lines $z=z_i$ passing through the vertices of $\cP$, into trapezia $\cT_i$ of the type just dealt with (possibly also degenerate trapezia which are triangles). Using the result on each individual trapezium, summing and subtracting the overcounting on the internal vertical segments (on which Theorem \ref{lineinvariant} is applied) and possibly additionally double counted vertices, yields a level two $\G$ decomposition of $S_K(\cP)$. Area is additive while the cancellation of internal edge contributions follows from $Z_{K,\uparrow}+Z_{K,\downarrow}=Z^{(1)}(K,|)$ so that the specific formulae for $\G$ and $\G^2$ contributions follow for general rational simple polygons.
\end{proof}

\noindent{\bf Remark.} Although the conditions $f_1,f_3\not=0$ are used in the proof, they can be removed by transforming under an element of $SL_2(\Z)$.

\begin{lem} \textbf{\textit{(Gluing Lemma)}} Suppose that a rational simple polygon $\cP$ is split into two rational simple polygons $\cP_1, \cP_2$ by a common part of the boundary consisting of segments $\cL_i$, $i=1,\ldots,r$ with all of the boundary segments of $\cP_1$, $\cP_2$ not parallel to null directions of $f_h$. Then the deficit from additivity of $Z^{(0)}_K$ is given by
\[Z^{(0)}_K(\cP) = Z^{(0)}_K(\cP_1) + Z^{(0)}_K(\cP_2)- \sum_{i=1}^rZ_K^{(0)}(\cL_i)+q^{f(0,0)}\sum_{j=1}^d\delta_{K\bfv_j\in\Z^2}\]
where $\bfv_1,\ldots,\bfv_d$ are the interior vertices of $\bigcup_{i=1}^r\cL_i$, that is those end points of segments $\cL_i$ which appear as endpoints of two segments.
\end{lem}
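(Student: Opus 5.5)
The plan is to derive the gluing formula by combining three facts. The first is the exact inclusion–exclusion identity for the lattice sums themselves. The second is the already established additivity of the $\G^2$ and $\G$ coefficients. The third is the uniqueness of $\G$-decompositions (Theorem \ref{uniquenessinarbitrarydimension}).

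First, at the level of lattice points, $K\cP\cap\Z^2$ is the union of $K\cP_1\cap\Z^2$ and $K\cP_2\cap\Z^2$. Their intersection is the set of lattice points on $K\bigcup_i\cL_i$. Counting points on the segments $K\cL_i$ separately over-counts each interior vertex $K\bfv_j$ that is a lattice point exactly once, since it lies on two segments. Hence, exactly in $\ZKh$,
\[
S_K(\cP)=S_K(\cP_1)+S_K(\cP_2)-\sum_{i=1}^r S_K(\cL_i)+\sum_{j=1}^d q^{f(K\bfv_j)}\delta_{K\bfv_j\in\Z^2}.
\]
Whenever $K\bfv_j\in\Z^2$ we have $f(K\bfv_j)\eK f(0,0)$ for $f\in\ZK[z,w]$, so Proposition \ref{modKmodphiK} replaces each vertex term by $q^{f(0,0)}\delta_{K\bfv_j\in\Z^2}$ modulo $\Phi_K$. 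This is just as in the one-dimensional discussion after Theorem \ref{1dqpt}. The delta symbols depend only on $K$ modulo the denominators of the coordinates of $\bfv_j$, so these terms are almost independent of $K$.

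Next, I substitute the level two decompositions of Theorem \ref{2DquantumPick} for $\cP,\cP_1,\cP_2$, and the level one decompositions of Theorem \ref{lineinvariant} for the $\cL_i$. The latter includes the trivial squak when the line constant $c\notin\Z$, which is consistent with the corresponding $\delta$ factors. Moving everything to one side gives a $\G$-decomposition of a trivial squak, whose coefficients are all almost independent of $K$. By Lemma \ref{nonontrivalGdecompofzero}, each coefficient must be trivial. The $\G^0$ coefficient gives exactly the claimed formula for $Z^{(0)}_K(\cP)$.

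As a consistency check, and to make sure no hidden assumption is needed, I would verify directly that the $\G^2$ and $\G$ coefficients of this identity already vanish. For $\G^2$ this follows from $A(\cP)=A(\cP_1)+A(\cP_2)$. For $\G$, each $\cL_i$ appears in $\partial\cP_1$ and $\partial\cP_2$ with opposite orientations, and the pair of oriented edge contributions sums to $Z^{(1)}_K(\cL_i)$. This holds by the identities $Z_{K,\uparrow}+Z_{K,\downarrow}=Z^{(1)}_K(|)$ and $Z_{K,\nearrow}+Z_{K,\swarrow}=Z^{(1)}_K(\cL)$ of \S6.5. A boundary edge of $\cP$ that is split between $\cP_1$ and $\cP_2$ is handled by additivity of the edge contributions in the length parameter $\lambda$. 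The uniqueness argument makes this check logically unnecessary, but it guards against orientation or sign errors.

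The main obstacle is the bookkeeping of the lattice-point inclusion–exclusion on the shared boundary, and I would do that step carefully. One must check that the common boundary is a path, or a union of paths, in which each interior vertex lies on exactly two segments. Endpoints of the whole common boundary lie on $\partial\cP$, belong to only one $\cL_i$, and are already correctly counted. A further subtlety is to pass from exact equalities to $\ePK$ only for all sufficiently large primes $K$ for which all the cited theorems apply. These are the primes for which the coefficients of $f$, the vertex coordinates and the slopes lie in $\ZK$, and for which the relevant Legendre-symbol arguments are nonzero mod $K$. There are only finitely many exceptional primes, which is exactly what the squak equivalence framework allows.
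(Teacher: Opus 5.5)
Your proposal is correct and follows the paper's route. Both use exact inclusion–exclusion for $S_K$, with the interior-vertex terms reduced to $q^{f(0,0)}\delta_{K\bfv_j\in\Z^2}$ via Proposition~\ref{modKmodphiK}, then substitute the decompositions of Theorems~\ref{2DquantumPick} and~\ref{lineinvariant} and conclude from uniqueness of $\G$-decompositions (which the paper also notes) or by matching the $\G^2$ and $\G$ coefficients via area additivity and $Z_{K,e}+Z_{K,-e}=Z^{(1)}_K(\cL)$. Your remark about edges of $\cP$ split between $\cP_1$ and $\cP_2$, handled by linearity of the edge contributions in $\lambda$, fills a small point the paper glosses over without changing the argument.
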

\begin{proof}  Taking into account double counting,
$S_K(cP)=S_K(\cP_1)+S_K(\cP_2)-S_K\left(\bigcup_{i=1}^r\cL_i\right)$.
Meanwhile,
\[S_K\left(\bigcup_{i=1}^r\cL_i\right)=\sum_{i=1}^r{}S_K(\cL_i)-\sum_{j=1}^dq^{f(K\bfv_j)}\delta_{K\bfv_j\in\Z^2}\]
Since $f(K\bfv_j)\eK{}f(0,0)$ hence by Lemma \ref{modKmodphiK}, $q^{f(K\bfv_j)}\ePK{}q^{f(0,0)}$. Thus
\begin{align}
    S_K(\cP)\ePK{}S_K(\cP_1)+S_K(\cP_2)-\sum_{i=1}^r{}S_K(\cL_i)+q^{f(0,0)}\sum_{j=1}^d\delta_{K\bfv_j\in\Z^2}\label{gluingS}
\end{align}
Note that for all but a finite number of primes $K$, $\delta_{K\bfv_j\in\Z^2}=\delta_{\bfv_j\in\Z^2}$.
By Theorem \ref{2DquantumPick} applied to $\cP$, $\cP_1$ and $\cP_2$ and Theorem \ref{lineinvariant} applied to $\cL$ then for almost all $K$,
\begin{align*}
  S_K(\cP)&\ePK\G^2{}A(\cP)q^{\frac{D}{\Delta}}\left(\frac{(\Delta)_K}{K}\right)
  +\G\sum_{e\in\partial\cP} Z_{K,e}+Z^{(0)}_K(\cP)\\
  S_K(\cP_1)&\ePK\G^2{}A(\cP_1)q^{\frac{D}{\Delta}}\left(\frac{(\Delta)_K}{K}\right)
  +\G\sum_{e\in\partial\cP_1} Z_{K,e}+Z^{(0)}_K(\cP_1)\\
  S_K(\cP_2)&\ePK\G^2{}A(\cP_2)q^{\frac{D}{\Delta}}\left(\frac{(\Delta)_K}{K}\right)
  +\G\sum_{e\in\partial\cP_2} Z_{K,e}+Z^{(0)}_K(\cP_2)\\
  S_K(\cL_i)&\ePK\G{}Z^{(1)}_K(\cL_i)+Z^{(0)}_K(\cL_i)
\end{align*}
The edges in $\partial\cP$ are precisely those in $\partial\cP_1$ and $\partial\cP_2$, except for internal edges which do not appear in $\partial\cP$ but appear in the boundaries of $\cP_1,\cP_2$ with opposite orientations.  Comparing the $\G^2$, $\G^1$ and $\G^0$ terms in \eqref{gluingS} we see that the $\G$ and $\G^2$ terms precisely match because $A(\cP)=A(\cP_1)+A(\cP_2)$ while for any line segment $\cL$ which corresponds to oriented edges $e,-e$, then  $Z_{K,e}+Z_{K,-e}=Z^{(1)}(\cL)$. Hence the terms without $\G$ must also precisely match, which is the result of the lemma. Alternatively, one can skip the comparison of $\G$ and $\G^2$ terms and jump straight to the last line of the proof by uniqueness of $\G$ decompositions, Theorem \ref{uniquenessinarbitrarydimension}.
\end{proof}

\section{\textsc{Integral Representation of the Symbolic Integral}}
As a first step to part (5) of Conjecture \ref{quantumpickconjecture}, we give an integral formula for one-dimensional symbolic integrals.

\begin{lem} Let $f(z) \in \R [z]$ and $a,b \in \R$ with $|a-b|< 1 $ then,
\[
    \int_{a+ B }^{b+ B } f (z) dz =  \sin\pi (b-a)
    \int_{-\infty}^{\infty} \frac{ f\left(iz+\frac{a+b-1}{2}\right) }{ \cosh 2\pi{}z  + \cos \pi (b-a) } dz
\]
is an integral representation for the symbolic expression.
\end{lem}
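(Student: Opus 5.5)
By linearity in $f$, it suffices to prove the identity for the exponentials $f(z)=e^{tz}$, viewed as generating functions for the monomials $z^n/n!$. Expanding both sides in powers of $t$ and comparing coefficients then gives the statement for every monomial, hence for all polynomials. The right-hand side converges for polynomial $f$, because the denominator grows like $e^{2\pi|z|}$ while $|f(iz+c)|$ grows only polynomially. When $f=e^{tz}$ with $t$ real and $|t|<2\pi$, the integrand is bounded by a constant times $e^{-2\pi|z|}$. Indeed $|e^{t(iz+c)}|=e^{tc}$, so the integral is analytic in $t$, and we may differentiate under the integral sign to recover the monomial statements.

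For the left-hand side, Lemma \ref{intBcommute} and Lemma \ref{Bpolyprop}(5) give
\[
\int_{a+B}^{b+B}\frac{z^n}{n!}\,dz=\frac{B_{n+1}(b)-B_{n+1}(a)}{(n+1)!}.
\]
Summing against $t^n$ and using \eqref{Bpolygenfn}, the left-hand side for $f=e^{tz}$ becomes
\[
\frac{e^{bt}-e^{at}}{e^t-1}.
\]
Set $c=\frac{a+b-1}{2}$ and $\theta=\pi(b-a)$, so that $|\theta|<\pi$. Factoring out $e^{ct}$, this expression becomes
\[
e^{ct}\,\frac{\sinh\frac{(b-a)t}{2}}{\sinh\frac t2}.
\]

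The right-hand side for $f=e^{tz}$ equals
\[
e^{ct}\sin\theta\int_{-\infty}^{\infty}\frac{e^{itz}}{\cosh 2\pi z+\cos\theta}\,dz,
\]
because $e^{t(iz+c)}=e^{ct}e^{itz}$. The proof therefore reduces to the classical Fourier transform identity
\[
\int_{-\infty}^{\infty}\frac{e^{itz}}{\cosh 2\pi z+\cos\theta}\,dz=\frac{\sinh\frac{\theta t}{2\pi}}{\sin\theta\,\sinh\frac t2},
\qquad |\theta|<\pi,\ |t|<2\pi.
\]
It then remains to observe that $\frac{\theta t}{2\pi}=\frac{(b-a)t}{2}$, so the two sides agree.

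I expect the Fourier transform identity to be the main obstacle. I would first prove it by contour integration over the rectangle with vertices $\pm R$ and $\pm R+i$, which exploits the periodicity $\cosh 2\pi(z+i)=\cosh 2\pi z$. The top edge of the rectangle contributes $-e^{-t}$ times the original integral. Inside the strip the integrand has two simple poles, at $z=\frac{i}{2}\pm\frac{i\theta}{2\pi}$, where $\cosh 2\pi z=-\cos\theta$. At these poles the derivative of the denominator is $2\pi\sinh 2\pi z=\pm 2\pi i\sin\theta$. Summing the two residues gives
\[
(1-e^{-t})I=2\pi i\cdot\frac{e^{-t/2}\bigl(e^{-\theta t/2\pi}-e^{\theta t/2\pi}\bigr)}{2\pi i\sin\theta}\cdot(-1),
\]
where $I$ denotes the integral and the sign still has to be checked. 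Rearranging yields $I=\frac{\sinh(\theta t/2\pi)}{\sin\theta\,\sinh(t/2)}$. The vertical edges of the rectangle vanish as $R\to\infty$ because $|t|<2\pi$. The case $\theta=0$, where $a=b$, is trivial on both sides.

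As a fallback, I would expand $1/(\cosh 2\pi z+\cos\theta)$ as the geometric-type series
\[
\frac{2}{\sin\theta}\sum_{k\ge1}(-1)^{k-1}\sin(k\theta)\,e^{-2\pi k|z|},
\]
integrate term by term, and sum the resulting series in closed form.
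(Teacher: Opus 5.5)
Your proposal is correct and follows essentially the paper's proof. Both arguments use the generating function $\frac{e^{bt}-e^{at}}{e^t-1}=e^{ct}\frac{\sinh((b-a)t/2)}{\sinh(t/2)}$ to reduce the lemma to the Fourier pair between $\frac{\sinh\alpha x}{\sinh\pi x}$ and $\frac{\sin\alpha}{\cosh\omega+\cos\alpha}$, then compare coefficients of $t^m$. The paper cites this pair from a table, whereas your rectangle-contour residue computation proves it directly, and your sign is correct. One small correction: the vertical edges vanish for every real $t$ because the denominator grows like $e^{2\pi R}$, so the condition $|t|<2\pi$ is needed only for convergence of the left-hand generating function.
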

\begin{proof} By linearity, it suffices to prove the equality for monomials $f(z)=z^m$, $m\in\N$. Set $\overline{B}_m (a,b)  :=  \int_{a+ B }^{b+ B  }z^m dz  = \frac{1}{m+1} \left[ B_{m+1} (b) - B_{m+1} (a) \right] $.  By Definition \ref{Bpolygenfn}, $\sum_{m=0 }^\infty  B_m (z) \frac{t^m }{m! }  = \frac{t e^{tz}}{ e^t -1 } $ for $|t|<2\pi$ and so shifting index $m=k+1$
\[
    \frac{t e^{tb} - t e^{ta}}{e^t -1 } = \sum_{ m=0 }^\infty \frac{  \left[ B_m (b) - B_m (a) \right]  t^m   }{m!}
    = t  \sum_{k = 0 }^\infty \frac{  \left[ B_{k+1} (b) - B_{k+1} (a)  \right]  t^k  }{(k+1)k!}  = t \sum_{k=0}^\infty  \overline{B}_k (a,b)  \frac{t^k }{k!}
\]
so that $ \sum_{m= 0 }^\infty\overline{B}_m (a,b) \frac{t^m }{m!} = \frac{e^{tb} - e^{ta}}{e^t -1 }=e^{ct} \frac{ \sinh { (dt) } }{\sinh{(t/2)}} $ for $c := \frac{a+ b -1}{2}$ and $ d := \frac{b-a}{2} $. For $|\alpha|<\pi$, the functions $\frac{\sinh{(\alpha{}x)}}{\sinh{(\pi x )}}$ and $\frac{1}{\sqrt{2\pi}} \frac{\sin\alpha}{\cosh{x} + \cos\alpha} $ form a Fourier pair (see \cite{N10} p.30). Hence
\begin{align*}
\frac{\sinh{(\alpha{}x)}}{\sinh{(\pi x )}} = \frac{1}{\sqrt{2 \pi }} \int_{-\infty}^\infty  \frac{1}{\sqrt{2 \pi }} \frac{\sin{\alpha}}{  \cosh {\omega} + \cos{\alpha} }  e^{ix\omega }d\omega
\end{align*}
Replacing $\alpha$, $x$, $\omega$ by $2\pi{}d$, $\frac{t}{2\pi}$, $2\pi{}z$ respectively, gives
\begin{align*}
    \frac{\sinh{d t }}{\sinh\frac{t}2} = \int_{-\infty}^\infty \frac{\sin{2\pi{}d}}{\cosh2\pi{}z + \cos{2\pi{}d} } e^{izt }dz.
\end{align*}
so that for $|t|<2\pi$,
\[
     \sum_{m= 0 }^\infty\overline{B}_m (a,b) \frac{t^m }{m!}=  e^{ct} \frac{ \sinh{dt} }{\sinh\frac{t}{2}} = \int_{-\infty}^{\infty} \frac{\sin2\pi{}d}{\cosh2\pi{}z + \cos2\pi{}d} e^{t(c+iz) } dz
\]
Comparing coefficients of $t^m$ gives
$\overline{B}_m(a,b)=\int_{-\infty}^{\infty} \frac{(c+iz)^m\sin2\pi{}d}{\cosh2\pi{}z + \cos2\pi{}d} e^{t(c+iz) } dz $ as required.
\end{proof}

\noindent A special case of this lemma for $a=\frac14$, $b=\frac34$ is the integral formula (2.1.5) in \cite{Law95} for Euler numbers
$E_m=2^{2m}\overline{B}_m(\frac14,\frac34) = 2^{2m+2}\int_{-\infty}^{\infty} \frac{ (iz)^m  }{e^{2\pi{}z}+e^{-2\pi{}z}}dz$.

This lemma allows symbolic integrals with integrands of the form $q^{g(z)}$ where $g(z)$ is a quadratic, to be considered as asymptotic expansions of integrals which are convergent for $|q|<1$ or for $|q|>1$, according to the sign of the leading term of $g(z)$. This is sufficient to deal with $Z^{(0)}_K$ in the one-dimensional case and the edge contributions $Z_{K,e}$ to $Z^{(1)}_K$ in the two-dimensional case. To deal with $Z^{(0)}_K(\cP)$ in the two-dimensional case requires a formula giving a holomorphic extension of repeated symbolic integrals.

\section{\textsc{Example}}
According to \cite{Law95}, a normalised version of the WRT invariant of $M_{n,t}$, the $\Q$HS given by surgery around a $(2,n)$-torus with $t$ additional twists ($t+n\not=0$) at a $K$-th root of unity $q$ is given by
\[
Z_K(M_{n,t})=(-1)^{t+\alpha}A^{2n}\frac{\sum_{(a,b)\in{}X}(-1)^bq^{g(a,b)}}{(q^{-\delta}-1)\sum_{a=0}^{K-1}q^{\delta{}a^2}}
\]
where $A=q^\frac14$, $\delta=\sgn(t+n)$, $X$ is the set of integral points in $[-\alpha,\alpha]^2$ with opposite parity, $\alpha=\frac{K-1}{2}$ and $g(a,b)=\frac{n}{8}\big((a+b)^2-1\big)+\frac{t-n}{16}(a-b)^2+\frac14(3a+b)$. The sum in the denominator is $\G$ for $\delta=1$ and $\left(\frac{-1}{K}\right)\G=(-1)^\alpha\G$ for $\delta=-1$. Meanwhile, for the numerator, split $X$ into points $(2i+1,2j), (2j,2i+1)$ where for $K\equiv_41$ the range is $\frac{1-K}{4}\leq{}i\leq\frac{K-5}{4}$, $\frac{1-K}4\leq{}j\leq\frac{K-1}{4}$ while for $K\equiv_43$ the range is $\frac{-1-K}{4}\leq{}i\leq\frac{K-3}{4}$, $\frac{3-K}4\leq{}j\leq\frac{K-3}4$; the summand now becomes $q^{g(2i+1,2j)}-q^{g(2j,2i+1)}$. The range differs from $(i,j)\in{}K[-\frac14,\frac14]^2$ by requiring those points with $i=\frac{K-1}{4}$ to be removed in the case of $K\equiv_41$ and those points with $i=\frac{-1-K}4$ to be added in the case of $K\equiv_43$. In either case, the summand for the exceptional points is $q^{g(\frac{\pm{}K+1}2,2j)}-q^{g(2j,\frac{\pm{}K+1}2)}$ which is congruent modulo $\Phi_K$ to $q^{g(\frac12,2j)}-q^{g(2j,\frac12)}$. For simplicity, we complete the computation just for the case of the Poincar\'e homology sphere, $n=-3$, $t=2$, $\delta=-1$.
\begin{align}
q(q-1)Z_K\ePK{}A^{-2}\G^{-1}\left[\sum_{(i,j)\in{}\Z^2\cap{}K\big[-\frac14,\frac14\big]^2}(q^{g(2i+1,2j)}-q^{g(2j,2i+1)})
+(-1)^\alpha\!\!\!\!\sum_{j\in{}\Z\cap{}K\big[-\frac14,\frac14\big]}(q^{g(2j,\frac12)}-q^{g(\frac12,2j)})\right]\label{PoincareHSsum}
\end{align}
For the double sum, apply Theorem \ref{2DquantumPick} to $\cP=\big[-\frac14,\frac14\big]^2$ with the two quadratic polynomials
\begin{align*}
    f_1(i,j)&\equiv{}g(2i+1,2j)=-\frac14i^2-\frac{11}2ij-\frac14j^2+\frac54i-\frac94j+\frac{17}{16}\\
    f_2(i,j)&\equiv{}g(2j,2i+1)=-\frac14i^2-\frac{11}2ij-\frac14j^2+\frac14i-\frac54j+\frac{9}{16}
\end{align*}
in turn, in place of $f$. Both have $\Delta=-30$, $D=-\frac{59}4$ while for the first quadratic  $z_0=-\frac{13}{30}$, $w_0=\frac4{15}$ and for the second quadratic $z_0=-\frac7{30}$, $w_0=\frac1{15}$. This gives level two $\G$ decompositions (see \eqref{Zright}, \eqref{Zleft}, \eqref{Zup}, \eqref{Zdown}, \eqref{Z0square})
\begin{align*}
&\sum_{(i,j)\in{}\Z^2\cap{}K[-\frac14,\frac14]^2}q^{g(2i+1,2j)}
\ePK\frac14q^{\frac{59}{120}}\left(\frac{-30}K\right)\G^2
+\int_{\{\frac{K}4\}+B}^{1-\{\frac{K}4\}+B}\int_{\{\frac{K}4\}+B}^{1-\{\frac{K}4\}+B}q^{f_1(z,w)}dwdz\\
&+\frac12q^{\frac{59}{120}}(-1)^\alpha\G
\left[\int_B^{\frac{43}{30}-\{\frac{K}4\}+B}q^{30z^2}dz
      +\!\!\!\int_B^{\frac{11}{15}-\{\frac{K}4\}+B}q^{30w^2}dw
      -\!\!\!\int_B^{\frac{13}{30}+\{\frac{K}4\}+B}q^{30z^2}dz
      -\!\!\!\int_B^{-\frac{4}{15}+\{\frac{K}4\}+B}q^{30w^2}dw\right]\\
&\sum_{(i,j)\in{}\Z^2\cap{}K[-\frac14,\frac14]^2}q^{g(2j,2i+1)}
\ePK\frac14q^{\frac{59}{120}}\left(\frac{-30}K\right)\G^2
+\int_{\{\frac{K}4\}+B}^{1-\{\frac{K}4\}+B}\int_{\{\frac{K}4\}+B}^{1-\{\frac{K}4\}+B}q^{f_2(z,w)}dwdz\\
&+\frac12q^{\frac{59}{120}}(-1)^\alpha\G
\left[\int_B^{\frac{37}{30}-\{\frac{K}4\}+B}q^{30z^2}dz
      +\!\!\!\int_B^{\frac{14}{15}-\{\frac{K}4\}+B}q^{30w^2}dw
      -\!\!\!\int_B^{\frac{7}{30}+\{\frac{K}4\}+B}q^{30z^2}dz
      -\!\!\!\int_B^{-\frac{1}{15}+\{\frac{K}4\}+B}q^{30w^2}dw\right]
\end{align*}
Note that when $K\equiv_41$, then $\{\frac{K}4\}=\frac14$ and $1-\{\frac{K}4\}=\frac34$ while when $K\equiv_43$, then $\{\frac{K}4\}=\frac34$ and $1-\{\frac{K}4\}=\frac14$ so that these $\G$ decompositions simplify to
\begin{align*}
    \frac14q^{\frac{59}{120}}\left(\frac{-30}K\right)\G^2
+\frac12q^{\frac{59}{120}}\G
\left[\int_{\frac{41}{60}+B}^{\frac{71}{60}+B}q^{30z^2}dz
 +\!\!\!\int_{-\frac1{60}+B}^{\frac{29}{60}+B}q^{30w^2}dw\right]
+\int_{\frac14+B}^{\frac34+B}\int_{\frac14+B}^{\frac34+B}q^{f_1(z,w)}dwdz\\
       \frac14q^{\frac{59}{120}}\left(\frac{-30}K\right)\G^2
+\frac12q^{\frac{59}{120}}\G
\left[\int_{\frac{29}{60}+B}^{\frac{59}{60}+B}q^{30z^2}dz
      +\!\!\!\int_{\frac{11}{60}+B}^{\frac{41}{60}+B}q^{30w^2}dw\right]
      +\int_{\frac14+B}^{\frac34+B}\int_{\frac14+B}^{\frac34+B}q^{f_2(z,w)}dwdz
\end{align*}
and in the difference, the $\G^2$ term will cancel.
The one-dimensional sum in \eqref{PoincareHSsum} involves exponents $h_1(j)\equiv{}g(2j,\frac12)=-\frac14j^2+\frac18j+\frac{31}{64}$ and $h_2(j)\equiv{}g(\frac12,2j)=-\frac14j^2-\frac78j+\frac{47}{64}$ which have critical values $\frac12$ and $\frac32$ respectively. By Theorem \ref{1dqpt}, we have level one $\G$ decompositions
\begin{align*}
    \sum_{j\in{}\Z\cap{}K\big[-\frac14,\frac14\big]}q^{g(2j,\frac12)}
    &\ePK\frac12q^{\frac12}(-1)^\alpha\G+\int_{\{\frac{K}4\}+B}^{1-\{\frac{K}4\}+B}q^{h_1(z)}dz
    =\frac12q^{\frac12}(-1)^\alpha\G+(-1)^\alpha\int_{\frac14+B}^{\frac34+B}q^{h_1(z)}dz\\
    \sum_{j\in{}\Z\cap{}K\big[-\frac14,\frac14\big]}q^{g(\frac12,2j)}
    &\ePK\frac12q^{\frac32}(-1)^\alpha\G+\int_{\{\frac{K}4\}+B}^{1-\{\frac{K}4\}+B}q^{h_2(z)}dz
    =\frac12q^{\frac32}(-1)^\alpha\G+(-1)^\alpha\int_{\frac14+B}^{\frac34+B}q^{h_2(z)}dz
\end{align*}
The $\G^0$ terms in the combination of the $\G$ decompositions of the four sums in  \eqref{PoincareHSsum} is
\begin{align}
\int_{\frac14+B}^{\frac34+B}\int_{\frac14+B}^{\frac34+B}(q^{f_1(z,w)}-q^{f_2(z,w)})dwdz\label{G0terms}
+\int_{\frac14+B}^{\frac34+B}(q^{h_1(z)}-q^{h_2(z)})dz
\end{align}
which we claim vanishes. To see this, note that since $f_1(z,w)=g(2z+1,2w)$, we can change variables $z:=z+\frac12$
\[
    \int_{\frac14+B}^{\frac34+B}\int_{\frac14+B}^{\frac34+B}q^{f_1(z,w)}dwdz
       =\int_{\frac34+B}^{\frac54+B}\int_{\frac14+B}^{\frac34+B}q^{g(2z,2w)}dwdz
       =\int_{\frac14+B}^{\frac34+B}q^{h_2(w)}dw-\int_{\frac14+B}^{\frac34+B}\int_{\frac14+B}^{\frac34+B}q^{g(2z,2w)}dwdz
\]
where in the second step we used that $\int_{a+\frac12+B}^{a+1+B}h(z)dz=h(a)-\int_{a+B}^{a+\frac12+B}h(z)dz$, a special case of the Euler-Maclaurin formula, with $a=\frac14$ and $h(z)$ being the inner integral.
Similarly,
\[
\int_{\frac14+B}^{\frac34+B}\int_{\frac14+B}^{\frac34+B}q^{f_2(z,w)}dwdz
=\int_{\frac14+B}^{\frac34+B}q^{h_1(w)}dw-\int_{\frac14+B}^{\frac34+B}\int_{\frac14+B}^{\frac34+B}q^{g(2w,2z)}dwdz
\]
Note that the double integrals in the last two equations are identical, by changing the order of symbolic integration on the square; from this it follows that the $\G^0$ contribution \eqref{G0terms} vanishes. This leaves only the $\G^1$ terms in the combination of the $\G$ decompositions of the four sums in  \eqref{PoincareHSsum}, so that
\[
q(q-1)Z_K\ePK\frac12q^{-\frac1{120}}\left[\int_{\frac{41}{60}+B}^{\frac{71}{60}+B}q^{30z^2}dz
 +\!\!\!\int_{-\frac1{60}+B}^{\frac{29}{60}+B}q^{30w^2}dw
 -\int_{\frac{29}{60}+B}^{\frac{59}{60}+B}q^{30z^2}dz
      -\!\!\!\int_{\frac{11}{60}+B}^{\frac{41}{60}+B}q^{30w^2}dw
 \right]+\frac12(1-q)
\]
Applying $\int_{a+\frac12+B}^{a+1+B}h(x)dx=h(a)-\int_{a+B}^{a+\frac12+B}h(x)dx$ to transform the first and third integrals in the last formula to match the second and fourth integrals will acquire extra terms $q^\frac{121}{120}-q^\frac1{120}$, which exactly cancel the term $\frac12(1-q)$ and gives the simplified expression
\[
q(q-1)Z_K\ePK{}q^{-\frac1{120}}\left[\int_{-\frac1{60}+B}^{\frac{29}{60}+B}q^{30w^2}dw
 -\int_{\frac{11}{60}+B}^{\frac{41}{60}+B}q^{30w^2}dw
 \right]
\]
This should be compared with \cite{LZ} where it was shown that the rescaled quantum invariant $W_K=q(q-1)Z_K$ of the Poincar\'e homology sphere at a $K$-th root of unity is the radial limit at that root of unity of the holomorphic function
\begin{align}
    W_\infty(q)\equiv1-\frac12\sum_{n=1}^\infty\chi_+(n)q^{\frac{n^2-1}{120}}\label{Winfty}
\end{align}
defined on $|q|<1$ where $\chi_+:\N\to\{-1,0,1\}$ is defined by $\chi_+(n)=+1$ for $n\equiv_{60}1,11,19,29$, $\chi_+(n)=-1$ for $n\equiv_{60}31,41,49,59$ and $\chi_+(n)=0$ otherwise. Further, in \cite{LZ}, the asymptotic expansion of $W_\infty(q)$ around $q=1$ was identified with the similarly rescaled Ohtsuki invariant, $q(q-1)Z_\infty(M)$ and the infinite sum on the right hand side of \eqref{Winfty} was identified as the Eichler integral of a suitable weight $\frac32$ theta function.

\end{document}